\newtheorem{thm}{Theorem}[section]
\newtheorem{cor}[thm]{Corollary}
\newtheorem{lem}[thm]{Lemma}
\newtheorem{prop}[thm]{Proposition}
\newtheorem{defn}[thm]{Definition}
\newtheorem{rem}[thm]{Remark}
\newtheorem{que}[thm]{Question}
\definecolor{darkgreen}{rgb}{0,0.5,0}
\definecolor{darkred}{rgb}{0.7,0,0}
\def\ba{\begin{array}}
\def\ea{\end{array}}
\def\be{\begin{equation}}
\def\ee{\end{equation}}
\def\bee{\begin{eqnarray}}
\def\beee{\begin{eqnarray*}}
\def\eee{\end{eqnarray}}
\def\eeee{\end{eqnarray*}}
\title[ Quantization for biharmonic maps ]{%\sc
\bf Quantization for biharmonic maps from  non-collapsed degenerating Einstein 4-manifolds
}
\thanks{Key words and phrases: Einstein manifolds; degenerating; biharmonic maps; energy identity; no neck.}
\thanks{2010 Mathematics subject classification: 53C21; 53C25; 35J62}
\author{Youmin Chen}
\address{School of Mathematical Sciences, Shanghai Jiao Tong University\\ 800 Dongchuan Road \\ Shanghai, 200240 \\P. R. China}%
\email{chyoumin19@sjtu.edu.cn}%
\author{Miaomiao Zhu}
\address{School of Mathematical Sciences, Shanghai Jiao Tong University\\ 800 Dongchuan Road \\ Shanghai, 200240 \\P. R. China}%
\email{mizhu@sjtu.edu.cn}%
\date{\today}
\begin{document}
\maketitle

\begin{abstract}
For a sequence of extrinsic or intrinsic biharmonic maps $u_j: M_j\rightarrow N$ from a sequence of non-collapsed degenerating closed Einstein 4-manifolds $(M_j,g_j)$ with bounded Einstein constants, bounded diameters and bounded $L^2$ curvature energy into a compact Riemannian manifold $(N,h)$ with uniformly bounded biharmonic energy, we establish a compactness theory modular finitely many bubbles, which are finite energy biharmonic maps from $\mathbb{R}^4$, or from $\mathbb{R}^4 / \Gamma$ for some nontrivial finite group $\Gamma \subset SO(4)$, or from some complete, noncompact, Ricci flat, non-flat ALE 4-manifold (orbifold). To achieve this, we develop a sophisticated asymptotic analysis for solutions over degenerating neck regions.

%that the energy identity and no neck property for both extrinsic and intrinsic biharmonic maps even if $M_j$ degenerates. In addition we give a proof of the energy identity and the no neck property for the convergence of a sequence of biharmonic maps from any fixed Riemannian manifold $(M,g)$ to $N$ with bounded energy.

\end{abstract}

\

\section{Introduction}

There are many geometric variational problems which are critical in 4 dimensions and the moduli space of  solutions of the Euler-Lagrange equations (in particular, the minimizers of the action functional) possess rich structures when the underlying domain manifold is of dimension 4.  Careful investigation of the moduli space usually leads to profound geometric and topological applications, for instance, the theories of Yang-Mills fields (in particular, ASD connections) and Seiberg-Witten equations, etc.

In the present paper, we shall develop a new scheme in geometric analysis, to lay the ground for investigating the compactness of moduli space of solutions of certain geometric PDEs defined over 4-manifolds equipped with varying geometric structures. We shall consider a 4th order nonlinear geometric system that is critical in dimension 4, namely, the biharmonic map system, which is analytically complicated and subtle to handle. For the underlying domains, we shall consider non-collapsed closed Einstein 4-manifolds with varying metrics. The scheme developed in this paper can be applied to many other systems arising from geometry and physics, for instance, some lower order systems like Yang-Mills fields (including ASD connections) and Seiberg-Witten equations, which will be investigated in future works.

The theory of biharmonic functions dates back to 1862, when Maxwell and Airy studied it to describe a mathematical model of elasticity. In 3 dimensions, biharmonic functions can also be used to represent the velocity and pressure fields in the 3 dimensional Stokes flow (also called creeping flow), which is a type of fluid flow where advective inertial forces are small compared with viscous forces (namely, the Reynolds number is low), see e.g. \cite{HB1965}. Biharmonic maps between Riemannian manifolds are natural generalizations of biharmonic functions and harmonic maps, and it is expected that exploration of the solution space of biharmonic maps should lead to some geometric and topological applications. Here we recall some basic notations and some geometric analysis aspects for biharmonic maps.

Let $(M,g)$ be a smooth Riemannian manifold and let $(N,h)$ be a compact smooth Riemannian manifold which is embedded in some Euclidean space $\mathbb{R}^ K$. Consider the following functionals for a map $u$ from $(M,g)$ to $(N,h)$,
\begin{equation*}
H(u)=\frac{1}{4}\int_{M}|\triangle_g u|^2 dV_g
\end{equation*}
and
\begin{equation*}
T(u)=\frac{1}{4}\int_{M}|\tau(u)|^2 dV_g.
\end{equation*}
Here $\tau(u)$ is the tension field of $u$, or equivalently, the tangential part of $\triangle_g u$.
Critical points of $H(u)$ are called \emph{extrinsic biharmonic maps}, because the functional $H(u)$ depends on the specific embedding of $(N,h)$ into the Euclidean space. Critical points of $T(u)$ are called \emph{intrinsic biharmonic maps}. The corresponding Euler-Lagrange equations are 4th order nonlinear elliptic systems which are critical in dimension 4 (see Section \ref{Preliminaries111}). When the target is an Euclidean space, then biharmonic maps reduce to biharmonic functions.  In the sequel, if not specified, we shall mainly consider extrinsic biharmonic maps and omit the word extrinsic. The case of intrinsic biharmonic maps will be discussed in Section \ref{secintrinsic}.

When the domain is of dimension 4, the above two functionals and the corresponding Euler-Lagrange equations are scaling invariant, therefore, in general, strong compactness for a sequence of solutions with uniformly bounded energy fails, due to the possible energy concentration at isolated points in the domain. To investigate the compactness issue, one needs to apply blow-up analysis and get some bubbling solutions, i.e., finite energy nontrivial biharmonic maps from $\mathbb{R}^4$ to $N$, by rescaling at the energy concentration points, called blow-up points. This is a well known phenomenon in geometric variational problems
that are invariant under a non-compact group of local symmetries, dating back to the work \cite{sacks1981existence}.

In analogy to two dimensional harmonic maps which is conformally invariant, when the domain is Euclidean and of dimension 4, the blow-up theory for biharmonic maps have been developed via various methods e.g. \cite{wang2004remarks, hornung2012energy, wang2012energysphere, wang2012energy, laurain2013energy, liu2015finite,liu2016neck}. More precisely, a bubble tree convergence (including energy identity and no neck property) hold for a sequence of such maps with uniformly bounded energy. This gives compactness modular bubbles, a phenomenon which appears also in many other geometric variational problems, for instance, 2 dimensional and 1st order problems like J-holomorphic curves e.g. \cite{Gromov1985},
2 dimensional and 2nd order problems like harmonic maps from surfaces e.g. \cite{sacks1981existence,Jost, ding1995energy,parker1996bubble,qing1997bubbling, LinWang1998},
 %2 dimensional and 4th order problems like Willmore surfaces e.g. \cite{RiviEnergywilmo},
 4 dimensional problems like Yang-Mills fields e.g. \cite{Donaldson1983, DK1990, FU1991, InterpolaenergyquantYM}.

If we allow the domain to vary, then in general, bubble tree convergence may not hold. For instance, in two dimensions, when the domain Riemann surface varies in a noncompact region of the muduli space and hence degenerates to surfaces with nodes, bubble tree convergence still holds for some 1st order problems like J-holomorphic curves e.g. \cite{Gromov1985, Wolfson1988, PW1993, Ye1994}, however, for 2nd order problems like harmonic maps, in general, energy identity fails as energy may get lost on some degenerating necks, see the example in \cite{parker1996bubble}. In fact, the bubble tree convergence holds true only when we impose some extra constraints on the Poho\v{z}aev constant for harmonic maps over degenerating collor regions \cite{zhu2010harmonic}. %For 4th order problems like Willmore surfaces, to derive the energy identity, one needs to impose some constraints on the residue for Willmore surfaces \cite{RiviEnergydege}.

More precisely, we shall investigate the compactness problem for the moduli space of biharmonic maps from varying non-collapsed Einstein 4-manifolds to a fixed compact Riemannian manifold. Thanks to the well developed and beautiful compactness theory for non-collapsed Einstein 4-manifolds
e.g. \cite{anderson1989ricci, nakajima1988hausdorff, BKN, bando1990bubbling, nakajima1994convergence, Tian1990, Anderson1992}, we may consider a sequence of biharmonic maps $u_j: M_j\rightarrow N$ from a sequence of non-collapsed closed Einstein 4-manifolds $(M_j,g_j)$ with bounded Einstein constants, bounded diameters and bounded $L^2$ curvature energy to a compact Riemannian manifold $(N,h)$ with uniformly bounded biharmonic energy.

It is well known that, on one hand, biharmonic maps can blow-up at energy concentration points in the domain and on the other hand, the underlying Einstein 4-manifolds can also blow-up at curvature concentration points and hence degenerate to a union of a Einstein 4-orbifold and some ALE  (asymptotically locally Euclidean) bubble manifolds (orbifolds). To study the compactness problem, there are several difficulties we need to overcome. The first difficulty is that the bubble-neck decomposition for biharmonic maps from varying domains is somewhat subtle, since there is a bubble tree convergence of the underlying Einstein 4-manifolds. In other words, the blow-up of biharmonic maps and the blow-up of the domain manifolds are entangled with each other. The second difficulty is that the degeneration of Einstein 4-manifolds is accompanied by the blow-up of Riemannian curvatures, while previous blow-up theory and neck analysis for biharmonic maps are developed for the cases of Euclidean domains in $\mathbb{R}^4$ or standard flat cylinder domains $[a,b] \times\mathbb{S}^3$ and hence can not be applied to the case of degenerating neck regions, where the geometric data are not explicitly known. More precisely, in order to analysis the behavior of biharmonic maps over the neck regions appeared in the bubble-neck decomposition for the degeneration of Einstein 4-manifolds, we need firstly to construct good global coordinates for those degenerate neck regions and take care of the behaviors of the varying metrics in these coordinate. Fortunately, inspired by the methods of constructing coordinates at the infinity in \cite{BKN, bando1990bubbling}, we succeed in constructing two types of good global coordinates for the degenerate neck region, which allow us to perform subtle neck analysis for biharmonic maps over the degenerating neck regions. In the following, we will sketch the bubble-neck decomposition procedure for the domain manifolds and briefly describe the geometry of the degenerating neck regions which look like portions of $\mathbb{R}^4/\Gamma$, and leave the details to the main text.

%The main difference between this paper and \cite{zhu2010harmonic,RiviEnergydege} is that they considered equations on 2-manifolds with collapsing degeneration, but we study biharmonic maps from Einstein 4-manifolds with non-collapsing degeneration. To be more concrete, they study equations on a sequence of Riemann surfaces whose conformal classes do not stay within a sub-domain of the Moduli space and degenerate in the thin collar regions around some degenerating geodesics \cite{zhu2010harmonic}. We remark here that on these thin collar regions which are conformal to cyclinders there are natural coordinates. We study the biharmonic map equations on a sequence of closed volume non-collapsing Einstein 4-manifolds with bounded Einstein constants, bounded diameters and bounded Euler number.

%This difference has a huge influence on the blow-up analysis, by the study in \cite{zhu2010harmonic,Li2012The,RiviEnergydege,Song2016Convergence,Zhu2009Dirac,jost19asympDirachar,liu2019harmonic}, if the conformal structure degenerate, then in general the usual energy quantization property in the thin collar regions no longer holds ( one can only hope for some kinds of generalized energy identities ), however we can show in this paper that the two key property still hold even if the Einstein 4-folds degenerate. So it indicates that it should make a big difference whether the degeneration of manifolds is collapsing or not.

To investigate our problem, we shall firstly extend the energy identity and no neck property for biharmonic maps from a domain in $\mathbb{R}^4$ to the case that the domain is a fixed general Riemannian 4-manifold. Differently from harmonic maps from Riemann surfaces which is conformally invariant and hence the uniformization theorem for Riemann surfaces can be applied to reduce the local blow-up analysis problem to the case of Euclidean domains in $\mathbb{R}^2$ and standard flat cylinder domains $[a,b] \times\mathbb{S}^1$, the neck analysis for biharmonic maps from general curved 4-manifolds is more subtle. In \cite{liu2015finite}, the authors attempted to deal with neck analysis for biharmonic maps from a curved space and carried out detailed analysis for the special case of $\mathbb{S}^4$ with round metric. However, the case of general curved domain manifolds remains open.

 For a general Riemannian 4-manifold $(M,g)$, we shall always take the normal coordinates $(x)$ at some point $p=0$, so that the scaling $u(\lambda x)$ is well defined for small $\lambda>0$. For simplicity of notations, it suffices to consider the case that there is only one blow-up point in the domain and there is only one bubble map, as the general case of multiple bubbles follows by induction.

\begin{thm} \label{noneckthmintr}
 Let $ u _i$ be a sequence of extrinsic (or intrinsic) biharmonic maps from a geodesic ball $B_1\subset (M,g)$  to $N$ with uniformly bounded biharmonic energy
\begin{equation*}
E(u_i,B_1)\equiv\int_{B_1}|\nabla^2_{g}u_i|^2+|\nabla_{g} u_i|^4 dV_g\leq \Lambda,
\end{equation*}
for some $\Lambda > 0$. Assume that there is a sequence of positive numbers $\lambda_i \rightarrow 0$ such that
\begin{equation*}
u _i(\lambda_i x)\rightarrow \omega,
\end{equation*}
on any compact set $ K \subset \mathbb{R}^4$, $u_i$ converges weakly in $W^{ 2,2 }$ to some $u_\infty: B_1\subset (M,g) \rightarrow N $, and moreover, the nontrivial biharmonic map $\omega: \mathbb{R}^4 \rightarrow N $ is the only bubble. Then, we have

\

\noindent{\textbf{Energy identity}:}
\begin{equation*}
\lim_{\delta\rightarrow 0} \lim_{R\rightarrow \infty}\lim_{i \rightarrow \infty}\int_{B_\delta \setminus B_{\lambda_i R}}|\nabla^2_{g}u_i|^2+|\nabla_{g} u_i|^4 dV_g=0
\end{equation*}

\noindent{\textbf{No neck property}:}
\begin{equation*}
\lim_{\delta\rightarrow 0} \lim_{R\rightarrow \infty}\lim_{i \rightarrow \infty}osc_{B_\delta \setminus B_{\lambda_i R}}  \ u_i=0.
\end{equation*}
\end{thm}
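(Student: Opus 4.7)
The plan is to reduce the problem to a controlled perturbation of the Euclidean case by working in normal coordinates at the blow-up point, then to run a dyadic neck decomposition with a curved Pohozaev identity whose curvature corrections are absorbed by the scale factor. Take normal coordinates $(x)$ at $p=0$; the metric satisfies $g_{ij}(x)=\delta_{ij}+O(|x|^2)$ with Christoffel symbols $\Gamma^k_{ij}(x)=O(|x|)$. In these coordinates the biharmonic map equation for $u_i$ on $(B_1,g)$ takes the form of the Euclidean biharmonic map equation plus correction terms carrying explicit $O(|x|^2)$ or $O(|x|)$ factors in front of derivatives of $u_i$ up to order three. This preserves the scale invariance of the leading part and reduces the entire analysis to a perturbation of the flat setting.

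Next I would decompose the neck $B_\delta\setminus B_{\lambda_i R}$ into overlapping dyadic annuli $A_k=B_{2r_k}\setminus B_{r_k/2}$ with $r_k=2^{-k}$, and rescale $v_k(y):=u_i(r_k y)$ with rescaled metric $\tilde g_k(y):=r_k^{-2}g(r_k y)$. Since $\omega$ is the only bubble, concentration-compactness combined with the strong convergence away from the bubble ensures that, for $\delta$ small and $R$ large, the biharmonic energy on each $A_k$ falls below the $\eps$-regularity threshold $\eps_0$. The $\eps$-regularity theorem for biharmonic maps, applied in the perturbed-Euclidean form above, then produces pointwise bounds on $v_k$ and its derivatives up to third order on the middle of each annulus, with constants uniform in $k$.

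With these bounds in hand, the central step is to prove geometric decay of the annular energies $E_k:=E(u_i,A_k)$. Testing the biharmonic map equation against the radial vector field $x^i\partial_i$ yields a curved Pohozaev-type identity on $(B_1,g)$ whose terms agree with the Euclidean ones modulo curvature-generated errors bounded pointwise by $|x|^2(|\nabla^2 u_i|^2+|\nabla u_i|^4)$. Combined with a three-annulus inequality in the spirit of Simon--\L ojasiewicz, this produces a contraction of the form $E_k\leq \theta(E_{k-1}+E_{k+1})+C r_k^2 E_k$ with some $\theta<1/2$ uniform in $k$. Iterating and summing the resulting geometric series gives the energy identity; the no-neck property then follows from the oscillation bound $osc_{A_k} u_i\leq C E_k^{1/2}$ on each annulus, obtained via $\eps$-regularity and a Morrey-type embedding, together with the convergence of $\sum_k E_k^{1/2}$.

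The principal obstacle is the curved Pohozaev identity. On Euclidean space it is an exact conservation law, but on $(M,g)$ one inevitably picks up curvature-dependent boundary and bulk error terms, and it is not a priori clear that these are strictly lower order than the Euclidean terms driving the contraction. The key observation is that in normal coordinates the error terms on the annulus at scale $r_k$ carry an explicit factor $r_k^2$ from the metric expansion, so their total contribution across the neck is controlled by $\sum_k r_k^2 E_k\leq C\delta^2\Lambda\to 0$ as $\delta\to 0$. This extra smallness is exactly what is needed to absorb the curvature contribution into the Euclidean mechanism and keep the contraction coefficient in the three-annulus inequality uniformly bounded away from $1$ at all dyadic scales, after which the remainder of the argument follows the flat template developed in the references cited above.
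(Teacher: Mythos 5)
Your overall strategy---normal coordinates giving $g_{ij}=\delta_{ij}+O(|x|^2)$, a dyadic neck decomposition, $\varepsilon$-regularity on each annulus, and a Poho\v{z}aev identity carrying $O(r^2)$ curvature errors---is in the right spirit and matches the paper's framework. But there is a concrete gap at the central step. You claim a single three-annulus contraction for the \emph{full} annular biharmonic energy, $E_k\leq\theta(E_{k-1}+E_{k+1})+Cr_k^2E_k$. No such inequality is established or available. The three-circle theorem that is actually known (Theorem~\ref{thrcircledegeneck}, from Liu--Yin) applies only to the mean-free function $v_i=u_i-(u_i)^*$, i.e.\ after subtracting the spherical average, and yields decay only for the \emph{tangential} part of the Hessian energy after interior elliptic estimates. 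The mean-free condition $\int_{\partial B_r}v\,d\sigma=0$ is essential: the spherically-averaged component $u^*(r)$ is the obstruction to convexity in $\log r$, and it is precisely this radial part, $\int(\partial_r^2u)^2+r^{-2}(\partial_r u)^2$, that the three-circle argument does not touch.

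What closes the gap in the paper is not a small curvature perturbation of a three-annulus inequality but a separate and nontrivial mechanism: the Poho\v{z}aev identity~(\ref{pohoidentgenrl1}) is decomposed into radial and tangential parts ($\square_1$, $\square_2$), integrated by parts in $r$, and manipulated---see~(\ref{pohoradictangen3}) and (\ref{pohoradictangen4})---to show that the radial annular energy is dominated by the tangential annular energy, boundary terms, and the $O(r^2)$ error. The paper explicitly emphasizes that ``this relation is not as obvious as in the harmonic maps case.'' Your proposal treats the Poho\v{z}aev identity as merely contributing the correction term $Cr_k^2E_k$, when in fact it is the device that transfers tangential decay to radial decay; without writing out that transfer you have not obtained decay of $E_k$ at all, only decay of $\int_{A_k}v^2/r^4$ for the mean-free part. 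You would need to (i) invoke the three-circle theorem for $v=u-u^*$ to get tangential decay, and then (ii) carry out the Poho\v{z}aev computation in the curved setting (including the cancellation structure in $\square_2$ and the identity $\triangle_g r=3/r+O(r)$) to bound the radial part; only after those two steps does the ODE argument for $F(s)$ and the oscillation estimate go through.
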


\

To prove the above theorem, we can not use the idea of rewriting the biharmonic map equations with respect to the  Riemannian metric $g$ as some approximate biharmonic map type equations on a Euclidean domain. This is because if we view $u$ as approximate biharmonic maps from some Euclidean domain and put the effect of the domain metric $g$ into some error terms which involve fourth derivatives of $u$, then we can not get uniform $L^p(B_1)$ (for some $p>4/3$) control of these error terms when we deal with a blow-up sequence of biharmonic maps from $(M,g)$. So we can not apply the classical Poho\v{z}aev type arguments for approximate biharmonic maps from Euclidean domains and with error terms uniformly bounded in $L^p(B_1)$ for some $p>4/3$, which are developed in the previous works \cite{wang2012energy, liu2015finite}. Therefore we need to make new observations and develop new methods.

A key step for the proof of the above theorem is to establish the following Poho\v{z}aev identity for biharmonic maps from Riemannian manifolds (see Subsection \ref{Pohogenermetric}):
\begin{eqnarray}\label{pohoidentgenrlintro}
\int_{\partial B_r(p)}(r\partial_ru)(\partial_r\triangle_{g} u)+r\frac{|\triangle_g u|^2}{2} -\partial_r(r\partial_ru)\triangle _g u d\sigma_g=P(g,r)
\end{eqnarray}
where
\begin{eqnarray*}
P(g,r)&=&-\int_{B_r(p)}(\triangle_g x^k)e_ku (\triangle _g u )-2\int_{B_r(p)}(\nabla_gx^k)\nabla_g(e_k u)\triangle_g u-|\triangle_g u|^2dV_g\\
& &+\int_{B_r(p)}(\text{div}_g(r\partial_r)-4)\frac{|\triangle_g u|^2}{2} dV_g
- \int_{B_r(p)}Ric_g(\nabla_g u, x^ke_k)\triangle_g udV_g,\nonumber
\end{eqnarray*}
 $(x)$ are the normal coordinates, $\{e_i(q)\}, i=1,\cdots 4,$ are the orthonormal vector fields obtained by parallel transport along the geodesic curve from $p$ to $q$, and $r\partial_r=\sum_{i=1}^{4}x^ie_i$.
It is easy to see that the error term $P(g,r)$ in the right hand side of the above Poho\v{z}aev identity is caused by the non-flatness of the metric and it involves at most second derivatives of $u$. Later we will see that it is of order $O(r^2)$. This decay order is sufficient to carry out the neck analysis for the case of a fixed Riemannian domain in Theorem \ref{noneckthmintr}.

From the identity (\ref{pohoidentgenrlintro}), we get
\begin{equation*}\label{pohozaevmodnondege}
\int_{\partial B_r(p)} (\square_1+\square_2 )d\sigma_g=\frac{P(g,r)}{r},
\end{equation*}
where
\begin{eqnarray*}
\square_1=-\frac{1}{2}(\partial^{2}_r u)^2+\left(\frac{\tilde{m}^2}{2r^2}\right)(\partial_r u)^2+\frac{1}{2r^4}(\tilde{\triangle}u)^2+\left(\frac{\tilde{m}}{r^3}\right)(\partial_r u)(\tilde{\triangle}u),
\end{eqnarray*}
and
\begin{eqnarray*}
\square_2& =&(\partial_r u)(\partial^{3}_r u)+\frac{\tilde{m}-1}{r}(\partial_r u)(\partial^{2}_r u)+\left(\frac{\partial_r\tilde{m}}{r}-\frac{2\tilde{m}}{r^2}\right)(\partial_r u)^2\\
& &+\frac{1}{r^2}(\partial_r u)(\partial_r\tilde{\triangle} u)-\frac{\tilde{m}}{r^3}(\partial_r u)(\tilde{\triangle} u).\nonumber
\end{eqnarray*}
Here $\tilde{\triangle}$ is the associated Laplace operator of $ \widetilde{\partial B_r(p)} $, $ \widetilde{\partial B_r(p)} $ is the sphere $\partial B_r(p)$ with the metric $\frac{g_r}{r^2}$, and $g_r$ is the metric of $\partial B_r(p)$ induced from $(M,g)$. The above identity together with some further observations then reveals the relation between the tangential part energy and the radial part energy.

\

Now, we turn to the most challenging part of the present paper, namely, the case of domain manifolds with varying geometric structures. It turns out that when domain manifolds are equipped with degenerating Einstein metrics, the situation becomes much more complicated and difficult to handle. In fact, we need to take extra care of the decay orders of both the maps and the metrics over degenerating neck regions involved in each step, which requires a sophisticated asymptotic analysis.

Let $(M_i ,g_i)$  be a sequence of closed $4$-dimensional Einstein manifolds with uniformly bounded Einstein constants $\{\mu_i\}$ and satisfying
\begin{equation*}
diam(M_i ,g_i)\leq D, \,\, vol(M_i ,g_i)\geq V \,\, \text{and}\,\, \int_{M_i} |R_{g_i}|^{2}dV_{g_i}\leq R
\end{equation*}
for some positive constants $D>0, V>0$ and $R>0$, where we denote the curvature tensor of a metric $g$ by $R_g$. Let $u_i$ be a sequence of extrinsic (or intrinsic) biharmonic maps from $(M_ i ,g_ i )$ to $ N$ and satisfies the following uniform biharmonic energy bound condition:
\begin{equation*}
E(u_i,M_i)\equiv\int_{M_i}|\nabla^2_{g_i}u_i|^2+|\nabla_{g_i} u_i|^4 dV_{g_i}\leq \Lambda,
\end{equation*}
for some $\Lambda > 0$.

By the compactness theory of non-collapsed Einstein 4-manifold, we can get bubble tree convergence for the underlying Einstein 4-manifolds (up to a subsequence). For simplicity of the statement, we assume at the moment that there is only one curvature blow-up point in the domain (i.e., the curvatures of $M_i$ blow-up at only one point as $i\rightarrow \infty$), and that there is only one bubble manifold, which is a Ricci flat, asymptotically locally Euclidean (ALE) manifold. The case of multiple bubble manifolds (orbifolds) will be discussed at the end of Section \ref{treesigleale}.

Suppose that $M_j$ blow up at $x_{a,j}$, and for any $\delta >0$,
$$(B(x_{a,j},\delta),g_j)\subset M_j\rightarrow (B(x_{a},\delta),g_\infty)\subset M_\infty$$
in the sense of Gromov-Hausdorff convergence. Here $M_\infty$ is an orbifold with isolated singularity $x_a$.
Then there is a sequence of $r_j\rightarrow 0$ such that $(( M_j , r_{j}^{-2}g_j),x_{a,j})$ converge to $(( M_a , h_a),x_{a,\infty})$ in the pointed Gromov-Hausdorff distance, where $( M_a , h_a)$ is a complete, noncompact, Ricci flat, non-flat ALE $4$-manifold (4-orbifold). And the degenerate neck region $B(x_{a,j},\delta)\setminus B(x_{a,j},r_jR) $ with $\delta$ being very small and $R$ being very large is close to a potion of a flat cone $\mathbb{R}^4/\Gamma$ for some discrete group $\Gamma \subset SO(4)$, for large $j$.

%If the blow-up of $u_j$ occur at a non-degenerating point $x_{b,j}$ ( $B(x_{b,j},\delta)\rightarrow B(x_{b},\delta)$, and $d_{g_\infty}(x_a,x_b)>\delta_0 $ for some $\delta_0$ ), then it is easy to see that by slightly modifying the argument for the proof of Theorem \ref{noneckthmintr}, we can prove the energy identity and the no neck property over the small neighbourhood of .

On the thick part of the domain manifold, which is away from the curvature blow-up point $x_{a,j}$, we have nice convergence of the metrics (up to a subsequence) and hence the energy identity and the no neck property can be proved by slightly modifying the argument for the proof of Theorem \ref{noneckthmintr}, see Subsection \ref{blow-thick}.

At the curvature blow-up point $x_{a,j}$, the situation becomes much more complicated. The blow-up of the maps $u_j$ can occur inside the bubble manifold region $B(x_{a,j},r_jR)$, or inside the degenerate neck region $B(x_{a,j},\delta)\setminus B(x_{a,j},r_jR) $. In the former case, the blow-up scale can be some sequence $\lambda_j^{A} \ll r_j$ (i.e., $\lim_{j\rightarrow}\frac{\lambda_j^{A}}{r_j}=0$) or $r_j$. For the case of $\lambda_j^{A} \ll r_j$ (assume that there is only one bubble map in this case, i.e. $u_j$ blow up at only one scale), we have a bubble map
$$\omega^A: \mathbb{R}^4\rightarrow N.$$
For the case of $\{r_j\}$, we have a bubble map
$$\omega^B: M_a\rightarrow N.$$
In the degenerate neck region $B(x_{a,j},\delta)\setminus B(x_{a,j},r_jR) $, multiple bubble maps of different scales can possibly occur. Without loss of generality, suppose that there is only one bubble map in this case, then it takes the form
$$\omega^C: \mathbb{R}^4/\Gamma\rightarrow N.$$

Thus, to understand the compactness problem, we are left with two questions concerning the limiting behavior of the maps $u_j$ over the following region
$$(B(x_{a,j},\delta)\setminus B(x_{a,j},r_jR))\setminus B(x_{a,j}^{C},\lambda_j^{C}R) ,$$
where $B(x_{a,j}^{C},\lambda_j^{C}R)$ is the bubble map domain corresponding to $\omega^C$. The first question is whether there is any energy of the maps lost over this region. The second question is whether the image of the maps over this region is converging (up to a subsequence) to a point and hence the weak limit of the maps and the bubbles are all connected.

In this paper, we shall give affirmative answers to these two questions and hence establish a compactness theory modular finitely many bubbles for biharmonic maps from non-collapsed Einstein 4-manifolds with varying metrics. Now we state our main result in this paper.

\begin{thm}\label{degeymenergyintr} Let $(M_j,g_j)$, $N$ and $ u_j$ be as above.
Without loss of generality, we assume that there is only one ALE bubble manifold and there is at most one bubble map in each case in the blow-up scheme discussed above, then we have (up to a subsequence)

\

\noindent{\textbf{Energy identity}:}

\begin{eqnarray*}
\int_{B(x_{a,j},\delta_0)}|\nabla^2_{g_j}u_j|^2+|\nabla_{g_j} u_j|^4 dV_{g_j}=\int_{B(x_{a},\delta_0)}|\nabla^2_{g_\infty}u_\infty|^2+|\nabla_{g_\infty} u_\infty|^4 dV_{g_\infty}+\sum_{a=A,B,C}E(\omega^a),
\end{eqnarray*}
where $E(\cdot)$ is the biharmonic energy.

\

\noindent{\textbf{No neck property}:}
The image of $ u_\infty$ and the images of all bubbles are connected, namely, both the distance between the images of any two connected bubbles and the distance between the image of $u_\infty$ and the image of any bubble which is connected to it in $N$ are zero.
\end{thm}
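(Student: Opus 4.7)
The plan is to perform a three-part decomposition of $B(x_{a,j},\delta_0)$ matching the bubble-neck decomposition of both the domains $(M_j,g_j)$ and of the maps $u_j$: (i) the \emph{thick region} $B(x_{a,j},\delta_0)\setminus B(x_{a,j},\delta)$ where $g_j$ converges smoothly to $g_\infty$; (ii) the \emph{ALE bubble region} $B(x_{a,j},r_jR)$ where $r_j^{-2}g_j$ converges to $(M_a,h_a)$; and (iii) the \emph{degenerating neck} $B(x_{a,j},\delta)\setminus B(x_{a,j},r_jR)$ which is close to a portion of the flat cone $\mathbb{R}^4/\Gamma$. On each piece I prove separately that, outside the relevant bubble domains, both the energy and the oscillation of $u_j$ tend to zero in the appropriate limits; the energy identity then follows by adding the contributions, and the no-neck property follows from summing the oscillation decays across the decomposition.

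On region (i), $\epsilon$-regularity for biharmonic maps combined with smooth convergence of the metrics away from $x_a$ yields $C^l_{loc}$ strong convergence $u_j\to u_\infty$, so no energy is lost and no neck forms as $\delta\to 0$. On region (ii), I rescale by $r_j^{-2}$ so that the domain converges to $(M_a,h_a)$ with uniformly smooth metric and $u_j$ remains biharmonic with respect to the rescaled metric. Inside a compact subset of $M_a$ I extract the bubble $\omega^B\colon M_a\to N$, and if a further blow-up at scale $\lambda_j^A/r_j\to 0$ is present, an additional bubble $\omega^A\colon\mathbb{R}^4\to N$. The sub-neck between $\omega^A$ and $\omega^B$ lives in a region with uniformly smooth geometry, so a mild adaptation of Theorem \ref{noneckthmintr} (using the reference metric $r_j^{-2}g_j$) handles it. The ALE end of $M_a$, a large annulus where $h_a$ is asymptotic to the flat cone with the sharp ALE decay, is handled by the same Pohozaev scheme with an error term that becomes arbitrarily small as the inner/outer radii tend to infinity.

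The heart of the argument is region (iii), in particular the annulus $(B(x_{a,j},\delta)\setminus B(x_{a,j},r_jR))\setminus B(x_{a,j}^C,\lambda_j^C R)$ surrounding the sub-bubble $\omega^C$. In the good global coordinates on the degenerating neck constructed earlier in the paper (inspired by the BKN method), $g_j$ is a controlled perturbation of the flat $\mathbb{R}^4/\Gamma$ metric, and combining this with the Einstein equation, the geometric quantities $\triangle_g x^k$, $\mathrm{div}_g(r\partial_r)-4$ and $\mathrm{Ric}_g$ appearing in the error $P(g_j,r)$ enjoy a definite decay, uniform in $j$ and in $r\in (r_jR,\delta)$. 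I then perform a dyadic decomposition in $r$ and, on each dyadic annulus, plug the Pohozaev identity \eqref{pohoidentgenrlintro} into the tangential/radial split
\begin{equation*}
\int_{\partial B_r}(\square_1+\square_2)\,d\sigma_g=\frac{P(g_j,r)}{r},
\end{equation*}
to obtain a three-sphere type inequality. Iterating and summing the resulting geometric series produces vanishing energy on the neck away from the sub-bubble. The annulus surrounding $\omega^C$ is treated by rescaling at scale $\lambda_j^C$, converting the problem to neck analysis on a fixed flat cone $\mathbb{R}^4/\Gamma$, where $P\equiv 0$ and the scheme from Theorem \ref{noneckthmintr} applies with trivial error.

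The main obstacle is the quantitative control of $P(g_j,r)$ on the degenerating neck, which requires upgrading Gromov-Hausdorff closeness of $g_j$ to the flat cone to a higher-regularity (say $C^{2,\alpha}$) closeness with definite decay in $r/r_j$; this is precisely what the BKN-type harmonic coordinates combined with the Einstein equation provide, and is the quantitative engine behind everything. Once this quantification is in place, the no-neck property on each dyadic annulus $Q_k$ follows from a Morrey/Sobolev type bound of the form
\begin{equation*}
\mathrm{osc}_{Q_k} u_j \leq C\bigl(E(u_j,\widetilde Q_k)\bigr)^{1/4},
\end{equation*}
where $\widetilde Q_k$ is a slight enlargement of $Q_k$ and $C$ is uniform thanks to the metric quantification. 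Summing the geometrically decaying energies over $k$ and invoking the established energy identity on the neck yields $\mathrm{osc}_{\mathrm{neck}} u_j\to 0$, whence the images of $u_\infty$ and of all bubbles are connected in $N$.
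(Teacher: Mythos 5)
Your three-part decomposition of $B(x_{a,j},\delta_0)$ into thick region, ALE bubble region and degenerating neck matches the paper's scheme, and the treatment of regions (i) and (ii) is essentially the same as the paper's. The genuine gap is in region (iii), and it is exactly the step you describe as ``plug the Poho\v{z}aev identity \eqref{pohoidentgenrlintro} into the tangential/radial split $\int_{\partial B_r}(\square_1+\square_2)\,d\sigma_g = P(g_j,r)/r$.'' The identity \eqref{pohoidentgenrlintro} has an error term $P(g,r)$ that is an integral over the \emph{entire geodesic ball} $B_r(x_{a,j})$. On the degenerating neck, any such ball $B(x_{a,j},r)$ with $r\in(r_jR,\delta)$ necessarily contains the ALE bubble region $B(x_{a,j},r_jR)$, where $|R_{g_j}|$ blows up like $r_j^{-2}$, the biharmonic energy of $u_j$ is of order one (it is the bubble energy), the coordinates $(x)$ do not extend, and the parallel frames $e_k$ entering $P(g,r)$ are undefined. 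There is no way to bound $\int_{B_r(x_{a,j})}|\triangle_g x^k|\,|e_ku||\triangle_g u|\,dV_g$ or the $\text{div}_g(r\partial_r)-4$ and $\text{Ric}_g$ terms by something small when $B_r$ absorbs the concentrating bubble. Indeed the paper states explicitly (end of Section 1) that in the degenerating case ``we do not have an identity of the form \eqref{pohoidentgenrlintro}.''

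The fix the paper implements, and which your proposal misses, is to abandon the ball-based identity in favor of a new, more geometric Poho\v{z}aev argument formulated on the annulus $A_{j;t,ct}=B(x_{a,j},ct)\setminus B(x_{a,j},t)$ alone: multiplying the equation by the vector field $r\partial_r u$ (not by $x^k e_k u$, since the coordinate functions are only $C^1$ in the $(x)$ coordinates and the geodesic spheres are not coordinate spheres in $(y)$), integrating by parts over the annulus, and deriving $c\int_{\partial B_{ct}}(\square_1+\square_2)-\int_{\partial B_t}(\square_1+\square_2) = P(g_j,t)/t$ where now $P(g_j,t)$ is an integral only over $A_{j;t,ct}$. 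There the small-energy hypothesis from the bubble-neck decomposition together with Lemma \ref{errorringhtj11} gives $|P(g_j,t)|\le\varepsilon^2\eta_j(t)$, which is what the iteration needs. Relatedly, you treat the quantitative control of $g_j$ near flat as coming from a single coordinate system with $C^{2,\alpha}$ closeness; the paper needs two coordinate systems, $(x)$ for the Poho\v{z}aev step (so that $|x|=r$ exactly and $\nabla_{\partial_r}\partial_r=0$) and $(y)$ for the three-circle step (so that $g_j$ is $C^4$ close to flat in a weighted sense), precisely because neither property is available in the other chart. Finally, near the sub-bubble $\omega^C$ the rescaled metric converges to the flat cone but is never flat, so the error $P$ does not vanish; one still needs the $\eta_j$-weighted decay, not $P\equiv 0$.
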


%We think the arguments in this paper should work for approximate biharmonic maps $u_j: (M_j,g_j)\rightarrow N$ from a sequence of closed Einstein manifolds to a closed Riemannian manifold with bi-tension fields bounded in $L ^p $ with $ p >4/3$. The blow up analysis of approximate biharmonic maps on a fixed manifold had been well-studied in \cite{wang2012energy,liu2015finite}. But there may be something new when we consider approximate biharmonic maps in the degenerating situation, since the bi-tension fields may concentrate themselves.

The general case of multiple bubble manifolds and multiple bubble maps follows from induction arguments. This gives bubble tree convergence (including possibly finitely many ghost bubbles) for biharmonic maps from non-collapsed degenerating Einstein 4-manifolds. We remark that the biharmonic maps from Ricci flat ALE manifolds (orbifolfds) may be trivial, since the number of nontrivial Ricci flat ALE bubble manifolds (orbifolds) is finite, so the induction process must be terminated in finite number of steps.

When the target $N$ has trivial fourth fundamental group, namely, $\pi_4(N)=\{e\}$, then it is easy to see that the two types of bubble maps $\omega^A: \mathbb{R}^4\rightarrow N$ and $\omega^C: \mathbb{R}^4/\Gamma\rightarrow N$ have to be trivial, therefore, the maps $u_j$ blow up only if the underlying Einstein 4-manifolds $(M_j,g_j)$ blow up. Note that the existence of biharmonic maps in each free homotopy class in $[M,N]$ is ensured by the method of heat flow in \cite{wang2007heat,gastel2006} when $\pi_4(N)=\{e\}$. In this case, it is interesting to know whether there are finite energy  nontrivial biharmonic maps from a nontrivial Ricci flat ALE manifold (orbifold) $V$ to $N$.
%If there are no such kinds of nontrivial biharmonic maps, then the map $u_j$ in Theorem \ref{degeymenergyintr} won't blow-up when $\pi_4(N)=\{e\}$.If they do exist, one might expect some applications in the study of the compactness of the moduli space of Einstein metrics. See more discussions in Remark \ref{akeyapplirem}.

To prove Theorem \ref{degeymenergyintr}, we shall develop three circle type method and establish new Poho\v{z}aev type identity for biharmonic maps from non-collapsed degenerating Einstein 4-manifolds. Recall that three circle type method is an important and powerful technique in geometric analysis since the work \cite{Simon1983}. For biharmonic maps from 4 dimensional Euclidean domains, such kind of method was developed in \cite{liu2015finite, liu2016neck} to get the decay estimates for the tangential part energy. Poho\v{z}aev type identities (or arguments) for biharmonic maps from a fixed domain in $\mathbb{R}^4$ were derived in \cite{wang2004remarks, hornung2012energy, wang2012energysphere, wang2012energy, liu2015finite,liu2016neck,laurain2013energy}. One can control the radial part energy by the tangential part energy in the neck analysis by detailed analysis of the relation between the radial part energy by the tangential part energy embodied in these identities. We remark that this relation is not as obvious as in the harmonic maps case.

However, when domains are degenerating Einstein 4-manifolds where the geometric data of the degenerating neck regions are not explicitly known, the situation becomes much more complicated and subtle, and there are several new difficulties we need to overcome. The first one is that we have to construct \emph{good global coordinates} on the whole degenerating neck region $B(x_{a,j},\delta)\setminus B(x_{a,j},r_jR)$ in which the metric $g_j$ is sufficiently regular for the neck analysis of biharmonic maps in order to prove Theorem \ref{degeymenergyintr}. The second one is that the non-flat metrics can effect the arguments throughout the proof, so we need to take care of the error terms appearing in each step.

To carry out delicate analysis on the degenerate neck region $B(x_{a,j},\delta)\setminus B(x_{a,j},r_jR)$, we succeed in constructing two types of good global coordinates on this region: $(x)$ and $(y)$, see Section \ref{constrcoordinates}. In the coordinates $(x)$, $g_j$ is $C^0$ close to the flat metric. While, in the coordinates $(y)$, $g_j$ is $C^4$ close to the flat metric in some weighted function space. To develop the three circle type method for biharmonic maps over the degenerating neck region, we have to use the coordinate $(y)$. While for the Poho\v{z}aev type argument, we have to use the $(x)$ coordinates which share a lot of similarities with the usual normal coordinates at a point. Therefore, to complete the proofs, we need to take advantage of both two coordinate systems.

To prove the degenerating case in Theorem \ref{degeymenergyintr}, we develop a new Poho\v{z}aev type argument which is more general than the case of a fixed Riemannian domain give in the proof of Theorem \ref{noneckthmintr} and the cases of Euclidean domains derived in previous works. More precisely, we derive a new Poho\v{z}aev type argument in a more geometric way and it depends much less on the coordinate functions than the previous ones developed for a fixed domain with local geometric data being explicitly given and regular enough. In fact, to deal with the degenerating case, we have to treat it in this way, since on one hand the coordinate functions $x^k$ are not regular enough for the analysis, and on the other hand the geodesic spheres of radius $r$ are not equal to the coordinates spheres given by $|y|=r$. We believe that our methods are also of wider interest in the field.

Here, we briefly describe our argument, see Subsection \ref{pohodege} for more details. The Poho\v{z}aev identity used in the degenerating neck region takes the following form:
\begin{eqnarray*}\label{pohoidentgenrl2}
\int_{\partial A_{j;t,ct}}(r\partial_ru_j)(\partial_r\triangle_{g_j} u_j)+r\frac{|\triangle_{g_j} u_j|^2}{2} -\partial_r(r\partial_ru_j)\triangle _{g_j}u_j d\sigma_{g_j}
=P(g_j, t),
\end{eqnarray*}
where $r=|x|=d_{g_j}(\cdot,x_{a,j})$, $P(g_j, t)$ is an error term depending on the metric $g_j$, and the bi-harmonic energy of $u_j$ over a portion of the degenerating neck region, denoted by
$ A_{j;t,ct}\equiv B(x_{a,j},ct)\setminus B(x_{a,j},t).$ Then it follows that
\begin{eqnarray*}\label{pohoidentgenrl2intro}
c\int_{\partial B(x_{a,j},ct)}\square_1+\square_2d\sigma_{g_j}-\int_{\partial B(x_{a,j},t)}\square_1+\square_2 d\sigma_{g_j}
=\frac{P(g_j,t)}{t},
\end{eqnarray*}
where $\square_1,\square_2$ have the same form as above. Later in Section \ref{degeneneckanalys}, we will see that the following form
$$cA(ct)-A(t)=\frac{P(g_j,t)}{t}, \quad (c>1) $$
is crucial to  prove Theorem \ref{degeymenergyintr} even though we do not have an identity of the form (\ref{pohoidentgenrlintro}).

A crucial and subtle fact in our argument is to show that the error term $P(g_j, t)$ has the same  order as the closeness of $g_j$ to the flat metric, see Lemma \ref{errorringhtj11}. Another difficulty in our Poho\v{z}aev type argument used in the proof of Theorem \ref{degeymenergyintr} is that we can only work on some sub-annular regions with nice geometric pictures, but not on the whole degenerating neck region $B(x_{a,j},\delta)\setminus B(x_{a,j},r_jR)$. This makes the whole argument more technical and also more interesting.

We remark that the geometry of the degenerating neck domains plays a vital role in the paper, especially the property
\begin{eqnarray*}
|g_{j,kl}-\delta_{kl}|<\eta_j(r)
\end{eqnarray*}
on $ A_{j;r_jR,\delta}\equiv B(x_{a,j},\delta)\setminus B(x_{a,j},r_jR)$ in the coordinates $(x)$ and $(y)$,
 where
 $$\eta_j(r)=O\left(r^{-\varepsilon_5}_\infty r^{\varepsilon_5}+ (r_jR_0)^{\varepsilon_5}r^{-\varepsilon_5}\right),$$
  $r_\infty>0$, $\varepsilon_5>0$, $r_j>0$ are the same as in the curvature estimate in Proposition \ref{neckprop}, and $R_0>0$ is some fixed large number. The arguments in this proof reveal that Theorem \ref{degeymenergyintr} holds because of the important fact that there is no curvature concentration on the degenerating neck regions in the tree of ALE bubble manifolds (orbifolds) $\mathbf{Tr_{ALE}}$, which is a direct consequence of Proposition \ref{neckprop} (see Section \ref{Preliminaries111}).

The paper is organized as follows. In Section \ref{Preliminaries111}, we recall some background knowledge about the bubble tree convergence of non-collapsed degenerating Einstein manifolds and some key results for the blow-up analysis of biharmonic maps. In Section \ref{noneckprf}, we prove Theorem \ref{noneckthmintr} (for extrinsic biharmonic maps). In Section \ref{treesigleale}, we do the bubble-neck decomposition. In Section \ref{constrcoordinates}, we construct the two types of good coordinates. In Section \ref{degeneneckanalys}, we prove Theorem \ref{degeymenergyintr} (for extrinsic biharmonic maps). In Section \ref{secintrinsic}, we deal with the case of intrinsic biharmonic maps.

\

\section{ Preliminaries}\label{Preliminaries111}

\subsection{Bubble tree convergence of non-collapsed Einstein manifolds}
We recall the compactness theory of non-collapsed Einstein metrics developed in
\cite{anderson1989ricci, nakajima1988hausdorff, BKN, bando1990bubbling, Tian1990, nakajima1994convergence, Anderson1992} etc. The definitions of (pointed) Gromov-Hausdorff convergence, Riemannian orbifold, and ALE (asymptotically locally Euclidean) manifold can be found in the above mentioned works and references therein. For more details about G-H convergence, we refer to Chapter 7 of \cite{BBSmetricgeo}. Roughly speaking, a 4-dimensional orbifold locally looks like $\mathbb{R}^4/\Gamma_0$ for some finite group $\Gamma_0 \subset SO(4)$ at the isolated singularities, an ALE 4-manifold is a complete noncompact manifold which is close to $\mathbb{R}^4/\Gamma_\infty$ at the infinity for some finite group $\Gamma_\infty \subset SO(4)$. We remark here that the results collected in this subsection in fact hold for all dimensions $n\geq4$, as along as we replace the $L^2$ curvature condition given in Theorem \ref{mainconvgethm} by the following
$$\int_{M_i} |R_{g_i}|^{\frac{n}{2}}dV_{g_i}\leq R.$$

\begin{thm} \label{mainconvgethm}(\cite{anderson1989ricci,nakajima1988hausdorff,BKN})
Let $(M_i ,g_i)$  be a sequence of $4$-dimemsional smooth manifolds and the Einstein metrics on them with uniformly bounded Einstein constants $\{\mu_i\}$ satisfying
\begin{equation*}
diam(M_i ,g_i)\leq D, \,\, vol(M_i ,g_i)\geq V \,\, \text{and}\,\, \int_{M_i} |R_{g_i}|^{2}dV_{g_i}\leq R
\end{equation*}
for some positive constants $D, V$ and $R$, where the curvature tensor of a metric $g$ is denoted by $R_g$. Then there exist s subsequence $\{j\}\subset \{i\}$ and a compact Einstein orbifold $(M_\infty, G_\infty )$ with a finite set (possibly empty) of orbifold singular points $S=\{x_1,x_2,\cdots,x_s\}\subset M_\infty$  for which the following statements hold:

\begin{itemize}
\item[(1)] $(M_j ,g_j)$ converges to $(M_\infty, g_\infty )$ in the Gromov-Hausdorff distance.

\item[(2)] There exists an into diffeomorphism $F_j:M_\infty\rightarrow M_j$ for each $j$ such that $F^{*}_{j}g_j$ converges to
$g_\infty$ in the $C^\infty$ -topology on $M_\infty \setminus S$.

\item[(3)] For every $x_a\in S$ ($a=1,2,\cdots,s$) and $j$, there exists $x_{a,j}\in M_j$  and a positive number $r_j$ such that

\begin{itemize}
\item[(3.a)] $B(x_{a,j} ,\delta)$ converges to $B(x_a, \delta) $ in the Gromov-Hausdorff distance for all $\delta>0$.

\item[(3.b)] $\lim_{j\rightarrow \infty} r_j =0$

\item[(3.c)] $(( M_j , r_{j}^{-2}g_j),x_{a,j})$ converge to $(( M_a , h_a),x_{a,\infty})$ in the pointed Gromov-Hausdorff distance, where $( M_a , h_a)$ is a complete, noncompact, Ricci flat, non-flat $4$-manifold which is ALE of order $4$.

\item[(3.d)] There exists an into diffeomorphism $G_j : M_a \rightarrow M_j$ such that $G^{*}_{j}(r_{j}^{-2} g_j)$ converges to $h_a$ in the $C^\infty$-topology on $M_a$.

\end{itemize}

\item[(4)] It holds that
\begin{equation*}
\lim_{j\rightarrow \infty}\int_{M_j}|R_{g_j}|^2 dV_j\geq \int_{M_\infty}|R_{g_\infty}|^2 dV_\infty +\sum_a \int_{M_a}|R_{h_a}|^2 dV_{h_a}.
\end{equation*}

\end{itemize}

\end{thm}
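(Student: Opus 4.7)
The plan is to combine three standard ingredients: Gromov-type precompactness for the underlying metric spaces, an $\epsilon$-regularity theorem adapted to Einstein $4$-manifolds with bounded $L^2$ curvature, and a blow-up analysis at points of curvature concentration producing Ricci-flat ALE bubbles. The diameter bound together with the noncollapsing (volume) bound let Gromov's precompactness theorem extract a subsequence with $(M_j, g_j) \to M_\infty$ in Gromov-Hausdorff topology; the remaining task is to upgrade this metric convergence to smooth convergence off a finite singular set $S$ and to describe the degeneration at each point of $S$.

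The core analytic input I would use is an $\epsilon$-regularity lemma of Anderson-BKN type: there exists $\epsilon_0 = \epsilon_0(\mu, V) > 0$ such that on any Einstein $4$-manifold with Einstein constant bounded by $\mu$, if $B_r(x)$ is noncollapsed and $\int_{B_r(x)} |R_g|^2\, dV_g < \epsilon_0$, then $|R_g|(x) \le C r^{-2}$ together with $C^k$ curvature bounds at scale $r$. This is proved by Moser iteration on the Bochner-type inequality $\triangle |R_g| \ge -c|R_g|^2$ valid in the Einstein case, combined with elliptic regularity for the Einstein equation in harmonic coordinates. The uniform bound $\int_{M_j}|R_{g_j}|^2 \le R$ then forces the set $S \subset M_\infty$ of points where $\epsilon_0$ of the curvature energy concentrates to be finite with $\# S \le R/\epsilon_0$. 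Away from $S$, $\epsilon$-regularity supplies uniform $C^k$ bounds for $g_j$ in harmonic coordinates, so after passing to a subsequence one builds diffeomorphisms $F_j : M_\infty \to M_j$ by patching harmonic charts and obtains $F_j^* g_j \to g_\infty$ in $C^\infty_{\mathrm{loc}}(M_\infty \setminus S)$ for a smooth Einstein metric $g_\infty$. The orbifold structure at each $x_a \in S$ is then read off from the tangent cone of $M_\infty$ at $x_a$, which by local volume monotonicity and noncollapsing is a flat cone $\mathbb{R}^4/\Gamma$ for some finite $\Gamma \subset SO(4)$.

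To construct the bubble at $x_a$, I would pick lifts $x_{a,j} \in M_j$ and scales $r_j \to 0$ normalized by $r_j^2 |R_{g_j}|(x_{a,j}) = 1$ (or by an equivalent integral normalization), so that $\tilde g_j := r_j^{-2} g_j$ has Einstein constant $r_j^2 \mu_j \to 0$ and unit curvature at scale $1$. Applying the $\epsilon$-regularity/Cheeger-Gromov package to the pointed sequence $((M_j, \tilde g_j), x_{a,j})$ yields pointed $C^\infty$ convergence to a complete Ricci-flat $4$-manifold $(M_a, h_a, x_{a,\infty})$, which is nontrivial by construction. To verify that $M_a$ is ALE of order $4$, I would examine its ends: Ricci-flatness, Bishop-Gromov, and the global smallness $\int_{M_a \setminus B_{\rho_0}}|R_{h_a}|^2 \ll 1$ (inherited from the bound on $\int_{M_j}|R_{g_j}|^2$) force a unique tangent cone at infinity equal to $\mathbb{R}^4/\Gamma_\infty$ for a finite $\Gamma_\infty \subset SO(4)$, matching the orbifold group at $x_a$; the sharp decay $|R_{h_a}| = O(\rho^{-4})$ then follows from weighted elliptic estimates applied to the Bianchi-derived elliptic system satisfied by $R_{h_a}$ on the end. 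The curvature inequality in (4) finally drops out of smooth convergence on exhausting compact pieces of $M_\infty \setminus S$ and of each $M_a$, lower semicontinuity of the $L^2$ norm, and Fatou's lemma.

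The step I expect to be the main obstacle is establishing the ALE structure of $M_a$ at infinity with the sharp order-$4$ decay. This forces one to rule out additional deeper bubbling at scales intermediate between $r_j$ and the diameter, iterating the blow-up selection with carefully chosen scales until all $L^2$ curvature is accounted for, and then to invoke the weighted elliptic theory developed in \cite{BKN, bando1990bubbling} on the asymptotic end to upgrade polynomial decay to the optimal order-$4$ rate. Finiteness of this bubbling iteration is itself nontrivial and ultimately comes from a uniform positive lower bound on the $L^2$ curvature of any nontrivial Ricci-flat ALE $4$-manifold, which caps the total number of bubbles by $R$ divided by that lower bound.
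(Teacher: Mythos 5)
The paper does not prove Theorem \ref{mainconvgethm}; it is quoted verbatim from Anderson, Nakajima, and Bando--Kasue--Nakajima as background material, so there is no in-paper proof to compare against. Your sketch is a faithful high-level reconstruction of the strategy in those references: Gromov precompactness plus the noncollapsing volume bound give Gromov--Hausdorff convergence; an $\epsilon$-regularity lemma obtained by Moser iteration on the Bochner inequality for Einstein metrics, combined with elliptic estimates in harmonic coordinates, upgrades this to $C^\infty_{\mathrm{loc}}$ convergence off a finite set $S$ with $\#S \le R/\epsilon_0$; blow-up at points of $S$ produces Ricci-flat ALE bubbles; and the $L^2$ curvature inequality in (4) follows from lower semicontinuity.

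A few points where the sketch compresses or departs from the paper's setup. First, you normalize the blow-up scale pointwise via $r_j^2 |R_{g_j}|(x_{a,j}) = 1$, which is Anderson's convention; the paper follows Bando's integral normalization $\int_{B(x_{a,j},r_\infty)\setminus B(x_{a,j},r_j)} |R_{g_j}|^2 = \varepsilon$, and this choice matters downstream because the paper's neck curvature estimate (Proposition \ref{neckprop}) is stated with exactly that $r_j$. Second, the step where $M_\infty$ is shown to carry a smooth orbifold Einstein metric across each $x_a$ is the removable singularity result of BKN (recalled in Remark \ref{remorbfsmoth}); your phrase ``read off from the tangent cone'' compresses a genuine removable-singularity argument, not just an identification of the tangent cone with $\mathbb{R}^4/\Gamma$. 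Third, the sharp order-$4$ ALE decay, which you correctly single out as the main obstacle, is the heart of BKN: one needs both an a priori polynomial decay at infinity (from $\epsilon$-regularity at large scales and Kasue's end estimates) and the improvement to order $4$ via a weighted expansion of the metric in harmonic coordinates at infinity. Finally, the object appearing in (3.c) at the first blow-up scale can a priori be an ALE orbifold rather than a manifold; getting a manifold bubble, and accounting for all the $L^2$ curvature energy, requires the iterated bubble-tree argument (recalled in Proposition \ref{covergtoorbifold} and Remark \ref{aletree}), with finiteness of the iteration resting on the energy gap for nontrivial Ricci-flat ALE spaces that you mention. Your last paragraph acknowledges this, so the gap is one of detail rather than of strategy.
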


\begin{rem}
Cheeger-Naber showed that non-collapsed 4-manifolds
with bounded Ricci curvature have a priori $L^2$ Riemannian curvature estimates,
see \cite{cheeger2015regularity}.
\end{rem}

\begin{rem}\label{remorbfsmoth}
By Theorem (5.1) of \cite{BKN}, the metric $g_\infty$ on $M_\infty$ extends smoothly across $x_a$ as an orbifold Einstein metric. That is to say there is a (covering) map
$\Pi_a:B_1\setminus\{0\}\rightarrow B(x_a, \delta)\setminus\{x_a\} $ such that
$\Pi^{*}_{a}g_\infty $ extends to a smooth Einstein metric on $B_1\subset \mathbb{R}^4$.
\end{rem}

Later, the bubble tree convergence was proved in \cite{bando1990bubbling}, showing that there is no curvature concentration in the neck regions (see also \cite{nakajima1994convergence}). Here for the sake of convenience, we state these results using notations in \cite{bando1990bubbling}. Now we take a positive constant $r_\infty$ sufficiently small so that for all
\begin{equation*}
\sup_{B(x_{a,j},r_\infty)} |R_{g_j}|^2=|R_{g_j}|^2(x_{a,j})\rightarrow \infty \quad \text{as} \quad j\rightarrow \infty
\end{equation*}
and
\begin{equation*}
\int_{B(x_a,r_\infty)} |R_{g_\infty}|^2\leq \frac{\varepsilon}{2}
\end{equation*}
with a small positive number $\varepsilon\leq \varepsilon_4/2$ determined by the argument in \cite{bando1990bubbling}, where $\varepsilon_4$ is the number in Proposition 1 of \cite{bando1990bubbling} (a small energy regularity theorem).
And for sufficiently large $j$ we can find a positive number $r_j$ so that
\begin{equation*}
\int_{B(x_{a,j},r_\infty)\setminus B(x_{a,j},r_j)} |R_{g_j}|^2 =\varepsilon.
\end{equation*}
It is easy to see that
\begin{equation*}
r_j\rightarrow 0 \quad \text{as} \quad j\rightarrow \infty.
\end{equation*}

\begin{prop}(\cite{bando1990bubbling})\label{covergtoorbifold}
There is a subsequence $\{k\}\subset\{j\}$ (denoted still by $\{j\}$), such that $  ( M_j,r_j^{-2}g_j),x_{a,j})$ converges to $((Y,h),y_\infty)$ (we call it an ALE bubble) in the pointed G-H distance, where $(Y,h)$ is a complete, non-compact, Ricci flat, non-flat ALE 4-manifold (orbifold) of order 4 with only finitely many isolated singular points. The convergence is actually smooth except at the singular points.
\end{prop}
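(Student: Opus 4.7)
The plan is to follow the blow-up-and-extract scheme of Bando--Kasue--Nakajima adapted to the pointed setting. First, rescale the metrics to $\tilde g_j := r_j^{-2} g_j$. Because the $L^2$ norm of curvature is scale-invariant in dimension $4$, the curvature bound $\int_{M_j}|R_{g_j}|^2 \le R$ transfers verbatim to $\tilde g_j$; moreover, the Einstein constant becomes $r_j^2\mu_j\to 0$, so any smooth pointed limit is automatically Ricci flat. The non-collapsing assumption $\mathrm{vol}(M_j,g_j)\ge V$, combined with the diameter bound and a Bishop--Gromov argument, yields a uniform volume lower bound $\mathrm{vol}_{\tilde g_j}(B^{\tilde g_j}(x_{a,j},1))\ge v_0>0$, which is the input needed to apply the compactness theory to the rescaled sequence.

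Second, apply the pointed analogue of Theorem \ref{mainconvgethm} to $((M_j,\tilde g_j),x_{a,j})$. This produces a subsequence converging in the pointed Gromov--Hausdorff sense to a complete Ricci-flat orbifold $((Y,h),y_\infty)$ with at most finitely many isolated singular points, and the convergence is $C^\infty$ on compact subsets of $Y\setminus\mathrm{Sing}(Y)$. The finiteness of $\mathrm{Sing}(Y)$ follows from the $\varepsilon$-regularity result (Proposition 1 of \cite{bando1990bubbling}): each singular point must carry at least $\varepsilon_4$ of $L^2$ curvature, and the total is bounded by $R$. The smooth convergence away from the singular points is standard once harmonic coordinates are available, using the elliptic nature of the Einstein equation in those coordinates and $\varepsilon$-regularity to bootstrap from $C^0$ to $C^\infty$.

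Third, verify the non-flatness and non-compactness of $(Y,h)$. Non-flatness is built into the choice of $r_j$: after rescaling,
\begin{equation*}
\int_{B^{\tilde g_j}(x_{a,j},r_\infty/r_j)\setminus B^{\tilde g_j}(x_{a,j},1)} |R_{\tilde g_j}|^2 \, dV_{\tilde g_j} = \varepsilon > 0,
\end{equation*}
so by lower semicontinuity (and the smooth convergence off the singular set) the limit carries a definite amount of curvature, hence is not flat. Non-compactness follows from $r_j\to 0$: the rescaled diameters $r_j^{-1}\mathrm{diam}(M_j,g_j)\to\infty$, so the pointed limit has infinite diameter. The ALE structure at infinity is then obtained from the BKN analysis applied to $(Y,h)$: a complete Ricci-flat 4-manifold with finite $L^2$ curvature and Euclidean volume growth admits, outside a compact set, harmonic coordinates in which the metric is $\delta + O(\rho^{-4})$, yielding ALE of order $4$ over some $\mathbb{R}^4/\Gamma_\infty$.

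The main obstacle will be the ALE conclusion at infinity. While the pointed Gromov--Hausdorff convergence and the local smooth convergence follow from relatively standard machinery, showing the quartic decay order and identifying the asymptotic cone as $\mathbb{R}^4/\Gamma_\infty$ requires the delicate analysis of BKN: one must establish the Euclidean volume growth at infinity (which uses the volume-comparison-type estimates permitted by the Ricci-flat condition together with the non-collapsing inherited from the $g_j$), construct harmonic coordinates at infinity, and run the decay-estimate bootstrap for the Einstein equation in those coordinates. The remaining steps are essentially organizational: verifying that the subsequence extraction and the energy accounting in the choice of $r_j$ prevent the limit from being trivial, and keeping track of which compact subsets of $Y\setminus\mathrm{Sing}(Y)$ the smooth convergence is being asserted on.
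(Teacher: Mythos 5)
The paper does not prove Proposition \ref{covergtoorbifold}; it is imported verbatim from Proposition~2 of \cite{bando1990bubbling}, and what you have written is a reconstruction of the Bando--Kasue--Nakajima blow-up scheme, which is indeed the route taken in that reference. Most steps of your reconstruction are sound: the scale-invariance of the $L^2$ curvature norm in dimension four, the vanishing of the rescaled Einstein constant, the non-collapsing of rescaled unit balls by volume comparison, pointed compactness into a Ricci-flat orbifold with finitely many singular points detected by $\varepsilon$-regularity, non-compactness from $r_\infty/r_j\to\infty$, and the extraction of the order-$4$ ALE structure from the harmonic-coordinate decay analysis of \cite{BKN}.

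There is, however, a genuine gap in your non-flatness step. You claim that ``lower semicontinuity'' over the rescaled annulus
\[
B^{\tilde g_j}(x_{a,j},r_\infty/r_j)\setminus B^{\tilde g_j}(x_{a,j},1),
\]
which carries curvature energy exactly $\varepsilon$, forces the limit $(Y,h)$ to be non-flat. This does not work: since the outer radius $r_\infty/r_j\to\infty$, this annulus is not contained in any fixed compact set, so lower semicontinuity on compacts only gives the one-sided inequality $\int_{Y\setminus B(y_\infty,1)}|R_h|^2\leq\varepsilon$ --- the $\varepsilon$ units of curvature may escape to infinity entirely, and indeed one expects the exterior of $Y$ to be nearly flat. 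The curvature that actually persists in the limit sits in the rescaled \emph{unit ball}: because $|R_{g_j}|(x_{a,j})\to\infty$, the contrapositive of the $\varepsilon$-regularity theorem gives $\int_{B(x_{a,j},r_\infty)}|R_{g_j}|^2\geq\varepsilon_4$ for $j$ large, hence
\[
\int_{B(x_{a,j},r_j)}|R_{g_j}|^2 \;=\; \int_{B(x_{a,j},r_\infty)}|R_{g_j}|^2-\varepsilon \;\geq\; \varepsilon_4-\varepsilon \;\geq\; \tfrac{1}{2}\varepsilon_4>0,
\]
which, after rescaling, is the energy in the fixed compact set $B^{\tilde g_j}(x_{a,j},1)$. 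Even with this correction one still has to rule out the possibility that all of this energy falls into orbifold singular points of $Y$ inside $B(y_\infty,1)$, which would leave $Y$ a flat cone $\mathbb{R}^4/\Gamma$; this is precisely where \cite{bando1990bubbling} uses the definition of $r_j$ as the energy cut-off scale together with the neck curvature decay of Proposition \ref{neckprop}, and it cannot be waved away. So your slogan that ``non-flatness is built into the choice of $r_j$'' is correct in spirit, but the mechanism you supplied for it is not the right one.
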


\begin{rem}\label{aletree}
By repeating the same process, we can obtain a bubble tree, for more details we refer to Page 211 of \cite{bando1990bubbling}. In the following we shall call it ALE-tree, and denote it by $\mathbf{Tr_{ALE}}$. And each one of the Ricci flat ALE manifolds (orbifolds) in the tree has only one end by the splitting theorems \cite{Borzellino1994,Cheeger1971} and the arguments in Theorem 3.5 of \cite{anderson1989ricci}.
\end{rem}

\begin{prop} \label{neckprop}(Proposition 3 and Proposition 4 in \cite{bando1990bubbling})
There exist positive constants $C_7>0$ and $\varepsilon_5>0$ such that for $ 4r_j\leq r< 4r \leq r_\infty$ it holds that
\begin{equation*}\label{neckcurvturestmprop}
r^2|R_{g_j}|\leq C_7 \max \left\{(\frac{r_j}{r})^{\varepsilon_5},(\frac{r}{r_\infty})^{\varepsilon_5}  \right\}.
\end{equation*}
And if one takes $1<K_1<K_2$ sufficiently large, then the subset $B(x_{a,j},K_2^{-1}r_\infty)\setminus B(x_{a,j},K_1r_j) $ (called the degenerate neck region) is close to a potion of some flat cone $\mathbb{R}^4/\Gamma$ for large $j$.
\end{prop}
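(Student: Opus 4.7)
The plan is to establish the pointwise curvature decay by combining an $\varepsilon$-regularity estimate with a three-annulus iteration on dyadic energies, and then to read off the flat cone picture by pointed Gromov-Hausdorff compactness of the rescaled neck. Throughout, I will write the Einstein constant as $\mu_j$, with $|\mu_j| \leq C$ and rescaling absorbing it at small scales.

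First I would recall the small-energy regularity theorem (Proposition~1 of \cite{bando1990bubbling}): for an Einstein 4-manifold with uniformly bounded Einstein constant, $\int_{B_{2r}(x)} |R_{g_j}|^2 < \varepsilon_4$ implies
\[
r^4 \sup_{B_r(x)} |R_{g_j}|^2 \leq C \int_{B_{2r}(x)} |R_{g_j}|^2.
\]
By our choice of $r_j$ and $r_\infty$ the total neck energy is at most $\varepsilon \leq \varepsilon_4/2$, so for any $x$ at distance $r$ from $x_{a,j}$ with $4 r_j \leq r \leq r_\infty/4$ the ball $B_{r/2}(x)$ lies inside the neck and the estimate yields the rough bound $r^2 |R_{g_j}|(x) \leq C \varepsilon^{1/2}$.

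Second, to upgrade this to the decay $r^2 |R_{g_j}| \leq C_7 \max\{(r_j/r)^{\varepsilon_5}, (r/r_\infty)^{\varepsilon_5}\}$, I would iterate on the dyadic annular energies
\[
E_k := \int_{B(x_{a,j}, 2^{k+1} r_j) \setminus B(x_{a,j}, 2^{k-1} r_j)} |R_{g_j}|^2,
\]
for $k$ in the range where $2^k r_j \in [4r_j, r_\infty/4]$. The heart of the argument is a three-annulus inequality
\[
E_k \leq \theta\,(E_{k-1} + E_{k+1}), \qquad \theta < 1/2,
\]
valid uniformly once the total neck energy is small. I would argue by contradiction and blow-up: rescale a central annulus to unit size, apply the $\varepsilon$-regularity together with the volume non-collapsing inherited from the lower volume bound, and use Cheeger-Gromov compactness to extract a $C^\infty$ limit on a Ricci-flat annular region; the hypothesis $E_{k \pm 1} \to 0$ together with unique continuation then forces the limit to be the flat product on an annulus in $\mathbb{R}^4/\Gamma$. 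On such a cone the linearized system satisfied by the curvature tensor decouples into eigenfunctions on the link $S^3/\Gamma$, and the first nontrivial eigenvalue of $\triangle_{S^3/\Gamma}$ supplies a sharp three-annulus inequality with constant $\theta < 1/2$, contradicting the sequence. Solving the recursion gives geometric decay of $E_k$ from both endpoints, and re-inserting this into the regularity estimate produces the claimed pointwise bound, with $\varepsilon_5 > 0$ determined by the spectral gap and $C_7$ by the $\varepsilon$-regularity constant.

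Third, for the flat cone structure: given any scales $t_j$ with $K_1 r_j \leq t_j \leq K_2^{-1} r_\infty$, rescale to $(M_j, t_j^{-2} g_j, x_{a,j})$. The Einstein constant becomes $t_j^2 \mu_j \to 0$, and the decay estimate from the previous step gives uniformly bounded curvature on every fixed annulus $\{\rho_1 \leq r/t_j \leq \rho_2\}$, while the volume lower bound is preserved. Cheeger-Gromov compactness then yields a smooth Ricci-flat pointed sub-sequential limit on an annulus, and since $r^2 |R_{g_j}| \to 0$ as $r/r_\infty \to 0$ and as $r/r_j \to \infty$ inside the chosen window, the limit must be an annular piece of a flat cone $\mathbb{R}^4/\Gamma_\ast$. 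A standard diagonal argument in $t_j$, together with taking $K_1, K_2$ large, globalizes this to Gromov-Hausdorff closeness of the entire degenerate neck $B(x_{a,j}, K_2^{-1} r_\infty) \setminus B(x_{a,j}, K_1 r_j)$ to a single cone portion, the group $\Gamma$ being forced by matching the ALE asymptotics of the bubble at infinity with the orbifold structure of $(M_\infty, g_\infty)$ at $x_a$.

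The main obstacle will be the three-annulus step: extracting a strict uniform $\theta < 1/2$, rather than $\theta \leq 1/2$, requires a careful classification of the flat cones $\mathbb{R}^4/\Gamma$ that can appear as sub-sequential blow-up limits of the rescaled necks, together with the explicit spectral gap on the link $S^3/\Gamma$ governing solutions of the linearized curvature equation. Everything else, once this iteration constant is in place, is a rather direct application of $\varepsilon$-regularity and Cheeger-Gromov compactness.
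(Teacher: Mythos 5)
The paper does not prove Proposition \ref{neckprop}: it is cited verbatim from Propositions 3 and 4 of \cite{bando1990bubbling}, and is used as black-box input here, so there is no in-paper proof to compare against. That said, your sketch --- $\varepsilon$-regularity on dyadic annuli, a three-annulus inequality via blow-up, exponential decay by iteration, and pointed Gromov-Hausdorff compactness to identify the flat cone limit --- is the right family of tools and is, in outline, the strategy Bando uses.

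Two places where your sketch is softer than it looks. First, the spectral-gap mechanism you invoke for a strict $\theta<1/2$ in the three-annulus step is more than is needed and not the usual route: the contradiction argument (rescale to a Ricci-flat annular limit and note that curvature energy concentrated in the middle third contradicts real-analyticity/unique continuation for Einstein metrics) already yields \emph{some} non-explicit $\theta<1/2$ once the total neck energy is small, and a non-explicit $\varepsilon_5$ is all the proposition asserts and all this paper ever uses. An explicit classification of $\Gamma$ and of the link spectrum would be a much harder project and is unnecessary. Second, the final claim --- that the whole degenerate neck $B(x_{a,j},K_2^{-1}r_\infty)\setminus B(x_{a,j},K_1 r_j)$ is Gromov-Hausdorff close to a portion of a \emph{single} cone $\mathbb{R}^4/\Gamma$ --- is not merely a diagonal argument over scales $t_j$: one must show that the group $\Gamma$ (equivalently, the diffeomorphism type of the cross-sectional link) is the same across the entire neck, and that it matches both the orbifold group at $x_a$ and the fundamental group at infinity of the ALE bubble. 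The present paper does not leave this implicit: Remark \ref{neckcoordinates} flags exactly this point, and the construction in Section \ref{constrcoordinates} (normal exponential map off $\partial B(x_{a,j},r_jR)$, Jacobi-field comparison against the curvature bound, and harmonic-coordinate patching as in \cite{BKN}) is the careful realization of the step you compress into one sentence. If you want a proof you can verify line by line rather than a plausibility sketch, that coordinate construction --- not the eigenvalue computation --- is where the real work is.
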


\begin{rem}\label{neckcoordinates}
It was noticed in \cite{bando1990bubbling} that there exist certain coordinates on the degenerate neck region. However, no details of construction of coordinates were given and little was known about the properties of such coordinates before. In order to carry out refined analysis for solutions of geometric PDEs defined over degenerating domains, we need to firstly construct good coordinates on these neck regions. By adapting the methods in \cite{BKN}, we succeed in constructing coordinates on the degenerate neck region (see Theorem \ref{neckmeytricflat})
$$ A_{K_1r_j,K_2^{-1}r_\infty}\equiv B(x_{a,j},K_2^{-1}r_\infty)\setminus B(x_{a,j},K_1r_j),$$
in which $g_j$ is close to the standard flat metric in certain sense.
It indicates that the finite group $\Gamma $ of the neck is equal to the fundamental group at the infinity of the ALE orbifold $Y$ and the local fundamental group of $B(x_a,r_\infty)$.
\end{rem}

 Notice that $\varepsilon_5>0$, a direct consequence of Proposition \ref{neckprop} is a curvature energy identity of the following type, one can find more specific statements in Theorem 2.5 of \cite{nakajima1994convergence}.

\begin{cor}\label{curenergyidentity}
Let $r_j,r_\infty,x_{a,j}, K_1,K_2$ be as above, then it holds
\begin{equation*}
\lim_{K_2\rightarrow \infty}\lim_{K_1\rightarrow \infty }\lim_{r_j\rightarrow 0}\int_{B(x_{a,j},K_2^{-1}r_\infty)\setminus B(x_{a,j},K_1r_j)}|R_{g_j}|^2 dV_j=0.
\end{equation*}
\end{cor}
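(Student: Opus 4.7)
The plan is to reduce the claim to a one-dimensional integral using the pointwise curvature estimate of Proposition \ref{neckprop}, then compute that integral explicitly and take the iterated limits. Once $j$ is large enough that $K_1 r_j < K_2^{-1} r_\infty$, Proposition \ref{neckprop} applies on the entire annulus $A_j := B(x_{a,j},K_2^{-1}r_\infty) \setminus B(x_{a,j},K_1 r_j)$ and gives, with $r = d_{g_j}(\cdot, x_{a,j})$,
\begin{equation*}
|R_{g_j}|^2 \leq C_7^2\, r^{-4}\max\bigl\{(r_j/r)^{2\varepsilon_5},\, (r/r_\infty)^{2\varepsilon_5}\bigr\}.
\end{equation*}
The second half of Proposition \ref{neckprop} moreover says that $(A_j, g_j)$ is close to a portion of the flat cone $\mathbb{R}^4/\Gamma$, so the volume element is controlled by $dV_{g_j} \leq C(\Gamma)\, r^3\, dr\, d\omega$ for the standard solid-angle measure $d\omega$. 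Hence
\begin{equation*}
\int_{A_j} |R_{g_j}|^2\, dV_{g_j} \leq C \int_{K_1 r_j}^{K_2^{-1} r_\infty} r^{-1}\max\bigl\{(r_j/r)^{2\varepsilon_5},\, (r/r_\infty)^{2\varepsilon_5}\bigr\}\, dr
\end{equation*}
for a constant $C$ depending only on $\Gamma$ and $C_7$.

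I would then split the radial integral at the geometric mean $r_\ast := \sqrt{r_j r_\infty}$, where the two entries in the maximum coincide. On $[K_1 r_j, r_\ast]$ the first term dominates and the primitive is elementary, giving
\begin{equation*}
\int_{K_1 r_j}^{r_\ast} r_j^{2\varepsilon_5}\, r^{-1-2\varepsilon_5}\, dr = \frac{1}{2\varepsilon_5}\Bigl(K_1^{-2\varepsilon_5} - (r_j/r_\infty)^{\varepsilon_5}\Bigr),
\end{equation*}
while on $[r_\ast, K_2^{-1}r_\infty]$ the second dominates and gives
\begin{equation*}
\int_{r_\ast}^{K_2^{-1}r_\infty} r_\infty^{-2\varepsilon_5}\, r^{-1+2\varepsilon_5}\, dr = \frac{1}{2\varepsilon_5}\Bigl(K_2^{-2\varepsilon_5} - (r_j/r_\infty)^{\varepsilon_5}\Bigr).
\end{equation*}
Sending $r_j \to 0$ (the innermost limit) kills the $(r_j/r_\infty)^{\varepsilon_5}$ terms and leaves an upper bound of order $K_1^{-2\varepsilon_5} + K_2^{-2\varepsilon_5}$. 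Letting $K_1 \to \infty$ and then $K_2 \to \infty$ sends this to zero, which is exactly the claimed identity.

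The argument is essentially book-keeping once the pointwise bound of Proposition \ref{neckprop} is in hand, so I do not anticipate a real obstacle. The one point worth checking carefully is the comparability $dV_{g_j} \leq C(\Gamma)\, r^3\, dr\, d\omega$ on $A_j$; this is exactly the content of the flat-cone approximation asserted in Proposition \ref{neckprop} and made quantitative by the coordinate construction sketched in Remark \ref{neckcoordinates} and carried out in Section \ref{constrcoordinates}. The decisive feature of the pointwise estimate is that $\varepsilon_5 > 0$, which is what produces the explicit decay rates $K_1^{-2\varepsilon_5}$ and $K_2^{-2\varepsilon_5}$ and allows the iterated limit to vanish.
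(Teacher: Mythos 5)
Your proposal is correct and fills in precisely the computation that the paper leaves implicit when it calls the corollary ``a direct consequence of Proposition~\ref{neckprop}'': squaring the pointwise curvature bound, reducing to a radial integral, splitting at the geometric mean $r_*=\sqrt{r_jr_\infty}$, and observing that $\varepsilon_5>0$ produces the $K_1^{-2\varepsilon_5}+K_2^{-2\varepsilon_5}$ decay after sending $r_j\to0$ is exactly the intended argument. The one point worth noting is that the volume comparability $dV_{g_j}\leq Cr^3\,dr\,d\omega$ you need can be obtained independently of the coordinate construction in Section~\ref{constrcoordinates} (which comes later in the paper) --- the Einstein metrics have uniformly bounded Ricci curvature, so Bishop--Gromov comparison already gives the bound on geodesic-sphere areas, and this keeps the corollary self-contained within Section~\ref{Preliminaries111}.
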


\subsection{Biharmonic maps}
Recall that extrinsic and intrinsic biharmonic maps are the critical points of the following two functionals
\begin{equation*}
H(u)=\frac{1}{4}\int_{M}|\triangle_g u|^2 dV_g
\end{equation*}
and
\begin{equation*}
T(u)=\frac{1}{4}\int_{M}|\tau(u)|^2 dV_g.
\end{equation*}
 The Euler-Lagrange equation for $H(u)$ is
 \begin{equation}\label{extrequ}
 \triangle^{2}_{g}u=\triangle_g u(B(u)(\nabla _g u,\nabla_g u))+2\nabla_g\cdot\langle\triangle_g u,\nabla_g(P(u))\rangle
 -\langle\triangle_g (P(u)),\triangle_g u\rangle.
 \end{equation}
  Here $B $ is the second fundamental
form of $N\subset \mathbb{R}^K$ and $P(u)$ is the projection to the tangent space $T _u N$.
The Euler-Lagrange equation for $T(u)$ is more complicated, we refer to \cite{liu2016neck}. In this paper we shall mainly focus on extrinsic biharmonic maps, the case of intrinsic biharmonic maps will be handled in Section \ref{secintrinsic}.

The analytic aspects of biharmonic maps have been studied extensively in recent decades.
The regularity theory of biharmonic maps are developed in  \cite{chang1999regularity,Strzelecki2003On, wang2004biharmonic, wang2004stationary, wang2004remarks,  Moser2006Remarks, lamm2008conservation, struwe2008partial, scheven2008dimension} etc. The blow-up theory (including energy identity and the no neck property) of biharmonic maps from domains in $\mathbb{R}^4$ had been studied in \cite{wang2004remarks,hornung2012energy,laurain2013energy,wang2012energysphere,wang2012energy,
liu2016neck,liu2015finite} etc.

Now we recall some basic analytical tools for the blow-up analysis for biharmonic maps in dimension 4. In fact, these basic tools can be easily extended to the case of a Riemannian domain.

First of all, there is a small energy regularity theorem for biharmonic maps.

\begin{thm}($\varepsilon$-regularity)\label{smallenergythm}
 Let $(B_1,g)$ be a ball in $\mathbb{R}^4$ equipped with a Riemannian metric $g$, for any $p > 1$, there exists $\varepsilon_0 > 0$ and $C_p > 0 $ such that if
 $u \in W ^{2,2} (B_1, \mathbb{R}^K ) $ is an extrinsic (or intrinsic) biharmonic map into $N$ satisfying
 \begin{equation}\label{smallenergy1}
 \int_{B_1}|\nabla^2_{g}u|^2+|\nabla_{g} u|^4 dV_g\leq \varepsilon_0,
 \end{equation}
 then
 \begin{equation}\label{smallenergyestm}
 \|u-\bar{u}\|_{W^{4,p}(B_{1/2})}\leq C_p (\|\nabla^2u\|_{L^{2}(B_{1})}+\|\nabla u\|_{L^{2}(B_{1})}),
 \end{equation}
 where $\bar{u}$ is the mean value of $u$ over the unit ball.
\end{thm}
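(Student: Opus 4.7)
The plan is to reduce this Riemannian $\varepsilon$-regularity to the known Euclidean version from \cite{wang2004remarks, wang2004biharmonic, wang2004stationary, liu2015finite} by treating the deviation of $g$ from the flat metric as a lower-order perturbation. Since $(B_1,g)$ is a ball in $\mathbb{R}^4$ with a fixed smooth metric, in coordinates I write
\begin{equation*}
\Delta_g = g^{ij}\partial_i\partial_j - g^{ij}\Gamma_{ij}^k\partial_k,
\qquad
\Delta_g^2 = \Delta^2 + \mathcal{L},
\end{equation*}
where $\mathcal{L}$ is a fourth-order operator whose coefficients are controlled by $\|g\|_{C^4(B_1)}$. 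After passing to a smaller ball and rescaling (which is harmless for the scale-invariant energy \eqref{smallenergy1}), the principal part $(g^{ij}-\delta^{ij})\partial_i\partial_j\Delta$ of $\mathcal{L}$ becomes arbitrarily small in operator norm, so $\mathcal{L}$ behaves as a subcritical perturbation.

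Next, I would expand the Euler--Lagrange equation \eqref{extrequ} schematically as
\begin{equation*}
\Delta_g^2 u = \nabla_g^2 u \ast (\nabla_g u)^{\otimes 2} + \nabla_g\cdot\bigl(\nabla_g^2 u \ast \nabla_g u\bigr) + \nabla_g^2 u \ast \nabla_g^2 u,
\end{equation*}
with tensor factors depending smoothly on $u\in N$ through $P(u)$ and $B(u)$. These are exactly the critical nonlinearities handled in the Euclidean case by Rivi\`ere-type conservation laws, or more directly by Wang's Hardy--BMO duality argument. Rewriting in Euclidean notation, $\nabla_g$ and $\Delta_g$ differ from $\nabla$ and $\Delta$ only by commutator terms with smooth bounded coefficients, which are strictly subcritical.

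Under the small-energy assumption \eqref{smallenergy1}, I would first establish a Morrey-type decay
\begin{equation*}
\sup_{B_r(x)\subset B_{3/4}} r^{-\alpha}\int_{B_r(x)}\bigl(|\nabla_g^2 u|^2 + |\nabla_g u|^4\bigr)\, dV_g \leq C\varepsilon_0
\end{equation*}
for some $\alpha>0$, by the standard $\varepsilon$-iteration of \cite{wang2004remarks}. This places the right hand side of \eqref{extrequ} in a Morrey space with strictly subcritical scaling, so linear elliptic theory for $\Delta_g^2$ (viewed as $\Delta^2+\mathcal{L}$) together with Riesz-potential / Adams-type embeddings bootstraps the regularity up to $W^{4,p}$ for every $p>1$, yielding the stated estimate on $u-\bar u$.

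The main obstacle is verifying that the perturbation $\mathcal{L}$ does not destroy the delicate cancellations in the Hodge / conservation-law decomposition that produce the decay exponent $\alpha>0$; in the Euclidean case one exploits the Riesz transform and Hardy--BMO duality, and one needs the analogous Calder\'on--Zygmund theory for $\Delta_g^2$. Since $g$ is smooth with bounded $C^k$ norms, this is achieved by freezing coefficients at a point, applying the flat result, and reabsorbing the error by an interpolation argument; all resulting constants may depend on $\|g\|_{C^4(B_1)}$, which the statement permits.
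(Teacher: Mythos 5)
Your proposal is correct and follows essentially the same route as the paper: the paper's own "proof" is a single remark that the Euclidean $\varepsilon$-regularity arguments of \cite{wang2012energy,laurain2013energy} remain valid for a smooth Riemannian metric, which is precisely the reduction you sketch. The one minor divergence is which Euclidean result you extend: the paper explicitly points to the Laurain--Rivi\`ere conservation-law proof, while you develop the Wang-style Morrey-decay iteration as the base case; since both are standard and equivalent for this purpose and both are invoked by the metric-as-perturbation argument you describe, this is a difference in citation rather than in substance.
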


\begin{proof}
When the metric $g$ is Euclidean, the corresponding result has been proved in previous works, for example, \cite{wang2012energy, laurain2013energy}. It is easy to see that the arguments in \cite{laurain2013energy} remain true when we consider biharmonic maps on balls equipped with smooth Riemannian metrics. It follows that (\ref{smallenergyestm}) holds.
\end{proof}

\begin{rem}\label{regusmalmetrdeped}
 There is no need to distinguish the $L^p$ norm with respect to $g$ and the $L^p$ norm with respect to the Euclidean metric for our purpose, as they are  locally equivalent to each other. %And the corresponding result holds if we replace $B_1$ with $B_2 \setminus B_1$.
\end{rem}

\begin{cor} \label{smallenergycor}
Let $\varepsilon_0 > 0$ be as in Theorem \ref{smallenergythm}, if
\begin{equation}\label{smallenergy1}
 \int_{B_r}|\nabla^2_{g}u|^2+|\nabla_{g} u|^4 dV_g\leq \varepsilon_0,
 \end{equation}
 then for $l=1,2,\cdots,$
\begin{equation}\label{smallenergy1}
 \sup_{B_{r/2}}|\nabla^l_{g}u|\leq C\frac{(\varepsilon_0)^{1/2}}{r^{l}}.
 \end{equation}
\end{cor}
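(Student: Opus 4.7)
The plan is to reduce to unit scale by a suitable rescaling and then invoke Theorem \ref{smallenergythm} together with Sobolev embedding, iterating once for derivatives of order $l\geq 4$. Since the biharmonic-map system and the energy functional $\int|\nabla_g^2 u|^2+|\nabla_g u|^4\,dV_g$ are conformally invariant in dimension four, rescaling produces a new biharmonic map with small energy on a unit ball, to which Theorem \ref{smallenergythm} applies.

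Concretely, fix $x_0\in B_{r/2}$, let $\phi(y)=x_0+ry$, and set $v(y)=u(\phi(y))$ together with $\tilde g_{ij}(y)=g_{ij}(\phi(y))$ on the unit ball $B_1$. A direct change of variables gives
\begin{equation*}
\int_{B_1}|\nabla_{\tilde g}^2 v|^2+|\nabla_{\tilde g} v|^4\,dV_{\tilde g}
= \int_{B_r(x_0)}|\nabla_g^2 u|^2+|\nabla_g u|^4\,dV_g
\le \varepsilon_0,
\end{equation*}
while conformal invariance (combined with the fact that $r^{-2}\tilde g$ is isometric to the pulled back metric $\phi^\ast g$) ensures that $v$ is an extrinsic (or intrinsic) biharmonic map $(B_1,\tilde g)\to N$. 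Since the components of $\tilde g$ are smooth with uniform $C^k$ bounds (for any $k$) that depend only on the original metric $g$ on $B_1$ and are independent of $r\le 1$, Theorem \ref{smallenergythm} yields, for $p$ as large as we wish,
\begin{equation*}
\|v-\bar v\|_{W^{4,p}(B_{1/2})}\le C_p\bigl(\|\nabla^2 v\|_{L^2(B_1)}+\|\nabla v\|_{L^2(B_1)}\bigr)\le C_p\sqrt{\varepsilon_0}.
\end{equation*}

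For $l=1,2,3$, choose $p>4$ so that the Sobolev embedding $W^{4,p}(B_{1/2})\hookrightarrow C^3(\overline{B_{1/4}})$ is continuous, giving $\sup_{B_{1/4}}|\nabla_{\tilde g}^l v|\le C\sqrt{\varepsilon_0}$. For $l\ge 4$ I would bootstrap via the Euler–Lagrange equation \eqref{extrequ}: written in the rescaled metric $\tilde g$, it has the schematic form $\Delta_{\tilde g}^2 v = F(v,\nabla_{\tilde g} v,\nabla_{\tilde g}^2 v,\nabla_{\tilde g}^3 v)$ with $F$ smooth in its arguments and polynomial in the derivatives. Once $v\in W^{4,p}$ with $p$ large, the right-hand side lies in $W^{3,p}$; interior $W^{8,p}$ estimates for the constant-coefficient leading operator (with the lower-order geometric terms treated perturbatively on $B_{1/2}$) then improve the regularity of $v$ by one order, and iteration yields, for each $l$,
\begin{equation*}
\|v\|_{C^l(B_{1/8})}\le C(l)\sqrt{\varepsilon_0}.
\end{equation*}

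Finally I would undo the rescaling. Because $\nabla_{\tilde g}^l v(0)= r^{\,l}\,(\nabla_g^l u)(x_0)$ and the $C^k$ norms of $\tilde g$ are uniformly controlled, this gives $|\nabla_g^l u|(x_0)\le C(l)\,r^{-l}\sqrt{\varepsilon_0}$, and taking the supremum over $x_0\in B_{r/2}$ delivers the estimate. The only nontrivial point in the scheme is the bootstrap in step three: one must verify that the lower-order terms in $\Delta_{\tilde g}^2$ (arising from the non-Euclidean $\tilde g$) and the nonlinear right-hand side of \eqref{extrequ} are controlled in a uniform way, independently of the rescaling parameter $r$. This holds because the $C^k$ norms of $\tilde g$ are bounded by those of $g$ for every $k$, and all nonlinear terms involve only $v$ and its derivatives up to order three, which are already controlled from the previous Sobolev step.
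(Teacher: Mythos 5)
Your rescaling-plus-bootstrap strategy is the intended one; the paper gives no proof, stating the corollary as an immediate consequence of the $\varepsilon$-regularity Theorem \ref{smallenergythm} and Sobolev embedding. So the scheme is right, but there is one genuine gap and two slips worth correcting.

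The gap is a domain containment error. With $\phi(y)=x_0+ry$ you have $\phi(B_1)=B_r(x_0)$, and for a generic $x_0\in B_{r/2}$ the ball $B_r(x_0)$ is \emph{not} contained in the ball $B_r$ on which the energy hypothesis \eqref{smallenergy1} is imposed, so you cannot conclude that $\int_{B_1}|\nabla_{\tilde g}^2 v|^2+|\nabla_{\tilde g}v|^4\,dV_{\tilde g}\leq\varepsilon_0$. Rescale instead by $\phi(y)=x_0+\tfrac{r}{2}y$, so that $\phi(B_1)=B_{r/2}(x_0)\subset B_r$; the rest of the argument is unchanged, and unscaling the resulting interior estimate on $B_{1/4}$ produces the bound on $B_{r/8}(x_0)$, which covers $B_{r/2}$ as $x_0$ ranges over $B_{r/2}$.

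Two smaller points. First, the extrinsic biharmonic energy $\int_M|\Delta_g u|^2\,dV_g$ is \emph{scale} invariant in dimension four, i.e. invariant under $g\mapsto\lambda^2 g$ for constant $\lambda$, but it is not conformally invariant (under $g\mapsto e^{2\phi}g$ the Laplacian picks up a first-order term $2\nabla\phi\cdot\nabla$). Since you only use the constant rescaling $\phi^\ast g=r^2\tilde g$, scale invariance is all you need; the phrase ``conformal invariance'' should be replaced accordingly, and this is not merely cosmetic because elsewhere the paper explicitly contrasts biharmonic maps with the conformally invariant two-dimensional harmonic map problem. Second, in the bootstrap for $l\geq 4$, from $v\in W^{4,p}$ with $p>4$ the highest-order nonlinearity $\alpha_1(v)\,\nabla_{\tilde g}\Delta_{\tilde g}v\,\sharp\,\nabla_{\tilde g}v$ lies in $W^{1,p}$, not $W^{3,p}$ as you wrote (differentiating once already produces $\nabla^4 v\cdot\nabla v\in L^p$, and differentiating further would require $\nabla^5v\in L^p$ which you do not yet have). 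But $W^{1,p}$ is enough to raise the regularity by one order and iterate, so the bootstrap still closes.
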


It is easy to see that the regularity theory for weakly biharmonic maps from domains in $\mathbb{R}^4$  developed in e.g. \cite{wang2004biharmonic, lamm2008conservation} can be extended to the case of a Riemannian domain $B_1 \subset (M^4,g)$. Moreover, one can apply an argument as Lemma 2.5 in \cite{hornung2012energy} to show that a biharmonic map which is smooth in $B_1 \setminus  \{0\} \subset (M^4,g)$ and with finite energy is in fact a weakly biharmonic map over the whole domain. Therefore, the removable singularity theorem of extrinsic (or intrinsic) biharmonic maps follows as a corollary.

\begin{thm}(removable singularity)\label{remsingu}
Let $ u$ be a smooth biharmonic map on $B_1 \setminus  \{0\} \subset (M^4,g)$,
if
\begin{equation*}
 \int_{B_1}|\nabla^2_{g}u|^2+|\nabla_{g} u|^4 dV_g< \infty,
 \end{equation*}
then $u$ can be extended to a smooth biharmonic map on the whole $B_1$.
\end{thm}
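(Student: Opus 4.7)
The plan is to follow the strategy indicated just before the statement: first upgrade $u$ to a weakly biharmonic map on all of $B_1$, and then invoke the $\varepsilon$-regularity theorem and a bootstrap argument to conclude smoothness at the puncture. The underlying reason this works in the critical dimension $n=4$ for the $W^{2,2}$-energy is that a single point has zero $W^{2,2}$-capacity in the borderline sense: there exist logarithmic cut-off functions $\varphi_\varepsilon\in C_c^\infty(B_1)$ with $\varphi_\varepsilon\equiv 0$ on $B_\varepsilon$, $\varphi_\varepsilon\equiv 1$ on $B_1\setminus B_{\sqrt{\varepsilon}}$, and satisfying $\|\nabla\varphi_\varepsilon\|_{L^4(B_1)}+\|\nabla^2\varphi_\varepsilon\|_{L^2(B_1)}\to 0$ as $\varepsilon\to 0$, which is a direct computation using the annular profile.

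First I would observe that the boundedness of $u$ (since $N$ is compact) together with the finite energy assumption gives $u\in W^{2,2}(B_1\setminus\{0\},N)\cap L^\infty$. Assigning $u(0)$ an arbitrary value in $N$, the extension lies in $W^{2,2}(B_1,N)$, since any singular part of its distributional second derivatives would be supported at $\{0\}$ and hence cannot appear in an $L^2$ function (this is verified by testing against $C_c^\infty$ functions and approximating them by $\varphi_\varepsilon \cdot (\text{test})$ in $W^{2,2}$). Next, to show that the extended $u$ is a weak biharmonic map on $B_1$, I would take any admissible variation $V=P(u)\xi$ with $\xi\in C_c^\infty(B_1,\mathbb{R}^K)$ and test the Euler--Lagrange equation \eqref{extrequ} against $\varphi_\varepsilon V$. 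Since $\mathrm{supp}(\varphi_\varepsilon V)\subset B_1\setminus B_\varepsilon$, where $u$ is a smooth biharmonic map, the weak form holds exactly. Expanding all derivatives that fall on $\varphi_\varepsilon$ and using H\"older, the error terms are controlled by sums of expressions of the form $\|\triangle_g u\|_{L^2}\|V\|_{L^\infty}\|\nabla^2\varphi_\varepsilon\|_{L^2}$, $\|\triangle_g u\|_{L^2}\|\nabla V\|_{L^4}\|\nabla\varphi_\varepsilon\|_{L^4}$, and analogous products with $|\nabla_g u|^2$ paired against $\nabla^k\varphi_\varepsilon$; each is controlled by the bounded biharmonic energy and vanishes as $\varepsilon\to 0$. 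Passing to the limit yields the weak Euler--Lagrange equation with the original test field $V$, so $u$ is a weakly biharmonic map in $W^{2,2}(B_1,N)$.

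Finally, since the total biharmonic energy is finite, absolute continuity of the integral lets me pick $r_0>0$ small enough that $E(u,B_{r_0})<\varepsilon_0$, where $\varepsilon_0$ is the small-energy constant of Theorem \ref{smallenergythm}. This theorem (which, as the authors note, holds on Riemannian balls) together with Corollary \ref{smallenergycor} and a standard bootstrap upgrades $u$ to $W^{4,p}(B_{r_0/2})$ for every $p>1$, hence to $C^{3,\alpha}$, and iteratively to a smooth biharmonic map on $B_{r_0/2}$. Combined with the smoothness on $B_1\setminus\{0\}$ already given, this proves smoothness across $0$.

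The main obstacle is the borderline capacity argument in the second paragraph: one has to check, with a concrete choice of logarithmic cut-off, that all of the cut-off error integrals involving the top-order derivatives of $u$ (which are only in $L^2$ and $L^4$) really do vanish, and that this can be done while respecting the tangential constraint $V(x)\in T_{u(x)}N$. Once this step is executed, the remaining reduction to weak biharmonicity and the appeal to $\varepsilon$-regularity are routine.
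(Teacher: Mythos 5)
Your proposal follows essentially the same two-step route the paper sketches: extend $u$ to a weak biharmonic map on all of $B_1$ by a logarithmic cut-off/capacity argument (which is what the paper delegates to Lemma 2.5 of Hornung--Moser \cite{hornung2012energy}), and then apply the $\varepsilon$-regularity Theorem \ref{smallenergythm} and bootstrap. The only difference is that you have written out the borderline capacity computation that the paper leaves implicit; the argument is correct.
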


\begin{rem}
Usually, removable singularity theorems can be proved by applying Poho\v{z}aev type arguments as in \cite{sacks1981existence}. See for example the proof of removable singularity theorem for biharmonic maps in Section 6 of \cite{liu2016neck} for the case of Euclidean domains. In fact, one can modify the method in \cite{liu2016neck} to give a different proof of Theorem \ref{remsingu}. We will use similar arguments to show that the oscillation of a biharmonic map at the infinity is arbitrarily small (see Lemma \ref{lemrovsiguifty}) in Appendix \ref{prflem1}.

\end{rem}

\begin{thm}\label{energygap11}
(energy gap) Let $u $ be a biharmonic map from $\mathbb{R}^4 / \Gamma$ ($\Gamma $ is some finite group in $SO(4)$) to $N$. There exists $\varepsilon(N,\Gamma)> 0$ depending only on $N$ and $\Gamma$, such that if $u$ satisfies
\begin{equation}\label{smallenergy1}
 \int_{\mathbb{R}^4 / \Gamma}|\nabla^2_{g}u|^2+|\nabla_{g} u|^4 dV_g\leq \varepsilon(N,\Gamma),
 \end{equation}
then $u$ is a constant map.
\end{thm}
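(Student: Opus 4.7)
The plan is to reduce the statement to a Liouville-type property on $\mathbb{R}^4$ by lifting to the universal cover and combining the removable singularity theorem (Theorem \ref{remsingu}) with the scale-invariant $\varepsilon$-regularity estimate (Corollary \ref{smallenergycor}). Since $\Gamma \subset SO(4)$ is finite and acts freely on $\mathbb{R}^4 \setminus \{0\}$, the covering projection $\pi: \mathbb{R}^4 \setminus \{0\} \to (\mathbb{R}^4/\Gamma) \setminus \{[0]\}$ is a local isometry of degree $|\Gamma|$, so the lift $\tilde{u} := u \circ \pi$ is a smooth biharmonic map from $\mathbb{R}^4 \setminus \{0\}$ into $N$ with
$$E(\tilde{u}, \mathbb{R}^4 \setminus \{0\}) = |\Gamma| \cdot E(u, \mathbb{R}^4/\Gamma).$$

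First I would set $\varepsilon(N, \Gamma) := \varepsilon_0 / |\Gamma|$, where $\varepsilon_0 > 0$ is the small-energy constant from Theorem \ref{smallenergythm}, so that $E(\tilde{u}) \leq \varepsilon_0$. Since $\tilde{u}$ is smooth, biharmonic, and of finite energy on the punctured ball $B_1 \setminus \{0\} \subset \mathbb{R}^4$, the removable singularity theorem (Theorem \ref{remsingu}) extends it to a smooth biharmonic map on $B_1$, and therefore on all of $\mathbb{R}^4$.

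Next I would run the Liouville-type argument exploiting scale invariance of the total energy. For any point $y \in \mathbb{R}^4$ and any radius $R > 0$,
$$E(\tilde{u}, B_R(y)) \leq E(\tilde{u}, \mathbb{R}^4) \leq \varepsilon_0,$$
so Corollary \ref{smallenergycor} applied on $B_R(y)$ yields
$$|\nabla \tilde{u}(y)| \leq \sup_{B_{R/2}(y)} |\nabla \tilde{u}| \leq \frac{C \varepsilon_0^{1/2}}{R}.$$
Letting $R \to \infty$ forces $\nabla \tilde{u}(y) = 0$ for every $y \in \mathbb{R}^4$, so $\tilde{u}$ is constant, and consequently so is $u$.

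The argument is essentially routine given the small-energy regularity and removable singularity results already at hand; the only genuine point of care is handling the orbifold point at $[0]$, which is accomplished by passing to the universal cover before invoking the Euclidean removable singularity theorem. Accordingly I do not anticipate a substantive obstacle beyond the bookkeeping of the factor $|\Gamma|$ in the energy when choosing $\varepsilon(N, \Gamma)$, which is precisely why the threshold is allowed to depend on $\Gamma$.
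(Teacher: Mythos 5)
Your proof is correct and follows essentially the same route as the paper's own argument: lift $u$ through the covering $\mathbb{R}^4 \setminus \{0\} \to (\mathbb{R}^4/\Gamma)\setminus\{0\}$, apply the removable singularity theorem (Theorem \ref{remsingu}) to extend $\tilde u$ across the origin, take $\varepsilon(N,\Gamma)=\varepsilon_0/|\Gamma|$, and conclude constancy via the scale-invariant estimate of Corollary \ref{smallenergycor} applied on balls $B_R(y)$ with $R\to\infty$. The only difference is that you write out the Liouville step explicitly, whereas the paper states it as a one-line consequence of Corollary \ref{smallenergycor} in the case $\Gamma=\{e\}$.
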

\begin{proof} For $\Gamma=\{e\}$, it is a direct consequence of Theorem \ref{smallenergycor}.
For a nontrivial group $\Gamma$, we can lift the map $u$ to be a map $\tilde{u}:\mathbb{R}^4\setminus \{0\}\rightarrow N $. And then by Theorem \ref{remsingu}, it can be extended to be a biharmonic map from $\mathbb{R}^4$ to $N$. We may take $\varepsilon(N,\Gamma)=\varepsilon_0/|\Gamma|$, where $|\Gamma|$ is the number of elements in $\Gamma$.
\end{proof}

With the above analytical tools in hand, now we are ready to analyse the blow-up procedure for biharmonic maps from Riemannian manifolds. We refer to \cite{laurain2013energy,wang2012energy} for the bubble-neck decomposition.

For a fixed Riemannian maniflod $(M,g)$, we always take the normal coordinates $x$, so that the scaling $u(\lambda x)$ is well defined for small $\lambda>0$. By Gauss Lemma, for small $r>0$ the geodesic ball $B _r$  is the same as the ball of radius $r$ in normal coordinates.

Now we state the energy identity and no neck property theorem for biharmonic maps defined on general Riemannian manifolds.
\begin{thm} (Energy identity and no neck property)\label{noneckthm}
 Let $ u _i$ be a sequence of extrinsic biharmonic maps from $B_1\subset (M,g)$  to $N$ satisfying
\begin{equation*}
E(u_i,B_1)\equiv\int_{B_1}|\nabla^2_{g}u_i|^2+|\nabla_{g} u_i|^4 dV_g\leq \Lambda,
\end{equation*}
for some $\Lambda > 0$. Assume that there is a sequence positive $\lambda_i \rightarrow 0$ such that
\begin{equation*}
u _i(\lambda_i x)\rightarrow \omega,
\end{equation*}
on any compact set $ K \subset \mathbb{R}^4$, $u_i$ converges weakly in $W^{ 2,2 }$ to $u_\infty$, and moreover $\omega$ is
the only bubble. Then,
\begin{equation*}
\lim_{\delta\rightarrow 0} \lim_{R\rightarrow \infty}\lim_{i \rightarrow \infty}\int_{B_\delta \setminus B_{\lambda_i R}}|\nabla^2_{g}u_i|^2+|\nabla_{g} u_i|^4 dV_g=0
\end{equation*}
and
\begin{equation*}
\lim_{\delta\rightarrow 0} \lim_{R\rightarrow \infty}\lim_{i \rightarrow \infty}osc_{B_\delta \setminus B_{\lambda_i R}} \ u_i=0.
\end{equation*}
\end{thm}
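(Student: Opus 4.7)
\medskip

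\textbf{Proof proposal.} The plan is to decompose the neck $B_\delta \setminus B_{\lambda_i R}$ into dyadic annuli $A_i^k := B_{2^{-k+1}\delta} \setminus B_{2^{-k}\delta}$ (for $k$ ranging up to $\sim \log_2(\delta/\lambda_i R)$), and to show that on each annulus the biharmonic energy decays geometrically away from the endpoints. Since there is only one bubble, standard bubble-neck decomposition (applied in normal coordinates $(x)$ at the blow-up point) ensures that the small energy hypothesis
\[
\int_{A_i^k}|\nabla^2_g u_i|^2 + |\nabla_g u_i|^4\, dV_g < \varepsilon_0
\]
holds uniformly in $k$ (after fixing large $R$ and small $\delta$, and after possibly passing to a subsequence). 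Thus Theorem \ref{smallenergythm} and Corollary \ref{smallenergycor} apply on each $A_i^k$, giving $|\nabla_g^\ell u_i|(x) \leq C |x|^{-\ell}$ for $\ell = 1,2,3$. The neck integrals are thus scaling-critical, and the goal is to prove that they sum to zero in the iterated limit.

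The heart of the argument is the Pohozaev identity (\ref{pohoidentgenrlintro}) derived by testing the Euler--Lagrange equation $\Delta_g^2 u_i$ against the radial vector field $r\partial_r = x^k e_k$ and integrating by parts over $B_r(p)$. First I would derive this identity carefully in normal coordinates: the left-hand side is the familiar boundary flux, while the right-hand side error $P(g,r)$ collects the terms created by the fact that $r\partial_r$ is neither divergence-free nor Killing, that $\Delta_g x^k \neq 0$, and by the Bochner-type contribution $\mathrm{Ric}_g(\nabla_g u, x^k e_k)\Delta_g u$. Using the normal-coordinate expansion $g_{kl} = \delta_{kl} + O(r^2)$, $\Gamma_{kl}^m = O(r)$, $\mathrm{div}_g(r\partial_r) - 4 = O(r^2)$, $\Delta_g x^k = O(r)$, and $\mathrm{Ric}_g = O(1)$, each integrand in $P(g,r)$ involves at most two derivatives of $u_i$ and carries an extra factor $r^2$ (or $r$ contracted with $x^k$); combined with the pointwise estimate $|\nabla_g^\ell u_i| \leq C r^{-\ell}$ on the neck, integration over $B_r$ yields $|P(g,r)| \leq C r^2$, which is exactly the decay needed.

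Next I would convert (\ref{pohoidentgenrlintro}) into an annular identity of the form $\int_{\partial B_r}(\square_1 + \square_2)\, d\sigma_g = P(g,r)/r$, with $\square_1, \square_2$ as displayed in the introduction, and then decompose the full gradient energy on $\partial B_r$ into a radial part (controlled by $\partial_r u$, $\partial_r^2 u$, $\partial_r^3 u$) and a tangential part (controlled by $\widetilde{\Delta} u$ and tangential derivatives). The $\square_1$-piece, after manipulation, shows that the radial part of $|\nabla_g^2 u_i|^2$ is controlled on $\partial B_r$ by the tangential part plus the error $|P(g,r)|/r = O(r)$. Thus it suffices to control the tangential energy over the neck. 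For that I would run a three-circle argument in the rescaled logarithmic variable $t = -\log r$: the small-energy elliptic estimates give that the tangential part $A(r) := \int_{\partial B_r} (\text{tangential energy density})\, d\sigma_g$ satisfies a second-order discrete inequality that, combined with smallness of the neck energy in each dyadic piece, forces $A(r) \leq C(r^\alpha + \lambda_i^\alpha R^\alpha r^{-\alpha})$ on the neck, for some $\alpha \in (0,1)$. Integrating this annular estimate from $\lambda_i R$ to $\delta$ yields $\int_{B_\delta \setminus B_{\lambda_i R}}(|\nabla_g^2 u_i|^2 + |\nabla_g u_i|^4)\, dV_g \to 0$ in the triple limit, which is the energy identity.

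The no-neck property then follows by a standard argument: on each dyadic annulus the Poincaré inequality and the just-proved energy decay give $\mathrm{osc}_{A_i^k} u_i \leq C \big(\int_{A_i^k}|\nabla_g u_i|^4\big)^{1/4} \leq C \cdot 2^{-\alpha k/4}$, which is summable in $k$, so the total oscillation over the neck tends to $0$. The main obstacle I anticipate is the tangential-energy three-circle step: one must absorb the Ricci and coordinate-error terms $P(g,r)/r$ into the iteration without losing the decay rate, and the naive viewpoint of treating $g$ as a perturbation of the Euclidean metric fails because the error terms are not uniformly bounded in $L^p$ for any $p > 4/3$ on blow-up sequences. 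The saving feature is that $P(g,r)$ involves only derivatives of $u_i$ of order $\leq 2$, which are controlled pointwise by $\varepsilon$-regularity, so the error is measured geometrically as $O(r^2)$ rather than via Sobolev norms of the map; this is what makes the three-circle closure go through.
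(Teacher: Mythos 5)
You have the right overall architecture — $\varepsilon$-regularity on dyadic pieces, tangential decay via a three-circle argument, a Pohozaev identity with geometric error $P(g,r)=O(r^2)$ to transfer the tangential decay to the radial part, and then oscillation control — and this matches the paper's Section \ref{noneckprf}. However, there are two substantive misconceptions in your proposal.

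First, you have the division of labor backwards between the ``perturb-the-Euclidean-metric'' viewpoint and the geometric error $P(g,r)$. For the tangential three-circle step the paper \emph{does} rewrite the biharmonic map equation in coordinates as an approximate biharmonic equation with respect to the flat metric and push the metric terms into a tension-type error (this is exactly Claim 0 and the proof of Lemma \ref{lemtangdecay}); the error involves up to fourth derivatives of $u_i$, but after rescaling a dyadic annulus to $B_2\setminus B_1$, $\varepsilon$-regularity controls those derivatives pointwise and the $C^4$-closeness $\|g_{ij}(\rho x)-\delta_{ij}\|_{C^4}=O(\rho^2)$ produces the needed $O(r^2)$ decay of the tension in $L^p(A_l)$. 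What actually \emph{fails} on a curved domain is the \emph{classical Pohozaev argument} via the approximate-biharmonic viewpoint, because there the error is tested against $r\partial_r u$ over the whole ball $B_1$ and no uniform $L^p(B_1)$ bound is available; that failure is precisely why the paper derives the new Pohozaev identity (\ref{pohoidentgenrl1}) directly on $(M,g)$, where the error $P(g,r)$ involves only $\leq 2$ derivatives of $u$. Your text attributes the failure to the three-circle step and the saving to $P(g,r)$, which is the reverse of what happens. Relatedly, the quantity $P(g,r)/r$ does not enter the three-circle iteration at all: the three-circle theorem is applied to $v_i=u_i-(u_i)^*$ as an approximate biharmonic \emph{function} with its own tension error $h_i$, and $P(g,r)$ only appears when converting the Pohozaev identity into the radial/tangential comparison (\ref{pohoradictangen3}). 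As written, your ``main obstacle'' paragraph does not describe a step that occurs.

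Second, the three-circle step is under-specified in a way that matters. Saying ``the small-energy elliptic estimates give a second-order discrete inequality for $A(r):=\int_{\partial B_r}(\text{tangential})$'' omits the actual mechanism: one has to (i) subtract the spherical average and work with $v_i=u_i-(u_i)^*$; (ii) show $v_i$ is an $\eta_0$-approximate biharmonic function in the sense of Definition \ref{defappbiharm}, which requires the local coordinate rewriting and the metric-decay Claim 0; and (iii) apply Theorem \ref{thrcircledegeneck}/Corollary \ref{threecirtotangdecay} to $F_l(v_i)=\int_{A_l}v_i^2/|x|^4$, and then upgrade to derivative decay via Lemma \ref{appbiharmest} and Sobolev embedding. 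Without (i)–(iii) there is no discrete inequality to iterate. Finally, the no-neck conclusion via ``Poincar\'e gives $\mathrm{osc}_{A_i^k}u_i\leq C(\int_{A_i^k}|\nabla u_i|^4)^{1/4}$'' is not correct: $W^{1,4}(\mathbb{R}^4)$ does not embed in $L^\infty$, so Poincar\'e alone gives no oscillation bound. You need the $W^{4,p}$ ($p>1$) control from Theorem \ref{smallenergythm} (equivalently Corollary \ref{smallenergycor}) to get the pointwise gradient bound $|\nabla u_i|\leq C\varepsilon^{1/2}/r$ on each annulus, and the convergence to zero of the total oscillation ultimately comes from the $\varepsilon$-prefactor in the two-sided decay $C\varepsilon^2(r/\delta+\lambda_i R/r)$ being arbitrarily small, not from mere summability of a one-sided $2^{-\alpha k}$ tail.
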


\begin{rem}
By arguments in Section \ref{secintrinsic}, we know that the same results hold for intrinsic biharmonic maps.
%It is natural to consider whether the above two equalities are still true when $(M_i,g_i)$ degenerates, if we consider a sequence of biharmonic maps $u_i$ from $M_i$ to $N$.  We will give affirm answer to this question in Section \ref{degeneneckanalys} (for extrinsic biharmonic maps) and Section \ref{secintrinsic} (for intrinsic biharmonic maps).
\end{rem}

A simple consequence of Theorem \ref{noneckthm} is that the following limit
$$\lim_{ |x|\rightarrow\infty} \omega(x)$$
exists (see the end of Section 2 of \cite{liu2016neck}). In this theorem, we assume that there is only one bubble. The general case of multiple bubble maps can be handled by induction.

In order to prove Theorem \ref{noneckthm}, we shall extend the methods developed for the case of  Euclidean domains in \cite{ laurain2013energy, liu2016neck, liu2015finite} to a more general setting. There are two main ingredients in \cite{liu2016neck, liu2015finite}, one is a three circle theorem for approximate biharmonic functions which is used to get the exponential decay for the tangential part energy on the neck regions, the other is a Poho\v{z}aev type argument. The main difficulty is to establish a Poho\v{z}aev type identity for biharmonic maps defined on geodesic balls in general Riemannian manifolds. Fortunately, the Poho\v{z}aev type identities on Euclidean domains derived in \cite{hornung2012energy, wang2012energy, laurain2013energy} provides the key clue. We shall put the proof in the next section. We remark that the proof have something in common with the proof for the uniqueness of tangent cone of (bi)harmonic maps \cite{Simon1983,simon1996theorems,chen2019uniqueness}. Both of them need to prove some decay estimates for the maps by subtle analysis of the structure of equations.

\

\section{Neck analysis on a fixed Riemannian manifold}\label{noneckprf}

In this section, we shall prove the energy identity and no neck property in Theorem \ref{noneckthm} by establishing a Poho\v{z}aev identity for biharmonic maps on general Riemannian manifolds.

\subsection{Decay of the tangential part energy}\label{tangdeacynodege}
For the sake of readers' convenience, we shall firstly recall some results in \cite{liu2015finite,liu2016neck}.
We start from the definition of $\eta$-approximate biharmonic functions.

\begin{defn}\label{defappbiharm} (Definition 1 of \cite{liu2015finite})
Let u be a smooth function defined on $B _{r_2}\setminus  B_{ r_1} $, $ u$ is called an
$\eta$-approximate biharmonic function, if it satisfies
\begin{eqnarray}\label{approbihar}
\triangle^2_{g}u(r,\theta)&=&a_1\nabla_{g}\triangle_g u+a_2\nabla_{g}^2u+a_3\nabla_g u+a_4 u \nonumber\\
&+&\frac{1}{|\partial B_r|}\int_{\partial B_r}b_1\nabla_{g}\triangle_g u+b_2\nabla_{g}^2u+b_3\nabla_g u+b_4 u d\sigma +h(x),
\end{eqnarray}
where $a_i$, $b_i$ and $h$ are smooth functions satisfying the followings: for any $\rho\in [r_1, r_2/2]$,

\begin{itemize}

\item[(a)] $||g_{ij}(\rho x)-\delta_{ij}||_{C^4(B_2\setminus B_1)}<\eta.$
Namely, the metric after scaling to $B_2\setminus B_1$ is close to the flat metric in $C^4$ norm.

\item[(b)]
\begin{equation*}\label{tensioncondit}
|||x|^{4(1-1/p)}h||_{L^p(B _{r_2}\setminus  B_{ r_1} )}\leq \eta
\end{equation*}

\item[(c)]
\begin{equation*}
\sum_{i=1}^{4}||a_i||_{\tilde{W}^{4-i,p}(B_{2\rho}\setminus B_\rho)}+||b_i||_{\tilde{W}^{4-i,p}(B_{2\rho}\setminus B_\rho)}\leq \eta.
\end{equation*}
Here the $\tilde{W}^{4-i,p} $ norm is defined by
\begin{equation*}\label{tensioncondit}
||w||_{\tilde{W}^{4-i,p}(B_{2\rho}\setminus B_\rho)}=||\rho ^i w(\rho x)||_{W^{4-i,p}(B_{2}\setminus B_1)}.
\end{equation*}

\end{itemize}

\end{defn}

One can check that if $u$ is an $\eta$-approximate biharmonic function
on $B _{r_2}\setminus  B_{ r_1}$, then $w(x) = u( x/\lambda )$ is another $\eta$-approximate biharmonic function on
$B _{\lambda r_2}\setminus  B_{ \lambda r_1}$.

The next is an interior $L^p$ estimate for approximate biharmonic functions.
\begin{lem}\label{appbiharmest} (Lemma 1 of \cite{liu2015finite})
 Suppose that $u : B _4 \setminus B _1 \rightarrow R $ is an $\eta$-approximate biharmonic function (for small $\eta>0$) with
 \begin{equation*}
\sum_{i=1}^{4}||a_i||_{{W}^{4-i,p}(B_{4}\setminus B_1)}+||b_i||_{{W}^{4-i,p}(B_{4}\setminus B_1)}\leq \eta.
\end{equation*}
and
\begin{equation*}\label{tensioncondit}
||h||_{L^p(B _{4}\setminus  B_{ 1} )}\leq C.
\end{equation*}
Then, for any $p > 1$, we have
\begin{equation*}
||u||_{W^{4,p}(B _{3}\setminus  B_{ 2})}\leq C(||u||_{L^{p}(B _{4}\setminus  B_{ 1})}+||h||_{L^p(B _{4}\setminus  B_{ 1} )}).
\end{equation*}
\end{lem}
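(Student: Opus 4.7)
The plan is to treat the equation as a perturbation of the flat biharmonic equation and apply standard interior $L^p$ theory. The first step is to decompose $\triangle^2_g=\triangle^2+R$, where $R$ is a fourth-order differential operator whose coefficients are bounded in $C^4(B_4\setminus B_1)$ by a universal multiple of $\|g_{ij}-\delta_{ij}\|_{C^4}<\eta$. In particular $\|R(u)\|_{L^p(B_4\setminus B_1)}\leq C\eta\,\|u\|_{W^{4,p}(B_4\setminus B_1)}$. Moving $R(u)$ onto the right-hand side of \eqref{approbihar} reformulates the equation as $\triangle^2 u=F$ with
$$F=-R(u)+\sum_{i=1}^{4}a_i\,\nabla^{4-i}_g u+\frac{1}{|\partial B_r|}\int_{\partial B_r}\sum_{i=1}^{4}b_i\,\nabla^{4-i}_g u\,d\sigma+h.$$

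Next I would fix a finite nested chain $B_3\setminus B_2\subset\subset A_N\subset\subset\cdots\subset\subset A_0=B_4\setminus B_1$ together with smooth cutoff functions subordinate to this chain, and iterate the classical Calder\'on--Zygmund interior regularity for the flat biharmonic operator,
$$\|u\|_{W^{4,p}(A_{k+1})}\leq C\bigl(\|\triangle^2 u\|_{L^p(A_{k})}+\|u\|_{L^p(A_{k})}\bigr), \quad p>1.$$
The remaining task is then to bound $\|F\|_{L^p(B_4\setminus B_1)}$ by $C\eta\,\|u\|_{W^{4,p}(B_4\setminus B_1)}+\|h\|_{L^p(B_4\setminus B_1)}$. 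The term $R(u)$ is already controlled. For each product $a_i\,\nabla^{4-i}_g u$ I would invoke a Sobolev multiplication estimate: since $\|a_i\|_{W^{4-i,p}}\leq\eta$ and $\nabla^{4-i}_g u$ is controlled by $\|u\|_{W^{4,p}}$, a standard H\"older and Gagliardo--Nirenberg interpolation on $\mathbb{R}^4$ yields
$$\|a_i\,\nabla^{4-i}_g u\|_{L^p(B_4\setminus B_1)}\leq C\|a_i\|_{W^{4-i,p}}\|u\|_{W^{4,p}}\leq C\eta\,\|u\|_{W^{4,p}}.$$
The averaged $b_i$-terms reduce to the same estimate after noting that the radial averaging over $\partial B_r$ is bounded on $L^p(B_4\setminus B_1)$ by a universal constant.

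With these pieces in place, fixing $\eta$ small enough allows every $O(\eta)\|u\|_{W^{4,p}}$ contribution on the right to be absorbed into the left by a standard finite iteration along the chain $\{A_k\}$, producing the desired estimate with only $\|u\|_{L^p(B_4\setminus B_1)}+\|h\|_{L^p(B_4\setminus B_1)}$ remaining on the right. The main technical obstacle I anticipate is the Sobolev multiplication bound for $a_i\,\nabla^{4-i}_g u$ when $p$ is close to $1$: for such exponents the embedding $W^{4-i,p}\hookrightarrow L^q$ is strictly subcritical, so one has to choose the conjugate H\"older exponents and the interpolation chain carefully, treating separately the three regimes according to whether $(4-i)p$ is less than, equal to, or greater than the dimension $4$. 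Everything else reduces to routine elliptic bootstrapping.
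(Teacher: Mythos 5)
The paper does not prove this lemma: it is quoted verbatim as Lemma 1 of \cite{liu2015finite}, so there is no in-house proof to compare against. That said, your perturbation-and-absorption strategy is the standard route to such interior estimates and is essentially what Liu and Yin do.

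The plan is sound. Two small points worth making explicit. First, the Sobolev multiplication you worry about does close for every $p>1$: with $a_i\in W^{4-i,p}$ and $\nabla^{4-i}_g u$ controlled by $\|u\|_{W^{4,p}}$ (hence lying in $W^{i,p}$), the critical Sobolev exponents satisfy
\begin{equation*}
\frac{1}{q_1}+\frac{1}{q_2}=\Bigl(\frac{1}{p}-\frac{4-i}{4}\Bigr)+\Bigl(\frac{1}{p}-\frac{i}{4}\Bigr)=\frac{2}{p}-1<\frac{1}{p}
\end{equation*}
precisely because $p>1$, so H\"older applies with slack; when either embedding is supercritical one simply uses $L^\infty$, and the borderline case still works since the other factor lands in some finite $L^q$. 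The only place where $p>1$ is genuinely needed is $i=4$, where you use $W^{4,p}(\mathbb{R}^4)\hookrightarrow L^\infty$. Second, the ``standard finite iteration'' you invoke is not quite a straightforward geometric-series absorption across a finite chain of annuli, since the $O(\eta)\|u\|_{W^{4,p}}$ term sits on the \emph{larger} annulus while the $W^{4,p}$ norm to be controlled sits on the smaller one; one needs either the Giaquinta-type iteration lemma with a scale-dependent family $A_t=B_{3+t}\setminus B_{2-t}$ and cutoffs whose gradients blow up as the gap shrinks, together with the interpolation $\|u\|_{W^{3,p}}\leq\epsilon\|u\|_{W^{4,p}}+C_\epsilon\|u\|_{L^p}$ to handle commutator terms, or a weighted-norm variant. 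This is standard machinery, but it is the step where a careless reader can go wrong, so it deserves to be spelled out rather than waved at. Your treatment of the spherical-average terms is correct: the map $f\mapsto f^*$ is bounded on $L^p(B_4\setminus B_1)$ by Jensen.
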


Now we study the behaviour of the biharmonic map (with respect to the matric $g$) $u_i$ on the neck region $\Sigma=B_\delta \setminus B_{\lambda_i R}$ as in Section 4 of \cite{liu2016neck}. Assume without loss of generality that
\begin{equation*}
\Sigma=\bigcup_{l=l_0}^{l_i}A_l,
\end{equation*}
for $A_l=B_{e^{-(l-1)L}} \setminus B_{e^{-lL}}$ and $l_0<l_i$. Moreover we may assume as in \cite{liu2016neck} that for any $\varepsilon>0$ ,
\begin{equation}\label{energysmallnecki}
\int_{A_l}|\nabla_{g}^2u_i|^2+|\nabla_g u_i|^4dV_g<\varepsilon^2<\varepsilon_0
\end{equation}
holds for $l=l_0,\cdots,l_i$, when $i$ and $R$ are large enough and $\delta>0$ is small enough. In the above, $\log \delta=-l_0L$ and $\log\lambda_i R=-l_iL$.

Our aim in this subsection is to prove the following exponential decay estimates:
\begin{lem}\label{lemtangdecay}
After setting $ r=e^t$, we have for arbitrarily small $\varepsilon>0$, if $\delta>0$ is sufficiently small and $i$ and $R$ are large enough, then
\begin{eqnarray*}\label{tangentialdecay}
& &\int_{(-lL,-(l-1)L)\times S^3}\left(|\triangle_{S^3}u_i|^2+|\nabla_{S^3}u_i|^4+|\partial_t \nabla_{S^3}u_i|^2+|\partial_t \triangle_{S^3}u_i|^2\right)dt d\theta\nonumber\\
&\leq&C\varepsilon^2(e^{-(l-l_0)L}+e^{-(l_i-l)L}).
\end{eqnarray*}
Or equivalently,
\begin{eqnarray*}
& &\int_{B_{e^L r}\setminus B_r}\frac{1}{r^4} \left ((\triangle_{S^3}u_i)^2+|\nabla_{S^3}u_i|^4 \right)+\frac{1}{r^2}|\partial_r\nabla_{S^3}u_i|^2+\frac{1}{r^2}|\partial_r\triangle_{S^3}u_i|^2dx\\
&\leq&C\varepsilon^2 \left(\frac{r}{\delta}+\frac{\lambda_i R}{r} \right).
\end{eqnarray*}
\end{lem}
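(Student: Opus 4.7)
The plan is to realize $u_i$ on the neck as an $\eta$-approximate biharmonic function in the sense of Definition \ref{defappbiharm} and then run a three-circle argument on the spherical mean-free (tangential) component, following the scheme of \cite{liu2015finite,liu2016neck} but with extra care for the non-flat metric. In the normal coordinates $(x)$ at the blow-up point one has $g_{kl}(x)=\delta_{kl}+O(|x|^2)$ and $\Gamma^k_{ij}=O(|x|)$, so after rescaling a single dyadic annulus $A_l$ to $B_2\setminus B_1$, the pulled-back metric is $C^4$-close to the Euclidean one with error of order $e^{-2lL}$, which is arbitrarily small once $\delta$ is small. Writing $\triangle_g^2=\triangle_0^2+\text{(lower order)}$ and expanding the right-hand side of the biharmonic map equation (\ref{extrequ}) gives a system of the form (\ref{approbihar}); the quasilinear terms $\triangle u\cdot B(u)(\nabla u,\nabla u)$, $\nabla\langle\triangle u,\nabla P(u)\rangle$ and $\langle\triangle P(u),\triangle u\rangle$ are controlled in $\tilde W^{4-i,p}$ by the small energy hypothesis (\ref{energysmallnecki}) combined with the $\varepsilon$-regularity bounds of Corollary \ref{smallenergycor}, producing coefficients $a_i,b_i$ and inhomogeneity $h$ of size $\eta=O(\varepsilon)$.

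Next I would split $u_i=\bar u_i+v_i$, where $\bar u_i(r)=|\partial B_r|^{-1}\int_{\partial B_r}u_i\,d\sigma$ is the spherical average (radial part) and $v_i$ has zero mean on every geodesic sphere. The averaging term in Definition \ref{defappbiharm} is designed precisely to allow projecting the approximate biharmonic equation onto the mean-free part; thus $v_i$ is itself $\eta$-approximately biharmonic on the neck, and the integrand on the left-hand side of the lemma is controlled by the $W^{2,2}$ norm of $v_i$ (plus one $\partial_t$ derivative) on the rescaled annulus, which in turn is controlled by the $L^p$ norm of $v_i$ through Lemma \ref{appbiharmest}.

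The core step is a three-circle inequality: there exists a universal $L_0$ such that for $L\ge L_0$ and $\varepsilon,\eta$ small,
\begin{equation*}
\mathcal F(v_i,A_l)\le e^{-L}\bigl(\mathcal F(v_i,A_{l-1})+\mathcal F(v_i,A_{l+1})\bigr)+C\eta\bigl(\mathcal F(v_i,A_{l-1})+\mathcal F(v_i,A_{l+1})\bigr)+\text{(small inhomogeneity)},
\end{equation*}
where $\mathcal F(v_i,A_l)$ denotes the integral on $A_l$ appearing in the lemma. The motivation is that separable bi-harmonic solutions $r^\alpha Y_k(\theta)$ of $\triangle_0^2=0$ on $\mathbb R^4$ with $k\ge 1$ have radial exponents $\alpha\in\{k,-k,k+2,-k-2\}$, so $|\alpha|\ge 1$ once the spherical mean ($k=0$) is removed; the constant-coefficient three-circle thus contracts with factor $e^{-L}$, and the perturbative error is absorbed by choosing $\varepsilon,\eta$ small via Lemma \ref{appbiharmest}. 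Iterating this inequality from the two boundary annuli $A_{l_0}$ and $A_{l_i}$, where $\mathcal F\le C\varepsilon^2$ by (\ref{energysmallnecki}), yields by a standard geometric-series maximum-principle-type argument the two-sided exponential bound $C\varepsilon^2(e^{-(l-l_0)L}+e^{-(l_i-l)L})$, which is exactly the conclusion.

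The main obstacle is controlling the metric-induced error terms uniformly along the entire chain of annuli. The rescaled metric closeness improves as $|x|\to 0$ like $e^{-2lL}$, which comfortably beats the contraction factor $e^{-L}$; the genuinely delicate piece is verifying that the $\fint_{\partial B_r}$ averaging in the approximate biharmonic equation indeed captures all obstructions coming from the $k=0$ spherical harmonic, so that the three-circle contraction applies with the full gap $|\alpha|\ge 1$ rather than a smaller one. Once this is checked the iteration is routine, and the equivalent form in $r$-variables follows by setting $r=e^t$ and using $A_l=\{e^{-lL}<|x|<e^{-(l-1)L}\}$ with $\log\delta=-l_0L$, $\log(\lambda_iR)=-l_iL$.
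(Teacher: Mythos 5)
Your overall scheme matches the paper's: write the equation in normal coordinates, absorb the metric corrections into an error term, show that the spherical mean-free part $v_i=u_i-u_i^*$ is an $\eta$-approximate biharmonic function in the sense of Definition~\ref{defappbiharm}, invoke the three-circle Theorem~\ref{thrcircledegeneck} (the form you state, $\mathcal F_l\le e^{-L}(\mathcal F_{l-1}+\mathcal F_{l+1})+\cdots$, does follow from the three alternatives), and close by Lemma~\ref{appbiharmest} plus Sobolev embedding. So the route is the right one.

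There is, however, a gap in how you size the inhomogeneity $h$, and it is exactly the point the paper is at pains to isolate (``Claim~0''). You write that the quasilinear terms produce ``coefficients $a_i,b_i$ \emph{and} inhomogeneity $h$ of size $\eta=O(\varepsilon)$.'' This conflates two separate pieces with very different behavior. In the paper's decomposition the intrinsic biharmonic-map nonlinearity (the $\alpha_k(u)\nabla^\bullet u\,\sharp\cdots$ terms) is placed into the coefficients $a_i,b_i$, which are indeed of size $O(\varepsilon)$ uniformly along the neck; but $h_i$ is taken to be precisely the mean-free part of the metric-induced error $\tau(u_i)$, and the whole point of Claim~0 is that in normal coordinates the rescaled metric is not merely $o(1)$-close but $O(\rho^2)$-close to flat in $C^4$, so that on $A_l$ one gets $\||x|^{4(1-1/p)}h_i\|_{L^p(A_l)}=O(\varepsilon e^{-2lL})$. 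If $h$ were only $O(\varepsilon)$, the ``$+$(small inhomogeneity)'' term in your three-circle inequality would, after iteration, leave a non-decaying $O(\varepsilon^2)$ floor, and you would not obtain the two-sided exponential bound $C\varepsilon^2(e^{-(l-l_0)L}+e^{-(l_i-l)L})$; the decay exponent supplied by Corollary~\ref{threecirtotangdecay} is $\hat\kappa=\min(1,2\kappa)$, where $\kappa$ is the decay rate of $h$, so to reach $\hat\kappa=1$ you need $\kappa\ge 1/2$, and the $O(|x|^2)$ decay of $h$ (i.e.\ $\kappa=2$) is what delivers it. You do remark later that ``the rescaled metric closeness improves like $e^{-2lL}$, which comfortably beats the contraction factor $e^{-L}$,'' which is the right intuition, but it needs to be threaded back into the definition of $h$ as a \emph{decaying} bound on the inhomogeneity (condition~(b) of Definition~\ref{defappbiharm} and hypothesis~(\ref{akeycondition3}) of Corollary~\ref{threecirtotangdecay}), not merely as a qualitative observation about the metric. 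Tighten this and the argument closes.
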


The main tool to prove the above lemma is the following three circle theorem for approximate biharmonic functions:

\begin{thm} \label{thrcircledegeneck} (Theorem 4 of \cite{liu2015finite})
There is some constant $\eta_0>0$ such that the following is true.
Assume that $v : A_{l-1} \bigcup A_l \bigcup A_{l+1} \rightarrow \mathbb{R}^N$ is an $\eta_0$-approximate biharmonic function
in the sense of Definition \ref{defappbiharm}. Suppose \begin{equation}\label{akeycondition1}
\max_{l-1,l,l+1}||r^{4(1-1/p)}h||^2_{L^p(A_l)}\leq \eta_0 F_l(v)
\end{equation}
and
\begin{equation}\label{akeycondition2}
\int_{\partial B_r(p)}vd\sigma_g=0,
\end{equation}
then we have the following alternatives:

\

\begin{itemize}
\item[(a)] if $F_{ l+1 }(v) \leq e^{ -L} F _l (v)$, then $F_ l (v) \leq e^{ -L} F _{l-1 }(v)$;\\
\item[(b)] if $F _{l-1} (v) \leq e^{ -L} F _l (v)$, then $F_ l (v) \leq e^{ -L} F_{ l+1 }(v)$;\\
\item[(c)] either $F_ l (v) \leq e ^{-L} F_{ l-1} (v)$, or $F_ l (v) \leq e ^{-L }F _{l+1}(v)$.\\
\end{itemize}
Here $r=d_{g}(x, p)$, $A_l=B_{ e^{ -(l-1)L}} \setminus B_{ e^{ -lL}}(p)\subset (M,g)$,
\begin{equation*}
F_ l (v) =\int_{A_l}\frac{v^2}{r^4 } dV_{g}.
\end{equation*}

\end{thm}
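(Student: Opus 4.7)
The first observation I would use is that statement (c) implies both (a) and (b). Indeed, if (c) holds and the hypothesis of (a) is true, i.e., $F_{l+1}(v) \leq e^{-L} F_l(v)$, then the second alternative in (c) would give $F_l \leq e^{-L} F_{l+1} \leq e^{-2L} F_l$, forcing $F_l = 0$ so that the conclusion of (a) is trivial; otherwise the first alternative in (c) is exactly the conclusion of (a). The case of (b) is symmetric. Thus it suffices to prove (c).

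I would prove (c) by a blow-up/compactness argument. Assume for contradiction that no $\eta_0 > 0$ works: for each $\eta_k \to 0$ there exists an $\eta_k$-approximate biharmonic function $v_k$ on $A_{l-1} \cup A_l \cup A_{l+1}$ satisfying the hypothesis but with $F_l(v_k) > e^{-L} F_{l-1}(v_k)$ and $F_l(v_k) > e^{-L} F_{l+1}(v_k)$. After translating indices we may take $l = 0$; rescaling places us in three fixed model annuli. Normalize $F_0(v_k) = 1$; then $F_{\pm 1}(v_k) < e^{L}$, so the $L^2$-norm of $v_k$ on $A_{-1} \cup A_0 \cup A_1$ is uniformly bounded. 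Applying Lemma \ref{appbiharmest} on slightly smaller annuli, combined with the smallness $\eta_k \to 0$ of the coefficient and source norms and a bootstrap from $L^2$ to $L^p$, yields uniform $W^{4,p}$ bounds on a central sub-region for any $p > 1$. Extract a subsequential limit $v_\infty$; because $\eta_k \to 0$ controls both the metric (condition (a) in Definition \ref{defappbiharm}) and the coefficient/source terms, $v_\infty$ satisfies the ordinary flat biharmonic equation $\Delta^2 v_\infty = 0$ with $\int_{\partial B_r} v_\infty\, d\sigma = 0$ on each sphere, and $F_0(v_\infty) = 1$, so $v_\infty \not\equiv 0$.

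The analysis of the limit then proceeds by spherical harmonic decomposition. Write $v_\infty(r,\theta) = \sum_{k \geq 0, m} c_{k,m}(r) Y_{k,m}(\theta)$ where $\{Y_{k,m}\}$ is an $L^2$-orthonormal basis of eigenfunctions of $\Delta_{S^3}$ with eigenvalue $-k(k+2)$. The sphere-mean condition kills all $k=0$ contributions. For each $k \geq 1$, the ODE arising from separation of variables in $\Delta^2 = 0$ on $\mathbb{R}^4$ has characteristic exponents $\{-k-2, -k, k, k+2\}$, so $c_{k,m}(r) = a r^k + b r^{k+2} + c r^{-k} + d r^{-(k+2)}$. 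By $L^2$-orthogonality of $\{Y_{k,m}\}$ in $\theta$, $F_l(v_\infty) = \sum_{k,m} F_l(c_{k,m} Y_{k,m})$. For a pure monomial $r^\alpha$ with $\alpha \neq 0$, a direct calculation gives $F_{l+1}/F_l = e^{-2\alpha L}$. Splitting $c_{k,m}$ into an \emph{inner} part (exponents $\{k,k+2\}$, $F_l$ decreasing in $l$) and an \emph{outer} part (exponents $\{-k,-k-2\}$, $F_l$ increasing in $l$), for $L$ chosen sufficiently large the dominant part on $A_0$ dictates the behavior on both neighbors, producing $F_0 \leq e^{-L} F_{-1}$ or $F_0 \leq e^{-L} F_{1}$ strictly; summing over $k \geq 1$ preserves the inequality by orthogonality. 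This contradicts the defining property of the sequence $v_k$.

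The main obstacle is the compactness step: one must combine the $L^2$ information coming from the $F_l$-bounds with the interior estimate of Lemma \ref{appbiharmest} (which in its stated form already needs an $L^p$ bound on $v$ on a slightly larger region), and then verify that the normalized limit $v_\infty$ does not vanish. This requires a bootstrap from $L^2$ to $L^p$ using the equation and the smallness of all coefficients in the $\eta_k$-approximate sense. A secondary subtlety is the mode-mixing in the flat limit: within a single spherical harmonic degree, the inner and outer subspaces are not $L^2$-orthogonal on $A_l$, so one must choose $L$ large enough that the exponential separation of growth rates ($e^{2L}$ versus $e^{-2L}$) beats all cross terms, after which the three-circle alternative (c) holds uniformly for every nontrivial mean-zero flat biharmonic function.
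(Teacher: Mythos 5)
The paper does not prove this theorem: it is quoted verbatim from \cite{liu2015finite} (Theorem 4 there), so there is no internal proof to compare against. Your reconstruction --- observe that (c) implies (a) and (b), then prove (c) by a blow-up to the flat biharmonic model followed by a spherical-harmonic analysis --- is the correct template and almost certainly the route Liu--Yin take; the deduction of (a) and (b) from (c) (using that the hypothesis of (a) together with the ``wrong'' branch of (c) forces $F_l=0$) is a clean and correct observation.

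There is, however, a real gap in the step from individual spherical-harmonic degrees to the full flat limit $v_\infty$. Alternative (c) is a disjunction, not a linear inequality. Orthogonality in $\theta$ does give $F_l(v_\infty)=\sum_{k,m}F_l(c_{k,m}Y_{k,m})$ with no cross-terms between distinct $(k,m)$, but if the first branch of (c) holds for some degrees and the second branch for others, the sum can a priori fail both; ``summing over $k\ge 1$ preserves the inequality by orthogonality'' therefore does not follow from what precedes it. The fix is quantitative: because every admissible exponent satisfies $|\alpha|\ge 1$ once $k\ge 1$, whichever branch a given degree satisfies actually holds with the stronger rate $e^{-2L}$ (up to within-degree cross-terms that are subordinate for $L$ large), and the worst case --- a balanced split of $F_0$ between the two camps --- only costs a factor of $2$, which $e^{-L}$ absorbs for $L$ large. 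This needs to be argued explicitly. You should also make the strictness of the flat alternative do the work it must do in the compactness step: $A_{\pm 1}$ share a boundary sphere with $\partial(A_{-1}\cup A_0\cup A_1)$, so Lemma \ref{appbiharmest} gives no interior control there and you only obtain $F_{\pm 1}(v_\infty)\le\liminf F_{\pm 1}(v_k)\le e^{L}$ by lower semicontinuity, not convergence. With the strict flat inequality the contradiction $1< e^{-L}F_{\pm 1}(v_\infty)\le 1$ closes; with a non-strict flat inequality it degenerates into an equality and the blow-up argument is inconclusive.
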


In this paper, we shall apply the above theorem only in the flat metric situation. Correspondingly,
\begin{equation*}
F_ l (v) :=\int_{A_l}\frac{v^2}{|x|^4 } dx.
\end{equation*}

\begin{cor}\label{threecirtotangdecay}
If $u_i-(u_i)^*\equiv v_i$ is an $\eta_0$-approximate biharmonic function on $\Sigma=B_\delta \setminus B_{\lambda_i R}$
in the sense of Definition \ref{defappbiharm} with respect to the Euclidean metric, and it satisfies (\ref{approbihar}) with the condition
\begin{equation}\label{akeycondition3}
||r^{4(1-1/p)}h_i||_{L^p(A_l)}\leq \eta(r)\varepsilon.
\end{equation}
Here $(u_i)^*(r)=\frac{1}{|\partial B_r|}\int_{\partial B_r(0)}u_id\sigma$  and $\eta(r)=O\left((\frac{\lambda_iR}{r})^{\kappa}+(\frac{r}{\delta})^{\kappa}\right)$, $\kappa>0$.
Then we have
\begin{eqnarray}\label{tangentialdecayzero}
F_l(v_i)\leq C\varepsilon^2(e^{-\hat{\kappa}(l-l_0)L}+e^{-\hat{\kappa}(l_i-l)L}),
\end{eqnarray}
and
\begin{eqnarray}\label{tangentialdecaycor}
& &\int_{(-lL,-(l-1)L)\times S^3}(|\triangle_{S^3}u_i|^2+|\nabla_{S^3}u_i|^4+|\partial_t \nabla_{S^3}u_i|^2+|\partial_t \triangle_{S^3}u_i|^2)dt d\theta\nonumber\\
&\leq&C\varepsilon^2(e^{-\hat{\kappa}(l-l_0)L}+e^{-\hat{\kappa}(l_i-l)L}),
\end{eqnarray}
where $\hat{\kappa}=\min(1,2\kappa)$.
In particular, if $\kappa \geq1/2$, then $u_i$ satisfies the estimate in
Lemma \ref{lemtangdecay}.

\end{cor}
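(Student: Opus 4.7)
\medskip

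The plan is to run Theorem \ref{thrcircledegeneck} as an iteration scheme across the annular decomposition $\Sigma=\bigcup_{l=l_0}^{l_i} A_l$, using the endpoint smallness coming from \eqref{energysmallnecki} to initiate the iteration on both sides and then converting $L^2$ decay of $v_i$ into the tangential energy bound via the interior estimate Lemma \ref{appbiharmest}. First I would verify the hypotheses of Theorem \ref{thrcircledegeneck} on each $A_l$: the zero-average condition \eqref{akeycondition2} holds by construction since $v_i = u_i-(u_i)^*$, the approximate biharmonic structure \eqref{approbihar} is given, and the metric is exactly Euclidean so (a) in Definition \ref{defappbiharm} is trivially satisfied. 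The only assumption that requires care is the smallness condition \eqref{akeycondition1} on the forcing $h_i$.

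For the iteration itself I would split the indices $l\in\{l_0,\ldots,l_i\}$ into two regimes. If $\eta(r_l)^2\varepsilon^2 \le \eta_0 F_l(v_i)$, then Theorem \ref{thrcircledegeneck}(c) applies and yields the dichotomy $F_l(v_i)\le e^{-L}F_{l-1}(v_i)$ or $F_l(v_i)\le e^{-L}F_{l+1}(v_i)$. If instead $F_l(v_i) < \eta_0^{-1}\eta(r_l)^2\varepsilon^2$, then the target bound at index $l$ is already built into the forcing decay, so no three-circle step is needed. A standard induction propagating from both endpoints $l_0$ and $l_i$, where \eqref{energysmallnecki} gives $F_{l_0}(v_i), F_{l_i}(v_i)\le C\varepsilon^2$ (the $L^2$ oscillation bound on $v_i$ is absorbed by the biharmonic energy via Poincar\'e), yields
\begin{equation*}
F_l(v_i)\ \le\ C\varepsilon^2\bigl(e^{-(l-l_0)L}+e^{-(l_i-l)L}\bigr)\ +\ C\eta(r_l)^2\varepsilon^2.
\end{equation*}
Since $\eta(r_l)^2=O\bigl(e^{-2\kappa(l-l_0)L}+e^{-2\kappa(l_i-l)L}\bigr)$, the two contributions combine and give \eqref{tangentialdecayzero} with the stated exponent $\hat\kappa=\min(1,2\kappa)$.

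To upgrade the $L^2$ decay \eqref{tangentialdecayzero} to the tangential energy estimate \eqref{tangentialdecaycor}, I would apply Lemma \ref{appbiharmest} to the rescaled function $v_i(\rho_l\,\cdot\,)$ on a fixed annulus $B_4\setminus B_1$, where $\rho_l=e^{-lL}$. This produces a $W^{4,p}$ bound on $v_i$ on $A_l$ in terms of the $L^p$ (hence $L^2$) norm of $v_i$ on $A_{l-1}\cup A_l\cup A_{l+1}$ and the appropriately weighted $L^p$ norm of $h_i$. Each of the scale-invariant quantities appearing in \eqref{tangentialdecaycor} is controlled by $\|v_i\|_{W^{4,p}}$ after passing through Sobolev embedding in four dimensions, and the $L^4$ term on $|\nabla_{S^3}u_i|$ is absorbed using the energy smallness \eqref{energysmallnecki} together with interpolation. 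Since $(u_i)^*(r)$ is purely radial, all operators $\nabla_{S^3},\triangle_{S^3},\partial_t\nabla_{S^3},\partial_t\triangle_{S^3}$ annihilate it, so the estimate transfers verbatim from $v_i$ to $u_i$.

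I expect the main obstacle to be the bookkeeping in the dichotomy-with-forcing: one must be careful that, when the three-circle hypothesis \eqref{akeycondition1} fails at some $l$, the resulting upper bound on $F_l(v_i)$ is still compatible with the inductive bounds propagating from neighboring levels where the dichotomy does apply, and that the final exponent is exactly $\hat\kappa=\min(1,2\kappa)$ rather than something smaller. A clean way to handle this is to introduce the comparison sequence $\Phi_l:=C\varepsilon^2(e^{-\hat\kappa(l-l_0)L}+e^{-\hat\kappa(l_i-l)L})$ and verify by induction, from both endpoints inward, that $F_l(v_i)\le\Phi_l$, choosing the constant $C$ large enough to dominate the endpoint initialization and the forcing contribution at every level simultaneously.
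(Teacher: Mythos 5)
Your proposal follows the same strategy the paper sketches in its remark after the corollary (and defers to Lemma 3 and Lemma 4 of Liu--Yin \cite{liu2015finite} for): verify the zero-average hypothesis for $v_i$, split the indices $l$ into the regime where the smallness hypothesis \eqref{akeycondition1} of the three-circle Theorem \ref{thrcircledegeneck} holds (and iterate its dichotomy from the two endpoints, where \eqref{energysmallnecki} provides the initialization) and the regime where it fails (where \eqref{akeycondition3} directly yields $F_l \lesssim \eta_0^{-1}\eta(r_l)^2\varepsilon^2$), observe that $\eta(r_l)^2 = O(e^{-2\kappa(l-l_0)L}+e^{-2\kappa(l_i-l)L})$ so the combined bound has exponent $\hat\kappa = \min(1,2\kappa)$, and upgrade the $L^2$ decay to the tangential energy estimate via the interior estimate of Lemma \ref{appbiharmest} together with Sobolev embedding, using that $(u_i)^*$ is radial so the tangential operators annihilate it. Your comparison-sequence idea $\Phi_l$ is a reasonable way to organize the dichotomy-with-forcing bookkeeping that the paper leaves implicit; the substance of the argument matches.
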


\begin{rem}
Corollary \ref{threecirtotangdecay} is almost a direct consequence of Theorem \ref{thrcircledegeneck} and Lemma \ref{appbiharmest}. Firstly, it is easy to see that $v_i=u_i-(u_i)^*$ satisfies (\ref{akeycondition2}). Secondly, (\ref{energysmallnecki}) says that $F_l(v_i)\leq C\varepsilon^2$ for all $l_0<l<l_i$. Thirdly, when condition (\ref{akeycondition1}) in Theorem \ref{thrcircledegeneck} is not satisfied, (\ref{akeycondition3}) ensures that we can get the desired decay estimate in (\ref{tangentialdecayzero}). Notice that $\nabla_{S^3}u_i=\nabla_{S^3}v_i$, by using Lemma \ref{appbiharmest} and Sobolev embedding theorem we obtain the decay estimate for the tangential part energy of $u_i$ in (\ref{tangentialdecaycor}).
For more details, we refer to the proofs of Lemma 3 and Lemma 4 in \cite{liu2015finite}.
\end{rem}

By using Corollary \ref{threecirtotangdecay}, to prove Lemma \ref{lemtangdecay}, the key is to rewrite the biharmonic map equation for $u_i$ (w.r.t the metric $g$) to be some approximate biharmonic map equation (w.r.t the flat metric), and then apply similar arguments as in \cite{liu2015finite} to show that $u_i-(u_i)^*$ is an approximate biharmonic function.

\begin{proof}[\textbf{Proof of Lemma \ref{lemtangdecay}}]

Recall that the extrinsic biharmonic map equation (\ref{extrequ}) can be written as (see \cite{liu2016neck})
\begin{eqnarray}\label{extrequ3}
\triangle^{2}_g u &=&\alpha_1(u)\nabla_g \triangle _g u \sharp \nabla_g u+\alpha_2(u)\nabla^{2}_g u \sharp \nabla^{2}_g u\nonumber\\
&& + \alpha_3(u)\nabla^{2}_g u \sharp \nabla_g u\sharp \nabla_g u+\alpha_4(u)\nabla_g u \sharp \nabla_g u\sharp \nabla_g u\sharp \nabla_g u.
\end{eqnarray}
Here $\alpha_i$ is a smooth function of $u$ and $\sharp$ is the contraction of tensors (with respect to the metric $g$), for which we have, for example,
\begin{equation}\label{propertytensor}
|\nabla_g \triangle _g u \sharp \nabla_g u|\leq C |\nabla_g \triangle _g u ||\nabla_g u|.
\end{equation}

Let $\rho\in[e^{-(l-1)L},e^{-lL}]$, we can assume
\begin{equation}\label{almostflat1}
\|g_{ij}(\rho x)-\delta_{ij}\|_{C^{4}}<\eta(\rho),
\end{equation}
where $\eta(\rho)\rightarrow 0$ as $\rho \rightarrow 0$. Namely, the metric after scaling to $ B _2 \setminus B_ 1$ is
close to the flat metric in $C^ 4 $ norm.

\

We make the following:

\

\noindent{\bf Claim 0}: Actually we can take $\eta(\rho)=O(\rho^2)$.

\

In local coordinates on $ B _2 \setminus B_ 1$, we have
\begin{equation*}
(\nabla_gu)^{i}=g^{ij}\partial_j u=g\sharp \partial u
\end{equation*}
and
\begin{eqnarray*}
\triangle_g u&=&\frac{1}{\sqrt{g}}\partial_i(\sqrt{g}g^{ij} \partial_j u)\\
&=&\frac{1}{\sqrt{g}}(\partial \sqrt{g})\sharp g\sharp \partial u+\partial g\sharp \partial u+g\sharp \partial\partial u,
\end{eqnarray*}
where $g\equiv\det{[g_{ij}]}$ and $\partial g$ denotes the first derivatives of the coefficients of the metric tensor. Note that $\sharp$ in the last two formulas denote Einstein summation in local coordinates, we use the same notation here since both tensor contraction and Einstein summation satisfy estimates of type (\ref{propertytensor}).

Now by the above two formula we rewrite (\ref{extrequ3}) in local coordinates. We will not give the full details here, since it is just a direct computation which is long but not difficult.
We shall compute the term $\nabla_g \triangle _g u \sharp \nabla_g u$, the others follows in a similar way.
\begin{eqnarray*}
& &\nabla_g \triangle _g u \sharp \nabla_g u=g\sharp \partial(\triangle_g u)\sharp g\sharp \partial u\\
&=&g\sharp g\sharp \partial \partial \partial u \sharp g\sharp \partial u+(\frac{1}{\sqrt{g}}g\sharp \partial \sqrt{g}\sharp\partial g+2g\sharp \partial g)\sharp \partial \partial u \sharp g\sharp \partial u\\
&+&(g\sharp\partial\frac{1}{\sqrt{g}} \partial \sqrt{g}\sharp g+ \frac{1}{\sqrt{g}}g\sharp \partial \partial\sqrt{g}  \sharp g+\frac{1}{\sqrt{g}}g\sharp \partial\sqrt{g}\sharp \partial g+g\sharp \partial \partial g)\sharp  \partial u\sharp g\sharp \partial u
\end{eqnarray*}
So by using (\ref{almostflat1}), we have
\begin{equation*}
|\nabla_g \triangle _g u \sharp \nabla_g u-\nabla \triangle u \sharp \nabla u|\leq
C\eta |(\nabla^3 u+\nabla^2 u+\nabla u)\sharp \nabla u|
\end{equation*}
on $B _2 \setminus B_ 1$,
where $g_{kl}\equiv g_{kl}(\rho x)$ and $\nabla, \triangle $ are the gradient operator and Laplacian operator with respect to the flat metric on $ B _2 \setminus B_ 1$.

By repeated computation (with the use of (\ref{almostflat1})) as above, it is easy to see that
(\ref{extrequ3}) (for $u_i(\rho \tilde{x})$) in local coordinates $(\tilde{x})$ on $ B _2 \setminus B_ 1$ is equivalent to
\begin{eqnarray*}
\triangle^2 u&=&\alpha_1(u)\nabla \triangle  u \sharp \nabla u+\alpha_2(u)\nabla^{2} u \sharp \nabla^{2} u\nonumber\\
&+&\alpha_3(u)\nabla^{2} u \sharp \nabla u\sharp \nabla u+\alpha_4(u)\nabla u \sharp \nabla u\sharp \nabla u\sharp \nabla u+\tilde{\tau}(u),
\end{eqnarray*}
where $\tilde{\tau}(u)$ satisfies
\begin{eqnarray*}
|\tilde{\tau}(u)|&\leq &C\eta(|\nabla^4 u|+\cdots+|\nabla u|)+C\eta(|\nabla^3 u|+|\nabla^2 u|+|\nabla u|) |\nabla u |\nonumber\\
&+&C\eta(|\nabla^2 u|+|\nabla u|)(|\nabla^2 u|+|\nabla u|)+C\eta|\nabla u|^4.
\end{eqnarray*}

Therefore from (\ref{energysmallnecki}) and the $\varepsilon$-regularity theorem (Theorem \ref{smallenergythm}) together with the use of Sobolev embedding theorems and H\"{o}lder inequality, we can obtain that for any $p>1$,
\begin{eqnarray}\label{tauestm}
||\tilde{\tau}(u_i(\rho \tilde{x}))||_{L^p(B _2 \setminus B_ 1)}\leq C\varepsilon \eta(\rho).
\end{eqnarray}

After scaling back to the scale of $A_l=B_{e^{-(l-1)L}} \setminus B_{e^{-lL}}$,
the equation (\ref{extrequ3}) for $u_i(x)$  in local coordinates is equivalent to
\begin{eqnarray*}
\triangle^2 u&=&\alpha_1(u)\nabla \triangle  u \sharp \nabla u+\alpha_2(u)\nabla^{2} u \sharp \nabla^{2} u\nonumber\\
&+&\alpha_3(u)\nabla^{2} u \sharp \nabla u\sharp \nabla u+\alpha_4(u)\nabla u \sharp \nabla u\sharp \nabla u\sharp \nabla u+{\tau}(u).
\end{eqnarray*}
It is easy to see that ${\tau(u_i(x))}=\rho^{-4}\tilde{\tau}(u_i(\rho \tilde{x})). $
By using (\ref{tauestm}), we have that
\begin{eqnarray*}\label{tensionfieldest2}
|||x|^{4(1-1/p)}{\tau}(u_i(x))||_{L^p(A_l)}\leq C \varepsilon\eta(e^{-lL}).
\end{eqnarray*}

Next, by applying an argument as in the proof of Lemma 2 of \cite{liu2015finite}, we know that
$v_ i = u _ i - u^*_{i}$
is an $\eta_0$-approximate biharmonic function (w.r.t. the Euclidean metric) in the sense of Definition \ref{defappbiharm} defined on $B _\delta \setminus B_{ \lambda_i R}$,  if $i$ and $R$ are large,  $\delta$ and $\varepsilon$ are small. That is
\begin{eqnarray}\label{similar33}
\triangle^2v_i(r,\theta)&=&a_1\nabla\triangle v_i+a_2\nabla v_i+a_3\nabla v_i+a_4 v_i \nonumber\\
&+&\frac{1}{|\partial B_r|}\int_{\partial B_r}b_1\nabla\triangle v_i+b_2\nabla v_i+b_3\nabla v_i+b_4 v_i d\sigma +h_i(x),
\end{eqnarray}
where the coefficients satisfy the conditions in Definition \ref{defappbiharm} and
\begin{eqnarray*}
h_i=\tau(u_i)-\frac{1}{|\partial B_r|}\int_{\partial B_r(0)}\tau(u_i)d\sigma.
\end{eqnarray*}
 Here $\eta_0>0$ is the small constant in Theorem \ref{thrcircledegeneck}.

Now, we are able to show the decay on the neck for the tangential part energy. Thanks to {\bf Claim 0}, we have that
\begin{eqnarray*}
& &|||x|^{4(1-1/p)}h_i||_{L^p(A_l)}\\
&\leq&C||\tilde{\tau}(u_i) ||_{L^p(B_2\setminus B_1)} =O(\varepsilon e^{-2lL})=O(\varepsilon|x|^2).
\end{eqnarray*}
In particular, $h_i$ satisfies the condition (b) in Definition \ref{defappbiharm} when $\varepsilon>0$ is small enough. Therefore, by applying Corollary \ref{threecirtotangdecay}, we have for any $\varepsilon>0$, if $i$ and $R$ are large enough and $\delta>0$ is sufficiently small, then
\begin{eqnarray*}\label{tangentialdecay}
& &\int_{(-lL,-(l-1)L)\times S^3}(|\triangle_{S^3}u_i|^2+|\partial_t \nabla_{S^3}u_i|^2+|\partial_t \triangle_{S^3}u_i|^2)dt d\theta\nonumber\\
&\leq&C\varepsilon^2\left(e^{-(l-l_0)L}+e^{-(l_i-l)L}\right),
\end{eqnarray*}
where $\log \delta=-l_0L$ and $\log\lambda_i R=-l_iL$.
\end{proof}

To complete the above proof, now we prove \noindent{\bf Claim 0}.

\begin{proof}[\textbf{Proof of \noindent{\bf Claim 0}}]

Since the metric $g$ is fixed, we can assume that the metric on $B_{3\rho}$ has bounded geometry, namely, the norm of the Riemannian curvature tensor $|R_g|_g$ and the norms of its covariant derivatives $|\nabla_g R_g|_g$, $|\nabla^2_{g}R_g|_g$ are bounded. Then by the scaling property of these tensors, we have that the metric $\tilde{g}=\rho^2 g$ on $ B _2 $ (w.r.t $\tilde{g}$)
satisfies
\begin{eqnarray*}
|R_{\tilde{g}}|_{\tilde{g}}<C\rho^2, \quad
|\nabla_{\tilde{g}} R_{\tilde{g}}|_{\tilde{g}}<C\rho^3,\quad
|\nabla^2_{\tilde{g}}R_{\tilde{g}}|_{\tilde{g}}<C\rho^4.
\end{eqnarray*}
Then in some coordinates $\tilde{x}$ (see Chapter 5 of \cite{schoen1994lectures} or Section 5 of \cite{lee1987yamabe})
\begin{eqnarray*}
{\tilde{g}}_{pq}(\tilde{x})&=&\delta_{pq}+1/3R_{{\tilde{g}};pijq}\tilde{x}^i\tilde{x}^j+1/6R_{\tilde{g};pijq,k}\tilde{x}^i\tilde{x}^j\tilde{x}^k\\
& &+(1/20 R_{{\tilde{g}};pijq,kl}+2/45R_{{\tilde{g}};pijm}R_{{\tilde{g}};qklm})\tilde{x}^i\tilde{x}^j\tilde{x}^k\tilde{x}^l+O(\rho^5),
\end{eqnarray*}
Notice that if we set $x=\rho\tilde{x}$, then  ${\tilde{g}}_{ij}(\tilde{x})=g_{ij}(x)$, hence
\begin{eqnarray*}\label{almostflat2}
g_{ij}(\rho \tilde{x})-\delta_{ij}&<&C\rho^2,\nonumber\\
\partial_kg_{ij}(\rho \tilde{x}),\partial_k\partial_lg_{ij}(\rho \tilde{x})&<&C\rho^2,\\
\partial^3g_{ij}(\rho \tilde{x})<C\rho^3, \,\,\partial^4g_{ij}(\rho \tilde{x})&<&C\rho^4,\nonumber
\end{eqnarray*}
where $\partial_i u\equiv\frac{\partial}{\partial \tilde{x}^i}u$ and  $\partial^3$ represents all the third derivatives (w.r.t. $\tilde{x}$).
\end{proof}

\subsection{Poho\v{z}aev identity on Riemannian manifolds} \label{Pohogenermetric}
Recall that Poho\v{z}aev type estimates or identities for biharmonic maps on  $B_r(0)\subset \mathbb{R}^4$ were already derived in \cite{wang2004remarks, hornung2012energy, wang2012energysphere, wang2012energy, liu2015finite,liu2016neck,laurain2013energy}.

In this subsection, we shall establish the Poho\v{z}aev identity for extrinsic biharmonic maps defined on general Riemannian manifolds. Let $(B_1(p), g)$ be a geodesic ball in $(M,g)$. Let $(x)$ be the normal coordinates centered at the point $p$, and set $e_i=(\frac{\partial}{\partial x^i})_p$, $i=1,\cdots,4$. For any $q\in B_1(p)$, we can obtain an orthogonal frame $\{e_i(q)\}, i=1,\cdots 4,$ by parallel transport along the geodesic line from $p$ to $q$.
Set $r^2=\sum_{i=1}^{4}x_{i}^2$, then we have $r\partial_r=\sum_{i=1}^{4}x^ie_i$.

Firstly, we multiply the extrinsic biharmonic map equation by $x^k e_k(u)$ and then we do integration by part. Thanks to the fact that $\triangle^2_{g}u \bot T_u N$ almost everywhere for an extrinsic biharmonic map $u$, we have
\begin{eqnarray*}
0& =&\int_{B_r(p)}(x^k e_k(u))(\triangle^2_{g} u)dV_g\\
&=&-\int_{B_r(p)}\langle \nabla_g(x^k)e_k u,\nabla_g(\triangle_g u)\rangle dV_g-\int_{B_r(p)}\langle x^k \nabla_g e_ku, \nabla_g\triangle u\rangle\\
& & +\int_{\partial B_r(p)}(x^k e_k(u))(\partial_r\triangle_{g} u)d \sigma_g\\
&=&\int_{B_r(p)}(\triangle_g x^k)e_ku (\triangle _g u )d V_g+2\int_{B_r(p)}(\nabla_gx^k)\nabla_g(e_k u)\triangle_g udV_g\\
& &-\int_{\partial B_r(p)}\partial_r(x^k e_ku)\triangle _g u d\sigma_g
+\int_{B_r(p)}x^k \triangle_g(e_k u)\triangle_g u dV_g\\
& &
+\int_{\partial B_r(p)}(x^k e_k(u))(\partial_r\triangle_{g} u)d \sigma_g.
\end{eqnarray*}

Next, we show that $$\int_{B_r(p)}x^k \triangle_g(e_k u)\triangle_g u dV_g$$ is
 closely associated with the following quantity $$r\int_{\partial B_r(p)}\frac{|\triangle_g u|^2}{2}d\sigma_g-\int_{B_r(p)} \text{div}_g(r\partial_r)\frac{|\triangle_g u|^2}{2} dV_g. $$
Direct calculations show
\begin{eqnarray*}
& &r\int_{\partial B_r(p)}\frac{|\triangle_g u|^2}{2}d\sigma_g=\int_{B_r(p)}
\text{div}(\frac{|\triangle_g u|^2}{2}x^ke_k)dV_g\\
&=&\int_{B_r(p)}x^ke_k(\triangle_g u) \triangle_gu dV_g+\int_{B_r(p)}\text{div}_g(r\partial_r)\frac{|\triangle_g u|^2}{2} dV_g,
\end{eqnarray*}
By the Ricci identity,
\begin{equation*}
x^k \triangle_ge_k u=x^ke_k(\triangle_g u)+Ric_g(\nabla_g u, x^ke_k).
\end{equation*}
It follows that
\begin{eqnarray*}
& &\int_{B_r(p)}x^k (\triangle_ge_k u )\triangle_gu dV_g\\
&=&\int_{B_r(p)}[x^ke_k(\triangle_g u)+Ric_g(\nabla_g u, x^ke_k)]\triangle_gu dV_g\\
&=&r\int_{\partial B_r(p)}\frac{|\triangle_g u|^2}{2}d\sigma_g+\int_{B_r(p)}Ric_g(\nabla_g u, x^ke_k)\triangle_gu -\text{div}_g(r\partial_r)\frac{|\triangle_g u|^2}{2} dV_g.
\end{eqnarray*}

Therefore, by putting the above formulas together, we have the following Poho\v{z}aev identity for biharmonic maps over Riemannian geodesic balls:
\begin{eqnarray*}
0&=&\int_{B_r(p)}(\triangle_g x^k)e_ku (\triangle _g u )d V_g+2\int_{B_r(p)}(\nabla_gx^k)\nabla_g(e_k u)\triangle_g udV_g\\
& &
-\int_{B_r(p)}4\frac{|\triangle_g u|^2}{2} dV_g
-\int_{\partial B_r(p)}\partial_r(x^k e_ku)\triangle _g u d\sigma_g\\
& &
+\int_{\partial B_r(p)}(x^k e_k(u))(\partial_r\triangle_{g} u)d \sigma_g+r\int_{\partial B_r(p)}\frac{|\triangle_g u|^2}{2}d\sigma_g\\
& &+\int_{B_r(p)}Ric_g(\nabla_g u, x^ke_k)\triangle_g u dV_g-\int_{B_r(p)}(\text{div}_g(r\partial_r)-4)\frac{|\triangle_g u|^2}{2} dV_g.
\end{eqnarray*}

Finally, by putting the boundary terms to the left hand side, we have that
\begin{eqnarray}\label{pohoidentgenrl1}
& &\int_{\partial B_r(p)}(r\partial_ru)(\partial_r\triangle_{g} u)+r\frac{|\triangle_g u|^2}{2} -\partial_r(r\partial_ru)\triangle _g u d\sigma_g\nonumber\\
&=&-\int_{B_r(p)}(\triangle_g x^k)e_ku (\triangle _g u )-2\int_{B_r(p)}(\nabla_gx^k)\nabla_g(e_k u)\triangle_g u-|\triangle_g u|^2dV_g\\
& &+\int_{B_r(p)}(\text{div}_g(r\partial_r)-4)\frac{|\triangle_g u|^2}{2} dV_g
- \int_{B_r(p)}Ric_g(\nabla_g u, x^ke_k)\triangle_g udV_g.\nonumber
\end{eqnarray}

We shall denote the integrand in the left hand side of (\ref{pohoidentgenrl1}) by $\square$, and denote the whole right hand side integration term by $ P(g,r)$.

To make better use of (\ref{pohoidentgenrl1}), we need to decompose $\triangle_g u$ into the radial part and the tangential part.
Let $ \widetilde{\partial B_r(p)} $ be the sphere $\partial B_r(p)$ with the metric $\frac{g_r}{r^2}$, where $g_r$ is the metric of $\partial B_r(p)$ induced from $M$.
Then we have
\begin{equation*}
\triangle_g u=\partial^2_{r}u+ \frac{\tilde{m}}{r}\partial_ru+\frac{1}{r^2}\tilde{\triangle}u,
\end{equation*}
where $\tilde{m}$ is the mean curvature, $\tilde{\nabla}$ is the gradient operator, and $\tilde{\triangle}$ is the associated Laplace operator of $ \widetilde{\partial B_r(p)} $.
And direct computations yield
\begin{eqnarray*}
(\triangle_{g}u)^2&=&(\partial^{2}_ru)^2+(\frac{\tilde{m}}{r})^2(\partial_r u)^2+\frac{1}{r^4}(\tilde{\triangle}u)^2\\
 & &+(\frac{2\tilde{m}}{r})(\partial_r u)(\partial_r^{2} u)^2+\frac{2}{r^2}(\tilde{\triangle}u)(\partial_r^{2} u)+(\frac{2\tilde{m}}{r^3})(\partial_r u)(\tilde{\triangle}u).
\end{eqnarray*}

The integrand term in the left hand side of (\ref{pohoidentgenrl1}) $$\square=(r\partial_ru)(\partial_r\triangle_{g} u)+r\frac{|\triangle_g u|^2}{2} -\partial_r(r\partial_ru)\triangle _g u $$ consists of the following two terms denoted by $r\square_1$ and $r\square_2$:

\begin{eqnarray}\label{leftterm11}
\square_1&\equiv&(\frac{1}{2}(\triangle_{g}u)^2-(\partial^2_{r}u)\triangle _g u)\nonumber\\
&=&-1/2(\partial^{2}_ru)^2+
\left(\frac{\tilde{m}^2}{2r^2}\right)(\partial_r u)^2+\frac{1}{2r^4}(\tilde{\triangle}u)^2+\left(\frac{\tilde{m}}{r^3}\right)(\partial_r u)(\tilde{\triangle}u),
\end{eqnarray}
\begin{eqnarray}\label{leftterm22}
\square_2 &\equiv& (\partial_ru)(\partial_r\triangle_{g} u)-\frac{1}{r}(\partial_ru)(\triangle_{g} u)\nonumber\\
&=&(\partial_r u)(\partial^{3}_r u)+\frac{\tilde{m}-1}{r}(\partial_r u)(\partial^{2}_r u)+\left(\frac{\partial_r\tilde{m}}{r}-\frac{2\tilde{m}}{r^2}\right)(\partial_r u)^2\\
& &+\frac{1}{r^2}(\partial_r u)(\partial_r\tilde{\triangle} u)-\frac{\tilde{m}}{r^3}(\partial_r u)(\tilde{\triangle} u).\nonumber
\end{eqnarray}

So, the identity (\ref{pohoidentgenrl1}) can be written briefly as
\begin{equation}\label{pohozaevmodnondege}
\int_{\partial B_r(p)} r(\square_1+\square_2 )d\sigma_g=P(g,r).
\end{equation}

Next, we shall estimate $P(g,r)$ in (\ref{pohozaevmodnondege}). It turns out that this estimate is crucial in our proof of Theorem \ref{noneckthm}. A key observation is that the quantity $P(g,r)$ involves at most the second derivatives of $u$. We remark that the idea of putting the effect of the metric $g$ into the bi-tension field (or error term) does not work for the Poho\v{z}aev type argument.

For a fixed Riemannian manifold $(M,g)$, we can assume that (set $\tilde{x}=\rho^{-1}x$)
\begin{equation*}\label{almostflatmetricnondege}
\|g_{ij}(\rho \tilde{x})-\delta_{ij}\|_{C^{4}(B_2)}<C\rho^2.
\end{equation*}
 Namely, the metric after scaling is
close to the flat metric in $C^ 4 $ norm.

\begin{lem}\label{almostflatconseq}
Let $x^i$, $e_i$, $\tilde{\triangle}$ and $\tilde{m}$ be as above, then it is not hard to check that for all $q\in B_{2\rho}(p)$, the followings hold:

\begin{itemize}
\item[1)] $$|e_i(x^j)-\delta_{i}^j| <C\rho^2;$$

\item[2)] for $i=1, \cdots,4$, $$ |\triangle _g x^i| <C\rho;$$

\item[3)] on $ \widetilde{\partial B_\rho(p)} $, for any smooth function $f$,
\begin{eqnarray*}
  |\nabla_{S^3}f-\tilde{\nabla}f|& <&C\rho^2|\nabla_{S^3}f|,\\
  |\triangle_{S^3}f-\tilde{\triangle}f| &<&C\rho^2(|\triangle_{S^3}f|+|\nabla_{S^3}f|),
\end{eqnarray*}
where the norm is dependent on the standard metric on $S^3$ and it is equivalent to the norm defined by $\frac{g_r}{r^2}$,
and
$$|\partial_r\triangle_{S^3}f-\partial_r\tilde{\triangle}f| <C\rho^2(|\partial_r\triangle_{S^3}f|+|\partial_r\nabla_{S^3}f|)+C\rho(|\triangle_{S^3}f|+|\nabla_{S^3}f|);$$

\item[4)] on $ \widetilde{\partial B_\rho(p)} $, $$\triangle_g r=\frac{3+O(r^2)}{r} ,\,\, \tilde{m}=r\triangle_g r=3+O(r^2);$$

\item[5)] on $ \widetilde{\partial B_\rho(p)} $, $$ |\partial_r\tilde{m}|<C\rho;$$

\item[6)] $$\text{div}_g(r\partial_r)-4=r\triangle_g r+1-4=O(r^2).$$

\end{itemize}

\end{lem}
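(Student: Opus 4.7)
The plan is to derive all six items from the standard normal-coordinate expansion of the metric at $p$, which guarantees $g_{ij}(x)=\delta_{ij}+\tfrac13 R_{ikjl}(p)x^kx^l+O(|x|^3)$ and $\Gamma^k_{ij}(x)=O(|x|)$. Combined with the standing assumption $\|g_{ij}(\rho\tilde{x})-\delta_{ij}\|_{C^4(B_2)}<C\rho^2$, this encodes everything we need; the lemma is essentially a bookkeeping statement converting these scaling estimates into pointwise bounds on various geometric objects on $B_{2\rho}(p)$ and on the geodesic sphere $\partial B_\rho(p)$.

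For item (1), the frame $\{e_i\}$ is defined by parallel transport of $\{(\partial/\partial x^i)_p\}$ along radial geodesics $\gamma_v(t)=tv$. I would integrate the parallel transport ODE $\dot e_i^{\,k}=-\Gamma^k_{jl}(tv)\,v^j e_i^{\,l}$ from $0$ to $\rho$, use $\Gamma^k_{jl}=O(|x|)$ to get $e_i^{\,k}-\delta_i^{\,k}=O(\rho^2)$, and then write $e_i(x^j)=e_i^{\,k}\partial_k x^j=e_i^{\,j}$ to conclude. For (2), the formula $\triangle_g x^i=\frac{1}{\sqrt{g}}\partial_k(\sqrt{g}\,g^{ki})$ together with $\partial g=O(|x|)$ gives the desired $O(\rho)$ bound directly.

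For items (3)--(6), the main point is that the induced metric $g_r$ on $\partial B_r(p)$ differs from $r^2 g_{S^3}$ by $O(r^4)$ entries (in suitable spherical coordinates), hence $g_r/r^2$ is $O(r^2)$-close to the round metric in $C^4$; this comparison yields the three inequalities in (3) for $\nabla_{S^3}$, $\triangle_{S^3}$ versus $\tilde\nabla$, $\tilde\triangle$, with the extra $\rho$-loss in the $\partial_r$-comparison coming from differentiating the metric correction $g_r/r^2-g_{S^3}=O(r^2)$ in $r$. Items (4) and (6) follow by expanding $\triangle_g r=r^{-1}\partial_r\log\sqrt{g(r,\theta)}\cdot r + \dots$ with $\sqrt{g}=r^3(1+O(r^2))$ (so the Euclidean value $3/r$ picks up an $O(r)$ correction), together with $\mathrm{div}_g(r\partial_r)=r\triangle_g r+g(\nabla_g r,\nabla_g r)+\dots = r\triangle_g r+1+O(r^2)$. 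Item (5) is then just $\partial_r$ applied to $\tilde m=3+O(r^2)$.

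The only mildly delicate point is the $\partial_r$-comparison between $\triangle_{S^3}$ and $\tilde\triangle$ in (3), because one must track which derivatives hit the $O(\rho^2)$ metric error and which hit the function $f$; the split naturally gives one term of the stated form with $O(\rho^2)$ coefficient and a lower-order term with $O(\rho)$ coefficient (from $\partial_r$ landing on the metric correction, which is itself $O(r)$ after differentiation). Apart from this, every estimate is a direct consequence of the normal-coordinate expansion, so no conceptual obstacle is expected.
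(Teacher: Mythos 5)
Your proposal is correct and takes essentially the same route as the paper's Appendix~A argument: item~(1) via the radial parallel-transport ODE with $\Gamma = O(r)$, item~(2) by the coordinate formula for $\triangle_g$, item~(3) from the $C^4$-closeness of $\tilde g_r = g_r/r^2$ to the round metric $g_{S^3}$, and items~(4)--(6) from the standard normal-coordinate expansion giving $\tilde m = r\triangle_g r = 3 + O(r^2)$. The only cosmetic caveats are that $\mathrm{div}_g(r\partial_r) = r\triangle_g r + |\nabla_g r|^2 = r\triangle_g r + 1$ exactly (since $r$ is the distance function), and for item~(5) one should note that the $C^4$ metric bound gives $C^1$-in-$r$ control on the $O(r^2)$ correction to $\tilde m$, not merely the pointwise $O(r^2)$ bound; neither affects the conclusion.
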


\

We put the proof of Lemma \ref{almostflatconseq} in Appendix \ref{pftwolem}.

Recall that
\begin{eqnarray*}\label{pohoidentgenrl}
P(g,r)
&=&-\int_{B_r(p)}(\triangle_g x^k)e_ku (\triangle _g u )-2\int_{B_r(p)}(\nabla_gx^k)\nabla_g(e_k u)\triangle_g u-|\triangle_g u|^2dV_g\\
& &+\int_{B_r(p)}(\text{div}_g(r\partial_r)-4)\frac{|\triangle_g u|^2}{2} dV_g
- \int_{B_r(p)}Ric_g(\nabla_g u, x^ke_k)\triangle_g udV_g.\nonumber
\end{eqnarray*}
With the help of Lemma \ref{almostflatconseq}, we can easily derive the following error estimate:
\begin{lem}\label{rigttermestlem11}
\begin{equation}\label{rigttermest}
  P(g,r)=O(r^2).
\end{equation}
\end{lem}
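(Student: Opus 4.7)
The plan is to estimate each of the four volume integrals making up $P(g,r)$ separately, combining the small-metric bounds collected in Lemma \ref{almostflatconseq} with the biharmonic energy hypothesis $\int_{B_r(p)}|\nabla_g^2 u|^2+|\nabla_g u|^4\leq \Lambda$. As auxiliary input I would first record two consequences of H\"older's inequality: since $|B_r(p)|_g \leq Cr^4$, we have $\int_{B_r}|\nabla_g u|^2 \leq |B_r|^{1/2}\bigl(\int_{B_r}|\nabla_g u|^4\bigr)^{1/2}\leq Cr^2$, and hence Cauchy--Schwarz yields $\int_{B_r}|\nabla_g u|\,|\triangle_g u| \leq Cr$; the bound $\int_{B_r}|\nabla_g^2 u|\,|\triangle_g u| \leq C$ follows directly from Cauchy--Schwarz together with $|\triangle_g u|\leq 2|\nabla_g^2 u|$.

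Three of the four contributions are then immediate. For $-\int_{B_r}(\triangle_g x^k)(e_k u)\triangle_g u \,dV_g$, part~(2) of Lemma \ref{almostflatconseq} gives $|\triangle_g x^k|\leq Cr$ on $B_r$, hence this term is bounded by $Cr\int_{B_r}|\nabla_g u|\,|\triangle_g u|=O(r^2)$. For $\int_{B_r}(\mathrm{div}_g(r\partial_r)-4)\frac{|\triangle_g u|^2}{2}dV_g$, part~(6) provides the coefficient bound $O(r^2)$, so the integral is $\leq Cr^2\Lambda$. The Ricci term is bounded by $\|\mathrm{Ric}_g\|_\infty\cdot|x^k e_k|_g \cdot\int_{B_r}|\nabla_g u|\,|\triangle_g u| \leq Cr\cdot Cr = O(r^2)$, using that the ambient metric on $M$ is fixed and smooth and $|x^k e_k|_g=r$.

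The substantive term is the second one, whose integrand is designed so that the flat-metric contribution cancels and only curvature-scale pieces remain. The key reduction is to expand $\sum_k(\nabla_g x^k)\cdot\nabla_g(e_k u)$ around $\triangle_g u$. From part~(1) of Lemma \ref{almostflatconseq}, together with the standard expansion $(e_k)^i(q)=\delta_k^i+O(r^2)$ of the parallel transported frame in the normal chart (which relies on $\Gamma^m_{ij}(p)=0$ and $\Gamma^m_{ij}=O(r)$), one verifies $|\nabla_g x^k - e_k|_g=O(r^2)$ and $|\partial_m(e_k)^i|=O(r)$. A short computation then produces the pointwise identity
\begin{equation*}
\sum_k (\nabla_g x^k)\cdot \nabla_g(e_k u) \;=\; \triangle_g u \;+\; O(r)\,|\nabla_g u| \;+\; O(r^2)\,|\nabla^2_g u|.
\end{equation*}
Multiplying by $\triangle_g u$ and subtracting $|\triangle_g u|^2$ leaves an integrand dominated pointwise by $Cr|\nabla_g u|\,|\triangle_g u|+Cr^2|\nabla^2_g u|\,|\triangle_g u|$, which integrates to $O(r^2)$ by the two auxiliary estimates. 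Summing the four contributions yields $P(g,r)=O(r^2)$.

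The main obstacle I anticipate is precisely this pointwise expansion: the other three terms are routine once the bounds of Lemma \ref{almostflatconseq} are in hand, but for the second one both $\sum_k(\nabla_g x^k)\cdot\nabla_g(e_k u)$ and $\triangle_g u$ are individually of order one, and one must verify carefully in the normal chart that all leading order contributions cancel, leaving only the curvature-scale remainders $O(r)|\nabla_g u|$ and $O(r^2)|\nabla^2_g u|$.
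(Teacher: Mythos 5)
Your proof is correct and essentially identical in approach to the paper's: estimate each of the four integrals separately using the metric bounds of Lemma \ref{almostflatconseq} together with the H\"older estimate $\int_{B_r}|\nabla_g u||\triangle_g u|\,dV_g\leq Cr$. One small remark in your favour: your pointwise expansion of $\sum_k(\nabla_g x^k)\cdot\nabla_g(e_k u)$ correctly records the $O(r)|\nabla_g u|$ remainder coming from $(\nabla_{e_l}e_k)u$ — the frame $\{e_k\}$ is parallel only along radial geodesics, so $\nabla_{e_l}e_k=O(r)$ rather than $0$ in tangential directions — whereas the paper's displayed computation reports only the $O(r^2)|\nabla^2_g u|$ piece; this extra term is harmless in either treatment since $\int_{B_r}O(r)|\nabla_g u||\triangle_g u|\,dV_g=O(r^2)$.
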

\begin{proof}
Indeed, by 1) of Lemma \ref{almostflatconseq},
\begin{eqnarray*}
  (\nabla_gx^k)\nabla_g(e_k u)&=&\sum_{l=1}^{4}e_lx^k\nabla_{e_l}(e_k u)=e_kx^k\nabla_{e_k}(e_k u)+\sum_{l\neq k}e_lx^k\nabla_{e_l}(e_k u)\\
  &=&O(r^2)|\nabla^{2}_g u|+\triangle_g u.
\end{eqnarray*}
So thanks to
$$ \int_{B_1}|\nabla^2_{g} u|^2+|\nabla_{g} u|^4dV_g<C,$$
it follows that
\begin{eqnarray*}
 & & \int_{B_r(p)}(\nabla_gx^k)\nabla_g(e_k u)\triangle_gu-{|\triangle_g u|^2}  dV_g\\
 &=&\int_{B_r(p)}  O(r^2)|\nabla^{2}_g u|^2  dV_g=O(r^2).
\end{eqnarray*}

By using H\"{o}lder inequality, we have
\begin{eqnarray*}
& &\int_{B_r(p)}|\triangle_g u||\nabla_g u|dV_g\\
&\leq&(\int_{B_r(p)}|\nabla^{2}_{g} u|^2dV_{g})^{1/2}(\int_{B_r(p)}|\nabla_{g} u|^4dV_{g})^{1/4}(\int_{B_r(p)}dV_{g})^{1/4}
\leq Cr.
\end{eqnarray*}
Notice that $Ric_g$ is bounded, $\triangle _g x^i=O(r )$ and  $\text{div}_g(r\partial_r)-4=O(r^2)$ by Lemma \ref{almostflatconseq}, then (\ref{rigttermest}) follows from simple computations.
\end{proof}

Now we shall reveal from (\ref{pohozaevmodnondege}) the relationship between the tangential part energy and the radial part energy, see (\ref{pohoradictangen3}) and (\ref{pohoradictangen4}) in the following arguments.

Integrating by part, we get
\begin{eqnarray}\label{coaera333}
& &\int_{B_R\setminus B_r}(\partial_r u)(\partial^{3}_r u)dV_g=\int_{r}^{R}ds\int_{\partial B_s}(\partial_s u)(\partial^{3}_s u)d\sigma_g\nonumber\\
&=&\int_{r}^{R}ds\int_{S^3}(s^3\sqrt{g(s,\theta)})(\partial_s u)(\partial^{3}_s u)d\theta\\
&=&\int_{r}^{R}ds\frac{d}{ds}\int_{S^3}(s^3\sqrt{g(s,\theta)})(\partial_s u)(\partial^{2}_s u)d\theta\nonumber\\
& &-\int_{r}^{R}ds\int_{S^3}(\frac{d}{ds}(s^3\sqrt{g(s,\theta)}))(\partial_s u)(\partial^{2}_s u)d\theta-\int_{r}^{R}ds\int_{S^3}(s^3\sqrt{g(s,\theta)})(\partial_s ^2 u)^2d\theta.\nonumber
\end{eqnarray}
Here $\sqrt{g}$ is the square root of the determinant of the metric $g$.
Set $A(r, \theta)=r^3\sqrt{g(r,\theta)}$, notice that
\begin{equation*}
\frac{\partial_r A(r,\theta)}{A(r,\theta)}=\triangle_gr=\frac{3}{r}+O(r),
\end{equation*}
we have
\begin{eqnarray}\label{coaera444}
\int_{B_R\setminus B_r}(\partial_r u)(\partial^{3}_r u)dV_g&=&\int_{\partial(B_R\setminus B_r)}(\partial_r u)(\partial^{2}_r u)\\
& &-\int_{B_R\setminus B_r}(\partial^{2}_r u)^2+\left(\frac{3}{r}+O(r)\right)(\partial_r u)(\partial^{2}_r u)dV_g.\nonumber
\end{eqnarray}

Recall that
\begin{eqnarray*}\label{leftterm2}
\square_2
&=&(\partial_r u)(\partial^{3}_r u)+\frac{\tilde{m}-1}{r}(\partial_r u)(\partial^{2}_r u)+\left(\frac{\partial_r\tilde{m}}{r}-\frac{2\tilde{m}}{r^2}\right)(\partial_r u)^2\\
& &+\frac{1}{r^2}(\partial_r u)(\partial_r\tilde{\triangle} u)-\frac{\tilde{m}}{r^3}(\partial_r u)(\tilde{\triangle} u).\nonumber
\end{eqnarray*}
Therefore we have
\begin{eqnarray*}
& &\int_{B_R\setminus B_r}\square_2dV_g\\
&=&\int_{\partial(B_R\setminus B_r)}(\partial_r u)(\partial^{2}_r u)d\sigma_g
-\int_{B_R\setminus B_r}(\partial^{2}_r u)^2+\left(\frac{3}{r}+O(r)\right)(\partial_r u)(\partial^{2}_r u) dV_g\\
& &+\int_{B_R\setminus B_r}\frac{\tilde{m}-1}{r}(\partial_r u)(\partial^{2}_r u)+\left(\frac{\partial_r\tilde{m}}{r}-\frac{2\tilde{m}}{r^2}\right)(\partial_r u)^2dV_g\\
& &+\int_{B_R\setminus B_r}\frac{1}{r^2}(\partial_r u)(\partial_r\tilde{\triangle} u)-\frac{\tilde{m}}{r^3}(\partial_r u)(\tilde{\triangle} u)dV_g.
\end{eqnarray*}
In the above $\int_{\partial(B_R\setminus B_r)}f d\sigma_g\equiv \int_{\partial B_R} f d\sigma_g -\int_{\partial B_r} f d\sigma_g.$

Then by direct calculations using 4) and 5) of Lemma \ref{almostflatconseq}, if we choose $R=cr$ for some $c>1$, we have
\begin{eqnarray*}
& &\int_{B_R\setminus B_r}\square_2dV_g\\
&=&\int_{\partial(B_R\setminus B_r)}(\partial_r u)(\partial^{2}_r u)d\sigma_g
+\int_{B_R\setminus B_r}\left(O(1)-\frac{6+O(r^2)}{r^2}\right)(\partial_r u)^2-(\partial^{2}_r u)^2dV_g\\
& &+\int_{B_R\setminus B_r}\frac{2+O(r^2)}{r}(\partial_r u)(\partial^{2}_r u)- \left(\frac{3}{r}+O(r)\right)(\partial_r u)(\partial^{2}_r u)dV_g\\
& &+\int_{B_R\setminus B_r}\frac{1}{r^2}(\partial_r u)(\partial_r\tilde{\triangle} u)-\frac{\tilde{m}}{r^3}(\partial_r u)(\tilde{\triangle} u)dV_g.
\end{eqnarray*}
Hence
\begin{eqnarray*}
& &\int_{B_R\setminus B_r}\square_2dV_g\\
&=&\int_{\partial(B_R\setminus B_r)}(\partial_r u)(\partial^{2}_r u)d\sigma_g
+\int_{B_R\setminus B_r}\left(-\frac{5}{r^2}+O(1)\right)(\partial_r u)^2-(\partial^{2}_r u)^2dV_g\\
& &+\int_{B_R\setminus B_r}\left(-\frac{1}{2r}+O(r)\right)\partial_r (\partial_r u)^2-\frac{1}{r^2}(\partial_r u)^2dV_g\\
& &+\int_{B_R\setminus B_r}\frac{1}{r^2}(\partial_r u)(\partial_r\tilde{\triangle} u)-\frac{\tilde{m}}{r^3}(\partial_r u)(\tilde{\triangle} u)dV_g.
\end{eqnarray*}
Next by the use of the formula $\frac{\partial_r A(r,\theta)}{A(r,\theta)}=\triangle_gr=\frac{3}{r}+O(r)$ again, we get
\begin{eqnarray}\label{coarea1}
\int_{\partial(B_R\setminus B_r)}\frac{1}{2s}(\partial_s u)^2d\sigma_g
&=&\int_{r}^{R}\frac{d}{ds}\int_{\partial B_s}\frac{1}{2s}(\partial_s u)^2d\sigma_gds\nonumber\\
&=&\int_{r}^{R}\frac{d}{ds}\int_{S^3}\frac{1}{2s}(\partial_s u)^2A(s, \theta)d\theta ds\nonumber\\
&=&\int_{r}^{R}\int_{\partial B_s}\frac{1}{2s}\partial_s (\partial_s u)^2-\frac{1}{2s^2}(\partial_s u)^2+\frac{1}{2s}(\triangle_g r)(\partial_s u)^2d\sigma_g ds\\
&=& \int_{r}^{R}\frac{1}{2s}\partial_s (\partial_s u)^2+\frac{1}{s^2}(\partial_s u)^2+O(1)(\partial_s u)^2d\sigma_g\nonumber\\
&=&\int_{B_R\setminus B_r}\frac{1}{2s}\partial_s (\partial_s u)^2+\frac{1}{s^2}(\partial_s u)^2+O(1)(\partial_s u)^2dV_g.\nonumber
\end{eqnarray}
Therefore
\begin{eqnarray*}
& &\int_{B_R\setminus B_r}\square_2dV_g\\
&=&\int_{\partial(B_R\setminus B_r)}(\partial_r u)(\partial^{2}_r u)d\sigma_g
+\int_{B_R\setminus B_r}\left(O(1)-\frac{5}{r^2}\right)(\partial_r u)^2-(\partial^{2}_r u)^2dV_g\\
& &-\int_{\partial(B_R\setminus B_r)}\frac{1}{2r}(\partial_r u)^2d\sigma_g+\int_{B_R\setminus B_r}O(r)\partial_r (\partial_r u)^2dV_g\\
& &+\int_{B_R\setminus B_r}\frac{1}{r^2}(\partial_r u)(\partial_r\tilde{\triangle} u)-\frac{\tilde{m}}{r^3}(\partial_r u)(\tilde{\triangle} u)dV_g.
\end{eqnarray*}

By putting the boundary terms together, we get
\begin{eqnarray}\label{leftterm3}
& &\int_{B_R\setminus B_r}\square_2dV_g\nonumber\\
&=&\int_{\partial(B_R\setminus B_r)}(\partial_r u)(\partial^{2}_r u)-\frac{1}{2r}(\partial_r u)^2d\sigma_g\nonumber\\
& &+\int_{B_R\setminus B_r}\left(O(1)-\frac{5}{r^2}\right)(\partial_r u)^2-(\partial^{2}_r u)^2+O(r)\partial_r (\partial_r u)^2dV_g\\
& &+\int_{B_R\setminus B_r}\frac{1}{r^2}(\partial_r u)(\partial_r\tilde{\triangle} u)-\frac{\tilde{m}}{r^3}(\partial_r u)(\tilde{\triangle} u)dV_g.\nonumber
\end{eqnarray}

By integrating (\ref{pohozaevmodnondege}) we get
\begin{eqnarray}\label{pohozaevmo22}
\int_{B_R\setminus B_r}(\square_1+\square_2)dV_{g}
=\int_r^{R}\int_{\partial B_s }(\square_1+\square_2)d\sigma_{g}ds
=\int_r^{R}\frac{1}{s}P(g,s)ds.
\end{eqnarray}
Recall
\begin{eqnarray*}\label{leftterm1}
\square_1
=-1/2(\partial^{2}_ru)^2+
(\frac{\tilde{m}^2}{2r^2})(\partial_r u)^2+\frac{1}{2r^4}(\tilde{\triangle}u)^2+(\frac{\tilde{m}}{r^3})(\partial_r u)(\tilde{\triangle}u).
\end{eqnarray*}
Then by combining (\ref{pohozaevmo22}), (\ref{leftterm3}) and (\ref{rigttermest}) (namely, $P(g,r)=O(r^2)$) together, we have
\begin{eqnarray}\label{pohoradictangen}
& &\int_{B_R\setminus B_r}\frac{3}{2}(\partial^{2}_r u)^2+\left(\frac{10-\tilde{m}^2}{2r^2}+O(1)\right)(\partial_r u)^2 dV_g\nonumber\\
&\leq&\int_{B_R\setminus B_r}\frac{1}{2r^4}(\tilde{\triangle}u)^2+\frac{1}{r^2}|(\partial_r u)(\partial_r\tilde{\triangle}u)|dV_g\\
& &+\int_{\partial(B_R\setminus B_r)}(\partial_r u)(\partial^{2}_r u)-\frac{1}{2r}(\partial_r u)^2d\sigma_g\nonumber\\
& &+\int_{B_R\setminus B_r}O(r)|\partial_r (\partial_r u)^2|dV_g+\int_{r}^{R}O(s)ds.\nonumber
\end{eqnarray}

Next we estimate $\int_{B_R\setminus B_r}O(r)|\partial_r (\partial_r u)^2|dV_g$. Since
$$ \int_{B_1}|\nabla^2_{g} u|^2+|\nabla_{g} u|^4dV_g$$
is bounded, by H\"{o}lder inequality, we have
\begin{equation*}
\int_{B_R\setminus B_r}|\partial_r (\partial_r u)^2|dV_g\leq Cr.
\end{equation*}
And Cauchy-Schwarz inequality gives us
\begin{eqnarray*}
& &\int_{B_{cr}\setminus B_r}\frac{1}{r^2}(\partial_r u)(\partial_r\tilde{\triangle}u)dV_g\\
&\leq&\int_{B_{cr}\setminus B_r}\frac{1}{r^2} \left(\frac{1}{32}(\partial_r u)^2+8(\partial_r\tilde{\triangle}u)^2\right)dV_g.
\end{eqnarray*}
So (\ref{pohoradictangen}) and 4) of Lemma \ref{almostflatconseq} imply that
\begin{eqnarray}\label{pohoradictangen3}
& &\int_{B_{cr}\setminus B_r}\frac{3}{2}(\partial^{2}_r u)^2+\frac{1}{3r^2}(\partial_r u)^2 dV_g\nonumber\\
&\leq&\int_{B_{cr}\setminus B_r}\frac{1}{2r^4}(\tilde{\triangle}u)^2+\frac{8}{r^2}(\partial_r\tilde{\triangle}u)^2dV_g\\
& &+\int_{\partial(B_{cr}\setminus B_r)}(\partial_r u)(\partial^{2}_r u)-\frac{1}{2r}(\partial_r u)^2d\sigma_g+Cr^2.\nonumber
\end{eqnarray}

Set $r_2(s)=e^{t_0+s}$ and $r_1(s)=e^{t_0-s}$, here $\log(\lambda_iR)<t_0< \log\delta$, and
$$0\leq s \leq \min \left(-t_0+\log\delta,-\log(\lambda_iR)+t_0 \right).$$
Since we can chose $c$ such that
$B_{r_2}\setminus B_{r_1}=\bigcup B_{c^l {r_1}}\setminus B_{r_1} $,
by adding up inequalities as (\ref{pohoradictangen3}), we have
\begin{eqnarray}\label{pohoradictangen4}
& &\int_{B_{r_2}\setminus B_{r_1}}\frac{3}{2}(\partial^{2}_r u)^2+\frac{1}{3r^2}(\partial_r u)^2 dV_g\nonumber\\
&\leq&\int_{B_{r_2}\setminus B_{r_1}}\frac{1}{2r^4}(\tilde{\triangle}u)^2+\frac{8}{r^2}(\partial_r\tilde{\triangle}u)^2dV_g\\
& &+\int_{\partial({B_{r_2}\setminus B_{r_1}})}(\partial_r u)(\partial^{2}_r u)-\frac{1}{2r}(\partial_r u)^2d\sigma_g+\int_{r_1}^{r_2}O(r)dr.\nonumber
\end{eqnarray}

Through the above efforts, now we are ready to prove the energy identity and the no neck identity by the relationship between the tangential part energy and the radial part energy reflected in (\ref{pohoradictangen3}) and (\ref{pohoradictangen4}).

\begin{proof}[\bf Proof of Theorem \ref{noneckthm}]
If we set $c=e^L$, $r=e^t$, then from the estimate
\begin{eqnarray*}
& &\int_{B_{e^L r}\setminus B_r}\frac{1}{r^4}\left((\triangle_{S^3}u_j)^2+|\nabla_{S^3}u_j|^4\right)
+\frac{1}{r^2}|\partial_r\nabla_{S^3}u_j|^2+\frac{1}{r^2}|\partial_r\triangle_{S^3}u_j|^2dx\\
&\leq&C\varepsilon^2 \left(\frac{r}{\delta}+\frac{\lambda_i R}{r}\right)
\end{eqnarray*}
in Lemma \ref{lemtangdecay} and 3)
in Lemma \ref{almostflatconseq} which indicates that the operators $\tilde{\nabla}$ and $\tilde{\triangle}$ are almost equal to $\nabla_{S^3}$ and $\triangle_{S^3}$, we get by Cauchy-Schwarz inequality that
\begin{eqnarray}\label{equaldecay11}
& &\int_{B_{cr}\setminus B_r}\frac{1}{2r^4}(\tilde{\triangle}u_i)^2+\frac{8}{r^2}|(\partial_r\tilde{\triangle}u_i)|^2dV_g\\
&\leq&C\varepsilon^2\left(\frac{r}{\delta}+\frac{\lambda_i R}{r}\right)  \nonumber.
\end{eqnarray}

Recall that $A_l=B_{e^{-(l-1)L}} \setminus B_{e^{-lL}}$, $\log \delta=-l_0L$ and $\log\lambda_i R=-l_iL$.
Then by (\ref{pohoradictangen3}) and (\ref{equaldecay11}) we have
\begin{eqnarray*}\label{pohoradictangen2}
& &\int_{B_{\delta}\setminus B_{\lambda_iR}}\frac{3}{2}(\partial^{2}_r u_i)^2+\frac{1}{3r^2}(\partial_r u_i)^2 dV_g\nonumber\\
&\leq&\sum_{l_0}^{l_i}\int_{A_l}\frac{1}{2r^4}(\tilde{\triangle}u_i)^2+\frac{8}{r^2}|\partial_r\tilde{\triangle}u_i|^2dV_g\\
& &+\int_{\hat{\partial}(B_{\delta}\setminus B_{\lambda_iR})}|(\partial_r u_i)(\partial^{2}_r u_i)|+\frac{1}{2r}(\partial_r u_i)^2d\sigma_g+C\sum_{l_0}^{l_i}e^{-2lL}\nonumber\\
&\leq&C\sum_{l_0}^{l_i}\varepsilon^2 \left(\frac{e^{-lL}}{\delta}+\frac{\lambda_i R}{e^{-lL}}\right)+C\sum_{l_0}^{l_i}e^{-2lL}   \\
& &+\int_{\hat{\partial}(B_{\delta}\setminus B_{\lambda_iR})}|(\partial_r u_i)(\partial^{2}_r u_i)|+\frac{1}{2r}(\partial_r u_i)^2d\sigma_g.
\end{eqnarray*}
In the above, $\hat{\partial}(B_{\delta}\setminus B_{\lambda_iR})=\partial B_{\delta}\cup \partial B_{\lambda_iR}.$
Next by (\ref{energysmallnecki}), we have from the $\varepsilon$-regularity Theorem \ref{smallenergythm} and the Sobolev embedding theorem that
\begin{equation*}
\int_{\hat{\partial}(B_{\delta}\setminus B_{\lambda_iR})}|(\partial_r u_i)(\partial^{2}_r u_i)|+\frac{1}{2r}(\partial_r u_i)^2d\sigma_g\leq C\varepsilon^2.
\end{equation*}
Therefore we have for any $\varepsilon$, there are constants $R_0$, $\delta_0$ and $i_0$ dependent only on $\varepsilon$ , if $i>i_0$, $R>R_0$ and $\delta<\delta_0$, then
\begin{equation}\label{radialenergyestimate}
\int_{B_{\delta}\setminus B_{\lambda_iR}}(\partial^{2}_r u_i)^2+\frac{1}{r^2}(\partial_r u_i)^2 dV_g\leq C\varepsilon^2.
\end{equation}
Combining the last inequality with (\ref{equaldecay11}), we have
\begin{equation*}\label{energyidentityformula}
\lim_{\delta\rightarrow 0} \lim_{R\rightarrow \infty}\lim_{i \rightarrow \infty}\int_{B_\delta \setminus B_{\lambda_i R}}|\nabla^2_{g}u_i|^2+|\nabla_{g} u_i|^4 dV_g=0.
\end{equation*}
So far we have proved the energy identity.

Next, we shall prove the no neck property. Recall that $r_2(s)=e^{t_0+s}$ and $r_1(s)=e^{t_0-s}$, we define
\begin{equation*}
F(s)=\int_{B_{r_2}\setminus B_{r_1}}\frac{3}{2}(\partial^{2}_r u_i)^2+\frac{1}{3r^2}(\partial_r u_i)^2 dV_g.
\end{equation*}
Hence
\begin{eqnarray*}
\partial_sF(s)&=&e^{t_0+s}\int_{\partial B_{r_2} }\frac{3}{2}(\partial^{2}_r u_i)^2+\frac{1}{3r^2}(\partial_r u_i)^2 d\sigma_g\\
& &+e^{t_0-s}\int_{\partial B_{r_1} }\frac{3}{2}(\partial^{2}_r u_i)^2+\frac{1}{3r^2}(\partial_r u_i)^2 d\sigma_g.
\end{eqnarray*}
By applying Cauchy-Schwarz inequality ($\sqrt{3}\leq 1/2(3+1)$), we get
\begin{eqnarray*}
& &\int_{\partial(B_{r_2}\setminus B_{r_1})}(\partial_r u_i)(\partial^{2}_r u_i)d\sigma_g \\
&\leq &\frac{1}{\sqrt{3}}e^{t_0+s}\int_{\partial B_{r_2} }\frac{3}{2}(\partial^{2}_r u_i)^2
+(\frac{1}{2r^2})(\partial_r u_i)^2  d\sigma_g\\
& &+ \frac{1}{\sqrt{3}}e^{t_0-s}\int_{\partial B_{r_1} }\frac{3}{2}(\partial^{2}_r u_i)^2+(\frac{1}{2r^2})(\partial_r u_i)^2 d\sigma_g.
\end{eqnarray*}
By (\ref{equaldecay11}), we get
\begin{eqnarray*}
\int_{B_{r_2}\setminus B_{r_1}}\frac{1}{2r^4}(\tilde{\triangle}u)^2+\frac{8}{r^2}(\partial_r\tilde{\triangle}u)^2dV_g
\leq C\varepsilon^2(e^{t_0-\log\delta}+e^{\log(\lambda_iR)-t_0})e^{s}.
\end{eqnarray*}
So from (\ref{pohoradictangen4}), we can deduce that ($1/\sqrt{3}<2/3$)
\begin{equation}\label{energyode}
F(s)\leq \frac{2}{3}\partial_s F(s)+Ce^{2(t_0+s)}+C\varepsilon^2(e^{t_0-\log\delta}+e^{\log(\lambda_iR)-t_0})e^{s}.
\end{equation}

Recall $\log(\lambda_iR)<t_0< \log\delta$, and $0\leq s \leq \min(-t_0+\log\delta,-\log(\lambda_iR)+t_0).$ Multiplying $e ^{-\frac{3}{2}s}$ to both sides of (\ref{energyode}), if we choose $\delta$ small such that $\delta<\varepsilon^2$, then we have
\begin{equation*}
\partial_s(e^{-\frac{3}{2}s}F)\geq -C\varepsilon^2(e^{t_0-\log\delta}+e^{\log(\lambda_iR)-t_0})e^{-\frac{1}{2}s},
\end{equation*}
since $ e^{2(t_0+s)}\leq e^{(t_0+s)}=\delta e^{(t_0-\log\delta)}e^s\leq \varepsilon^2e^{(t_0-\log\delta)}e^s.$
Integrating
the above inequality from $s = 1 $ to
$s= \min(-t_0+\log\delta,-\log(\lambda_iR)+t_0)$,
we have from the fact that $F(s)$ is uniformly bounded by $C\varepsilon^2$ (see (\ref{radialenergyestimate})), that
\begin{eqnarray*}
F(1)&=&\int_{B_{e^{t_0+1}}\setminus B_{e^{t_0-1}}}\frac{3}{2}(\partial^{2}_r u_i)^2+(\frac{1}{3r^2})(\partial_r u_i)^2 dV_g\\
&\leq &C\varepsilon^2(e^{t_0-\log\delta}+e^{\log(\lambda_iR)-t_0}).
\end{eqnarray*}

Now we can apply similar arguments as in Section 5 of \cite{liu2016neck} to show that the decay of the radial part energy and the tangential part energy (see (\ref{equaldecay11})) is sufficient to guarantee that the following no neck property hold
\begin{equation*}
\lim_{\delta\rightarrow 0} \lim_{R\rightarrow \infty}\lim_{i \rightarrow \infty}osc_{B_\delta \setminus B_{\lambda_i R}}u_i=0.
\end{equation*}
The idea is to derive the decay of first derivative of $u_i$ from the energy decay by the $\varepsilon$-regularity and the Sobolev embedding theorem. The proof of Theorem \ref{noneckthm}  is finished.
\end{proof}

\

\section{Blow-up analysis for maps from degenerating Einstein manifolds}\label{treesigleale}

Let $(M_i ,g_i)$  be a sequence of $4$-dimensional Einstein manifolds as in Theorem \ref{mainconvgethm}. And let $u_i$ be a sequence of biharmonic maps from  $(M_ i ,g_ i )$ (with uniformly bounded Einstein constants) to $ N$, and satisfy
\begin{equation*}
E(u_i,M_i)\equiv\int_{M_i}|\nabla^2_{g_i}u_i|^2+|\nabla_{g_i} u_i|^4 dV_{g_i}\leq \Lambda,
\end{equation*}
for some $\Lambda > 0$. Without loss of generality, we can assume by Theorem \ref{mainconvgethm} that there exists a subsequence  $\{j\}\subset \{i\}$ such that $(M_ j ,g_ j )$ converges to $(M_ \infty ,g_ \infty )$ which is an Einstein orbifold with only one singularity $x_a$.

\begin{rem}
To study the convergence of maps from a sequence of manifolds, we recall here some necessary background knowledge about the convergence of functions defined on a sequence of metric spaces, see e.g. \cite{Mikhael1981Groups, Grove1991Manifolds}.
Suppose $X=\lim_{G-H}X_i$. Extend the metrics on $X, X_i$ to a metric on $X\coprod X_i$ so that $X_i\rightarrow X$ inside $X\coprod X_i$ in the classical Hausdorff sense. We say that $f_i: X_i\rightarrow \mathbb{R}^N$ converges to $f: X\rightarrow \mathbb{R}^N$ provided $f_i(x_i)\rightarrow f(x)$ whenever $x_i\rightarrow x$.
\end{rem}

\subsection{Blow-up away from orbifold singularities}\label{blow-thick}
In this subsection, we shall consider the simper case that the maps $u_j$ blow up away from orbifold singularities.

Away from the orbifold  singularity $x_a$, $(M_ j ,g_ j )$ converges to $(M_ \infty ,g_ \infty )$ smoothly, and
\begin{equation*}
u_j:M_j \rightarrow N
\end{equation*}
(sub)converges weakly to a map
\begin{equation*}
u_\infty:M_\infty \rightarrow N
\end{equation*}
via some diffeomorphsim
\begin{equation*}
G_j: M_\infty\setminus \{x_a\}\rightarrow M_j.
\end{equation*}
That is
\begin{equation*}
u_j\circ G_j  \rightarrow u_\infty.
\end{equation*}

Now we shall firstly examine how the limit map $u_\infty:M_\infty \rightarrow N$ behave around the singularity $x_a$. By Remark \ref{remorbfsmoth}, $\Pi^{*}_{a}g_\infty $ extends to a smooth metric on $B_1$, and on account of the fact that $u_\infty$ is a smooth map on
$B(x_a, \delta)\setminus\{x_a\} $,
we can consider the map $u_\infty(\Pi_a)$ as a smooth map on $(B_1,\Pi^{*}_{a}g_\infty )\setminus\{0\}$. It is easy to check that $u_\infty(\Pi_a)$ satisfies the biharmonic map equation with respect to the metric $\Pi^{*}_{a}g_\infty $ on $B_1\setminus\{0\}$, and the energy of $u_\infty(\Pi_a)$ over  $(B_1,\Pi^{*}_{a}g_\infty )\setminus\{0\}$  is finite. By the removable singularity Theorem \ref{remsingu}, $u_\infty(\Pi_a)$ extends to a smooth map on $B_1$.
Therefore, $u_\infty$ is a continuous map on $B(x_a,\delta)\subset M_\infty$.
So we conclude the following:
\begin{lem}\label{limmapcontinus}
The limit map $u_\infty$ is a continuous map from $M_\infty$ to $N$.
\end{lem}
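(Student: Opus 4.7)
The proof is essentially assembled from the observations in the paragraph just preceding the statement; my plan is to organize them cleanly and fill in the one step that is not quite explicit, namely that smoothness of $u_\infty\circ\Pi_a$ at $0$ descends to continuity of $u_\infty$ at $x_a$.

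First, away from $x_a$, continuity (in fact smoothness) is immediate: by Theorem \ref{mainconvgethm}(2), the diffeomorphisms $G_j:M_\infty\setminus\{x_a\}\to M_j$ satisfy $F_j^*g_j\to g_\infty$ in $C^\infty_{\mathrm{loc}}$, and the weak $W^{2,2}$ convergence $u_j\circ G_j\to u_\infty$ together with the uniform energy bound lets us apply the $\varepsilon$-regularity Theorem \ref{smallenergythm} and Corollary \ref{smallenergycor} on small balls in $M_\infty\setminus\{x_a\}$. So the only point of interest is $x_a$.

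At $x_a$, I would invoke Remark \ref{remorbfsmoth} to take the covering/quotient map $\Pi_a:B_1\setminus\{0\}\to B(x_a,\delta)\setminus\{x_a\}$, where $B_1\subset\mathbb{R}^4$ and $\Pi_a^*g_\infty$ extends to a smooth Einstein metric $\bar g$ on the whole ball $B_1$. The composition $\bar u:=u_\infty\circ\Pi_a$ is well-defined and smooth on $(B_1,\bar g)\setminus\{0\}$ (since $\Pi_a$ is a smooth local diffeomorphism there and $u_\infty$ is smooth on $B(x_a,\delta)\setminus\{x_a\}$), and it satisfies the extrinsic (resp.\ intrinsic) biharmonic map equation with respect to $\bar g$ because the equation is a local intrinsic identity. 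The finiteness of its biharmonic energy
\[
\int_{B_1\setminus\{0\}}|\nabla^2_{\bar g}\bar u|^2+|\nabla_{\bar g}\bar u|^4\,dV_{\bar g}=|\Gamma_a|\int_{B(x_a,\delta)\setminus\{x_a\}}|\nabla^2_{g_\infty}u_\infty|^2+|\nabla_{g_\infty}u_\infty|^4\,dV_{g_\infty}<\infty
\]
follows from the uniform bound $E(u_j,M_j)\le\Lambda$ passed to the weak limit on the thick part and the bounded covering degree $|\Gamma_a|$. Therefore the removable singularity Theorem \ref{remsingu} applies and produces a smooth biharmonic map (still denoted $\bar u$) on the full ball $(B_1,\bar g)$.

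Finally, to descend continuity to $x_a$: given any sequence $p_n\to x_a$ in $M_\infty$, the orbifold chart realizes a neighbourhood of $x_a$ as $B_1/\Gamma_a$ with $\Pi_a$ the quotient map, so I can pick preimages $\tilde p_n\in B_1$ with $\Pi_a(\tilde p_n)=p_n$; since $\Pi_a$ is proper and finite-to-one and $p_n\to x_a$ corresponds to $\tilde p_n\to 0$ after passing to a subsequence, smoothness of $\bar u$ at $0$ gives $u_\infty(p_n)=\bar u(\tilde p_n)\to \bar u(0)$. The limit is independent of the choice of preimages because $\bar u$ is $\Gamma_a$-equivariantly a single value at $0$, so defining $u_\infty(x_a):=\bar u(0)$ yields continuity at $x_a$. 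Combined with smoothness on $M_\infty\setminus\{x_a\}$, this gives the claim; the same argument applies at each orbifold point when there are several. I do not expect a serious obstacle here — the only subtlety worth flagging is verifying that the pulled-back map on the punctured ball has finite energy in the right norm so that Theorem \ref{remsingu} is applicable.
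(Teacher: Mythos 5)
Your proposal is correct and follows essentially the same route as the paper: pull $u_\infty$ back through the orbifold covering $\Pi_a$, check that $u_\infty\circ\Pi_a$ is a finite-energy smooth biharmonic map on $(B_1,\Pi_a^*g_\infty)\setminus\{0\}$, invoke the removable singularity Theorem \ref{remsingu}, and then descend. The only addition is that you spell out the finite-energy verification (via the factor $|\Gamma_a|$) and the descent of continuity through the quotient, both of which the paper treats as immediate.
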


If $u_j$ blow-up at non-degenerating points $x_{b,j}$, namely, $( B(x_{b,j},r),g_ j )\rightarrow (B(x_b,r),g_\infty) $ smoothly in the Cheeger-Gromov sense, then we can pull the metric of $( B(x_{b,j},r),g_ j )$ back to $ M_ \infty $ via a smooth map
$F_j: B(x_b,r)\rightarrow M_j$. Therefore locally we can consider $u_j$ as a sequence of biharmonic maps defined on $B(x_b,r) $. Without loss of generality, we suppose that away from the orbifold singularity $x_a$, $u_j$ blow-up only at $x_b$, and there is only one bubble map $\omega^b$.

In local coordinates around the blow-up point $x_b$, It is easy to see that there is a sequence of positive numbers $\lambda_j\rightarrow 0$ such that $u_j(\lambda_j x)\rightarrow \omega^b$, where
\begin{equation*}
\omega^b:(\mathbb{R}^4,g_{\text{euc}})\rightarrow N
\end{equation*}
is a biharmonic map and $g_{\text{euc}}$ is the Euclidean metric.

Since $(M_ j ,g_ j )$ converges to $(M_ \infty ,g_ \infty )$ smoothly away from $x_a$, without loss of generality,  we can assume that in local coordinates around $x_b$, the metrics $g_j $ satisfy the following: for sufficiently small $\rho>0$,
\begin{equation*}\label{almostflat}
\|(g_j)_{kl}(\rho x)-\delta_{kl}\|_{C^4}<C\rho^2
\end{equation*}
on $B_2\setminus B_1$ uniformly for large enough $j$. Then by similar arguments as in the proof of Theorem \ref{noneckthm}, we have the following bubble tree convergence away from the orbifold singularity:
\begin{thm}\label{noneckthm1}
Let $ u _i$ be a sequence of biharmonic maps from $(B(x_b,1),g_i)$  to $N$ satisfying
\begin{equation*}
E(u_i,B_1)\equiv\int_{B_1}|\nabla^2_{g_i}u_i|^2+|\nabla_{g_i} u_i|^4 dV_{g_i}\leq \Lambda,
\end{equation*}
for some $\Lambda > 0$.
If $\{u_i\} $ as above blow-up at $x_b$ (away from the orbifold singularity $x_a$ such that $d_{g_\infty}(x_a,x_b)>1$ ) with only one (nontrivial) bubble $\omega^b: (\mathbb{R}^4,g_{\text{euc}})\rightarrow N$,
 then
 \begin{equation*}
\lim_{\delta\rightarrow 0} \lim_{R\rightarrow \infty}\lim_{i \rightarrow \infty}\int_{B_\delta \setminus B_{\lambda_i R}}|\nabla^2_{g_i}u_i|^2+|\nabla_{g_i} u_i|^4 dV_{g_i}=0
\end{equation*}
and
\begin{equation*}
\lim_{\delta\rightarrow 0} \lim_{R\rightarrow \infty}\lim_{i \rightarrow \infty} osc_{B_\delta \setminus B_{\lambda_i R}}u_i=0.
\end{equation*}
\end{thm}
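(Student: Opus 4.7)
The plan is to reduce the theorem to the fixed-metric situation of Theorem \ref{noneckthm} by pulling back the varying metrics via the diffeomorphisms $F_i$, and then to repeat the three-circle and Poho\v{z}aev arguments of Section \ref{noneckprf} with uniform control on all error terms. The key point is that since $(M_i, g_i)$ converges smoothly to $(M_\infty, g_\infty)$ in a neighbourhood of $x_b$ (recall $d_{g_\infty}(x_a, x_b) > 1$), the domain is essentially fixed, and the only new feature is that the metric varies with $i$.

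First I would set $h_i = F_i^* g_i$ on a fixed ball around $x_b$ in $M_\infty$. Since $g_i \to g_\infty$ in $C^\infty$ away from $x_a$, we have $h_i \to g_\infty$ in $C^\infty$. Working in $h_i$-normal coordinates centered at $x_b$ and arguing as in \textbf{Claim 0} of Subsection \ref{tangdeacynodege}, uniform bounds on the Riemann tensor of $h_i$ and its covariant derivatives give the uniform metric closeness $\|(h_i)_{kl}(\rho \tilde{x}) - \delta_{kl}\|_{C^4(B_2 \setminus B_1)} \leq C \rho^2$ on each rescaled dyadic annulus, with $C$ independent of $i$. This is precisely the hypothesis that drives the whole Section \ref{noneckprf}.

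Next I would adapt the tangential-decay argument of Subsection \ref{tangdeacynodege}: rewriting \eqref{extrequ3} for $h_i$ in these coordinates produces a Euclidean-biharmonic-type equation plus an error $\tau(u_i)$ arising from the mismatch of the connections and Laplacians. The same estimation as in the proof of Lemma \ref{lemtangdecay} yields $\||x|^{4(1-1/p)} \tau(u_i)\|_{L^p(A_l)} = O(\varepsilon |x|^2)$ uniformly in $i$, so that $v_i = u_i - u_i^*$ satisfies the hypotheses of Corollary \ref{threecirtotangdecay} with exponent $\kappa = 2$; this gives the required exponential decay of the tangential part energy on the neck.

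Finally I would run the Poho\v{z}aev identity of Subsection \ref{Pohogenermetric} for each $h_i$. All geometric quantities appearing in Lemma \ref{almostflatconseq} are controlled uniformly by the $C^\infty$ convergence $h_i \to g_\infty$, so the error term satisfies $P(h_i, r) = O(r^2)$ with a constant independent of $i$. The resulting radial-versus-tangential inequality \eqref{pohoradictangen3} then combines with the tangential decay to give a bound on the radial-part energy on the neck, yielding the energy identity. The no-neck property then follows from the ODE argument at the end of Section \ref{noneckprf} together with $\varepsilon$-regularity and Sobolev embedding. I do not expect a genuine obstacle here beyond careful bookkeeping of $i$-uniform constants; the substantive analytic difficulties are all present already in the fixed-metric Theorem \ref{noneckthm}, and the truly new issues in the paper arise in the degenerating-neck case handled later in Section \ref{degeneneckanalys}.
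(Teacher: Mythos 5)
Your proposal matches the paper's approach: the paper also observes that since $(M_j, g_j)\to(M_\infty,g_\infty)$ smoothly away from $x_a$, one has the uniform bound $\|(g_j)_{kl}(\rho x)-\delta_{kl}\|_{C^4}<C\rho^2$ for large $j$, and then simply invokes "similar arguments as in the proof of Theorem \ref{noneckthm}." Your write-up just spells out the same reduction in more detail.
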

In particular the above theorem indicate that the image in $N$ of the limit map $u_\infty$ is connected to the image of the bubble map $\omega^b$.

\

\subsection{Blow-up at the orbifold singularity
\uppercase\expandafter{\romannumeral1}: single ALE bubble case }\label{secblowseting}

In this subsection, we shall focus on the more subtle case that the maps $u_j$ blow up at the orbifold singularity $x_a \in M_\infty$. The analysis involved in this case becomes much more complicated.

For simplicity, we shall assume that there is only one ALE bubble orbifold $M_a$ at $x_a$. Note that $M_a$ is smooth in this case. Later, we will study the more general cases in which we may encounter ALE bubble orbifolds.

First of all, we need to study the domain decomposition of the manifolds $M_j$ near the orbifold singularity $x_a$. Note that we have assumed that $x_a$ is the only singularity and there is only one bubble manifold.

By (3) of Theorem \ref{mainconvgethm}, for every $j$, there exists $x_{a,j}\in M_j$  and a positive number $r_j$ such that the followings hold:

\

\begin{itemize}
\item[(a)] $B(x_{a,j} ,\delta)$ converges to $B(x_a, \delta) $ in the Gromov-Hausdorff distance for all $\delta>0$.\\

\item[(b)] $\lim_{j\rightarrow \infty} r_j =0$.\\

\item[(c)] $(( M_j , r_{j}^{-2}g_j),x_{a,j})$ converge to $(( M_a , h_a),x_{a,\infty})$ in the Gromov-Hausdorff distance, where $( M_a , h_a)$ is a complete, noncompact, Ricci flat, non-flat $4$-manifold which is ALE of order $4$.\\

\item[(d)] There exists an into diffeomorphism $G_j : M_a \rightarrow M_j$ such that $G^{*}_{j}(r_{j}^{-2} g_j)$ converges to $h_a$ in the $C^\infty$-topology on $M_a$. According to this, for any $R>0$, we shall call $$A_{r_j R,\delta}(x_{a,j})=B(x_{a,j},\delta)\setminus B(x_{a,j},r_j R)$$
the degenerating neck region, and $B(x_{a,j},r_j R)$ the ALE bubble domain, and $M_j\setminus B(x_{a,j},\delta)$ the base domain.\\
\end{itemize}

In general, it is possible that there are several bubble maps $\omega^{k}_{a}$ ($k=1,\cdots ,m_a$) occuring at the orbifold singularity $x_a$.
Suppose that the maps
\begin{equation*}
u_j: (M_j, (\lambda_{j}^{k})^{-2}g_j, x_{a,k,j})\rightarrow N
\end{equation*}
converge to $\omega^{k}_{a}$ as $j\rightarrow \infty$.
Here
$$\lim_{j\rightarrow \infty}d_{g_j}(x_{a,k,j},x_{a,j}) =0.$$

To handle the complicated situation considered in this subsection, we shall investigate the blow-up process in the following three cases:\\

\noindent{\bf Case A}: $\lim_{j\rightarrow\infty}\frac{d_{g_j}(x_{a,k,j},x_{a,j})}{ r_j}<\infty$ and $\lim_{j\rightarrow\infty}\frac{\lambda_j^{k}}{ r_j}=0$;\\

\noindent{\bf Case B}: $\lim_{j\rightarrow\infty}\frac{d_{g_j}(x_{a,k,j},x_{a,j})}{r_j}<\infty$ and $0<\lim_{j\rightarrow\infty}\frac{\lambda_j^{k}}{ r_j}<\infty$;\\

\noindent{\bf Case C}: others.\\

Roughly speaking, the domain of the bubble map of Case B is (almost) the ALE bubble domain, bubbles of case A are on the bubble of Case B, the bubble of Case B is on some bubble of Case C or is away from bubbles of Case C. We remark here that we can assume $x_{a,k,j}=x_{a,j}$ in Case B without loss of generality.

In the rest of the paper, bubbles of case A, Case B, Case C will be indicated by $\omega^A$, $\omega^B$, $\omega^C$ respectively. We shall denote the centers of those three types of bubbles by $x^{A}_{a,j}$, $x^{B}_{a,j}$, $x^{C}_{a,j}$ respectively, and denote those three type of bubble scales by $\lambda_{j}^{A}$, $\lambda_{j}^{B}$, $\lambda_{j}^{C}$ respectively.

In case A, it is easy to see that
\begin{equation*}
[u_j: (M_j, (\lambda_{j}^{A})^{-2}g_j, x^{A}_{a,j})\rightarrow N ] \longrightarrow [\omega^{A}:(\mathbb{R}^4, g_{\text{euc}}, 0)\rightarrow N].
\end{equation*}

In Case B, by an adjustment of the bubble scale, we may assume $r_j=\lambda_{j}^{B}$. By our assumption in the above, we have
\begin{equation*}
[u_j: (M_j, (\lambda_{j}^{B})^{-2}g_j, x^{B}_{a,j})\rightarrow N ] \longrightarrow [\omega^{B}:( M_a , h_a,x_{a,\infty})\rightarrow N]
\end{equation*}
on any compact subset of the ALE bubble manifold $M_a$.

\begin{rem}\label{bublleAandD}
By our construction, bubble $\omega^A$ lies on the bubble $\omega^B$, and by applying the
same arguments for the proof of Theorem \ref{noneckthm1}, we know that there is no energy loss in the neck region
$$A_{\lambda_j^{A} R,\delta r_j }(x^{A}_{a,j})\equiv B(x^{A}_{a,j},{\delta r_j}) \setminus B(x^{A}_{a,j},{\lambda_j^{A} R}),$$ and the image of $\omega^ A $ is connected to the image of
 $\omega^ B$.
\end{rem}

It is natural to ask the following:

\begin{que}\label{ques1}
How the maps $u_j$ behaves on the degenerate neck regions
$$A_{r_jR,\delta}(x_{a,j})=B(x_{a,j},\delta)\setminus B(x_{a,j},r_jR)$$
when $j\rightarrow\infty$?
\end{que}

This question is related to the analysis of bubble maps of Case C.
It has certain similarity with the blow-up analysis of harmonic maps from degenerating Riemann surfaces \cite{zhu2010harmonic}. Set $T_j=\log(r_jR)$ and $T(\delta)=\log(\delta)$, then we have the following bubble-neck decomposition:

\begin{prop}\label{properneckc2} Notations and assumptions as above.
\begin{itemize}

\item [(1)] “Asymptotic boundary conditions”:
\begin{equation*}
\lim_{j\rightarrow \infty}\omega(u_j,P_{T_j,T_j+L})=\lim_{\delta\rightarrow 0}\omega(u_j,P_{T(\delta)-L,T(\delta)})=0, \quad \forall L\geq 1,
\end{equation*}
\begin{equation*}
\lim_{j\rightarrow \infty}osc_{P_{T_j,T_j+L}} u_j=\lim_{\delta\rightarrow 0}osc_{P_{T(\delta)-L,T(\delta)}}u_j=0,
\end{equation*}
where $P_{T_j^{1},T_j^{2}}$ is the cylinder corresponding to $A_{e^{T_j^{1}},e^{T_j^{2}}}(x_{a,j})\subset M_j$,
\begin{equation*}
\omega(u_j,P_{T_j^{1},T_j^{2}})\equiv \sup_{t\in[T_j^{1},T_j^{2}-1]}\int_{A_{e^{t},e^{t+1}}(x_{a,j})}|\nabla^2_{g_j} u_j|^2+|\nabla_{g_j} u_j|^4 dV_{g_j}.
\end{equation*}

\item [(2)] “bubble domain and neck domain”: after selection of a subsequence, which we still denote by $u_j$, the following two alternatives hold:

\begin{itemize}
\item [(2.1)]
\begin{equation*}
\lim_{\delta\rightarrow 0}\lim_{R\rightarrow \infty}\lim_{j\rightarrow \infty}\omega(u_j,P_{T_j,T(\delta)})=0,
\end{equation*}

\item [(2.2)] there exists some number $N_2>0$ which is independent of $j$, and $2N_2$ sequences of numbers $\{a_{j}^1\},\{b_{j}^1\}, \cdots, \{a_{j}^{N_2}\},\{b_{j}^{N_2}\}$,  such that
\begin{equation*}
T_j\leq a_j^{1}\ll b_j^{1}\leq \cdots \leq a_j^{N_2}\ll b_j^{N_2}\leq T(\delta) \quad (a_j^{\alpha}\ll b_j^{\alpha} \,\text{means}\,\lim_{j\rightarrow \infty}b_j^{\alpha}- a_j^{\alpha}=\infty )
\end{equation*}
and
\begin{equation*}
 |b_j^{\alpha}- a_j^{\alpha}|\ll |T_j|,\quad   \text{i.e.} \ \lim_{j\rightarrow \infty} \frac{|b_j^{\alpha}- a_j^{\alpha}|}{|T_j|}=0.
\end{equation*}
\end{itemize}

\end{itemize}

Denote
 $$J_j^{\alpha}\equiv P_{a_j^{\alpha}, b_j^{\alpha}}, \quad \alpha=1,\cdots, N_2,$$
 $$I_{j}^0=P_{T_j, a_{j}^1},I_j^{N_2}=P_{b_j^{N_2},T(\delta)}, I_j^{\alpha}=P_{b_j^{\alpha},a_j^{\alpha+1}}, \quad \alpha=1,\cdots, N_2-1. $$
 Then

\begin{itemize}
  \item [({\romannumeral1})] $ \alpha=0,1,\cdots, N_2,$ $\lim_{\delta\rightarrow 0}\lim_{R\rightarrow \infty}\lim_{j\rightarrow \infty}\omega(u_j,I_j^{\alpha})=0$.
  \item [({\romannumeral2})] $\forall \alpha=1,\cdots, N_2,$ there is a bubble tree which consists of at most finitely many bubble maps, i.e., finite energy biharmonic maps from $\mathbb{R}^4/\Gamma$ to $N$. Here, for simplicity of notations, we assume there is only one such bubble map, namely, there is a biharmonic map
      $$\omega^{C,\alpha}: \mathbb{R}^4/\Gamma\longrightarrow N,$$
      such that
 \begin{equation*}
 \lim_{\delta\rightarrow 0}\lim_{R\rightarrow \infty}\lim_{j\rightarrow \infty}E_{bi}(u_j,\bar{J}_j^{\alpha})=E_{bi}(\omega^{C,\alpha}),
\end{equation*}
 where $\bar{J}_j^{\alpha}$ is the region in $M_j$ corresponding to $J_j^{\alpha}$, and
  \begin{equation*}
E_{bi}(u_j,\bar{J}_j^{\alpha})\equiv\int_{\bar{J}_j^{\alpha}}|\nabla^2_{g_j}u_j|^2+|\nabla_{g_j} u_j|^4 dV_{g_j}.
\end{equation*}
  \item [({\romannumeral3})] Each bubble $\omega^{C,\alpha}$ given above is a continuous map, and
\begin{equation*}
\lim_{R\rightarrow \infty}osc_{ (\mathbb{R}^4/\Gamma)\setminus B_R}\ \omega^{C,\alpha}=0.
\end{equation*}
\end{itemize}

\end{prop}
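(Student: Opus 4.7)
The plan is to adapt the standard bubble-neck decomposition scheme for critical variational problems in dimension 4 (as in \cite{ding1995energy, parker1996bubble} for harmonic maps and \cite{laurain2013energy, liu2016neck} for biharmonic maps on Euclidean domains) to the present setting, with the essential new input being that the degenerate neck is geometrically close to a portion of the flat cone $\mathbb{R}^4/\Gamma$ by Proposition \ref{neckprop} rather than to $\mathbb{R}^4$.

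For the asymptotic boundary conditions (1), I would treat the two ends separately. Near the outer end $T\approx T(\delta)$, $g_j$ converges smoothly to $g_\infty$ off $x_a$ and $u_j\to u_\infty$; since $u_\infty$ is continuous at $x_a$ (Lemma \ref{limmapcontinus}) and lifts to a smooth map on the orbifold cover via $\Pi_a$, both $\omega(u_j, P_{T(\delta)-L, T(\delta)})$ and the oscillation converge to the corresponding quantities for $u_\infty$ on shrinking annuli around $x_a$, which vanish as $\delta\to 0$. Near the inner end $T\approx T_j$, rescale by $r_j^{-1}$ so that $P_{T_j, T_j+L}$ corresponds to a fixed annulus in $M_a$; the rescaled maps converge smoothly to the Case B bubble $\omega^B:M_a\to N$, whose finite energy and vanishing oscillation at infinity (via an argument as in Lemma \ref{lemrovsiguifty} applied at the unique ALE end of $M_a$) yield the claim.

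For (2), iterate a bubble-picking procedure. Fix a threshold $\varepsilon_* < \tfrac12\min\{\varepsilon_0, \varepsilon(N,\Gamma)\}$ and scan $[T_j, T(\delta)]$ from left to right, cutting at the first scale $t_j^{(1)}$ at which the unit-length energy reaches $\varepsilon_*$; if no such scale arises after passing to subsequences and taking $j\to\infty$, $R\to\infty$, $\delta\to 0$, we are in case (2.1). Otherwise, rescale $g_j$ by $e^{-2t_j^{(1)}}$: by Proposition \ref{neckprop} this metric converges on every fixed ball to the flat cone metric on $\mathbb{R}^4/\Gamma$, and the rescaled maps (with uniformly bounded energy) subsequentially converge in $W^{2,2}_{\mathrm{loc}}$ after lifting to the universal cover; applying the removable singularity Theorem \ref{remsingu} on the cover then produces a bubble $\omega^{C,1}:\mathbb{R}^4/\Gamma\to N$. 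Next I would define $J_j^{(1)} = P_{a_j^{(1)}, b_j^{(1)}}$ as the maximal interval around $t_j^{(1)}$ containing the essential support of this bubble (the standard \emph{bubble domain} construction) and iterate on $[b_j^{(1)}, T(\delta)]$. Because each bubble carries at least $\varepsilon(N,\Gamma)$ of energy by the energy gap (Theorem \ref{energygap11}) and the total energy is bounded by $\Lambda$, the iteration terminates after at most $N_2\leq \Lambda/\varepsilon(N,\Gamma)$ steps; the separation $|b_j^{\alpha}-a_j^{\alpha}|\ll|T_j|$ holds because the essential support of a single bubble has $O(1)$ logarithmic width by the $\varepsilon$-regularity (Theorem \ref{smallenergythm}) while $|T_j|\to\infty$. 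Items (i)--(iii) then drop out: (i) from the cut-off choice; (ii) from the energy accounting inside each $J_j^{\alpha}$ under the single-bubble simplification; (iii) from the combination of $\varepsilon$-regularity, removable singularity, and the oscillation-at-infinity estimate in Lemma \ref{lemrovsiguifty}.

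The main obstacle I anticipate is ensuring that bubbles extracted at interior scales of the degenerate neck are biharmonic maps from the \emph{quotient} $\mathbb{R}^4/\Gamma$ and not merely from $\mathbb{R}^4$: this requires the global flat-cone structure of the whole degenerate neck (Proposition \ref{neckprop}), together with the identification of $\Gamma$ as the local fundamental group at $x_a$ sketched in Remark \ref{neckcoordinates}; without these geometric ingredients, the combinatorial iteration would lose the quotient structure of the bubble. A secondary technical point is the verification of the inner boundary asymptotic condition, which reduces to a no-neck statement at infinity for the Case-B bubble $\omega^B$ on the Ricci-flat ALE manifold $M_a$.
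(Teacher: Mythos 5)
Your proposal follows essentially the same route as the paper's proof: the paper relegates the combinatorial decomposition in item (2) to an argument ``in the same spirit as \cite{zhu2010harmonic}'' and otherwise relies on exactly the ingredients you identify --- Proposition \ref{neckprop} for the flat-cone geometry of the rescaled neck, the removable singularity theorem applied after lifting to $\mathbb{R}^4$, the energy gap theorem for termination of the iteration, Lemma \ref{limmapcontinus} for continuity, and Lemma \ref{lemrovsiguifty} (packaged as Lemma \ref{remsinguinfty}) for the oscillation at infinity. Your treatment of (1) by separating the inner end (rescale by $r_j^{-1}$, compare to the Case-B bubble $\omega^B$ on the ALE manifold $M_a$) from the outer end (Cheeger--Gromov convergence to $u_\infty$ near $x_a$) supplies the details the paper compresses into ``follows easily from our assumption.''

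One small slip worth fixing: you justify $|b_j^\alpha - a_j^\alpha| \ll |T_j|$ by asserting that ``the essential support of a single bubble has $O(1)$ logarithmic width.'' That is in direct tension with the structural requirement $a_j^\alpha \ll b_j^\alpha$, which by definition forces $b_j^\alpha - a_j^\alpha \to \infty$: after rescaling, the bubble domain must exhaust $\mathbb{R}^4/\Gamma$ in order to capture the full energy of $\omega^{C,\alpha}$, and that cannot happen inside an annulus of bounded logarithmic width. The correct justification is softer: since $\omega^{C,\alpha}$ has finite total energy, for every $\eta>0$ all but $\eta$ of its energy lives in $B_{R_\eta}$ for some finite $R_\eta$, so a diagonal argument lets one choose $b_j^\alpha - a_j^\alpha \to \infty$ arbitrarily slowly (say, like $\sqrt{|T_j|}$), and then $|T_j|\to\infty$ yields the required ratio condition. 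This is what the construction in \cite{zhu2010harmonic} actually produces; once that is corrected, your argument matches the paper's.
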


\begin{proof}
(1) follows easily from our assumption. (2) can be shown in the same sprit as in \cite{zhu2010harmonic}.

For ({\romannumeral1}), it can be argued as in the proof for Proposition 3.1 of \cite{zhu2010harmonic}.

For ({\romannumeral2}), we only need to verify that $\omega^{C,\alpha}$ is map from $\mathbb{R}^4/\Gamma$ to $ N.$
Let $\bar{J}_j^{\alpha}$ be the annulus $B(x_{a,j},e^{b_j^{\alpha}})\setminus B(x_{a,j},e^{a_j^{\alpha}})\subset M_j $ corresponding to ${J}_j^{\alpha}$.
Proposition \ref{neckprop} says that the neck region between $\omega^{B}$ and $u_\infty$ in $M_j$ looks like a potion of a flat cone $\mathbb{R}^4/\Gamma$ for large $j$, moreover,
\begin{equation*}\label{neckcurvturestm}
r^2|R_{g_j}|\leq C \max \left\{(\frac{r_j}{r})^{\varepsilon_5},(\frac{r}{r_\infty})^{\varepsilon_5}\right\}
\end{equation*}
on $\bar{J}_j^{\alpha}$, where $\varepsilon_5$ and $r_\infty$ are small positive constants.
So by the relation $a_j^{\alpha}\ll b_j^{\alpha}$, after scaling of the scale $\lambda_j^{C,\alpha}\equiv e^{\frac{a_j^{\alpha}-b_j^{\alpha}}{2}}$, $\left(\bar{J}_j^{\alpha}, (\lambda_j^{C,\alpha})^{-2}g_j \right)$ converges to $\mathbb{R}^4/\Gamma$ as $j\rightarrow \infty$, and $u_j: \left(\bar{J}_j^{\alpha},(\lambda_j^{C,\alpha})^{-2}g_j \right)\rightarrow N$ converges to a bubble $\omega^{C,\alpha}$.

For ({\romannumeral3}), the first part can be argued as in Lemma \ref{limmapcontinus} and the second part follows from the result of Lemma \ref{remsinguinfty} which is stated and proved separately afterwards.
\end{proof}

\begin{lem}\label{remsinguinfty}
The bubble maps $\omega^B:M_a\rightarrow N$ and $\omega^{C,\alpha}:\mathbb{R}^4/\Gamma\rightarrow N$ above is uniformly continuous at the infinity in the sense that, for any $\delta > 0$, there
is $R > 0$ such that
$$osc_{M \setminus B_ R}  \  \omega <\delta,$$
where $M=M_a$ and $\omega=\omega^B$,  or $M=\mathbb{R}^4/\Gamma$ and $\omega=\omega^{C,\alpha}$.
\end{lem}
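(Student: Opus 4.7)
The plan is to reduce both cases to a uniform statement: a finite-energy biharmonic map defined on the complement of a compact set in $\mathbb{R}^4$, with respect to a metric that is either the flat one on $\mathbb{R}^4/\Gamma$ or $C^4$-asymptotic to it of order four, must have oscillation tending to zero on $\mathbb{R}^4\setminus B_R$ as $R\to\infty$. For $\omega^{C,\alpha}:\mathbb{R}^4/\Gamma\to N$ I would pass to the $\Gamma$-cover to obtain a $\Gamma$-equivariant $\tilde\omega:\mathbb{R}^4\setminus\{0\}\to N$ of finite energy, which by Theorem \ref{remsingu} extends smoothly across $0$; the problem is then exactly oscillation control at infinity on flat $\mathbb{R}^4$. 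For $\omega^B:M_a\to N$ I would invoke the ALE coordinates at infinity provided by the ALE-of-order-4 property (Theorem \ref{mainconvgethm}(3.c)): outside a compact set, $M_a$ is diffeomorphic to $(\mathbb{R}^4/\Gamma_\infty)\setminus B_1$ with $\|h_a-g_{\mathrm{euc}}\|_{C^4(B_{2r}\setminus B_r)}=O(r^{-4})$, and after passing to the $\Gamma_\infty$-cover we get $\tilde\omega$ on $\mathbb{R}^4\setminus B_1$ satisfying the biharmonic equation for a near-flat metric.

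The core oscillation estimate runs the neck analysis of Section \ref{noneckprf} on \emph{expanding} dyadic annuli $A_k=B_{2^{k+1}R}\setminus B_{2^kR}$ rather than the shrinking ones used there. Since the total energy is finite, $E(\tilde\omega,A_k)\to 0$ as $k\to\infty$, so the $\varepsilon$-regularity of Theorem \ref{smallenergythm} and Corollary \ref{smallenergycor} apply on every sufficiently far-out annulus. Setting $v:=\tilde\omega-\tilde\omega^\ast$ with $\tilde\omega^\ast$ the spherical mean, one checks as in Subsection \ref{tangdeacynodege} that $v$ satisfies an $\eta$-approximate biharmonic equation; the residual term vanishes in the flat quotient case and is $O(r^{-4})$ in the ALE case, so condition (b) of Definition \ref{defappbiharm} is comfortably satisfied. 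The three-circle theorem (Theorem \ref{thrcircledegeneck}), applied in the alternative under which decay propagates outward, then delivers exponential decay of the tangential part energy on the dyadic annuli, exactly in parallel with Corollary \ref{threecirtotangdecay}.

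For the radial part I would invoke the Poho\v{z}aev identity \eqref{pohoidentgenrl1} on balls $B_r$ with $r\to\infty$, together with an analogue of Lemma \ref{rigttermestlem11}: in the quotient case the error term $P(\tilde g,r)$ is identically zero, while in the ALE case the curvature decays like $r^{-6}$ and $\mathrm{div}_{\tilde g}(r\partial_r)-4=O(r^{-4})$, so the annular integral $\int_r^{cr}s^{-1}P(\tilde g,s)\,ds$ is $O(r^{-2})$ and integrable at infinity. Retracing the computations leading to \eqref{pohoradictangen3}, now with signs compatible with decay as $r\to\infty$, yields exponential decay of the radial energy on the expanding dyadic family. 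Summing over $k$ and feeding back into $\varepsilon$-regularity and the Sobolev embedding gives $|x|\cdot|\nabla\tilde\omega|(x)\le C|x|^{-\kappa/2}$ for some $\kappa>0$; joining two points in $\mathbb{R}^4\setminus B_R$ by a path of length $O(\max(|x|,|y|))$ staying outside $B_R$ then bounds $d_N(\tilde\omega(x),\tilde\omega(y))$ by $CR^{-\kappa/2}$, and descending from the cover gives the claim.

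The main obstacle is the ALE case for $\omega^B$: one has to set up the analogue of Lemma \ref{almostflatconseq} at infinity, where the scaling parameter goes to infinity and the closeness of $h_a$ to $g_{\mathrm{euc}}$ \emph{improves} like $\rho^{-4}$ instead of deteriorating like $\rho^{2}$. Once that is in place, all error terms in the three-circle and Poho\v{z}aev steps have summable or integrable tails at infinity, and the expanding-annulus version of the argument of Theorem \ref{noneckthm} closes. No genuinely new analytic input is required beyond what Sections \ref{Preliminaries111} and \ref{noneckprf} already supply.
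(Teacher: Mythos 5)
Your proposal is correct and follows essentially the same route as the paper: the paper also reduces Lemma~\ref{remsinguinfty} (after a scaling normalization making the energy outside $B_1$ small) to the ALE small-energy statement of Lemma~\ref{lemrovsiguifty}, whose proof in Appendix~\ref{prflem1} runs exactly the expanding-annulus analysis you describe --- ALE coordinates at infinity with $O(|z|^{-4})$ (and better, by elliptic regularity) metric decay, approximate-biharmonic structure for $v=u-u^*$ on dyadic annuli giving $F_l(v)\le C\varepsilon^2 e^{-lL}$ via the three-circle mechanism, and a Poho\v{z}aev-type argument (written there in cylindrical variables via $G(t)$ with $G(\infty)=0$, equivalent to retracing \eqref{pohoradictangen3}) yielding exponential radial-energy decay and hence the gradient bound $|x||\nabla u|\le C\varepsilon|x|^{-1/2}$. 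The only cosmetic difference is that you make the $\omega^{C,\alpha}$ case explicit by lifting to the $\Gamma$-cover and invoking Theorem~\ref{remsingu}, whereas the paper dismisses that case as a degenerate instance of the ALE (orbifold) one.
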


Since biharmonic maps are in general not conformally invariant in dimension 4, we
can not apply the removable singularity theorem to see that $\lim_{|x|\rightarrow \infty} \omega(x)$ is well defined, see e.g. \cite{wang2004remarks, liu2015finite}.

\begin{proof}
We shall only prove the lemma for $\omega^B$.
Since the problem is scaling invariant in dimension 4, we may assume that for some fixed small $\varepsilon_1>0$ to be determined by the arguments later,
\begin{equation*}
\int_{ M_a\setminus B_ 1}|\nabla^2_{h_a}\omega^B|^2+|\nabla_{h_a}\omega^B|^4 dV_{h_a}<\varepsilon_1.
\end{equation*}
Then the lemma is an immediate consequence of the following Lemma \ref{lemrovsiguifty}.
\end{proof}

\begin{lem}\label{lemrovsiguifty}
There is a constant $\varepsilon_1>0$ depending on $N$ such that, if $u : M_a \rightarrow N$ is a
biharmonic map from a Ricci flat ALE manifold (orbifold) of order 4 to $N$ and satisfying
\begin{equation*}
\int_{M_a \setminus B_ 1}|\nabla^2_{h_a}u|^2+|\nabla_{h_a}u|^4 dV_{h_a}<\varepsilon_1.
\end{equation*}
then $u$ is uniformly continuous at the infinity in the sense that for any $\delta> 0$, there
is $R > 0$ independent of u such that
$$osc_{M_a \setminus B_ R} \  u< \delta.$$
\end{lem}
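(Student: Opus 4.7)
\medskip

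\noindent\textbf{Proof plan for Lemma \ref{lemrovsiguifty}.} The strategy is to mimic the neck analysis developed in Section \ref{noneckprf}, but now carried out at the infinity of the ALE end rather than on a shrinking geodesic ball. Since $M_a$ is ALE of order $4$ (mod a finite group $\Gamma_\infty \subset SO(4)$ acting at infinity), there exist coordinates $(y)$ on $M_a \setminus K$ (lifting to $\mathbb{R}^4 \setminus B_{R_0}$ through the $\Gamma_\infty$-cover if $\Gamma_\infty$ is nontrivial) in which
\begin{equation*}
|(h_a)_{kl}(y) - \delta_{kl}| + |y|\,|\partial (h_a)_{kl}(y)| + \cdots + |y|^4 |\partial^4 (h_a)_{kl}(y)| = O(|y|^{-4}).
\end{equation*}
In particular, after rescaling a dyadic annulus $A_l = B_{2^{l+1}} \setminus B_{2^l}$ (for $l$ large) to $B_2 \setminus B_1$, the rescaled metric is $C^4$-close to the flat metric with error $O(2^{-4l})$. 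I would first reduce to the case $\Gamma_\infty = \{e\}$ by lifting $u$ to the universal cover of the end (the small energy assumption is preserved up to multiplication by $|\Gamma_\infty|$, which is harmless after possibly shrinking $\varepsilon_1$).

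Next, I would decompose $M_a \setminus B_1 = \bigcup_l A_l$ and use the small energy hypothesis together with Theorem \ref{smallenergythm} and Corollary \ref{smallenergycor} to obtain uniform $W^{4,p}$ bounds on $u$ after rescaling each $A_l$ to unit scale. With these bounds in hand, the approach splits into two parallel pieces, exactly as in the proof of Theorem \ref{noneckthm}. First, I would write the biharmonic map equation in the form \eqref{extrequ3} and, using the $C^4$-closeness of $h_a$ to the Euclidean metric at scale $2^l$, recast $v_l := u - u^*$ (with $u^*(r)$ the spherical mean) as an $\eta_0$-approximate biharmonic function in the sense of Definition \ref{defappbiharm}, with inhomogeneous term $h$ obeying $\||y|^{4(1-1/p)} h\|_{L^p(A_l)} = O(\varepsilon_1 \, 2^{-4l})$. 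The three-circle theorem (Theorem \ref{thrcircledegeneck}) then yields exponential decay of the tangential energy
\begin{equation*}
F_l(v) = \int_{A_l} \frac{|v|^2}{|y|^4}\, dV
\end{equation*}
as $l \to \infty$, together with the corresponding decay of $|\tilde{\triangle} u|$ and $|\partial_r \tilde{\triangle} u|$ in weighted $L^2$, exactly as in the proof of Lemma \ref{lemtangdecay}.

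Second, I would run the Pohozaev identity \eqref{pohoidentgenrl1} on annular regions $B_{2^{l+1}} \setminus B_{2^l}$ centered at the ALE origin. The error term $P(h_a, r)$ can be estimated via Lemma \ref{rigttermestlem11} using the ALE decay: the analogues of items 1)--6) of Lemma \ref{almostflatconseq} hold with $\rho^2$ replaced by the much better rate $O(r^{-4})$, so in particular $P(h_a, r) = O(r^{-2})$ times the local energy, which is summable in $l$. Repeating the derivation that led to \eqref{pohoradictangen3}--\eqref{pohoradictangen4}, the radial part $\int (\partial_r^2 u)^2 + r^{-2}(\partial_r u)^2$ on $B_{2^{l+1}} \setminus B_{2^l}$ is controlled by the tangential part on the same annulus plus the error term plus boundary terms. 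Combining with the exponential tangential decay from the previous step gives decay of the total energy $E(u, B_{2^{l+1}} \setminus B_{2^l}) \to 0$ as $l \to \infty$.

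Finally, I would convert this energy decay into an oscillation bound. By Corollary \ref{smallenergycor} applied on each annulus $A_l$ (after scaling to unit size), $\sup_{A_l} |\nabla_{h_a} u| \le C\, 2^{-l}\, E(u, A_{l-1} \cup A_l \cup A_{l+1})^{1/2}$. Integrating along radial geodesics from any point in $\partial B_{2^L}$ to infinity and summing geometrically, one obtains $\mathrm{osc}_{M_a \setminus B_{2^L}}\, u \le C \sum_{l \ge L} E(u, A_l)^{1/2}$, which tends to $0$ as $L \to \infty$. This yields the claimed uniform continuity at infinity. The main obstacle in this plan is the bookkeeping in the Pohozaev step: one must verify that the non-flat pieces of $h_a$ contribute error terms which are both summable over the dyadic scales and absorbable into the radial energy (not merely the tangential energy, which is what the three-circle theorem controls). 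The rapid ALE decay of order $4$ is precisely what makes this absorption go through.
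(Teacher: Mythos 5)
Your plan is sound and structurally parallel to the paper's appendix proof up through the three-circle step, but it handles the radial-energy step by a genuinely different route. The paper lifts $u$ to the asymptotically flat end $\mathbb{R}^4\setminus B(0,R)$ in ALE coordinates, pushes the deviation of $\varphi^{*}h_a$ from the Euclidean metric into the bitension error $\tau(u)$ (which then has $L^{4/3}$ norm of order $e^{-s}$ outside $B_{e^{s}}$), and runs a \emph{flat} cylindrical Pohozaev argument: since $\triangle^2 u - \tau(u)\perp T_uN$ and $\partial_t u$ is tangent, $\int_{S^3}\bigl(\triangle^2 u - \tau(u)\bigr)\,\partial_t u\, d\theta = 0$, and the cylindrical form of the bilaplacian turns this into $\partial_t G(t)=\int_{S^3}e^{4t}\tau(u)\partial_t u\,d\theta$ with $G(+\infty)=0$, from which the exponential decay of the radial energy follows. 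You propose instead to carry the curved-domain identity \eqref{pohoidentgenrl1} of Subsection \ref{Pohogenermetric} to the ALE end, with ALE coordinates standing in for normal coordinates and the error $P(h_a,r)$ controlled via ALE analogues of Lemma \ref{almostflatconseq} at rate $O(r^{-4})$. This could be made to work---it is closer in spirit to what Section \ref{degeneneckanalys} does on degenerating necks---but requires reworking Lemma \ref{almostflatconseq} in a setting where there is no interior basepoint: the radial vector field $\sum_i y^i\partial_{y^i}$ is not exactly $r\partial_r$ for a geodesic distance $r$ from a point of $M_a$, and the frames $e_k$ are coordinate fields rather than parallel-transported frames from a center, so items 1)--6) of that lemma would need to be re-derived in the ALE chart. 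The paper's route is shorter precisely because the $O(r^{-4})$ decay makes the ``put the metric into $\tau$'' trick harmless here, even though the paper explicitly rejects that trick for the general curved-domain Pohozaev in Section \ref{noneckprf} where the decay is only $O(r^2)$ and one deals with a blow-up sequence rather than a single fixed map with small energy. Your closing remark that the order-$4$ ALE decay is what makes the radial absorption go through is exactly the right diagnosis; the paper's resolution is to use that decay to revert to a purely Euclidean Pohozaev, rather than carrying a curved Pohozaev identity to infinity.
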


The proof of Lemma \ref{lemrovsiguifty} will be given in Appendix \ref{prflem1}.

\

\subsection{Blow-up at the singularity \uppercase\expandafter{\romannumeral2}: multiple ALE bubbles case }\label{treemultiale}
In this subsection, we shall discuss how to construct the whole bubble tree for the convergence of biharmonic maps in the case that there are several ALE bubble manifolds (orbifolds) emerging (at the same orbifold singularity) from the G-H convergence of Einstein manifolds.

From the analysis in the previous subsections, we know that three types of bubble maps will appear at the orbifold singularity, namely,
$\omega^A: \mathbb{R}^4\rightarrow N,$ $\omega^B: M_{ALE}\rightarrow N,$ and $\omega^C: \mathbb{R}^4/\Gamma\rightarrow N,$ where $ M_{ALE}$ represent the ALE bubble manifolds (orbifolds) in the tree $\mathbf{Tr_{ALE}}$ (see Remark \ref{aletree}).

Bubble maps of type $\omega^B $ occur in the place where the Riemannian curvatures concentrate, and the number of bubble maps of kind $\omega^B$ is equal to the number of ALE bubble manifolds (orbifolds) in the tree $\mathbf{Tr_{ALE}}$ (if one of them is defined on a ALE orbifold, then we can argue as in Lemma \ref{limmapcontinus} to show that it is continuous at the orbifold singularities).  Bubble maps of type $\omega^C$ appear in degenerating neck regions such as those discussed in Proposition \ref{neckprop}. Bubble maps of type $\omega^A$ appear over the regions  where there is no degeneration of the metrics at its blow-up scale (they are on bubble maps of type $\omega^B$). With these in mind, one can easily construct the whole bubble map tree by induction. Note that the whole process will be terminated in finite steps, since the number of nontrivial ALE bubble manifolds (orbifolds) is finite and the number of nontrivial bubble maps of type $\omega^A$ and $\omega^C$ must be finite by the energy gap Theorem \ref{energygap11}) and the finite total energy assumption.

\begin{que}\label{akeyapplirem}
Is there an energy gap phenomenon for biharmonic maps from the ALE spaces in the tree $\mathbf{Tr_{ALE}}$?
%We think this question is quite interesting since nontrivial topology may prevent a gap. Recall that by the analysis in Section 6 of \cite{anderson1989ricci}, for a nontrivial complete Ricci flat ALE manifold $(V,h)$, it holds that
%\begin{eqnarray}\label{nontriviatoponum}
%& &|\pi_1(V)|<\infty,\\
%& &\chi(V)=1+b_2(V)\geq 3/2+\frac{1}{|\Gamma_\infty|},\nonumber
%\end{eqnarray}
%where $\chi(V)$ is the Euler number and $b_2(V)$ is the second Betti number of $V$, $|\Gamma_\infty|$ is the order of the fundamental group at the infinity of $V$, and $|\pi_1(V)|$ is the order of the fundamental group of $V$. So that there is an integral 2-cycle $\Sigma\subset V$, with $[\Sigma]\neq0$ in $H_2(V;\mathbb{R})$. We guess that even we assume that $\pi_4(N)$ is trivial, there are nontrivial biharmonic maps from $V$ to $N$ which induce nontrivial maps from $\pi_2(V)$ to $\pi_2(N)$. Moreover we can imagine that there may be a sequence of biharmonic maps $\phi_k:V\rightarrow N$ such that
%\begin{equation*}
%\lim_{k\rightarrow 0}\int_{V}|\nabla^2_{h}\phi_k|^2+|\nabla_{h}\phi_k|^4 dV_h=0.
%\end{equation*}
%If these speculates are true, then we can try to detect the blow-up of the underlying Einstein manifolds $ M_j$ by the blow-up of biharmonic maps $u_j: M_j\rightarrow N$. Since if $\pi_4(N)$ is trivial, then biharmonic maps from $\mathbb{R}^4$ and $\mathbb{R}^4/\Gamma$ must be trivial, see e.g. \cite{scheven2008dimension}.
\end{que}

Now we state the main theorem in this paper to end this section.
\begin{thm}\label{degeymenergy} Let $(M_j,g_j)$ be a sequence of Einstein manifolds as in Theorem \ref{mainconvgethm} and let $ u_j$ be a sequence of biharmonic maps from $(M_j,g_j)$ to $N$, satisfying
 \begin{equation*}
\int_{M_j}|\nabla^2_{g_j}u_j|^2+|\nabla_{g_j} u_j|^4 dV_{g_j}\leq \Lambda
\end{equation*}
for some $\Lambda>0$.
Without loss of generality, we assume that there is only one ALE bubble manifold and there is at most one bubble in each case in the above blow up analysis, then (up to a subsequence) we have
\begin{eqnarray*}
\int_{M_j}|\nabla^2_{g_j}u_j|^2+|\nabla_{g_j} u_j|^4 dV_{g_j}=\int_{M_\infty}|\nabla^2_{g_\infty}u_\infty|^2+|\nabla_{g_\infty} u_\infty|^4 dV_{g_\infty}+E_{\text{bi}}(\omega^b)+\sum_{a=A,B,C}E_{\text{bi}}(\omega^a),
\end{eqnarray*}
where $E_{\text{bi}}(\cdot)$ is the biharmonic energy of the corresponding bubble map. Moreover the images of all the bubbles and the image of $ u_\infty$ are all connected.
%, and the distance between the images of any two connected bubbles (or the distance between the image of $ u_\infty$ and the image of any bubble which is connected to it ) in $N$ is zero.
\end{thm}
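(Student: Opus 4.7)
The plan is to reduce the global energy identity and no neck property to local problems on four types of regions in $M_j$: (i) the thick part $M_j\setminus B(x_{a,j},\delta_0)$, (ii) neighborhoods of blow-up points away from $x_{a,j}$, (iii) the bubble-manifold region $B(x_{a,j},r_jR)$ with its interior sub-bubble annuli (where $\omega^B$ and $\omega^A$ live), and (iv) the degenerating neck $A_{r_jR,\delta}(x_{a,j})$ together with the sub-regions coming from Proposition \ref{properneckc2}. On (i) the metrics converge smoothly, and weak convergence combined with the $\varepsilon$-regularity Theorem \ref{smallenergythm} gives both the local energy identity and continuity of $u_\infty$ up to $x_a$ (Lemma \ref{limmapcontinus}). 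Regions (ii) are disposed of by Theorem \ref{noneckthm1}. Region (iii) is a fixed-domain problem after the diffeomorphism $G_j$ pulls $r_j^{-2}g_j$ back to a limit metric $h_a$ on $M_a$, so Theorem \ref{noneckthm} applies and connects $\omega^B$ to all $\omega^A$ sitting on it (cf. Remark \ref{bublleAandD}). Hence the genuinely new work is entirely on the degenerating neck and on each subregion $I_j^\alpha$ in the bubble-neck decomposition of Proposition \ref{properneckc2}.

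\textbf{Tangential decay on the neck.} On each degenerating annulus I would work in both coordinate systems of Section \ref{constrcoordinates}: the $(y)$-coordinates, in which $g_j$ is $C^4$-close to flat with two-sided decay
\[
\eta_j(r)=O\bigl(r_\infty^{-\varepsilon_5}r^{\varepsilon_5}+(r_jR_0)^{\varepsilon_5}r^{-\varepsilon_5}\bigr),
\]
for the three circle step; and the $(x)$-coordinates for the Pohozaev step. Exactly as in Subsection \ref{tangdeacynodege}, I would rewrite the biharmonic equation \eqref{extrequ3} in the $(y)$-chart as an equation for $v_j=u_j-u_j^*$ of the approximate biharmonic type \eqref{similar33}, with error $h_j$ satisfying $\||x|^{4(1-1/p)}h_j\|_{L^p(A_l)}\le C\varepsilon\,\eta_j$. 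The decisive ingredient is that Proposition \ref{neckprop} provides the polynomial two-sided smallness $\eta_j\to 0$ with exponent $\varepsilon_5>0$, which is exactly the hypothesis of Corollary \ref{threecirtotangdecay}; applying it on dyadic annuli $A_l$ tiling the neck yields the exponential decay of the tangential part energy in $\tilde\triangle u_j$ and $\partial_r\tilde\triangle u_j$, with rates $e^{-\hat\kappa(l-l_0)L}+e^{-\hat\kappa(l_i-l)L}$.

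\textbf{Pohozaev identity and radial control.} Next I would establish and apply the new Pohozaev identity of Subsection \ref{pohodege} on sub-annuli $A_{j;t,ct}\subset A_{r_jR,\delta}(x_{a,j})$, in the form
\[
c\!\int_{\partial B(x_{a,j},ct)}\!\!(\square_1+\square_2)\,d\sigma_{g_j}-\!\int_{\partial B(x_{a,j},t)}\!\!(\square_1+\square_2)\,d\sigma_{g_j}=\frac{P(g_j,t)}{t}.
\]
The main obstacle, and what I expect to be the most delicate step, is Lemma \ref{errorringhtj11}: showing that the error $P(g_j,t)$ is of the same order as the metric deviation $\eta_j(t)$. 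This requires analyzing each summand of $P(g,r)$ ($\triangle_g x^k$, $(\nabla_g x^k)\nabla_g(e_k u)-|\triangle_g u|^2$, $\mathrm{div}_g(r\partial_r)-4$, and the Ricci term) in the $(x)$-coordinates with only the weak regularity available on the degenerating neck (no true normal coordinates, and geodesic spheres not equal to coordinate spheres), replacing Lemma \ref{almostflatconseq} by its degenerating-neck analogue. Once this is in hand, the coercive quadratic identity \eqref{pohoradictangen3}--\eqref{pohoradictangen4} bounds $(\partial_r^2 u_j)^2+r^{-2}(\partial_r u_j)^2$ by the already-controlled tangential energy plus $\varepsilon$-regularity boundary terms plus the summable remainder $\sum_l\eta_j(e^{-lL})$. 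Telescoping across the dyadic decomposition of each $I_j^\alpha$ yields the energy identity on the neck, which together with (i)--(iii) produces the global energy identity.

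\textbf{No-neck property.} With decaying total energy on each annulus of the neck, the $\varepsilon$-regularity Theorem \ref{smallenergythm} applied at scale $r$ gives $\sup|\nabla_{g_j}u_j|\le Cr^{-1}\bigl(\text{local energy}\bigr)^{1/2}$; integrating along radial geodesics, as at the end of the proof of Theorem \ref{noneckthm}, controls the oscillation of $u_j$ over each $I_j^\alpha$. The oscillation control already available on regions (i)--(iii) and within each $J_j^\alpha$ (from Proposition \ref{properneckc2}(\romannumeral3) and Lemma \ref{remsinguinfty}) then chains together the images of $u_\infty$, $\omega^b$, $\omega^A$, $\omega^B$, and the $\omega^{C,\alpha}$'s into a connected subset of $N$, completing the proof. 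The persistent technical tension is that the Pohozaev identity can only be run on sub-annuli of the neck with sufficient geometric control; stitching the telescoped estimate across the many sub-annuli tiling the whole degenerating region is made possible precisely by the $\varepsilon_5$-decay of the curvature guaranteed by Proposition \ref{neckprop}, which is the geometric heart of the argument.
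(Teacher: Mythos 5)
Your overall decomposition (thick part, away-from-$x_{a,j}$ bubbles, the ALE bubble domain, and the degenerating neck with its $I_j^\alpha$ / $J_j^\alpha$ sub-decomposition), the two-coordinate strategy, and the use of the three-circle method in $(y)$-coordinates followed by a Pohozaev-type argument in $(x)$-coordinates all match the paper's route. But there is a genuine gap in the Pohozaev step as you describe it. You propose to control the error $P(g_j,t)$ by ``analyzing each summand of $P(g,r)$ ($\triangle_g x^k$, $(\nabla_g x^k)\nabla_g(e_k u)-|\triangle_g u|^2$, $\mathrm{div}_g(r\partial_r)-4$, and the Ricci term)'' after replacing Lemma \ref{almostflatconseq} by a degenerating-neck analogue. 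That analogue does not exist for the coordinate-dependent summands, and cannot: the $(x)$-coordinates on $A_{r_jR,\delta}(x_{a,j})$ are only $C^1$ and the metric components $g_{j,kl}(x)$ are merely $C^0$-close to $\delta_{kl}$ (Theorem \ref{neckmeytricflat}), so $\triangle_{g_j}x^k$ is at best distributional, $e_k$ is not available as a quantitatively controlled parallel frame, and the identity $(\nabla_g x^k)\nabla_g(e_k u)\approx\triangle_g u$ underlying Lemma \ref{rigttermestlem11} breaks down. Lemma \ref{almostflatconseqdege} deliberately omits every item of Lemma \ref{almostflatconseq} that involves $x^k$ or $e_k$, precisely for this reason.

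The paper resolves this by abandoning the coordinate-based form of $P(g,r)$ altogether. Starting from
\begin{eqnarray*}
0&=&\int_{A_{j;r,cr}}(r\partial_r u)(\triangle^2_{g_j} u)\,dV_{g_j},
\end{eqnarray*}
it never expands $r\partial_r$ as $x^k e_k$; instead it writes $P(g_j,r)$ as
\begin{eqnarray*}
P(g_j,r)=\int_{A_{j;r,cr}}\mathrm{div}_{g_j}\!\left(\tfrac{|\triangle_{g_j}u|^2}{2}\,r\partial_r\right)dV_{g_j}
-\int_{A_{j;r,cr}}\triangle_{g_j}(r\partial_r u)\,\triangle_{g_j}u\,dV_{g_j},
\end{eqnarray*}
and then in Lemma \ref{estimate-g} shows directly via the polar decomposition \eqref{laplacepolardege}, the Ricci identity on the rescaled annulus, and items (1), (2), (4), (5) of Lemma \ref{almostflatconseqdege} (all of which involve only $r$, $\tilde m$, $\tilde\triangle$, $\mathrm{div}_{g_j}(r\partial_r)$ --- quantities controlled by the Jacobi field estimate (\ref{shapecompa}) and the curvature bound in Proposition \ref{neckprop}) that $\triangle_{g_j}(r\partial_r u)\triangle_{g_j}u-r\partial_r(\triangle_{g_j}u)\triangle_{g_j}u$ differs from $2|\triangle_{g_j}u|^2$ by terms of size $\eta_j(r)/r$. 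This gives Lemma \ref{errorringhtj11} without ever touching $\triangle_g x^k$ or $e_k$. Your proposal, taken literally, would stall at exactly this point; you would need to replace the coordinate-based expansion with the divergence-form identity above (or an equivalent coordinate-free computation) before the telescoping argument you correctly outline can run. The rest of your plan --- telescoping \eqref{pohoradictangen3dege}-\eqref{pohoradictangen4dege} over the dyadic annuli of each $I_j^\alpha$, absorbing the tangential terms via Theorem \ref{tangdecaydege}, and reading off oscillation decay from $\varepsilon$-regularity --- is sound and matches the paper.
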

By the analysis discussed before, to prove the above main theorem, we need only to show that  the biharmonic energy of the maps $u_j$ over the neck regions $I_j^{\alpha}$ is converging to 0 and the image of $u_j$ over $I_j^{\alpha}$ is converging to a point.
More precisely, we will prove the following theorem for the extrinsic case in Section \ref{degeneneckanalys} (the intrinsic case will handled in Section \ref{secintrinsic}).
\begin{thm}\label{degenockenergy}
Let $\bar{I}_j^{\alpha}= A_{e^{b_j^\alpha},e^{a_j^{\alpha+1}}}(x_{a,j}) $ be the annulus region in $M_j$ corresponding to $I_j^{\alpha}$ in Proposition \ref{properneckc2}, then
\begin{equation*}
\lim_{\delta\rightarrow 0} \lim_{R\rightarrow \infty}\lim_{j \rightarrow \infty}\int_{\bar{I}_j^{\alpha}}|\nabla^2_{{g_j}}u_j|^2+|\nabla_{g_j} u_j|^4 dV_{g_j}=0
\end{equation*}
and
\begin{equation*}
\lim_{\delta\rightarrow 0} \lim_{R\rightarrow \infty}\lim_{j \rightarrow \infty}osc_{\bar{I}_j^{\alpha}} \  u_j=0.
\end{equation*}
\end{thm}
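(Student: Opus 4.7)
The strategy is to adapt the three-circle plus Pohožaev scheme used for Theorem \ref{noneckthm}, carrying it out carefully in the two coordinate systems $(x)$ and $(y)$ constructed on the degenerate neck, and tracking the error terms against the curvature/metric estimate $|g_{j,kl}-\delta_{kl}|<\eta_j(r)$ with $\eta_j(r)=O(r_\infty^{-\varepsilon_5}r^{\varepsilon_5}+(r_jR_0)^{\varepsilon_5}r^{-\varepsilon_5})$ coming from Proposition \ref{neckprop}. First I would dyadically decompose $\bar I_j^{\alpha}=A_{e^{b_j^{\alpha}},e^{a_j^{\alpha+1}}}(x_{a,j})$ into annuli $A_l=B(x_{a,j},e^{-(l-1)L})\setminus B(x_{a,j},e^{-lL})$; Proposition \ref{properneckc2}(\romannumeral1) guarantees that the biharmonic energy on each $A_l$ is bounded by $\varepsilon^2<\varepsilon_0$ once $\delta,R^{-1},j^{-1}$ are small enough, so the $\varepsilon$-regularity theorem and Corollary \ref{smallenergycor} apply uniformly.

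The tangential energy decay is the first real step. Working in the coordinates $(y)$, where $g_j$ is $C^4$-close to the flat metric with error controlled by $\eta_j(\rho)$, I would rewrite the biharmonic map equation \eqref{extrequ3} as a perturbed equation with respect to the Euclidean Laplacian on $A_l$, following the argument used to establish Lemma \ref{lemtangdecay}. The error tensor $\tilde\tau$ now obeys a bound of the form $\||x|^{4(1-1/p)}\tilde\tau\|_{L^p(A_l)}\le C\varepsilon\,\eta_j(e^{-lL})$, and because $\eta_j$ is of power type with exponent $\varepsilon_5>0$ at both ends of the neck, $v_j=u_j-(u_j)^*$ meets the hypotheses of Corollary \ref{threecirtotangdecay}. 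This yields an exponential-type decay
\[
\int_{A_l}\tfrac{1}{r^4}|\tilde\triangle u_j|^2+\tfrac{1}{r^2}|\partial_r\tilde\triangle u_j|^2+\tfrac{1}{r^2}|\partial_r\nabla_{S^3}u_j|^2\,dV_{g_j}\le C\varepsilon^2(e^{-\hat\kappa(l-l_0)L}+e^{-\hat\kappa(l_i-l)L}),
\]
with $\hat\kappa=\min(1,2\varepsilon_5)>0$, so the tangential part of the energy is summable over the whole neck.

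The next step is the radial part, which requires the new Pohožaev argument in the $(x)$ coordinates. Since the neck is only an annular portion and the geodesic spheres $\partial B(x_{a,j},t)$ are not coordinate spheres, I cannot integrate the Pohožaev identity globally as in Section \ref{Pohogenermetric}; instead I will use the differential identity
\[
c\int_{\partial B(x_{a,j},ct)}(\square_1+\square_2)\,d\sigma_{g_j}-\int_{\partial B(x_{a,j},t)}(\square_1+\square_2)\,d\sigma_{g_j}=\frac{P(g_j,t)}{t}
\]
on sub-annular regions $A_{j;t,ct}\subset\bar I_j^{\alpha}$ with $c>1$ fixed and small. The crucial input here is the forthcoming Lemma \ref{errorringhtj11}, which promises that $P(g_j,t)=O(\eta_j(t))$ times the local energy, the same order as the closeness of $g_j$ to the flat metric. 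Rewriting $\square_1+\square_2$ as in \eqref{leftterm11}-\eqref{leftterm22} and using the tangential decay already obtained, I can bound the tangential contributions to $\square_1,\square_2$ by the right-hand side of the previous display; the remaining piece is essentially $-\tfrac12(\partial_r^2u_j)^2+\tfrac{\tilde m^2}{2r^2}(\partial_r u_j)^2$ plus lower order, and a summation argument over geometric chains $t\mapsto ct\mapsto c^2t\mapsto\cdots$ (mimicking the passage from \eqref{pohoradictangen3} to \eqref{pohoradictangen4}) yields an ODE differential inequality $F(s)\le \tfrac{2}{3}F'(s)+\text{(summable tangential + metric error)}$ for the radial energy $F$ on a concentric dyadic family. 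Together with the uniform bound $F\le C\varepsilon^2$, this gives the radial energy decay, hence the full energy identity on $\bar I_j^{\alpha}$.

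The hardest point, and the one most different from the fixed-domain proof, is verifying that the Pohožaev error $P(g_j,t)$ genuinely inherits the decay rate $\eta_j(t)$; this is delicate because the terms $\triangle_{g_j}x^k$ and $\mathrm{div}_{g_j}(r\partial_r)-4$ are not in principle controlled pointwise by $\eta_j$ (the $(x)$ coordinates are only $C^0$-close to flat), and one must exploit the integration by parts structure together with the $(y)$-coordinate $C^4$ estimates to move derivatives off the metric. Granted the error bound and the radial/tangential energy decay, the no-neck property will follow from a standard argument: the $\varepsilon$-regularity gives $\sup_{A_l}|\nabla_{g_j}u_j|\le Cr^{-1}\varepsilon_l$ with $\sum_l\varepsilon_l<\infty$, so $osc_{\bar I_j^{\alpha}}u_j\le\sum_l osc_{A_l}u_j\le C\sum_l\varepsilon_l\to 0$ as $\delta\to 0$, $R\to\infty$, $j\to\infty$, exactly as at the end of the proof of Theorem \ref{noneckthm}.
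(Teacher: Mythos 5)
Your high-level strategy matches the paper: dyadic decomposition, three-circle decay for the tangential energy in the $(y)$ chart, a Poho\v{z}aev argument on sub-annuli in the $(x)$ chart for the radial energy, and finally an ODE/iteration to convert the energy decay into oscillation decay. The tangential part of your proposal is essentially correct and coincides with Theorems \ref{thrcirdegneck}--\ref{tangdecaydege}.

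The genuine gap is in your account of the Poho\v{z}aev error. You identify that $\triangle_{g_j}x^k$ is not controllable because $(x)$ is only $C^0$-close to flat, and you propose to repair this by ``exploiting the integration by parts structure together with the $(y)$-coordinate $C^4$ estimates to move derivatives off the metric.'' This will not work: the coordinates $(x)$ and $(y)$ are only $C^1_{|y|\eta_j}$-close (Theorem \ref{neckmeytricflat}), so the second derivatives of $x$ in $y$, hence $\triangle_{g_j}x^k$, cannot be controlled by this route; and an integration by parts pushing a derivative onto the solution would produce $\nabla^3 u$ terms that are not in $L^{4/3}$ uniformly. The paper does something structurally different. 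The degenerate-neck Poho\v{z}aev (Subsection \ref{pohodege}) never introduces the coordinate functions $x^k$ or the frame $e_k$ at all: one multiplies the equation by $r\partial_r u$, where $r\partial_r$ is the radial vector field of the geodesic polar structure (using $\nabla_{\partial_r}\partial_r=0$ and $|x|=d_{g_j}(\cdot,x_{a,j})$, both guaranteed by the construction of $(x)$). The error then takes the form
\begin{equation*}
P(g_j,r)=\int_{A_{j;r,cr}}\operatorname{div}_{g_j}\Bigl(\tfrac{|\triangle_{g_j}u|^2}{2}\,r\partial_r\Bigr)-\triangle_{g_j}(r\partial_r u)\,\triangle_{g_j}u\;dV_{g_j},
\end{equation*}
and the whole point of Lemma \ref{estimate-g} is that this can be estimated by $\varepsilon^2\eta_j(r)$ using only (i) the Ricci commutation identity for $[\tilde\triangle,\tilde\partial_r]$ together with the pointwise curvature bound of Proposition \ref{neckprop}, and (ii) the mean-curvature estimate $\tilde m=r\triangle_{g_j}r=3+\eta_j(r)$, $\partial_r\tilde m=\eta_j/\rho$, $\operatorname{div}_{g_j}(r\partial_r)=4+\eta_j$ of Lemma \ref{almostflatconseqdege}, which are obtained by a Jacobi-field comparison argument, not from any coordinate expansion. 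In other words, the estimate you call ``delicate'' does not require controlling $\triangle_{g_j}x^k$ at all, because that term simply does not appear in the identity.

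A second, smaller issue is your ODE step. In the fixed-domain case the Poho\v{z}aev identity holds at each single radius $r$ (the bulk error comes from the full ball $B_r$), and one gets a single-function inequality $F(s)\le\tfrac{2}{3}\partial_sF(s)+\ldots$. In the degenerating case one can only compare two nearby radii, $cA(ct)-A(t)=P(g_j,t)/t$, because the ALE bubble occupies the center; the corresponding differential inequality therefore involves two functions $F$ and $G$ supported on offset dyadic families (see \eqref{pohoradictangen4dege11} and the subsequent iteration with $e^L G(L)\le e^{-L/2}G(3L)+\ldots$). Writing a single-function $F(s)\le\tfrac{2}{3}F'(s)$ glosses over this offset and would not, as stated, give the power-type decay $F(L)\lesssim\varepsilon^2\bigl(e^{\varepsilon_5(t_0-a_j^{\alpha+1})}+e^{\varepsilon_5(b_j^\alpha-t_0)}\bigr)$ that the no-neck oscillation estimate actually needs; the uniform bound $F\le C\varepsilon^2$ alone is not summable over dyadic annuli, contrary to what the last step of your proposal implicitly assumes.
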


The proof of the above theorem is the most difficult part in this paper.

\

\section{Geometry of the degenerating neck regions}\label{constrcoordinates}

To prove Theorem \ref{degenockenergy}, we need to investigate the asymptotic bebavior of biharmonic maps $u_j$ over the neck regions $I_j^{\alpha}$, which are sub-annuli of the degenerating neck regions $A_{r_jR,\delta_1}(x_{a,j})\subset M_j$. Now the main difficult issue is that we do not have satisfactory coordinates on those degenerating neck regions $A_{r_jR,\delta_1}(x_{a,j})$ at this stage. Recall that $A_{\delta_2,\delta_1}(\cdot)$ represents $B(\cdot,\delta_1)\setminus B(\cdot,\delta_2)$. In this section, we shall explore the refined geometric picture of the degenerating neck region $A_{r_jR,\delta_1}(x_{a,j})$ and construct two types of \emph{good global coordinates} on this region: $(x)$ and $(y)$. In the coordinates $(x)$, the metrics $g_j$ are $C^0$ close to the flat metric. While, in the coordinates $(y)$, the metrics $g_j$ are $C^4$ close to the flat metric in some weighted function space.

\begin{rem}
The arguments in this section are valid in higher dimensions in the domain and hence the scheme developed in this paper can be applied to geometric systems defined over non-collapsed degenerating Einstein manifolds of higher dimensions.
\end{rem}

 Firstly, we recall the definition of weighted H\"{o}lder space. The weighted H\"{o}lder space $C^{k,\alpha}_{\eta}(A_{r_1,r_2})$ ($0<\alpha<1$) with weight $\eta$ (a function of $|x|=r$) are the space of functions in $C^{k,\alpha}(A_{r_1,r_2}(0))$ ($0\in\mathbb{R}^n$) with bounded weighted norm $||\cdot||_{C^{k,\alpha}_{\eta}}$, which is defined as follows:
\begin{equation*}
||f||_{C^{k,\alpha}_{\eta}}=\sum_{j=0}^{k}\sup_{A_{r_1,r_2}(0)}\eta^{-1}|x|^{j}|D^{j}f|+\sup_{x\neq y}
\min \ (\eta^{-1}|x|^{k+\alpha},\eta^{-1}|y|^{k+\alpha})\frac{|D^{k}f(x)-D^{k}f(y)|}{|x-y|^{\alpha}}.
\end{equation*}
 Analogous weighted norms for spaces of functions on $\mathbb{R}^n\setminus B_R$ were defined in \cite{Mey} (page 252-253). By similar arguments as in the proof of Theorem 1 in \cite{Mey}, we have the following Schauder's estimates for linear elliptic equations:

\begin{thm}\label{schauderestneck}
Set \begin{equation*}
L(u)=\sum_{k,l}a_{kl}(x)\partial_k\partial_l u+\sum_{k}a_{k}(x)\partial_k u+a(x)u,
\end{equation*}
where $ a_{k,l}\in C^{p,\alpha}_{r^{0}}(A_{r_1,r_2})$, $ a_{k}\in C^{p,\alpha}_{r^{-1}}(A_{r_1,r_2})$, $ a\in C^{p,\alpha}_{r^{-2}}(A_{r_1,r_2})$ with norms less than $K_1$.
Furthermore, we assume
\begin{equation*}
\sum_{k,l}a_{kl}(x)\xi_k\xi_l\geq K_2\sum_{k}\xi_k^{2} \quad (K_2>0).
\end{equation*}
Let $u$ be a solution of $L(u)=f$, then we have
\begin{equation*}
||u ||_{C^{p+2,\alpha}_{\eta}}\leq \hat{C}\hat{K}(||u ||_{C^{0}_{\eta}}+||f ||_{C^{p,\alpha}_{r^{-2}\eta}}),
\end{equation*}
where $\hat{C}>0$ is defined by $\eta(s)\leq \hat{C} \eta(r) $ for $1/2r<s<3/2r$, and $\hat{K}>0$ depends only on $K_1$, $K_2$, $p$, $i$, $\alpha$, and $n$.
\end{thm}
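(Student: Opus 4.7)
The plan is to reduce the weighted Schauder estimate on the annulus $A_{r_1,r_2}$ to the standard interior Schauder estimate on the unit ball via a covering and scaling argument. The choice of weights in the hypothesis is precisely the one that makes the operator $L$ scale-invariantly bounded: under the dilation $x \mapsto x_0 + \rho y$ with $\rho = |x_0|/4$, a derivative picks up a factor of $\rho^{-1}$, which is exactly compensated by the factors $|x|^{-1}$ in the weight on $a_k$ and $|x|^{-2}$ in the weight on $a$. This is what makes a uniform-in-$x_0$ estimate possible.

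First, I would fix an arbitrary $x_0 \in A_{r_1,r_2}$, set $\rho = |x_0|/4$, and define the rescaled functions $\tilde u(y) := u(x_0 + \rho y)$, $\tilde f(y) := \rho^2 f(x_0 + \rho y)$ on $B_1(0)$, together with the rescaled coefficients $\tilde a_{kl}(y) := a_{kl}(x_0+\rho y)$, $\tilde a_k(y) := \rho\, a_k(x_0+\rho y)$, $\tilde a(y) := \rho^2 a(x_0+\rho y)$. Since $|x|$ is comparable to $|x_0|$ throughout $B_\rho(x_0)$, the defining weighted $C^{p,\alpha}$ bounds on the original coefficients translate, after rescaling, into uniform $C^{p,\alpha}(B_1)$ bounds depending only on $K_1$. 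Ellipticity is preserved under this rescaling with the same constant $K_2$. Thus $\tilde L \tilde u = \tilde f$ is a uniformly elliptic equation on $B_1$ with coefficients uniformly bounded in $C^{p,\alpha}$.

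Next, I would apply the classical interior Schauder estimate of order $p+2$ on the unit ball to obtain
\begin{equation*}
\|\tilde u\|_{C^{p+2,\alpha}(B_{1/2})} \leq C\bigl(\|\tilde u\|_{C^{0}(B_1)} + \|\tilde f\|_{C^{p,\alpha}(B_1)}\bigr),
\end{equation*}
where $C$ depends on $K_1,K_2,p,\alpha,n$. Unwinding the scaling, each derivative $\partial^j$ contributes a factor $\rho^j$, which aligns with the scale-invariant factor $|x|^j$ in the definition of $\|\cdot\|_{C^{p+2,\alpha}_\eta}$. The hypothesis $\eta(s)\leq \hat C\,\eta(r)$ for $s\in(r/2,3r/2)$ ensures that all weight factors appearing in the rescaled inequality at the point $x_0$ are comparable to $\eta(|x_0|)$ up to $\hat C$, so the bound becomes
\begin{equation*}
\sum_{j=0}^{p+2}\eta(|x_0|)^{-1}|x_0|^{j}\sup_{B_{\rho/2}(x_0)}|D^j u| \;\leq\; \hat C\,\hat K\bigl(\|u\|_{C^0_\eta(B_\rho(x_0))} + \|f\|_{C^{p,\alpha}_{r^{-2}\eta}(B_\rho(x_0))}\bigr).
\end{equation*}
The H\"older seminorm part of $\|u\|_{C^{p+2,\alpha}_\eta}$ is handled analogously: for $x,y$ in the same rescaled ball the estimate follows directly from the interior bound on $\tilde u$, while for $x,y$ at genuinely different scales the triangle inequality together with the $C^{p+2}$ bounds already obtained controls the seminorm up to a factor absorbed by the minimum weight $\min(\eta^{-1}|x|^{k+\alpha},\eta^{-1}|y|^{k+\alpha})$. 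Taking the supremum over $x_0\in A_{r_1,r_2}$ gives the result.

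The main technical point to watch is the H\"older seminorm for pairs $(x,y)$ lying in disjoint rescaled balls; here one must exploit the scale-invariant definition of the weighted $C^{p+2,\alpha}_\eta$ norm, where taking the minimum of the weights at $x$ and $y$ is what allows a clean triangle-inequality reduction to points within a single ball. Everything else is a routine adaptation of Meyers' argument in \cite{Mey}, and no new analytical input is required beyond the standard interior Schauder theory and the bookkeeping of powers of $|x|$ and $\eta$.
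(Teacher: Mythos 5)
Your argument is correct and takes essentially the same approach as the paper: both cover the annulus by balls of radius comparable to $|x_0|$, invoke the interior Schauder estimate on each such ball in a scale-invariant form, and use the comparability hypothesis $\eta(s)\le\hat C\,\eta(r)$ to patch the local bounds together into the global weighted norm. The paper packages the rescaling step by quoting the Douglis--Nirenberg interior estimate directly in the distance-to-boundary weighted norms $C^{p,\alpha}_\sigma(B_x)$ (Theorem~\ref{schauderestbound}) and isolates the required norm-equivalences --- including the cross-scale H\"older seminorm control that you correctly flag as the one delicate point --- into Lemma~\ref{normrelation}, whereas your explicit rescaling to the unit ball would re-derive that same content by hand.
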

We put the proof in Appendix \ref{schauderest}.

The main result in this section is the following:

\begin{thm}\label{neckmeytricflat}
For  $\delta_1>0$ small enough and for $R>0$ and $j>0$ sufficiently large, there exist two coordinates $(x)$ and $(y)$ on $A_{r_jR,\delta_1}(x_{a,j})\subset M_j$ such that the following properties hold:
in coordinates $(x)$, there hold
\begin{eqnarray*}\label{almostflat1dege1}
& &\|g_{j,kl}( x)-\delta_{kl}\|_{C^{0}}<\eta_j(|x|),\\
& &|x(\cdot)|\equiv r(\cdot)=d_{g_j}(\cdot,x_{a,j})\quad \text{and} \quad \nabla_{\partial_r}\partial_r=0,
\end{eqnarray*}
and in coordinates $(y)$, there hold
\begin{equation*}\label{almostflat1dege1}
\|g_{j,kl}( y)-\delta_{kl}\|_{C^{4,\alpha}_{\eta_j(|y|)}}<C,
\end{equation*}
where $\eta_j(t)=O(r^{-\varepsilon_5}_\infty t^{\varepsilon_5}+ (r_jR_0)^{\varepsilon_5}t^{-\varepsilon_5}),$ $r_\infty>0$, $\varepsilon_5 >0$, $r_j>0$ are the same as in the curvature estimate in Proposition \ref{neckprop}, and $R_0>0$ is some fixed large number.
Moreover,
\begin{equation*}
\|y-x\|_{C^{1}_{|y|\eta_j(|y|)}}<C,
\end{equation*}
in particular, $ |x-y|<\eta_j(|x|)|x|$.
\end{thm}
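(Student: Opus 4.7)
The plan is to build $(x)$ as a geodesic-polar coordinate, construct $(y)$ as a harmonic perturbation of $(x)$, and compare the two via weighted elliptic estimates. The only geometric input we need is Proposition \ref{neckprop} (the curvature bound and the Gromov-Hausdorff closeness of the neck to a portion of the cone $\mathbb{R}^4/\Gamma$), and the only analytic input is Theorem \ref{schauderestneck}.

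For $(x)$, fix a Gromov-Hausdorff identification between a middle distance-sphere $\Sigma_{j,\rho_0}=\{d_{g_j}(\cdot,x_{a,j})=\rho_0\}$ and $\mathbb{S}^3/\Gamma$ provided by Proposition \ref{neckprop}, and set $x(q)=r(q)\,\theta(q)\in\mathbb{R}^4/\Gamma$, where $r(q)=d_{g_j}(q,x_{a,j})$ and $\theta(q)$ is the initial angular direction of the minimising geodesic from $x_{a,j}$ to $q$. The curvature bound $r^2|R_{g_j}|\leq C\eta_j(r)$ rules out conjugate points along radial geodesics across the neck, so $(x)$ is a valid (orbifold) chart. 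The Gauss lemma gives $|x|=r$ and $\nabla_{\partial_r}\partial_r=0$ automatically. Writing $g_j=dr^2+r^2h_j(r,\theta)$ in this polar form and integrating a standard Jacobi-field comparison against the curvature bound yields $\|h_j(r,\cdot)-g_{\mathbb{S}^3/\Gamma}\|_{C^0}\leq C\eta_j(r)$, which translates into the required $\|g_{j,kl}(x)-\delta_{kl}\|_{C^0}<\eta_j(|x|)$.

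For $(y)$ I follow the scheme of \cite{BKN}: solve the Dirichlet problem $\Delta_{g_j}y^k=0$ on $A_{r_jR,\delta_1}$ with boundary data $y^k=x^k$ on the two bounding spheres (the data should in fact be refined to agree with the harmonic coordinates coming from the ALE bubble on the inside and from the smoothed orbifold on the outside, compare Theorem \ref{mainconvgethm} and Remark \ref{remorbfsmoth}). Writing $v^k=y^k-x^k$, one has $\Delta_{g_j}v^k=-\Delta_{g_j}x^k$, whose right-hand side is of weighted size $O(\eta_j(r)/r^2)$ thanks to the $C^0$ closeness of $g_j$ to the flat metric in $(x)$-coordinates. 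Theorem \ref{schauderestneck} applied with weight $|x|\eta_j(|x|)$ then gives $\|v^k\|_{C^{1}_{|x|\eta_j(|x|)}}\leq C$, so that $|x-y|<\eta_j(|x|)|x|$, $(y)$ is a genuine coordinate system, and, using $|x|\sim|y|$ on the neck, the comparison bound $\|y-x\|_{C^1_{|y|\eta_j(|y|)}}<C$ in the statement follows. In these harmonic coordinates the Einstein equation $\mathrm{Ric}(g_j)=\mu_j g_j$ becomes a quasilinear elliptic system
\begin{equation*}
\tfrac12 g_j^{pq}\partial_p\partial_q g_{j,kl}+Q(g_j,\partial g_j)=-\mu_j g_{j,kl},
\end{equation*}
to which a weighted Schauder bootstrap via Theorem \ref{schauderestneck}, initialised with the $C^0$ bound just transported from $(x)$, promotes the closeness of $g_{j,kl}(y)$ to $\delta_{kl}$ all the way up to the $C^{4,\alpha}_{\eta_j(|y|)}$ level.

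The main obstacle will be executing this elliptic bootstrap in a way that simultaneously respects the two-sided weight $\eta_j(r)=O(r_\infty^{-\varepsilon_5}r^{\varepsilon_5}+(r_jR_0)^{\varepsilon_5}r^{-\varepsilon_5})$ and the $\Gamma$-equivariance needed for the coordinates to descend to $\mathbb{R}^4/\Gamma$. In particular, the coefficients of the quasilinear system for $g_{j,kl}$ depend on the metric one is trying to estimate, so there is a chicken-and-egg issue: the iteration must alternate carefully between improving the bound for $v=y-x$ and improving the bound for $g_{j,kl}-\delta_{kl}$, each round using the previous one as input. A subsidiary technical point is verifying that the Dirichlet data on the two boundary spheres are already close to flat in the weighted $C^{4,\alpha}$ sense, which in turn requires choosing the boundary identifications compatibly with the harmonic coordinates at infinity of the ALE bubble $M_a$ and at the orbifold point $x_a\in M_\infty$.
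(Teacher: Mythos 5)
Your plan for $(x)$ is in the right spirit, but the way you anchor the angular coordinate is problematic. You define $\theta(q)$ as the initial direction at $x_{a,j}$ of the minimising geodesic to $q$; those geodesics traverse the bubble region $B(x_{a,j},r_jR)$ where the curvature is uncontrolled, so this direction map is not something you can estimate from Proposition \ref{neckprop}, and its relation to your identification of a middle sphere with $\mathbb{S}^3/\Gamma$ would also require transporting that identification inward through the bubble. The paper instead anchors the angular coordinate on the inner sphere $S_{r_jR}$ (where the rescaled metric converges to the ALE metric $h_a$, giving a canonical diffeomorphism $\Upsilon_{r_jR}$ to $S^3$), and shoots geodesics \emph{outward}, so that Jacobi fields are integrated entirely inside the neck where the curvature bound holds. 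You also dismiss the ``no conjugate points'' issue too quickly: Jacobi comparison rules out focal points, but showing that the normal exponential map off $S_{r_jR}$ has no cut locus across the whole neck is a separate and genuinely nontrivial step (this is Proposition \ref{intodiffeo}, which is proved by a contradiction/rescaling argument, not by the curvature bound alone).

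The construction of $(y)$ is where the real gap is. You propose solving $\Delta_{g_j}y^k=0$ globally with boundary data $x^k$, estimate $v^k=y^k-x^k$ by treating $\Delta_{g_j}x^k$ as a right-hand side of size $O(\eta_j/r^2)$, and then bootstrap the metric regularity via Theorem \ref{schauderestneck}. This does not run. First, $\Delta_{g_j}x^k$ involves Christoffel symbols of $g_j$, i.e.\ first derivatives of the metric; the $C^0$ closeness of $g_j$ to $\delta$ in $(x)$-coordinates gives no pointwise control of these, so the claimed bound on the source term is unfounded. (The paper does estimate $\Delta_{g_j}r$ and the mean curvature in Lemma \ref{almostflatconseqdege}, but via Jacobi field and shape operator computations, not from the $C^0$ bound; the full vector $\Delta_{g_j}x^k$ would require more.) Second, and more seriously, Theorem \ref{schauderestneck} needs the coefficients $g_j^{kl}$ to lie in $C^{p,\alpha}_{r^0}$; a $C^0$ bound is not an admissible starting point for the weighted Schauder iteration, so your ``initialised with the $C^0$ bound'' bootstrap is circular: the Hölder control of the metric that you need as input to the elliptic estimate is exactly what you are trying to prove. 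This is precisely the obstruction the paper's local-harmonic-coordinates approach is designed to bypass: on small balls $B_{k,l}$ one constructs harmonic coordinates $\mathbf{H}_{k,l}$ and compares $d\mathbf{H}_{k,l}$ with the radially parallel coframe $\omega_t$ \emph{directly} via the pointwise estimate in Fact (2.9) of \cite{BKN}, which is driven only by the curvature bound and does not presuppose any Hölder regularity of $g_j$ in $(x)$-coordinates. That yields the $C^{1,\alpha}_{\eta_j}$ estimate on $g_j$ in the local harmonic chart (inequality (\ref{intialregul})), from which the Einstein system and Theorem \ref{schauderestneck} give $C^{p,\alpha}_{\eta_j}$ for all $p$; the overlap estimates on $\mathbf{H}_{k_1,l_1}\circ\mathbf{H}_{k_2,l_2}^{-1}$ then let one patch the local charts with a partition of unity into a single coordinate system $(y)$. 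Your global Dirichlet problem also faces the boundary-matching subtlety you yourself flag, but the regularity circularity above is the decisive failure: without first producing a $C^{1,\alpha}_{\eta_j}$ gauge from the curvature bound alone, the weighted Schauder theory cannot be launched.
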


\begin{rem}
Here we say coordinates on the degenerating neck region, we mean a map
$$\varphi=\mathcal{L}^{-1}\circ proj:\mathbb{R}^4 \rightarrow  A_{r_jR,\delta_1}(x_{a,j}),$$
where $\mathcal{L}$ is a diffeomorphism from $ A_{r_jR,\delta_1}(x_{a,j})$ to $\mathbb{R}^4/\Gamma$, and $proj$ is the natural projection from $\mathbb{R}^4$ to $\mathbb{R}^4/\Gamma$.
\end{rem}

\begin{rem}
The above theorem says that, in the $(y)$ coordinates, the metric on the degenerating neck region $A_{r_jR,\delta_1}(x_{a,j})\subset M_j$ is close in $C^4$ topology (in fact $C^k$ for any $k \in \mathbb{N}$ by the argument given in the proof) to the flat metric in some weighted sense. It is easy to see that $\eta_j$ satisfies the requirement in Theorem \ref{schauderestneck}. We remark that the notation $\eta_j$ often appears in the rest of the paper, it means a function of order $\eta_j$ or it serves as a positive upper bound.

\end{rem}

To prove Theorem \ref{neckmeytricflat}, we shall apply some arguments in \cite{BKN} with some modifications. The proof will be completed in two steps. Firstly, we outline the construction of $C^{1}$-coordinates on the degenerating neck regions.

\begin{lem}\label{neckmeytricflatlem}
One can construct on $A_{r_jR,\delta_1}(x_{a,j})\subset M_j$ coordinates $(x)$ for small enough $\delta_1>0$ and sufficiently large $R>0$ and $j>0$, such that
\begin{equation*}\label{almostflat1dege1}
\|g_{j,kl}( x)-\delta_{kl}\|_{C^{0}}<\eta_j(|x|).
\end{equation*}
\end{lem}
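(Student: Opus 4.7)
The plan is to build $(x)$ as geodesic polar coordinates based at the curvature blow-up point $x_{a,j}$, with the angular identification fixed on a single reference sphere and extended radially by parallel transport. Under this construction the requirements $|x|=r=d_{g_j}(\cdot,x_{a,j})$ and $\nabla_{\partial_r}\partial_r=0$ are automatic, and the content of the lemma is the $C^0$ smallness of $g_{j,kl}-\delta_{kl}$.

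First I would fix a base radius $s_\ast\in[r_jR,\delta_1]$ and construct, on the geodesic sphere $\Sigma_\ast=\partial B(x_{a,j},s_\ast)$, an orthonormal frame $\{e_1,e_2,e_3,e_4=\partial_r\}$ together with a diffeomorphism $\psi\colon S^3/\Gamma\to s_\ast^{-1}\Sigma_\ast$, chosen so that the frame agrees, up to an error of size $\eta_j(s_\ast)$, with the pullback via $\psi$ of the standard translation-invariant Euclidean frame on $\mathbb{R}^4/\Gamma$ (well defined since $\Gamma\subset SO(4)$). This step draws on the smooth convergence $(M_j,g_j)\to(M_\infty,g_\infty)$ at the outer scale and $(M_j,r_j^{-2}g_j)\to(M_a,h_a)$ at the inner scale, and on the coincidence of the local fundamental group of $M_\infty$ at $x_a$ with the fundamental group at infinity of $M_a$ (both equal to $\Gamma$; see Remark \ref{neckcoordinates}).

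Next, for each $p\in\Sigma_\ast$ let $\gamma_p$ be the unit-speed radial geodesic with $\gamma_p(s_\ast)=p$ and $\gamma_p'(s_\ast)=e_4(p)$, defined on $[r_jR,\delta_1]$. Parallel-transport $\{e_k\}$ along each $\gamma_p$ to produce orthonormal frames over the whole neck, and define the coordinates by
\[
x\bigl(\gamma_p(r)\bigr)=r\cdot\psi^{-1}(p)\in\mathbb{R}^4/\Gamma.
\]
In these coordinates the tangent vectors $\partial_{x^k}$ are linear combinations of $\partial_r$ and variation fields $J$ of the geodesic family $\{\gamma_p\}$, each of which satisfies the Jacobi equation $\nabla_{\partial_r}^2 J=-R(J,\partial_r)\partial_r$ with initial data prescribed at $r=s_\ast$ by the frame $\{e_i\}$. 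A Gr\"onwall comparison of these Jacobi fields with their Euclidean analogues, together with the pointwise curvature bound
\[
r^2|R_{g_j}|\leq C_7\max\bigl\{(r_j/r)^{\varepsilon_5},(r/r_\infty)^{\varepsilon_5}\bigr\}
\]
from Proposition \ref{neckprop}, controls their difference by $r$ times the integral of $s^{-1}\bigl((r_j/s)^{\varepsilon_5}+(s/r_\infty)^{\varepsilon_5}\bigr)$ between $|x|$ and $s_\ast$. Both branches of the integral are of order $\eta_j(r)$, yielding $|g_{j,kl}(x)-\delta_{kl}|<\eta_j(|x|)$.

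The main obstacle is the construction in the second step of the global quotient structure $\psi\colon S^3/\Gamma\to s_\ast^{-1}\Sigma_\ast$. One must exploit the identification, highlighted in Remark \ref{neckcoordinates}, between the local fundamental group of $M_\infty$ at the orbifold point $x_a$, the fundamental group at infinity of the ALE space $M_a$, and the holonomy group of the flat cone $\mathbb{R}^4/\Gamma$ modelling the neck. Without this coincidence, parallel transport around $\Sigma_\ast$ would fail to close up modulo $\Gamma$ and the radial extension would not yield a well-defined chart. Once the global identification is in hand, the Jacobi ODE analysis combined with Proposition \ref{neckprop} reduces the rest of the proof to a routine Gr\"onwall estimate yielding the stated weight $\eta_j$.
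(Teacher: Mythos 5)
Your construction matches the paper's in spirit: exponentiate radially from a reference geodesic sphere, parallel-transport an orthonormal frame along the radial geodesics, and compare the resulting variation (Jacobi) fields with their flat-cone analogues using the curvature decay from Proposition \ref{neckprop}. However, as stated the plan has two concrete gaps. First, you allow the base radius $s_\ast$ to be anywhere in $[r_jR,\delta_1]$, but the base sphere must be taken at the \emph{inner} scale $s_\ast=r_jR$ (with $R=R_0$ fixed afterwards). Two things force this choice. One is that control of the shape operator and frame at the base --- your ``error of size $\eta_j(s_\ast)$'' --- is only directly available at the inner scale: the rescaled convergence $(M_j,r_j^{-2}g_j)\to(M_a,h_a)$ and the fact that $M_a$ is ALE of order $4$ give $\|(r_jR)A_{r_jR}+\mathrm{Id}\|<a(j,R)\to 0$; at an intermediate sphere no such a priori estimate is available, and asserting one is essentially circular, since the analogous bound (\ref{shapecompa}) is a \emph{consequence} of the lemma you are proving. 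The other is that integrating the Jacobi comparison \emph{inward} from a middle sphere breaks down: writing $X(t)=tY+Z(t)$ with $Z(s_\ast)$, $Z'(s_\ast)$ small, one has $|Z(t)|/t\gtrsim |Z'(s_\ast)|\,(s_\ast-t)/t$, and for $t\ll s_\ast$ this is of size $|Z'(s_\ast)|\,s_\ast/t$, which dwarfs $\eta_j(t)$ unless $Z'(s_\ast)$ vanishes to high order. The paper only ever integrates outward from $r_jR$, where $(t-r_jR)/t\leq 1$ keeps this initial-data contribution uniformly bounded by $|Z'(r_jR)|=O(a(j,R))$, and the residual terms cancel against the double curvature integral before $R$ is fixed at $R_0$.

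Second, the claim that $|x|=r=d_{g_j}(\cdot,x_{a,j})$ is ``automatic'' once the construction is set up is not justified. What is automatic along each individual radial geodesic is $\nabla_{\partial_r}\partial_r=0$ and $|x(\gamma_p(r))|=r$ by definition; what is \emph{not} automatic is that the map $\Phi_{r_jR}\colon S^3\times[r_jR,\delta_1]\to M_j$ is a diffeomorphism onto the annulus --- equivalently that $r$ coincides with the geodesic distance globally and that the radial geodesics from $S_{r_jR}$ develop no focal points and do not meet the cut locus inside the neck. That is precisely Proposition \ref{intodiffeo}, whose proof is a separate comparison argument (again built on the curvature decay of Proposition \ref{neckprop} and the convergence to the limit spaces at both ends); without it, $(x)$ is not known to be a globally defined chart on $A_{r_jR,\delta_1}(x_{a,j})$. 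Finally, your identification of the $\Gamma$-quotient structure as the ``main obstacle'' overstates that point: the paper simply passes to the universal cover of the annulus (and records in Remark \ref{neckcoordinates} the coincidence of the relevant groups), so the genuine technical content lies in the two issues above, not in the quotient bookkeeping.
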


\begin{proof}
    Recall that $ (M_j, r_j^{-2}g_j, x_{a,j})$ converges in pointed G-H topology (in $C^\infty$ topology outside some compact set) to $(M_a,h_a,p_a)$ (see Proposition \ref{covergtoorbifold} or \cite{anderson1989ricci}). The fact that $(M_a,h_a)$ is a Ricci flat ALE manifold (orbifold) gives the existence of the following diffeomorphism (see Section 2 of \cite{kasue1988compactification} or Lemma (3.1) of \cite{BKN}):
    $$\Pi_{R}:A_{\frac{R}{2}, 2R}(p_a)\subset M_a\rightarrow \mathcal{C}(S^3/\Gamma).$$
   Moreover, the limit metric
   \begin{equation*}
   \lim_{R\rightarrow\infty}R^{-2}\Pi_{R*}h_a \end{equation*}
   is the standard flat metric on the flat cone $\mathcal{C}(S^3/\Gamma)$. By taking the universal covering of $A_{R, 2R}(p_a)\subset (M_a,h_a)$, we assume $\Gamma={e}$. Let us temporarily ignore the fundamental group.

   For large $R>0$, we set $\tilde{S}_{R}:=\Pi_{R}^{-1}(\{1\}\times S^3)$. Hence
 \begin{eqnarray}\label{shape11}
 ||R\tilde{A}_{R}+Id||\leq a(R) ,
 \end{eqnarray}
where $\tilde{A}_{R}$ stands for the shape operator of $\tilde{S}_{R}$, and $a(R)\rightarrow 0$, as ${R\rightarrow\infty}$.

Notice that $r_j^{-1}A_{\frac{r_jR}{2}, 2r_jR}(x_{a,j})$ converge smoothly to $A_{\frac{R}{2}, 2R}(p_a)\subset M_a$, it is easy to see that $\partial B(x_{a,j},r_jR)$ is smooth, and $(R)^{-1}\tilde{S}_{R} $ and $(r_jR)^{-1}\partial B(x_{a,j},r_jR)$ are closed in $C^\infty$ topology.
Hence $\partial B(x_{a,j},r_jR)\equiv S_{r_jR}$ is diffeomorphic to $S^3$,
and
\begin{equation*}
\lim_{R\rightarrow\infty}\lim_{j\rightarrow\infty}(r_jR)^{-1}S_{r_jR}=(S^3,g_{S^3})
\end{equation*}
in $C^k$ topology, where $g_{S^3}$ is the standard round sphere metric. And
  \begin{equation}\label{shape111}
||( r_jR)A_{r_jR}+Id||< a(j,R),  \quad  a(j,R)\rightarrow0\text{ as } R,j\rightarrow\infty
 \end{equation}
 by (\ref{shape11}) and the arguments at the beginning of the proof.
Denote the diffeomorphism by
\begin{equation}\label{keydifff11}
\Upsilon_{r_jR}:\tilde{\theta}\in S_{r_jR}\rightarrow \theta\in S^3.
 \end{equation}

Motivated by the construction of coordinates around isolated orbifold singularity in Page 342 of \cite{BKN}, we consider a map
\begin{eqnarray*}
\Phi_{r_jR}:  S^3\times [r_jR, \delta_1]   &\longrightarrow&  M_j \\
(\theta, t) &\longmapsto& \exp\{(t-r_jR)\nu_{r_jR}(\tilde{\theta})\}=\exp\{(t-r_jR)\nu_{r_jR}(\Upsilon_{r_jR}^{-1}(\theta))\}
\end{eqnarray*}
%& & (\theta, t)\in S^3\times [r_jR, \delta_1]\rightarrow\\ & &\exp\{(t-r_jR)\nu_{r_jR}(\tilde{\theta})\}=\exp\{(t-r_jR)\nu_{r_jR}(\Upsilon_{r_jR}^{-1}(\theta))\}\in M_j,
where $ \nu_{r_jR}(\tilde{\theta})$ is outward unit normal vector.

\begin{prop}\label{intodiffeo}
$\Phi_{r_jR}$ is an into-diffeomorphism for some fixed constant $\delta_1
>0$ independent of $j$ and $R$.
\end{prop}
The above proposition will be proved soon afterward, now we turn back to the construction of coordinates.

Taking a point $\tilde{\theta}\in S_{r_jR}$, we consider parallel orthonormal vector fields $E_1(t),\cdots,$ $E_4(t)$ along $\sigma_{r_jR}(t)=\Phi_{r_jR} (\theta, t)$. Let $ X_{r_jR}(t)$ be a Jacobi field (see Section 2 of \cite{BKN}) along $\sigma_{r_jR}(t)$ with
\begin{eqnarray}\label{intialcondt11}
& &  X_{r_jR}(r_jR)  =  r_jRE_k(r_jR),  \nonumber\\
& &  \nabla_{\partial t} X_{r_jR}^{\bot}(r_jR)   =   -r_jRA_{r_jR}(E_k^{\bot}(r_jR)), \\
& &  \nabla_{\partial t}(X_{r_jR}- X_{r_jR}^{\bot})(r_jR)   =  E_k(r_jR)-E_k^{\bot}(r_jR)  \nonumber
\end{eqnarray}
for some $k=1,\cdots,4$. Here $A_{r_jR}$ stands for the shape operator of $S_{r_jR}$, $\nu_{r_jR} \perp  X_{r_jR}^{\bot}$. Then $X^{i}_{r_jR}(t)=(X_{r_jR}(t),E_i(t)) $ satisfies (see Section 5 of \cite{smith1992removing} or Section 3 of \cite{BKN})
\begin{equation*}
X^{i}_{r_jR}(t)=X^{i}_{r_jR}(r_jR)+(\partial_tX^{i}_{r_jR})(r_jR)(t-r_jR)
+\int_{r_jR}^{t}\int_{r_jR}^{s}\sum X^{l}_{r_jR}(u)K_l^{i}(u)duds,
\end{equation*}
where $K_l^{i}(u)=(Riem(E_l(u),(\partial_t\sigma_{r_jR})(u))(\partial_t\sigma_{r_jR})(u),E_i(u)).$

Thanks to the curvature estimate
\begin{equation}\label{neckcurvturestmprop}
r^2|R_{g_j}|\leq C_7 \max \left\{(\frac{r_j}{r})^{\varepsilon_5},(\frac{r}{r_\infty})^{\varepsilon_5}\right\}.
\end{equation}
 in Proposition \ref{neckprop},
  we find by direct calculations using (\ref{shape111}) and (\ref{intialcondt11}) that
  \begin{eqnarray*}
  & &| X^{i}_{r_jR}(t)|<Ct,\\
  & &(\partial_tX^{i}_{r_jR})(r_jR)t=\delta_k^{i}(1+a(j,R))t,\\
  & &\big{|}\int_{r_jR}^{t}\int_{r_jR}^{s}\sum X^{l}_{r_jR}(u)K_l^{i}(u)duds\big{|}\\
  &<& C\max\left\{\left[\varepsilon_5r_jR^{1-\varepsilon_5}+tR^{-\varepsilon_5}-t(\frac{r_j}{t})^{\varepsilon_5}\right],
  \left[\varepsilon_5^{-1} \cdot\frac{t^{1+\varepsilon_5}}{r_\infty^{\varepsilon_5}}\right]\right\}.
  \end{eqnarray*}
  for $t\in[r_jR, \delta_1]$. Noticing that
  \begin{equation*}
 a(j,R)\rightarrow0, \text{ as } R,j\rightarrow\infty
 \end{equation*}
 by (\ref{shape111}), we know that the terms $a(j,R)t$ and $tR^{-\varepsilon_5}$ have to cancel each other out. Indeed
 \begin{eqnarray*}
 (\partial_tX^{i}_{r_jR})(Nr_jR)&=&(\partial_tX^{i}_{r_jR})(r_jR)+\int_{r_jR}^{Nr_jR}\sum X^{l}_{r_jR}(u)K_l^{i}(u)du\\
 &=&(\partial_tX^{i}_{r_jR})(r_jR)+O(R^{-\varepsilon_5}-(NR)^{-\varepsilon_5}),
 \end{eqnarray*}
 where $N>0$ is some large number.
  Therefore we obtain the following key estimate
\begin{equation}\label{jacobiest1}
|t^{-1}X^{i}_{r_jR}(t)-\delta_{ik}|\leq C(r_jRt^{-1})+ C(r_jR^{1-\varepsilon_5}t^{-1})+C\max(r^{-\varepsilon_5}_\infty t^{\varepsilon_5}, (r^{\varepsilon_5}_j)t^{-\varepsilon_5}).
\end{equation}

Now fix $R=R_0$ in (\ref{jacobiest1}) for some sufficiently large $R_0$.
Then for $t\in[r_j R, \delta_1]$, we have
\begin{equation*}
|t^{-1}X^{i}_{r_jR_0}(t)-\delta_{ik}|\leq \eta_j(t),
\end{equation*}
where $\eta_j(t)=O(r^{-\varepsilon_5}_\infty t^{\varepsilon_5}+ (r_jR_0)^{\varepsilon_5}t^{-\varepsilon_5}).$

 The above estimate means that in the coordinates
\begin{equation}\label{coordinatesonneck11}
\Phi_{r_jR_0}:S^3\times [r_jR,\delta_1]=\left\{x\in\mathbb{R}^4:r_jR\leq|x|\leq\delta_1 \right\}\rightarrow A_{r_jR,\delta_1}(x_{a,j})\subset M_j,
\end{equation}
the metric $g_j$ satisfies
\begin{equation*}
|(\Phi_{r_jR_0}^{*}g_j)_{kl}-\delta_{kl}|\leq \eta_j(|x|).
\end{equation*}
We just outline it, for details one can see Step 2 in the proof of Theorem (1.1) in \cite{BKN}. In fact
from (\ref{jacobiest1}) we have that the shape operator $\bar{A}_t,t\in[r_jR,\delta_1]$ of $ S_t=\partial B(x_{a,j},t)$ satisfies
\begin{equation}\label{shapecompa}
|t\bar{A}_t+Id|\leq \eta_j(t).
\end{equation}
The standard Euclidean metric on $S^3\times [r_jR,\delta_1]$ can be written as
\begin{equation*}
dr^2+r^2d\theta^2.
\end{equation*}
So from (\ref{shapecompa}) the metric $g_j$ in $(x)$ coordinates takes the form
\begin{equation*}
dr^2+r^2(1+\eta_j(r))^2d\theta^2.
\end{equation*}
Hence
\begin{equation*}
|g_{j,kl}(x)-\delta_{kl}|\leq \eta_j(|x|).
\end{equation*}

 \end{proof}

\begin{rem}\label{coordequal2}
From the above construction, we know that
 $|x(\cdot)|=r(\cdot)=d_{g_j}(\cdot,x_{a,j})$ and $ \nabla_{\partial_r}\partial_r=0$.
\end{rem}

The final step for the proof of the Theorem \ref{neckmeytricflat} is to improve the regularity of the metric on the basis of Lemma \ref{neckmeytricflatlem}.

\begin{lem}\label{neckmeytricflatlem2} We can construct coordinates $(y)$ on $A_{r_jR,\delta_1}(x_{a,j})$ such that
\begin{equation*}
\|g_{j,kl}( y)-\delta_{kl}\|_{C^{4,\alpha}_{\eta_j}}<C.
\end{equation*}
\end{lem}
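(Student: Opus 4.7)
The strategy is to upgrade the $C^0$ coordinates $(x)$ from Lemma \ref{neckmeytricflatlem} to harmonic coordinates $(y)$ on the degenerating neck. In harmonic coordinates the Einstein equation turns into a quasilinear elliptic system for the metric components, and the claimed weighted regularity then follows by an iterative elliptic bootstrap based on the weighted Schauder theory of Theorem \ref{schauderestneck}.

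First I would construct harmonic coordinates. On each scale-invariant dyadic annular piece $A_{s,cs}(x_{a,j}) \subset A_{r_jR,\delta_1}(x_{a,j})$, let $y^l$ solve $\Delta_{g_j} y^l = 0$ with Dirichlet data agreeing with $x^l$ on the boundary; existence and smoothness on each fixed smooth $M_j$ are standard. After rescaling the shell to unit size, the $C^0$ closeness $\|g_{j,kl}(x) - \delta_{kl}\|_{C^0} \leq \eta_j(s)$ provided by Lemma \ref{neckmeytricflatlem} means that $\Delta_{g_j}$ is a $C^0$-small perturbation of the flat Laplacian, so interior estimates for linear elliptic equations with $C^0$-small coefficient perturbation yield $\|y^l - x^l\|_{C^{1,\alpha}} \leq C \eta_j(s)$ on each rescaled shell. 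Rescaling back and combining across all shells gives $\|y - x\|_{C^1_{|y|\eta_j(|y|)}} \leq C$; in particular the $(y)$ coordinates are still $C^0$-close to flat with the same weight $\eta_j$, and the local charts can be patched into a single global chart using the fact (Proposition \ref{neckprop}) that the neck is globally modeled on a portion of $\mathbb{R}^4/\Gamma$ and that on every overlap both $y^l$ and $x^l$ agree up to $O(\eta_j)$.

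Next I would exploit the Einstein condition. The defining identity of harmonic coordinates, $g^{pq}\Gamma^k_{pq} = 0$, reduces the Ricci tensor to
\[ R_{kl}(g_j) \;=\; -\tfrac{1}{2}\, g_j^{pq}\, \partial_p \partial_q\, g_{j,kl} \;+\; Q_{kl}(g_j, \partial g_j), \]
with $Q_{kl}$ a polynomial that is quadratic in $\partial g_j$ and smooth in $g_j$. Since $g_j$ is Einstein with uniformly bounded constant $\mu_j$, the metric components $g_{j,kl}(y)$ satisfy
\[ -\tfrac{1}{2}\, g_j^{pq}\, \partial_p \partial_q\, g_{j,kl} \;=\; \mu_j\, g_{j,kl} \;-\; Q_{kl}(g_j, \partial g_j), \]
which is uniformly elliptic thanks to the $C^0$ closeness just established. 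Applying Theorem \ref{schauderestneck} iteratively with weight $\eta_j(r)$ --- noting that $\eta_j(s)$ and $\eta_j(r)$ are comparable whenever $r/2 < s < 2r$, so the hypothesis on $\hat{C}$ holds --- and using the quadratic structure of $Q$ to absorb first-derivative contributions at each stage, I would upgrade the regularity from $C^{0}_{\eta_j}$ to $C^{2,\alpha}_{\eta_j}$, and then to $C^{4,\alpha}_{\eta_j}$ (indeed to any $C^{k,\alpha}_{\eta_j}$), yielding the claimed estimate.

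The main obstacle will be Step 1: producing harmonic coordinates with the correct weighted closeness to $(x)$ globally on the degenerating neck, given only $C^0$ regularity of $g_j$ in the $(x)$ coordinates. A priori the natural expression $\Delta_{g_j} x^l = -g_j^{pq}\Gamma^l_{pq}$ involves first derivatives of $g_j$ that are not uniformly controlled in $(x)$ coordinates, so one cannot simply invoke Schauder estimates on a fixed domain. The resolution is exactly the scale-invariant dyadic approach above: after rescaling each shell to unit size the problem becomes elliptic with $C^0$-small perturbation of the flat Laplacian, and the weighted H\"older norms of Theorem \ref{schauderestneck}, whose definition is tailored to this scale-invariant setting with weight $\eta_j(r) = O(r_\infty^{-\varepsilon_5} r^{\varepsilon_5} + (r_j R_0)^{\varepsilon_5} r^{-\varepsilon_5})$, are precisely what is needed to translate the local estimates into the desired global weighted estimate. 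A secondary technical issue is the compatibility of local harmonic coordinates on overlapping shells into one global chart; this succeeds because both the local charts and their common reference $(x)$ are adapted to the same limiting flat cone $\mathbb{R}^4/\Gamma$, and the overlaps carry $O(\eta_j)$ errors that are compatible with the target weighted norm.
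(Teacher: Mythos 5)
Your overall strategy---pass to harmonic coordinates and bootstrap the Einstein equation using the weighted Schauder estimate of Theorem \ref{schauderestneck}---is indeed the right skeleton, and it matches the spirit of the paper's proof. But there is a genuine gap in the initial regularity step, and a related issue with the gluing.

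The central problem is the claim that ``interior estimates for linear elliptic equations with $C^0$-small coefficient perturbation yield $\|y^l-x^l\|_{C^{1,\alpha}}\leq C\eta_j(s)$.'' This does not follow from the $C^0$ closeness of $g_j$ in the $(x)$ coordinates alone. The function $y^l-x^l$ solves $\Delta_{g_j}(y^l-x^l)=-\Delta_{g_j}x^l$, whose right-hand side involves first derivatives of $g_{j,kl}$ in the $(x)$ coordinates---quantities which Lemma \ref{neckmeytricflatlem} does not control (the $(x)$ chart is only $C^0$-good). De Giorgi--Nash--Moser gives $C^{\alpha}$ for divergence-form equations with merely bounded coefficients, and a $W^{1,p}$ estimate does give $\|y^l-x^l\|_{C^{\alpha}}\lesssim\eta_j$ by treating $\Delta_{g_j}x^l$ as a divergence of a $C^0$-small vector field, but one does not get $C^{1,\alpha}$ this way. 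Moreover, even after producing harmonic coordinates, the bootstrap in your second step cannot get started from $C^0$ regularity of $g$: Theorem \ref{schauderestneck} requires the coefficients and, crucially, the source term $Q_{kl}(g,\partial g)$ to live in weighted $C^{p,\alpha}$ spaces. Since $Q$ is quadratic in $\partial g$, you need $g\in C^{1,\alpha}_{\eta_j}$ \emph{before} the first application of the Schauder estimate, which is exactly the regularity you are trying to establish---the argument is circular without a separate a priori estimate.

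What closes this gap in the paper is the curvature estimate of Proposition \ref{neckprop}, which your proposal never uses at the construction stage. The paper builds local harmonic coordinates $\mathbf{H}_{k,l}$ on small balls $B(\sigma_k(t_l);\delta t_l)$ adapted to a radially parallel frame and then invokes BKN's Fact~(2.9), which gives a quantitative bound $|d\mathbf{H}_t-\omega_t|\leq C\,(\sup|Rm|)(\lambda\delta t)^2\leq\eta_j(t)$ directly from the pointwise curvature decay. This is what converts the $C^0$ information plus curvature decay into the crucial initial estimate $\|(\mathbf{H}_{k,l*}g_j)_{mn}-\delta_{mn}\|_{C^{1,\alpha}_{\eta_j}}\leq C$ (display (\ref{intialregul}) in the paper), and only after this does the Einstein bootstrap via Theorem \ref{schauderestneck} kick in. A second divergence from the paper: the paper does not solve global Dirichlet problems on overlapping shells (which would face a compatibility issue on overlaps, since different boundary data give different solutions) and does not produce a globally harmonic chart. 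Instead it establishes that the transition maps $\mathbf{H}_{k_1,l_1}\circ\mathbf{H}_{k_2,l_2}^{-1}$ are $C^{p+1,\alpha}_{|z|\eta_j}$-close to the identity and then glues via a partition of unity, $\Psi=\sum\rho_{k,l}\mathbf{H}_{k,l}$; the resulting $(y)$ is not a harmonic chart, merely one in which the weighted estimates hold. Your gluing remark (``the overlaps carry $O(\eta_j)$ errors compatible with the target weighted norm'') gestures at the right conclusion but skips the quantitative transition-map estimate that makes it work.
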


 This estimate says that if we introduce the coordinates $\tilde{y}=\rho^{-1} y$ on $A_{\rho,2\rho}(x_{a,j})\subset M_j$, then
\begin{equation*}\label{almostflat1dege1}
\|g_{j,kl}(\rho\tilde{y})-\delta_{kl}\|_{C^{4}(B_2\setminus B_1)}<\eta_j(\rho)
\end{equation*}
for all $\rho\in[r_j R, \delta_1]$.

\begin{proof}
As in the proof of Lemma \ref{neckmeytricflatlem}, at very
point $\tilde{\theta}\in S_{r_jR_0}$, we consider parallel orthonormal vector fields $E_1(t,\tilde{\theta}),\cdots,$ $E_4(t,\tilde{\theta})$ along $\sigma_{r_jR_0}(t)=\Phi_{r_jR_0} (\theta, t)$ which satisfy $$r_jR_0\nu_{r_jR_0}(\tilde{\theta})=x^i(r_jR_0,\tilde{\theta})E_i(r_jR_0,\tilde{\theta}).$$ Here $(x)=(r,\theta)$ are the coordinates constructed in Lemma \ref{neckmeytricflatlem}.
Then
\begin{equation*}\label{keyfpohozaev12}
r\partial_r=x^iE_i(r)
\end{equation*}
 along $\sigma_{r_jR_0}$, since both $E_i(t)$ and $\partial_r$ are parallel invariant and $|x|=r$.

Now we take geodesics $\sigma_1,\cdots, \sigma_K$ ($\sigma_k(t)=\Phi_{r_jR_0} (\theta_k, t)$, $\Upsilon_{r_jR_0}(\theta_k)=\tilde{\theta}_k$ )
such that
$$\{B(\sigma_k(t_l);\delta t_l)\}_{k=1,\cdots, K, l=1,\cdots}$$
 cover $A_{r_jR,\delta_1}(x_{a,j})$. We denote $\{B(\sigma_k(t_l);\delta t_l)\}$ by $B_{k,l}$.

 For all $k$ and $l$, we take harmonic coordinates associated to $E_1(t_l,\tilde{\theta}_k),\cdots,$ $E_4(t_l,\tilde{\theta}_k)$
 \begin{equation*}
 \mathbf{H}_{k,l}:B_{k,l}\rightarrow \mathbb{R}^4
 \end{equation*}
 as in \cite{BKN}.
 Moreover we set $ \mathbf{H}_{k,l}(\sigma_k(t_l))=(\theta_k, t_l)$.
 By means of the arguments in Step 3 in the proof of Theorem (1.1) in \cite{BKN} with some minor modifications, we have
 \begin{equation}\label{intialregul}
 ||(\mathbf{H}_{k,l*}g_j)_{mn}-\delta_{mn}||_{C^{1,\alpha}_{\eta_j(|z|)}(\mathbf{H}_{k,l}(B_{k,l}))}\leq C,
 \end{equation}
 where $|z|$ is the absolute value of $\mathbf{H}_{k,l}$ coordinates functions. Then by elliptic estimates (see Schauder's estimates for elliptic equations in Theorem \ref{schauderestneck}), we know that in the above coordinates, the metric satisfies
  \begin{equation}\label{weightsmooth}
 ||(\mathbf{H}_{k,l*}g_j)_{mn}-\delta_{mn}||_{C^{p,\alpha}_{\eta_j(|z|)}(\mathbf{H}_{k,l}(B_{k,l}))}\leq C
 \end{equation}
 for all $p\in \mathbb{N}$. Since Einstein metrics satisfy an elliptic equation in harmonic coordinates. In fact in harmonic coordinates, the equation for the metric takes the form \cite{BKN}
 \begin{equation*}
 \triangle_g g_{kl}=-2Ric_{kl}+Q_{kl}(g,\partial g).
 \end{equation*}

For the completeness of the argument, we outline the proof of (\ref{intialregul}). Let $\omega_t$ be the frame on
$B(\sigma_{r_jR_0}(t);2\delta t) $ obtained by radial parallel transport of the dual frame of $\{ E_1, \cdots, E_4\}$ defined along $\sigma_{r_jR_0}(t)$ as above. By estimates for harmonic coordinates in Fact (2.9) in \cite{BKN}, we have ($\lambda<1$)
 \begin{equation*}
 |d\mathbf{H}_t-\omega_t|\leq C \left(\sup_{B(\sigma_{r_jR_0}(t);\lambda\delta t)}|Rm| \right) (\lambda\delta t)^2\quad \text{on } B(\sigma_{r_jR_0}(t);\lambda\delta t),
 \end{equation*}
and then by the curvature estimates in Proposition \ref{neckprop} (see (\ref{neckcurvturestmprop})), we get
 \begin{equation*}
 |d\mathbf{H}_t-\omega_t|\leq \eta_j(t)\quad \text{on } B(\sigma_{r_jR_0}(t);\lambda\delta t),
 \end{equation*}
 where $\mathbf{H}_t$ stand for the harmonic coordinates associated to the frame $\{ E_1, \cdots, E_4\}$.
  Now we denote $\Phi_{r_jR_0} (\theta, t\xi)$ by $\Phi_t(\theta, \xi)$.
  The last estimate shows that
   \begin{equation*}
| d(t^{-1}\mathbf{H}_t\circ\Phi_t)-t^{-1}\Phi_t^{*}\omega_t|\leq \eta_j(t), \quad \text{on } B((\theta,1);\lambda\delta )\subset S^3\times(1/2,3/2).
 \end{equation*}
 From the proof of Lemma \ref{neckmeytricflatlem}, it is easy to see that
\begin{equation*}
 |t^{-1} \Phi_t^{*}\omega_t(\theta,\xi)-(dz_1, \cdots, dz_4)|\leq \eta_j(t) ,
 \end{equation*}
where $(dz_1, \cdots, dz_4)$ form a basis of cotangent bundle of $\mathbb{R}^4$.
Therefore
   \begin{equation*}
| d(t^{-1}\mathbf{H}_t\circ\Phi_t)-(dz_1, \cdots, dz_4)|\leq \eta_j(t).
 \end{equation*}
 Since ($\mathbf{H}_t\circ\Phi_t)(\theta,1)=0\in \mathbb{R}^4$, we get
 \begin{equation}\label{keypinchingchart}
| (t^{-1}\mathbf{H}_t\circ\Phi_t)-(z_1,\cdots,z_4)|\leq \eta_j(t).
 \end{equation}
Thanks to the estimate
  \begin{equation*}
|(\Phi_t^{*}g_j)_{kl}-\delta_{kl}|\leq \eta_j(t) \quad \text{on } B((\theta,1);\lambda\delta )\subset S^3\times(1/2,3/2),
 \end{equation*}
induced from Lemma \ref{neckmeytricflatlem}, and notice that
 \begin{equation*}
(t^{-1}\mathbf{H}_t\circ\Phi_t)_{*}(\Phi_t^{*}g_j)=(t^{-1}\mathbf{H}_t)_{*}g_j,
 \end{equation*}
 we can get
\begin{equation*}
|t^{-1}(\mathbf{H}_t)_{*}g_j-\delta_{mn}|<\eta_j(t) \quad
\text{ on }t^{-1}\mathbf{H}_t(B(\sigma_{r_jR_0}(t);2\delta t)).
 \end{equation*}
Next by elliptic estimates \cite{DavidGilbarg2003Elliptic}, we obtain
\begin{equation*}
||t^{-1}(\mathbf{H}_t)_{*}g_j-\delta_{mn}||_{C^{1,\alpha}}<\eta_j(t) \quad
\text{ on }t^{-1}\mathbf{H}_t(B(\sigma_{r_jR_0}(t);2\delta t)).
 \end{equation*}
Therefore we get
(\ref{intialregul}).

Next by similar arguments as in Step 4 in the proof of Theorem (1.1) in \cite{BKN}, if the intersection $B_{k_1,l_1}\cap B_{k_2,l_2}$ is not empty, then
 \begin{equation}\label{coordrelationharmloc}
 ||\mathbf{H}_{k_1,l_1}\circ \mathbf{H}^{-1}_{k_2,l_2}-Id ||_{C^{2,\alpha}_{|z|\eta_j(|z|)}(\mathbf{H}_{k_2,l_2}(B_{k_1,l_1}\cap B_{k_2,l_2}))}\leq C,
 \end{equation}
 where $|z|$ is the absolute value of $\mathbf{H}_{k_2,l_2}$ coordinates functions.
 Indeed by the above construction, $t_l^{-1}\mathbf{H}_{k,l}\circ\Phi_{t_l}$ are close to the restriction of the standard coordinates functions on $\mathbb{R}^4$ (we denote it by $\mathbf{H}_{\infty}$) for all $k$ and $l$, since every flat bundle on $S^3$ is trivial.
 If $B_{k_1,l_1}$ intersects $B_{k_2,l_2}$ (assume $\frac{t_{l_1}}{t_{l_2}}=\beta^{-N}<1$ for some $\beta>1$ and $N\in\mathbb{N}$), then by (\ref{keypinchingchart}), we get
  \begin{eqnarray*}
 & &|t_{l_1}^{-1}\mathbf{H}_{k_1,l_1}\circ\Phi_{r_jR_0}(\theta,\xi t_{l_1})-t_{l_1}^{-1}\mathbf{H}_{k_2,l_2}\circ\Phi_{r_jR_0}(\theta,\xi t_{l_1}) |\\
&\leq& \eta_j(t_{l_1})+|\beta^{N}t_{l_2}^{-1}\mathbf{H}_{k_2,l_2}\circ\Phi_{r_jR_0}(\theta,\beta^{-N}\xi t_{l_2})-\mathbf{H}_{\infty}(\theta,\xi t_{l_1})|\\
 &\leq&\eta_j(t_{l_1})+\eta_j(t_{l_2})\leq C \eta_j(t_{l_1}).
 \end{eqnarray*}
 Thus (\ref{coordrelationharmloc}) can be derived by apriori estimates for harmonic functions (see Theorem \ref{schauderestneck}).
Since all components of $\mathbf{H}_{k_1,l_1}\circ \mathbf{H}^{-1}_{k_2,l_2}$ are harmonic functions with respect to $g_j$. And Moreover by applying Theorem \ref{schauderestneck} again, it actually holds for all $p\in\mathbb{N}$ that
 \begin{equation*}
 ||\mathbf{H}_{k_1,l_1}\circ \mathbf{H}^{-1}_{k_2,l_2}-Id ||_{C^{p+1,\alpha}_{|z|\eta_j(|z|)}\left(\mathbf{H}_{k_2,l_2}(B_{k_1,l_1}\cap B_{k_2,l_2})\right)}\leq C.
 \end{equation*}

 Now we take a partition of unity $\{\rho_{k,l}\}$ associated to the covering $\{B_{k,l}\}$ so that
 if $B_{k_1,l_1}$ intersects with  $B_{k_2,l_2}$ , then
 \begin{equation*}
 || \rho_{k_1,l_1}\circ \mathbf{H}^{-1}_{k_2,l_2}||_{C^{p+1,\alpha}_{|z|\eta_j(|z|)}\left(\mathbf{H}_{k_2,l_2}(B_{k_1,l_1}\cap B_{k_2,l_2})\right)}\leq C.
 \end{equation*}
 For such a construction, we refer to Page 325 of \cite{BKN}.

 Finally, we define a smooth map $ \Psi: A_{r_jR,\delta_1}(x_{a,j})\rightarrow \mathbb{R}^4$
 by
 \begin{equation*}
 \Psi(\cdot)=\sum_{k,l}\rho_{k,l}(\cdot)\mathbf{H}_{k,l}(\cdot).
 \end{equation*}
 Then we have
 \begin{eqnarray*}
& & || \Psi\circ \mathbf{H}^{-1}_{k_1,l_1}-Id||_{C^{p+1,\alpha}_{|z|\eta_j(|z|)}(\mathbf{H}_{k_1,l_1}(B_{k_1,l_1}))}\\
&=& || \sum_{k,l}\rho_{k,l}\circ \mathbf{H}^{-1}_{k_1,l_1}(\mathbf{H}_{k,l}\circ\mathbf{H}^{-1}_{k_1,l_1}-Id)||
_{C^{p+1,\alpha}_{|z|\eta_j(|z|)}(\mathbf{H}_{k_1,l_1}(B_{k_1,l_1}))}\leq C.
 \end{eqnarray*}
 Therefore $\Psi$ defines coordinates $(y)$ such that
 \begin{equation*}\label{almostflat1dege1}
\|g_{j,kl}(y)-\delta_{kl}\|_{C^{p,\alpha}_{\eta_j(|y|)}}<C.
\end{equation*}
The proof is finished.
\end{proof}

\begin{rem}\label{coordequal}\label{distequaval}
By the above construction, $\mathbf{H}_{\infty}$ in the proof is the restriction of the identity map, so
 \begin{equation*}
\|y-x\|_{C^{1}_{|y|\eta_j(|y|)}}<C,
\end{equation*}
in particular $ |x-y|<\eta_j(|x|)|x|$, where
 $|x(\cdot)|=r=d_{g_j}(\cdot,x_{a,j})$. In fact, the map $\Phi_{r_jR_0} (\theta, t)$ define the coordinates $(x)$,  $\Phi_{r_jR_0} (\theta, t\xi)=\Phi_t(\theta, \xi)$, $t_l^{-1}\mathbf{H}_{k,l}\circ\Phi_{t_l}\rightarrow \mathbf{H}_{\infty}$; the map $\Psi$ define the coordinate $(y)$, and $\Psi\circ\mathbf{H}^{-1}_{k,l}$ is closed to $Id$.
\end{rem}

 Now we give the proof of Proposition \ref{intodiffeo}. We follow the arguments in Step 1 of the proof for Theorem (1.1) in \cite{BKN}.
  \begin{proof}
 Let $J$ be a nontrivial Jacobi field along the geodesic $\sigma_{r_jR}(t)=\Phi_{r_jR} ( \theta, t)$ with $J(r_jR)\in T_{\tilde{\theta}}S_{r_jR}$ and $\frac{\partial}{\partial t}J(r_jR)=A_{r_jR}(J(r_jR)).$  The first key observation is that for large R, and for some small enough $\delta_1$, $J(t)$ never vanishes as long as $d_{g_j}(\sigma_{r_jR}(t),S_{r_jR})=t-r_jR$.
 Indeed, let $y(t)$ be the solution of the ordinary differential equation:
 \begin{eqnarray*}
& &\frac{d^2}{dt^2}y(t)+C\max\left(\frac{r_j^{\varepsilon_5}}{t^{\varepsilon_5+2}},
\frac{t^{-2+\varepsilon_5}}
 {r_\infty^{\varepsilon_5}}\right)y(t)=0, \quad \forall \,t\in[r_jR, \delta_1),\\
 & & y(r_jR)=1, \frac{dy}{dt}(r_jR)=(r_jR)^{-1}(1-a(j,R)),
 \end{eqnarray*}
 where a constant $C$ is taken so that the sectional curvature of $M_j$ at $\sigma_{r_jR}(t)$ is bounded from above by $$C\max \left(\frac{r_j^{\varepsilon_5}}{t^{2+\varepsilon_5}},\frac{t^{\varepsilon_5-2}}
 {r_\infty^{\varepsilon}}\right).$$
 See the curvature estimate in (\ref{neckcurvturestmprop}) and the meaning of $a(j,R)$ in (\ref{shape111}).

 If $y(t)$ is positive on $[r_jR,\delta_1]$, then $\frac{d}{dt}y(t)\leq(r_jR)^{-1}(1-a(j,R))$, since $\frac{d^2}{dt^2}y(t)\leq0$. Thus
 \begin{eqnarray*}
 & &\frac{d}{dt}y(t)=(r_jR)^{-1}(1-a(j,R))-C\int_{r_jR}^{t}
 \max\left(\frac{r_j^{\varepsilon_5}}{s^{2+\varepsilon_5}},\frac{s^{\varepsilon_5-2}}
 {r_\infty^{\varepsilon_5}}\right)y(s)ds\\
 &\leq&(r_jR)^{-1}(1-a(j,R))
 -C\int_{r_jR}^{t}\max \left(\frac{r_j^{\varepsilon_5}}{s^{2+\varepsilon_5}},\frac{s^{\varepsilon_5-2}}
 {r_\infty^{\varepsilon_5}}\right)\left(1+\frac{s}{r_jR}(1-a(j,R))\right)ds.
 \end{eqnarray*}

 Therefore direct computations show that if we choose $R$ large and $\delta_1$ small, $\frac{d}{dt}y(t)$ is positive on $[r_jR,\delta_1]$, and hence so is $y(t)$. Then applying the comparison theorem on Jacobi fields to our situation, we have
 \begin{equation*}
 y(t)\leq |J(t)|/|J(r_jR)|, \quad \text{as long as}\,\,d_{g_j}(\sigma_{r_jR}(t),S_{r_jR})=t-r_jR.
 \end{equation*}
 This shows that $S_{r_jR}$ has no focal points as long as $d_{g_j}(\sigma_{r_jR}(t),S_{r_jR})=t-r_jR$.

 Suppose $S_{r_jR}$ has the cut locus $\mathcal{C}_{r_jR}^{+}$ outside $B(x_{a,j},r_jR)$. Let $q$ be a point of $\mathcal{C}_{r_jR}^{+}$ which is closest to $S_{r_jR}$, i.e., $d_{g_j}(q,S_{r_jR})=d_{g_j}(\mathcal{C}_{r_jR}^{+},S_{r_jR})$, and $\sigma_{r_jR}: [r_jR,\delta_1]\rightarrow M_j $ a geodesic such that
 $d_{g_j}(\sigma_{r_jR}(t),S_{r_jR})=t-r_jR$ on $[r_jR,r_jR+t_0]$, $t_0=d_{g_j}(q,S_{r_jR})$ and $\sigma_{r_jR}(r_jR+t_0)=q.$

 Then since $S_{r_jR}$
 has no focal points along $\sigma_{r_jR}|_{[r_jR,r_jR+t_0]}$, it turns out from the standard theory of Riemannian geometry (Lemma 2 in P.226 of \cite{gromoll2006riemannsche}, Theorem (1.5) in Chap.4 of \cite{nakagawa1977taiiki}) that $\sigma_{r_jR}$ satisfies:
 \begin{equation*}
 d_{g_j}(\sigma_{r_jR}(t),S_{r_jR})=r_jR+2t_0-t, \quad \text{on} \,\,[r_jR+t_0,r_jR+2t_0].
 \end{equation*}

  Now we show that $\mathcal{C}_{r_jR}^{+}\bigcap A_{r_jR, \delta_1}(x_{a,j}) $ must be empty for large $R$ and small $\delta_1$. In fact if there are a divergence sequences $\{R_k\}$ and a sequence $\{\delta_k\}$ converging to 0 such that $\mathcal{C}_{r_jR_k}^{+}\bigcap A_{r_jR_k, \delta_k}(x_{a,j}) $ is not empty for any $k$, then we can take $t_k\leq \delta_k$ and
  $$ \sigma_{r_jR_k}:[r_jR_k,r_jR_k+2t_k]\rightarrow M_j$$
  with $t_k=d_{g_j}(\mathcal{C}_{r_jR_k}^{+},S_{r_jR_k})$ as above.

  We claim that
  $$\lim_{k\rightarrow\infty}\lim_{j\rightarrow\infty}\frac{t_k}{r_jR_k}=\infty.$$
  Then the metric space
  $$\left(\{x\in M_j:1/2\leq (t_k+r_jR_k)^{-1}d_{g_j}(x;x_{a,j})\leq 2\}, (t_k+r_jR_k)^{-1}d_{g_j}\right)$$
  converges to $\left(\{x\in\mathbb{R}^4: 1/2\leq |x|\leq 2\},  d_{std}\right)$ in $C^{1,\alpha}$-topology, as $j\rightarrow \infty$ firstly and then $k\rightarrow\infty$ (note that the curvature estimate in Proposition \ref{neckprop} is enough to ensure this convergence by Gromov compactness theorem, see for example Theorem 2.2 in \cite{anderson1989ricci}). And the geodesic $\sigma_{r_jR_k}$ converges to a line in $\mathbb{R}^4$ from $x_0$ with $|x_0|=1/2$ to $\acute{x}_0$ with $|\acute{x}_0|=1/2$ passing through a point $x$ with $|x|=1$. This is a contradiction.  So the proof of Proposition \ref{intodiffeo} is completed.

Finally, we prove the above claim. Since $ (M_j, r_j^{-2}g_j, x_{a,j})$ converges in pointed G-H topology to the Ricci flat ALE manifold $(M_a,h_a,p_a)$, and moreover $ (M_j\setminus B(x_{a,j};r_jR_k), r_j^{-2}g_j, x_{a,j})$ converges in $C^{\infty}$ topology to $(M_a\setminus B(p_a,R_k), h_a),$ $ \sigma_{r_jR_k}$ converges to a geodesic segment $\sigma_{R_k}:[R_k,R_k+2t_k]\rightarrow M_a$ satisfying
 \begin{equation*}
 d_{h_a}(\sigma_{R_k}(t),S_{R_k})=R_k+2t_k-t, \quad \text{on} \,\,[R_k+t_k,R_k+2t_k].
 \end{equation*}
By the arguments in subsection 2.2 of \cite{kasue1989convergence}, we have $t_k\geq CR_k^{1+\varepsilon}$ for some $\varepsilon>0$, hence the claim is valid.
 \end{proof}

\

\section{Neck analysis on degenerating neck regions}\label{degeneneckanalys}

In this section, we shall analyze the behavior of biharmonic maps $u_j$ defined on the degenerating neck regions $I_j^{\alpha}$ and then prove Theorem \ref{degenockenergy}. As in Section \ref{constrcoordinates}, for simplicity of notations, we shall ignore the effect of the fundamental group of $A_{r_jR, \delta_1}(x_{a,j})$ when we carry out the neck analysis. We remark again that the notation $\eta_j$ often appears in the rest of the paper, it means a function of order $\eta_j$ or it serves as a positive upper bound, it's precise expression may differ line by line.

To begin with, we shall firstly recall the following three key results given in the previous sections. In Proposition \ref{properneckc2}, we denote the set ${I}_j^{\alpha}$ to be the cylinder corresponding to $\bar{I}_j^{\alpha}= A_{e^{b_j^\alpha},e^{a_j^{\alpha+1}}}(x_{a,j})\subset M_j. $
\begin{itemize}
\item[($\mathfrak{K}_1$)] According to the bubble-neck decomposition in Proposition \ref{properneckc2}, $\bar{I}_j^{\alpha}\subset A_{r_jR, \delta_1}(x_{a,j})$, and
\begin{equation*}
\lim_{\delta\rightarrow 0}\lim_{R\rightarrow \infty}\lim_{j\rightarrow \infty}\omega(u_j,{I}_j^{\alpha})=0,
\end{equation*}
where
\begin{equation*}
\omega(u_j,{I}_j^{\alpha})\equiv \sup_{t\in \left[b_j^\alpha,a_j^{\alpha+1}-1 \right]}\int_{A_{e^{t},e^{t+1}}(x_{a,j})}|\nabla^2_{g_j} u_j|^2+|\nabla_{g_j} u_j|^4 dV_{g_j}.
\end{equation*}

\item[($\mathfrak{K}_2$)] By Theorem \ref{neckmeytricflat}, for $\delta>0$ small enough and for $R>0$ and $j>0$ sufficiently large, there exist coordinates $(x)$ and $(y)$ on $A_{r_jR,\delta}(x_{a,j})\subset M_j$ such that
\begin{eqnarray*}\label{almostflat1dege1}
& &\|g_{j,kl}( x)-\delta_{kl}\|_{C^{0}}<\eta_j(|x|),\\
& &|x(\cdot)|\equiv r(\cdot)=d_{g_j}(\cdot,x_{a,j})\quad \text{and} \quad \nabla_{\partial_r}\partial_r=0,
\end{eqnarray*}
and
\begin{equation*}\label{almostflat1dege1}
\|g_{j,kl}( y)-\delta_{kl}\|_{C^{4,\alpha}_{\eta_j(|y|)}}<C,
\end{equation*}
where $\eta_j(t)=O(r^{-\varepsilon_5}_\infty t^{\varepsilon_5}+ (r_jR_0)^{\varepsilon_5}t^{-\varepsilon_5}),$ $r_\infty$, $\varepsilon_5 $, $r_j$ are the same as in the curvature estimate in Proposition \ref{neckprop}, and $R_0>0$ is some fixed large number.
Moreover,
\begin{equation*}
\|y-x\|_{C^{1}_{|y|\eta_j(|y|)}}<C,
\end{equation*}
in particular $ |x-y|<\eta_j(|x|)|x|$.

\item[($\mathfrak{K}_3$)] The proof of the main Theorem \ref{degeymenergyintr} can be reduced to be the proof of Theorem \ref{degenockenergy} by the arguments in Section \ref{treesigleale}.

\end{itemize}

\subsection{Decay estimates for tangential part energy}\label{tangdeacydege}
The main task in this subsection is to show that similar decay estimates as in Lemma \ref{lemtangdecay} for biharmonic maps on those degenerating neck regions still hold. We remark that the decay along degenerating neck regions may not be as fast as in the case of a fixed domain shown in Lemma \ref{lemtangdecay}, however, it is already good enough for our purpose as we will see soon.

Suppose $b_j^{\alpha}=-l^{\alpha}_{j,b}L$, $a_j^{\alpha+1}=-l^{\alpha}_{j,a}L$ for some universal constant $L>0$.
Set
$$ A_l=A_{j;e^{-lL},e^{-(l-1)L}}=B(x_{a,j},e^{-(l-1)L})\setminus B(x_{a,j},e^{-lL}), \quad  l^{\alpha}_{j,a}\leq l<l^{\alpha}_{j,b}.$$
By ($\mathfrak{K}_2$) in the above, for some $\tilde{L}$,
$$A_l\subset \tilde{A}_{l-1}\cup \tilde{A}_l\cup \tilde{A}_{l+1},$$
where $\tilde{A}_l =\left\{q\in M_j; e^{-l\tilde{L}}\leq|y(q)| \leq e^{-(l-1)\tilde{L}}\right\}$.
So if we can prove the energy decay on $\tilde{A}_l$, then the energy decay on $A_l $ follows easily.

Set $s=|y|$,
\begin{equation*}
\tilde{F}_ l (v_j) =\int_{\tilde{A}_l}\frac{v_{j}^2}{|y|^4 } dy,
\end{equation*}
for any smooth functions $v_j$ on $\bar{I}_j^{\alpha}\subset M_j$, and
\begin{equation*}
(u_j)^*(s)=\frac{1}{|\partial B_s|}\int_{\partial B_s}u_j d\sigma,
\end{equation*}
where $(y)$ are the coordinates constructed in Theorem \ref{neckmeytricflat}, $dy$ is the volume form of the flat metric, and $d\sigma$ is volume form of the round sphere of radius $s$. Here we use the $(y)$ coordinates instead of the $(x)$ coordinates for the reason that the metric coefficients of $g_j$ in $(x)$ coordinates are not regular enough for our neck analysis carried out later.

Our aim is to prove the following decay estimates.
\begin{thm}\label{thrcirdegneck}
Let $v_j=u_j-(u_j)^*,$ and $s=|y|=e^t$,
then when $j$ and $R$ are large enough and $\delta$ is small enough, for $l^{\alpha}_{j,a}-1\leq l<l^{\alpha}_{j,b}+1$, we have that for some $\vartheta>0$,
\begin{eqnarray*}\label{tangentialdecaydege}
\tilde{F}_ l \leq C\varepsilon^2 \left(e^{-\vartheta(l-l^{\alpha}_{j,a})\tilde{L}}+e^{-\vartheta(l^{\alpha}_{j,b}-l)\tilde{L}}\right),
\end{eqnarray*}
and
\begin{eqnarray*}\label{tangentialdecaykappa}
& &\int_{(-l\tilde{L},-(l-1)\tilde{L})\times S^3}\left(|\triangle_{S^3}u_j|^2+|\nabla_{S^3}u_j|^4+|\partial_t \nabla_{S^3}u_j|^2+|\partial_t \triangle_{S^3}u_j|^2 \right)dt d\theta\nonumber\\
&\leq& C\varepsilon^2 \left(e^{-\vartheta(l-l^{\alpha}_{j,a})\tilde{L}}+e^{-\vartheta(l^{\alpha}_{j,b}-l)\tilde{L}}\right).
\end{eqnarray*}

\end{thm}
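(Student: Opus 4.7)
The plan is to adapt the strategy used in the fixed-metric case (Lemma \ref{lemtangdecay}) by working in the coordinates $(y)$ furnished by Theorem \ref{neckmeytricflat}, in which the metric $g_j$ is close to the flat metric in the weighted space $C^{4,\alpha}_{\eta_j}$. On each annulus $\tilde A_l$ with typical scale $\rho = e^{-l\tilde L}$, this weighted closeness rescales to
\[
\|g_{j,kl}(\rho\tilde y) - \delta_{kl}\|_{C^4(B_2\setminus B_1)} < C\,\eta_j(\rho),
\]
which is exactly the hypothesis needed to rewrite the biharmonic system with respect to the flat background. First I would take the extrinsic biharmonic map equation (\ref{extrequ3}) in coordinates $(y)$ and move all metric errors to the right-hand side, producing (as in the proof of Lemma \ref{lemtangdecay}) an approximate biharmonic map equation with inhomogeneity $\tau(u_j)$ controlled pointwise by $\eta_j(|y|)$ times sums of derivatives of $u_j$ up to order four. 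Combining this with the $\varepsilon$-regularity theorem and the small-energy condition $\omega(u_j,I_j^{\alpha})\to 0$ from ($\mathfrak{K}_1$), one obtains for any $p>1$ the estimate
\[
\||y|^{4(1-1/p)}\tau(u_j)\|_{L^p(\tilde A_l)} \leq C\varepsilon\,\eta_j(e^{-l\tilde L}).
\]

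With $v_j = u_j-(u_j)^*$ and $h_j = \tau(u_j)-\tfrac{1}{|\partial B_s|}\int_{\partial B_s}\tau(u_j)\,d\sigma$, the same argument as in the fixed-metric case shows that $v_j$ is an $\eta_0$-approximate biharmonic function on $\bar I_j^\alpha$ in the sense of Definition \ref{defappbiharm}. The crucial step is then to re-express the bound on $h_j$ in a form matching condition (\ref{akeycondition3}) of Corollary \ref{threecirtotangdecay}. For $t = |y|\in [e^{b_j^\alpha}, e^{a_j^{\alpha+1}}]$ I would factor
\[
\eta_j(t) \leq r_\infty^{-\varepsilon_5}t^{\varepsilon_5} + (r_jR_0)^{\varepsilon_5}t^{-\varepsilon_5} = \Bigl(\tfrac{e^{a_j^{\alpha+1}}}{r_\infty}\Bigr)^{\varepsilon_5}\!\Bigl(\tfrac{t}{e^{a_j^{\alpha+1}}}\Bigr)^{\varepsilon_5} + \Bigl(\tfrac{r_jR_0}{e^{b_j^\alpha}}\Bigr)^{\varepsilon_5}\!\Bigl(\tfrac{e^{b_j^\alpha}}{t}\Bigr)^{\varepsilon_5}.
\]
Since $e^{a_j^{\alpha+1}}\leq \delta\leq r_\infty$ and $r_jR_0\leq r_jR\leq e^{b_j^\alpha}$, the two prefactors are uniformly bounded, so this becomes $\eta_j(t) = O\bigl((e^{b_j^\alpha}/t)^{\varepsilon_5} + (t/e^{a_j^{\alpha+1}})^{\varepsilon_5}\bigr)$, matching (\ref{akeycondition3}) with $\kappa = \varepsilon_5$ and with $(e^{b_j^\alpha},e^{a_j^{\alpha+1}})$ playing the role of $(\lambda_i R,\delta)$.

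Applying Corollary \ref{threecirtotangdecay} on the cylinder corresponding to $\bar I_j^\alpha$ (slightly enlarged at each end so that a three-circle step is available at $l^\alpha_{j,a}$ and $l^\alpha_{j,b}$) then yields
\[
\tilde F_l(v_j) \leq C\varepsilon^2\bigl(e^{-\vartheta(l-l^\alpha_{j,a})\tilde L} + e^{-\vartheta(l^\alpha_{j,b}-l)\tilde L}\bigr), \qquad \vartheta := \min(1,2\varepsilon_5),
\]
which is the first conclusion of the theorem. The second, namely the decay of $\int (|\triangle_{S^3}u_j|^2 + |\nabla_{S^3}u_j|^4 + |\partial_t\nabla_{S^3}u_j|^2 + |\partial_t\triangle_{S^3}u_j|^2)\,dt\,d\theta$, follows by applying the interior $L^p$ estimate Lemma \ref{appbiharmest} to $v_j$ on each $\tilde A_l$ and invoking Sobolev embedding, using that $\nabla_{S^3}u_j = \nabla_{S^3}v_j$ and that the difference between the standard metric on $S^3$ and the rescaled induced metric on $\partial B_t$ is controlled by $\eta_j$ (the analogue of Lemma \ref{almostflatconseq}, whose verification in the degenerating setting is straightforward from ($\mathfrak{K}_2$)).

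The principal difficulty is the one addressed in the factorization step above: the closeness parameter $\eta_j$ from Theorem \ref{neckmeytricflat} is global on $A_{r_jR,\delta_1}$, and is not small on the whole neck. What makes the argument work is that $\eta_j$ is already of the required \emph{two-sided power-law} form, and the sub-neck $\bar I_j^\alpha$ is strictly interior to the global neck ($r_jR_0 \ll e^{b_j^\alpha}$ and $e^{a_j^{\alpha+1}} \ll r_\infty$), so that the error fits cleanly into the three-circle framework. This structural property of $\eta_j$ is itself a consequence of the no-curvature-concentration estimate (\ref{neckcurvturestmprop}) of Proposition \ref{neckprop}, and is really the geometric input that allows the neck analysis in the degenerating case to proceed in parallel with the fixed-metric one.
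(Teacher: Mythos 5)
Your proposal is correct and follows essentially the same route as the paper: work in the $(y)$ coordinates of Theorem \ref{neckmeytricflat}, rewrite the biharmonic map equation so that $v_j = u_j - (u_j)^*$ is an $\eta_0$-approximate biharmonic function with error $h_j$ controlled by $\eta_j$, and feed this into the three-circle argument of Corollary \ref{threecirtotangdecay}. Your explicit factorization of $\eta_j(t)$ into a two-sided power law centered on $\bar I_j^\alpha$ (using $r_jR_0 \le e^{b_j^\alpha}$ and $e^{a_j^{\alpha+1}} \le r_\infty$) correctly fills in the step the paper leaves implicit in Remark \ref{tangdecayorder} and justifies the choice $\vartheta = \min(1,2\varepsilon_5)$.
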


The fact ($\mathfrak{K}_2$) indicates that the $(y)$ coordinates and the $(x)$ coordinates in Theorem \ref{neckmeytricflat} are close to each other in $C^1$ norm, so by the following simple fact
 \begin{eqnarray*}
 & &\int_{A_l}
\frac{1}{r^4}\left((\triangle_{S^3}u_j)^2+|\nabla_{S^3}u_j|^4\right)
+\frac{1}{r^2}|\partial_r\nabla_{S^3}u_j|^2+\frac{1}{r^2}|\partial_r\triangle_{S^3}u_j|^2dx\\
&\leq&C\int_{\tilde{A}_{l-1}\cup \tilde{A}_l\cup \tilde{A}_{l+1}}
\frac{1}{s^4}\left((\triangle_{S^3}u_j)^2+|\nabla_{S^3}u_j|^4 \right)
+\frac{1}{s^2}|\partial_s\nabla_{S^3}u_j|^2+\frac{1}{s^2}|\partial_s\triangle_{S^3}u_j|^2dy,
\end{eqnarray*}
we have
\begin{thm} \label{tangdecaydege}
Set $r=|x|$, then for $r\in \left[e^{b_j^{\alpha}},e^{a_j^{\alpha+1}} \right]$,
\begin{eqnarray*}
& &\int_{A_{j;r,e^L r}}
\frac{1}{r^4}\left((\triangle_{S^3}u_j)^2+|\nabla_{S^3}u_j|^4\right)+\frac{1}{r^2}|\partial_r\nabla_{S^3}u_j|^2+\frac{1}{r^2}|\partial_r\triangle_{S^3}u_j|^2dx\\
&\leq&C\varepsilon^2 \left[\left(\frac{r}{e^{a_j^{\alpha+1}}}\right)^{\vartheta}+\left(\frac{e^{b_j^{\alpha}}}{r}\right)^{\vartheta} \right],
\end{eqnarray*}
where $A_{j;r,e^L r}:=\{r\leq |x|\leq e^{L}r\}\subset M_j$.
\end{thm}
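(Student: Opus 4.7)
The plan is to deduce Theorem \ref{tangdecaydege} directly from Theorem \ref{thrcirdegneck} together with the coordinate comparison recorded in $(\mathfrak{K}_2)$. Since each of the tangential/mixed quantities $\triangle_{S^3}u_j$, $\nabla_{S^3}u_j$, $\partial_r \nabla_{S^3}u_j$, $\partial_r \triangle_{S^3}u_j$ kills the spherical mean $(u_j)^{*}$, one may replace $u_j$ by $v_j = u_j - (u_j)^{*}$ in all the integrands, and it is this substitution that opens the door to invoking Theorem \ref{thrcirdegneck}.

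First, I would fix $r \in [e^{b_j^{\alpha}}, e^{a_j^{\alpha+1}}]$ and choose an integer $l$ and a constant $\tilde L$ (depending only on $L$) so that $r \asymp e^{-l\tilde L}$; using the bound $|y - x| < \eta_j(|x|)|x|$ from Theorem \ref{neckmeytricflat}, one obtains for $j$ large the inclusion $A_{j;r,e^L r} \subset \tilde A_{l-1} \cup \tilde A_l \cup \tilde A_{l+1}$ of the $(x)$-annulus into three consecutive $(y)$-annuli. Second, the $C^{1}$-closeness $\|y - x\|_{C^{1}_{|y|\eta_j(|y|)}} < C$ implies that the pullbacks of $\partial_r$, $\nabla_{S^3}$, $\triangle_{S^3}$ and of the Euclidean volume form across the two coordinate systems differ by a multiplicative factor $1 + O(\eta_j)$. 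This yields the pointwise equivalence of the $(x)$-integrand with its $(y)$-analogue, and hence the bound
\[ \int_{A_{j;r,e^L r}} \bigl( \text{LHS of Theorem \ref{tangdecaydege}} \bigr) \, dx \; \le \; C \int_{\tilde A_{l-1} \cup \tilde A_l \cup \tilde A_{l+1}} \bigl( \text{$(y)$-integrand from Theorem \ref{thrcirdegneck}} \bigr) \, dy. \]

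Third, I would apply Theorem \ref{thrcirdegneck} on each of the three $(y)$-annuli and control the right-hand side by
\[ C\varepsilon^{2}\bigl(e^{-\vartheta(l - l^{\alpha}_{j,a})\tilde L} + e^{-\vartheta(l^{\alpha}_{j,b} - l)\tilde L}\bigr), \]
and then translate the exponentials back to length scales using $e^{-l\tilde L} \asymp r$, $e^{-l^{\alpha}_{j,a}\tilde L} \asymp e^{a_j^{\alpha+1}}$, and $e^{-l^{\alpha}_{j,b}\tilde L} \asymp e^{b_j^{\alpha}}$ (the last two via the definitions $b_j^{\alpha} = -l_{j,b}^{\alpha} L$, $a_j^{\alpha+1} = -l_{j,a}^{\alpha} L$ and $\tilde L \asymp L$). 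This produces the polynomial decay $(r/e^{a_j^{\alpha+1}})^{\vartheta} + (e^{b_j^{\alpha}}/r)^{\vartheta}$ in the statement, possibly after enlarging $C$ and shrinking $\vartheta$. The only point demanding attention, and in my view more a bookkeeping matter than a genuine obstacle, is that the $1 + O(\eta_j)$-errors from the coordinate change must not swamp the decay; since $\eta_j(r) = O(r_{\infty}^{-\varepsilon_5}r^{\varepsilon_5} + (r_j R_0)^{\varepsilon_5}r^{-\varepsilon_5})$ is itself polynomially small of exponent $\varepsilon_5$ across the neck, those errors are absorbable into the final constant and exponent. The substantive analytic content lives entirely in Theorem \ref{thrcirdegneck}; Theorem \ref{tangdecaydege} is its translation into the geometrically natural coordinates in which $r$ is the actual geodesic distance to $x_{a,j}$.
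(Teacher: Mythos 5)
Your proposal follows essentially the same route as the paper: the paper itself justifies Theorem \ref{tangdecaydege} by noting the $C^{1}$-closeness of the $(x)$ and $(y)$ coordinates, using the inclusion $A_{l}\subset\tilde{A}_{l-1}\cup\tilde{A}_{l}\cup\tilde{A}_{l+1}$ to dominate the $(x)$-coordinate tangential/mixed energy by its $(y)$-coordinate analogue on three consecutive $\tilde{A}$-annuli, and then quoting Theorem \ref{thrcirdegneck} and rewriting the exponentials $e^{-\vartheta(l-l^{\alpha}_{j,a})\tilde L}+e^{-\vartheta(l^{\alpha}_{j,b}-l)\tilde L}$ as $(r/e^{a_j^{\alpha+1}})^{\vartheta}+(e^{b_j^{\alpha}}/r)^{\vartheta}$. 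Your additional remarks about replacing $u_j$ by $v_j$ and about the $\eta_j$-size errors from the change of coordinates are correct but not strictly needed here, since the comparison inequality is stated directly for the tangential/mixed derivatives of $u_j$; the substantive content is indeed contained in Theorem \ref{thrcirdegneck}, exactly as you say.
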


The proof of the above theorem is parallel to the one for Lemma \ref{lemtangdecay}, we just outline it.

Notice that the fact ($\mathfrak{K}_1$) in the above ensure that
\begin{equation*}
\int_{\tilde{A}_l}|\nabla^2_{g_j}u_j|^2+|\nabla_{g_j} u_j|^4 dV_{g_j}<\varepsilon^2
\end{equation*}
for any given $\varepsilon>0$, as long as $j$ is large enough. And from the fact ($\mathfrak{K}_2$), we know that for any $\eta_0>0$, there exists $j_0>0$, $R_0>0$ and $\delta_0>0$ such that
 for all $\rho\in \left[e^{b_j^{\alpha}}, e^{a_j^{\alpha+1}} \right]$, the metric $g_j$  satisfies
 \begin{equation}\label{almostflat1degeta0}
\|g_{j,kl}(\rho \tilde{y})-\delta_{kl}\|_{C^{4}(B_2\setminus B_1)}<\eta_0
\end{equation}
 when $j>j_0$, $R>R_0$ and $\delta<\delta_0$, where $\tilde{y}=\rho^{-1}y$.
So that we can apply the $\varepsilon-$regularity Theorem \ref{smallenergythm} on the region $\tilde{A}_l$. In the rest of paper, we shall mainly focus on the region $I_j^{\alpha}$, we may not point this out when we have used the $\varepsilon-$regularity theorem.

\

Rewrite the biharmonic map equation as in the proof of Lemma \ref{lemtangdecay} (see (\ref{similar33})) to put the affect of the metric into the tension term (or error term), we get the following key lemma:

\begin{lem}
$v_j(y) = u_j -( u_j)^*$
is an $\eta_0$-approximate biharmonic function (w.r.t. the flat metric) in
the sense of Definition \ref{defappbiharm}, where $\eta_0>0$ is the constant in Theorem \ref{thrcircledegeneck}. That is
\begin{eqnarray*}
\triangle^2v_j(s,\theta)&=&a_1\nabla\triangle v_j+a_2\nabla v_j+a_3\nabla v_j+a_4 v_j \nonumber\\
&+&\frac{1}{|\partial B_s|}\int_{\partial B_s}b_1\nabla\triangle v_j+b_2\nabla v_j+b_3\nabla v_j+b_4 v_j d\sigma +h_j(y),
\end{eqnarray*}
where the coefficients satisfy the conditions in Definition \ref{defappbiharm}. Moreover the corresponding $h_j$ satisfies
\begin{equation*}\label{tensioncondit}
|||y|^{4(1-1/p)}h_j||^2_{L^p(\tilde{A}_l) }\leq \varepsilon^2\eta_{j}^2(|y|).
\end{equation*}
\end{lem}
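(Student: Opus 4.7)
\medskip
\noindent\textbf{Proof proposal.} The plan is to adapt the argument of Lemma~\ref{lemtangdecay} (Subsection~\ref{tangdeacynodege}) to the degenerating neck $\bar I_j^{\alpha}$, working throughout in the $(y)$-coordinates of Theorem~\ref{neckmeytricflat}. First I would rewrite the extrinsic biharmonic map equation (\ref{extrequ3}) for $u_j$ in those local coordinates and collect into a single error term $\tau(u_j)$ every factor of $(g_{j,kl}-\delta_{kl})$, $\partial g_{j,kl}$, and higher derivatives of $g_{j,kl}$ that arises when one commutes $\triangle_{g_j}$ past the flat Laplacian. The resulting identity has the same pointwise structure as (\ref{extrequ3}) in the flat metric plus the error $\tau(u_j)$, which is polynomial of total degree at most four in $u_j$ and its derivatives up to order four and linear in the derivatives of $g_j-\delta$ up to order four.

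Next I would rescale by $\tilde y = \rho^{-1}y$ on each dyadic annulus of the neck for $\rho \in [e^{b_j^{\alpha}}, e^{a_j^{\alpha+1}}]$. Theorem~\ref{neckmeytricflat} yields $\|g_{j,kl}(\rho\tilde y)-\delta_{kl}\|_{C^{4}(B_2\setminus B_1)} \leq C\eta_j(\rho)$, since the rescaling precisely absorbs the weight $\eta_j(|y|)$ of the $C^{4,\alpha}_{\eta_j}$ norm. Combined with the $\varepsilon$-regularity estimate (Theorem~\ref{smallenergythm}) applied on $\tilde A_l$ --- available thanks to $(\mathfrak{K}_1)$ and yielding $\|u_j(\rho\tilde y)\|_{W^{4,p}(B_2\setminus B_1)}\leq C\varepsilon$ --- together with H\"older's inequality and Sobolev embedding on the nonlinear combinations, this will produce
\begin{equation*}
\|\tilde\tau(u_j(\rho\tilde y))\|_{L^p(B_2\setminus B_1)}\leq C\varepsilon\,\eta_j(\rho).
\end{equation*}
Inverting the scaling via $\tau(u_j(y)) = \rho^{-4}\,\tilde\tau(u_j(\rho\tilde y))$ then translates this into the weighted bound $\||y|^{4(1-1/p)}\tau(u_j)\|_{L^p(\tilde A_l)} \leq C\varepsilon\,\eta_j(|y|)$.

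Finally, exactly as in Lemma~2 of \cite{liu2015finite}, I would decompose the equation for $v_j = u_j - (u_j)^*$ into a pointwise part --- giving the coefficients $a_i$ --- and a spherical-average part --- giving the coefficients $b_i$ --- and set
\begin{equation*}
h_j(y) \,=\, \tau(u_j)(y) - \frac{1}{|\partial B_s|}\int_{\partial B_s}\tau(u_j)\,d\sigma,\qquad s = |y|.
\end{equation*}
The $\tilde W^{4-i,p}$ bounds required by Definition~\ref{defappbiharm}(c) for $a_i, b_i$ follow from the $\varepsilon$-regularity alone and can be made smaller than $\eta_0$ by taking $\varepsilon$ small; averaging does not increase the $L^p$ norm, so the bound above for $\tau(u_j)$ passes to $h_j$, yielding the asserted estimate $\||y|^{4(1-1/p)} h_j\|^2_{L^p(\tilde A_l)} \leq \varepsilon^2\,\eta_j^2(|y|)$.

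The principal obstacle will be to show that each metric derivative $\partial^k g_j$ appearing in $\tilde\tau$ is accompanied by the correct negative power of $|y|$, so that after the spherical mean is subtracted and the scaling is undone, the full order $\eta_j(\rho)$ --- rather than $\eta_j(\rho)^{1/2}$ or a loss of one factor of $|y|$ --- is recovered. This is precisely why the $(y)$-coordinates of Theorem~\ref{neckmeytricflat}, in which $g_j$ is close to $\delta_{kl}$ in the weighted $C^{4,\alpha}_{\eta_j(|y|)}$ norm, are used at this step; the $(x)$-coordinates, which give only $C^0$-closeness, are insufficient.
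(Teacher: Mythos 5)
Your proposal is correct and takes essentially the same approach as the paper: the paper proves the lemma by repeating the argument of Lemma~\ref{lemtangdecay} verbatim, with the role of {\bf Claim 0} (giving $C^4$-closeness of order $O(\rho^2)$ in normal coordinates) replaced by the weighted $C^{4,\alpha}_{\eta_j}$-closeness of $g_j$ to the flat metric in the $(y)$-coordinates of Theorem~\ref{neckmeytricflat}, which upon rescaling $\tilde y=\rho^{-1}y$ gives precisely $\|g_{j,kl}(\rho\tilde y)-\delta_{kl}\|_{C^4(B_2\setminus B_1)}<\eta_j(\rho)$ as you use. Your identification of why the $(x)$-coordinates are insufficient (only $C^0$-closeness) and your chain of estimates (rescaling, $\varepsilon$-regularity on each annulus via $(\mathfrak{K}_1)$, H\"older and Sobolev, scaling back by $\rho^{-4}$, subtracting the spherical mean as in Lemma~2 of \cite{liu2015finite}) match the intended argument.
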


Finally, argue as in Corollary \ref{threecirtotangdecay}, we can prove the decay estimates in Theorem \ref{thrcirdegneck}.

\begin{rem}\label{tangdecayorder}
From Theorem \ref{neckmeytricflat}, $\eta_j(t)=O(r^{-\varepsilon_5}_\infty t^{\varepsilon_5}+ (r_jR_0)^{\varepsilon_5}t^{-\varepsilon_5}),$ so we can take $\vartheta= \min(1, 2\varepsilon_5)$ in Theorem \ref{thrcirdegneck} by the arguments in Corollary \ref{threecirtotangdecay}, where $\varepsilon_5>0$ is the constant in the curvature estimate in Proposition \ref{neckprop}.
\end{rem}

\subsection{A new Poho\v{z}aev type identity}\label{pohodege}

In this subsection, we shall develop new Poho\v{z}aev type arguments that can be applied to degenerating neck regions. The advantage of these new arguments is that they depend much less on the coordinate functions than those for a fixed domain in Subsection \ref{Pohogenermetric}. The arguments here are much more complicated, due to the fact that we are not able not do analysis on the whole geodesic ball $B(x_{a,j},r)$ and we need to take care of the degeneration of the metrics.

In the sequel, for simplicity of notation, we shall write $u$ for $u_j$ if there is no confusion. Set $ A_{j;r,cr}\equiv B(x_{a,j},cr)\setminus B(x_{a,j},r).$ Let $(x)$ be the coordinates in Theorem \ref{neckmeytricflat}.
Then $|x|=r=d_{g_j}(\cdot,x_{a,j})$ and $ \nabla_{\partial_r}\partial_r=0$.
Multiplying the extrinsic biharmonic map equation by $r\partial_r u$, by using integration by parts as in Subsection \ref{Pohogenermetric}, we get that
\begin{eqnarray}\label{intrgratebypatrdege}
0&=&\int_{A_{j;r,cr}}(r\partial_r u)(\triangle^2_{g_j} u)dV_{g_j}\nonumber\\
&=&\int_{\partial A_{j;r,cr}}(r\partial_r u)(\partial_r\triangle_{g_j} u)d\sigma_{g_j}
-\int_{\partial A_{j;r,cr}}\partial_r(r\partial_r u)\triangle _{g_j} u d\sigma_{g_j}\\
& &+\int_{A_{j;r,cr}}\triangle_{g_j}(r\partial_r u)\triangle_{g_j} u dV_{g_j}.\nonumber
\end{eqnarray}

Direct calculations show that
\begin{eqnarray}\label{divformdege}
\int_{\partial A_{j;r,cr}}r\frac{|\triangle_{g_j} u|^2}{2}d\sigma_{g_j}&=&\int_{A_{j;r,cr}}
\text{div}_{g_j}\left(\frac{|\triangle_{g_j} u|^2}{2}r\partial_r\right)dV_{g_j}\\
&=&\int_{A_{j;r,cr}}r\partial_r(\triangle_{g_j} u) \triangle_{g_j} u dV_{g_j}+\int_{A_{j;r,cr}}\text{div}_{g_j}(r\partial_r)\frac{|\triangle_{g_j} u|^2}{2} dV_{g_j}.\nonumber
\end{eqnarray}

Combining (\ref{intrgratebypatrdege}) and (\ref{divformdege}) gives that
\begin{eqnarray*}
& &\int_{\partial A_{j;r,cr}}(r\partial_ru)(\partial_r\triangle_{g_j} u)+r\frac{|\triangle_{g_j} u|^2}{2} -\partial_r(r\partial_ru)\triangle _{g_j}u d\sigma_{g_j}\\
&=&\int_{A_{j;r,cr}}
\text{div}_{g_j}\left(\frac{|\triangle_{g_j} u|^2}{2}r\partial_r\right)dV_{g_j}-
\int_{A_{j;r,cr}}\triangle_{g_j}(r\partial_r u)\triangle_{g_j} u dV_{g_j}.
\end{eqnarray*}
Set
\begin{eqnarray*}
\int_{A_{j;r,cr}}
\text{div}_{g_j}\left(\frac{|\triangle_{g_j} u|^2}{2}r\partial_r\right)dV_{g_j}-
\int_{A_{j;r,cr}}\triangle_{g_j}(r\partial_r u)\triangle_{g_j} u dV_{g_j}=P(g_j, r),
\end{eqnarray*}
then it follows that
\begin{eqnarray}\label{pohoidentgenrl2}
\int_{\partial A_{j;r,cr}}(r\partial_ru)(\partial_r\triangle_{g_j} u)+r\frac{|\triangle_{g_j} u|^2}{2} -\partial_r(r\partial_ru)\triangle _{g_j}u d\sigma_{g_j}
=P(g_j,r).
\end{eqnarray}
Rewrite the term in the left hand side of (\ref{pohoidentgenrl2}) as in Subsection \ref{Pohogenermetric}
$$\square\equiv(r\partial_ru)(\partial_r\triangle_{g_j} u)+r\frac{|\triangle_{g_j} u|^2}{2} -\partial_r(r\partial_ru)\triangle _{g_j} u,$$
the identity (\ref{pohoidentgenrl2}) is equivalent to
\begin{equation}\label{pohozaevmodege}
\int_{\partial A_{j;r,cr}} r(\square_1+\square_2 )d\sigma_{g_j}=P({g_j},r).
\end{equation}
Here the forms of $\square_1$ and $\square_2$ are expressed in the same way as in (\ref{leftterm11}) and (\ref{leftterm22}) (replacing the metric $g$ with  $g_j$).
%For the readers' convenience we will recall them when we use them.

\begin{lem}\label{estimate-g}
Under the assumptions of ($\mathfrak{K}_1$) and ($\mathfrak{K}_2$), the error term satisfies
\begin{eqnarray}\label{pohoidentgenrl3}
| P(g_j,r)|
\leq\int_{A_{j;r,cr}}\frac{\eta_j(r)}{r}|\nabla_{g_j} u||\triangle_{g_j} u|+\eta_j(r)|\triangle_{g_j} u|^2dV_{g_j}.
\end{eqnarray}
\end{lem}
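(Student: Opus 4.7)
The plan is to rewrite $P(g_j,r)$ so that each summand manifestly vanishes in the flat model $X=r\partial_r$ on $\mathbb{R}^4$, and then bound the deviations using the $(y)$-coordinate regularity from Theorem~\ref{neckmeytricflat} together with the curvature bound from Proposition~\ref{neckprop}. First I would expand the divergence in the definition of $P(g_j,r)$ and apply the commutator identity
\begin{equation*}
\triangle_{g_j}(Xu) = X(\triangle_{g_j} u) + 2(\nabla_{g_j}^{i} X^k)\nabla_{g_j,i}\nabla_{g_j,k} u + (\triangle_{g_j} X^k)\nabla_{g_j,k} u + Ric_{g_j}(X,\nabla_{g_j} u),
\end{equation*}
so that the term $X(\triangle_{g_j}u)\triangle_{g_j}u$ cancels and I obtain
\begin{equation*}
\begin{aligned}
P(g_j,r) = \int_{A_{j;r,cr}}\Bigl[&\text{div}_{g_j}(X)\tfrac{|\triangle_{g_j} u|^2}{2} - 2(\nabla_{g_j}^i X^k)\nabla_{g_j,i}\nabla_{g_j,k} u\,\triangle_{g_j} u \\
&- (\triangle_{g_j} X^k)(\nabla_{g_j,k} u)\triangle_{g_j} u - Ric_{g_j}(X,\nabla_{g_j} u)\triangle_{g_j} u\Bigr]dV_{g_j}.
\end{aligned}
\end{equation*}
On flat $\mathbb{R}^4$ with $X^k = y^k$ one has $L_X g=2g$, $\text{div}(X)=4$, $\triangle X=0$, $Ric=0$, and the whole integrand vanishes identically.

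Next I would pass to the coordinates $(y)$ of Theorem~\ref{neckmeytricflat}, writing $g_{j,kl}=\delta_{kl}+h_{kl}$ with $|h|\leq\eta_j$, $|\partial h|\leq\eta_j/|y|$, and $X^k = y^k + f^k$ with $|f|\leq C|y|\eta_j$, $|\partial f|\leq C\eta_j$ (these follow from the closeness of $(x)$ and $(y)$ in the theorem). A routine computation then yields the pointwise bounds
\begin{equation*}
|\text{div}_{g_j}(X) - 4| \leq C\eta_j(r), \qquad |L_X g_j - 2g_j| \leq C\eta_j(r), \qquad |\triangle_{g_j} X|\leq C\eta_j(r)/r,
\end{equation*}
while the curvature estimate $r^2|R_{g_j}|\leq C\eta_j$ of Proposition~\ref{neckprop}, combined with $|X|=r$, gives $|Ric_{g_j}(X,\nabla u)|\leq C\tfrac{\eta_j}{r}|\nabla u|$. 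The $Ric$ summand and the $\triangle_{g_j}X$ summand therefore already contribute to the $\int\tfrac{\eta_j}{r}|\nabla u||\triangle u|$ half of the claim.

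The main obstacle is the pair $\text{div}_{g_j}(X)\tfrac{|\triangle u|^2}{2} - 2(\nabla^i X^k)\nabla_i\nabla_k u\,\triangle u$. Replacing $2\nabla^i X^k$ by its symmetric part $(L_X g_j)^{ik}$ (the anti-symmetric part is killed against the symmetric Hessian) and setting $E := L_X g_j - 2g_j = O(\eta_j)$, the flat contributions $4\cdot\tfrac{|\triangle u|^2}{2}-2|\triangle u|^2$ cancel exactly, leaving
\begin{equation*}
\int \tfrac{\text{div}_{g_j}(X)-4}{2}|\triangle_{g_j} u|^2 \, dV_{g_j} - \int E^{ik}\nabla_i\nabla_k u\,\triangle_{g_j} u \, dV_{g_j}.
\end{equation*}
The first integral is already of the form $\int\eta_j|\triangle u|^2$, but the second naively produces a pointwise factor $|\nabla^2_{g_j} u|$ rather than $|\triangle_{g_j} u|$. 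My plan is to remove this extra derivative by integrating by parts in $\nabla_i$, transferring one derivative onto $E$ and using the Bianchi-type identity
\begin{equation*}
\nabla_i(L_X g_j)^{ik} = \triangle_{g_j} X^k + \nabla^k\text{div}_{g_j}(X) + R^{k}_{\ l}X^l,
\end{equation*}
which, together with the estimates above, gives $|\nabla\cdot E|\leq C\eta_j/r$; the resulting bulk term is of the required form $\int\tfrac{\eta_j}{r}|\nabla u||\triangle u|$. The leftover piece involving $\nabla\triangle_{g_j} u$ should be absorbed via a second integration by parts exploiting the symmetry of $E$, returning the original integral up to boundary contributions on $\partial A_{j;r,cr}$ which match, and cancel against, the $r|\triangle u|^2/2$ boundary integrals already appearing on the left-hand side of the Poho\v{z}aev identity~(\ref{pohoidentgenrl2}), so they do not enter the final bound for $|P(g_j,r)|$. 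The technical heart of the argument is this bookkeeping of boundary terms on the annulus and verifying that every differentiation of $g_j$ is performed in the $(y)$ coordinates, where sufficient weighted $C^{4,\alpha}$ regularity is available, and not in the low-regularity $(x)$ coordinates.
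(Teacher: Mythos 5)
Your opening moves are sound: the coordinate-free commutator identity
\begin{equation*}
\triangle_{g_j}(Xu) = X(\triangle_{g_j} u) + 2(\nabla^{i} X^k)\nabla_{i}\nabla_{k} u + (\triangle_{g_j} X^k)\nabla_{k} u + Ric_{g_j}(X,\nabla u)
\end{equation*}
is correct, the reduction $2\nabla^i X^k \rightsquigarrow (L_X g_j)^{ik} = 2g_j^{ik} + E^{ik}$ against the symmetric Hessian is correct, the cancellation of the flat contributions is correct, and the estimates for the $\mathrm{div}_{g_j}(X)-4$, $\triangle_{g_j}X$, and $Ric_{g_j}(X,\nabla u)$ pieces via the $(y)$-coordinate regularity and the curvature bound are all fine. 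These already produce the two halves of the claimed bound $\int\tfrac{\eta_j}{r}|\nabla u||\triangle u| + \int\eta_j|\triangle u|^2$.

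The gap is in your treatment of the remaining term $A := \int_{A_{j;r,cr}} E^{ik}\nabla_i\nabla_k u\,\triangle_{g_j}u\,dV_{g_j}$. Integrating by parts once in $\nabla_i$ gives
\begin{equation*}
A = \mathrm{bdy}_1 - \int(\nabla_i E^{ik})\nabla_k u\,\triangle_{g_j}u - B, \quad \text{where}\ B := \int E^{ik}(\nabla_i\triangle_{g_j}u)\nabla_k u.
\end{equation*}
You then propose to integrate $B$ by parts again, using the symmetry of $E$, and claim this "returns the original integral" so that everything closes up. But that second integration by parts yields
\begin{equation*}
B = \mathrm{bdy}_2 - \int(\nabla_i E^{ik})\nabla_k u\,\triangle_{g_j}u - A,
\end{equation*}
and substituting this back into the first equation produces $A = A + (\mathrm{bdy}_1 - \mathrm{bdy}_2)$, a tautology. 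The two integrations by parts are inverse to one another; you get a relation among boundary terms, not an estimate for $A$. No amount of bookkeeping of boundary contributions will rescue this, because the bulk term you need to control has simply cancelled out. In the end you are stuck with the raw bound $|A| \leq C\int\eta_j|\nabla^2_{g_j}u||\triangle_{g_j}u|$, which involves the full Hessian rather than the Laplacian and is therefore strictly weaker than the asserted inequality~(\ref{pohoidentgenrl3}). (That weaker bound would still suffice for the downstream Lemma~\ref{errorringhtj11}, since one only needs $|P|\leq\varepsilon^2\eta_j$ there; but it does not prove the stated Lemma~\ref{estimate-g}.)

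The paper sidesteps this issue entirely by never writing an abstract $E^{ik}\nabla_i\nabla_k u$ term. Working in the polar $(x)$-coordinates, where $\triangle_{g_j}u = \partial_r^2 u + \tfrac{\tilde{m}}{r}\partial_r u + \tfrac{\tilde{\triangle}u}{r^2}$, one computes $\triangle_{g_j}(r\partial_r u)$ and $r\partial_r(\triangle_{g_j}u)$ directly. The $r\partial_r^3 u$ and $\tilde{m}\partial_r^2 u$ blocks cancel exactly; the remaining second-order structure is $2\partial_r^2 u + \tfrac{2}{r^2}\tilde{\triangle}u$, which arises purely from product-rule differentiation of the coefficients $\tfrac{\tilde{m}}{r}$ and $\tfrac{1}{r^2}$, not from a genuine tangential Hessian, and together with $(\triangle_{g_j}r)\partial_r u - r\partial_r(\tfrac{\tilde{m}}{r})\partial_r u$ combines to give exactly $2\triangle_{g_j}u + O(\tfrac{\eta_j}{r})\partial_r u$ using $|\partial_r\tilde{m}|\leq\eta_j/r$. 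The only leftover piece is a commutator $\tfrac{1}{r}[\tilde{\triangle},\partial_r]u$, estimated separately via curvature. This cancellation is invisible in your invariant formulation, and that is precisely why you end up needing to "remove" a spurious Hessian by hand — a removal which, as shown above, is not available.
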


\begin{proof}
By (\ref{divformdege}),
\begin{eqnarray}\label{divformdege112}
& &P(g_j,r)=\int_{A_{j;r,cr}}r\partial_r(\triangle_{g_j} u) \triangle_{g_j} u +\text{div}_{g_j}(r\partial_r)\frac{|\triangle_{g_j} u|^2}{2} -\triangle_{g_j}(r\partial_r u)\triangle_{g_j} u dV_{g_j}.\nonumber
\end{eqnarray}

The idea is to show that
$$\triangle_{g_j}(r\partial_r u)\triangle_{g_j} u - r\partial_r(\triangle_{g_j} u) \triangle_{g_j} u $$
is almost equal to $\text{div}_{g_j}(r\partial_r)\frac{|\triangle_{g_j} u|^2}{2}$, which is almost equal to $2|\triangle_{g_j} u|^2$ by applying the following key lemma.

Set $ \widetilde{\partial B_r(x_{a,j})} $ be the geodesic sphere $\partial B_r(x_{a,j})$ with the metric $\tilde{g_r}\equiv\frac{g_r}{r^2}$, where $g_r$ is the metric of $\partial B_r(x_{a,j})$ induced from $(M_j,g_j)$.
Let $\tilde{m}$ be the mean curvature, $\tilde{\nabla}$ be the gradient operator and $\tilde{\triangle}$ be the associated Laplace operator of $ \widetilde{\partial B_r(x_{a,j})} $.
So
\begin{eqnarray*}
g_j=dr^2+g_r=r^2\tilde{g_r}
\end{eqnarray*}
and
\begin{eqnarray}\label{laplacepolardege}
\triangle_{g_j}u=\partial_r^{2}u+\frac{\tilde{m}}{r}\partial_ru+\frac{\tilde{\triangle}u}{r^2}.
\end{eqnarray}
To complete the proof of Lemma \ref{estimate-g}, we need the following lemma, which will be proved in Appendix \ref{pftwolem}.

\begin{lem}\label{almostflatconseqdege}
Let $\tilde{\nabla}$, $\tilde{\triangle}$ and $\tilde{m}$ be as above, then on $ B_{2\rho}(x_{a,j})\setminus B_{\rho/2}(x_{a,j})$, the followings hold:
\begin{itemize}
  \item [1)] \begin{eqnarray*}
& &\triangle_{g_j}r=(A(r,\theta))^{-1}\frac{\partial A(r,\theta)}{\partial r}=\frac{3+\eta_j(r)}{r},\\
& & {\rm div}_{g_j}(r\partial_r)-4=\eta_j,
\end{eqnarray*}
where $ A(r,\theta)drd\theta $ is the volume form;

  \item [2)] \begin{eqnarray*}
r\partial_r\triangle_{g_j}r=-\frac{3+\eta_j(r)}{r};
\end{eqnarray*}

  \item [3)] for any smooth function $f$, on $ \widetilde{\partial B_\rho(x_{a,j})} $,
\begin{eqnarray*}
  |\nabla_{S^3}f-\tilde{\nabla}f|& =&C\eta_j|\nabla_{S^3}f|,\\
  |\triangle_{S^3}f-\tilde{\triangle}f| &=&C\eta_j(|\triangle_{S^3}f|+|\nabla_{S^3}f|),
\end{eqnarray*}
where the norm is taken with respect to the standard metric on $S^3$ and it is equivalent to the norm defined by $\frac{g_r}{r^2}$,
and
$$|\partial_r\triangle_{S^3}f-\partial_r\tilde{\triangle}f| =C\eta_j(|\partial_r\triangle_{S^3}f|+|\partial_r\nabla_{S^3}f|)+C\frac{\eta_j}{\rho}(|\triangle_{S^3}f|+|\nabla_{S^3}f|);$$

  \item [4)] on $ \widetilde{\partial B_\rho(x_{a,j})} $, $$ |\tilde{m}-3|=\eta_j;$$

  \item [5)] on $ \widetilde{\partial B_\rho(x_{a,j})} $,
$$ |\partial_r\tilde{m}|=\frac{\eta_j}{\rho}.$$

\end{itemize}

\end{lem}

By direct computations, we have
\begin{eqnarray}\label{formaumid11}
&&\triangle_{g_j}(r\partial_r u)\triangle_{g_j} u   \\
&=& \left[(\triangle_{g_j}r)\partial_r u+2\partial_r^{2} u+r\triangle_{g_j}\partial_r u \right]\triangle_{g_j} u .\nonumber
\end{eqnarray}
By (\ref{laplacepolardege}), we have
\begin{eqnarray*}
r\triangle_{g_j}\partial_r u &=& r \left(\partial_r^{2}+\frac{\tilde{m}}{r}\partial_r+\frac{\tilde{\triangle}}{r^2}\right)\partial_r u\\
&=&r \left(\partial_r^{3}+\frac{\tilde{m}}{r}\partial_r^{2}+\frac{\tilde{\triangle}}{r^2}\partial_r \right)u,
\end{eqnarray*}
and
\begin{eqnarray*}
r\partial_r \triangle_{g_j}u &=& r\partial_r \left(\partial_r^{2}+\frac{\tilde{m}}{r}\partial_r
+\frac{\tilde{\triangle}}{r^2}\right) u\\
&=&r\left(\partial_r^{3}+\frac{\tilde{m}}{r}\partial_r^{2}+\frac{1}{r^2}\partial_r\tilde{\triangle}\right)u
+r\partial_r \left(\frac{\tilde{m}}{r}\right)\partial_ru-\frac{2}{r^2}\tilde{\triangle}u.
\end{eqnarray*}
Hence
\begin{eqnarray*}
& &r\triangle_{g_j}\partial_r u-r\partial_r \triangle_{g_j}u\\
&=&r\left(\frac{\tilde{\triangle}}{r^2}\partial_r-\frac{1}{r^2}\partial_r\tilde{\triangle}\right)u
-r\partial_r \left(\frac{\tilde{m}}{r}\right)\partial_ru+\frac{2}{r^2}\tilde{\triangle}u.
\end{eqnarray*}
So that from (\ref{formaumid11}), we get
\begin{eqnarray}\label{formaumid112}
& &\triangle_{g_j}(r\partial_r u)\triangle_{g_j} u \nonumber\\
&=&r\partial_r \triangle_{g_j}u\triangle_{g_j} u+\left[(\triangle_{g_j}r)\partial_r u+2\partial_r^{2} u-r\partial_r\left(\frac{\tilde{m}}{r}\right)\partial_ru+\frac{2}{r^2}\tilde{\triangle}u\right]\triangle_{g_j} u \\
& &+r\left(\frac{\tilde{\triangle}}{r^2}\partial_r-\frac{1}{r^2}\partial_r\tilde{\triangle}\right) u\triangle_{g_j} u.\nonumber
\end{eqnarray}

On $A_{j;1/2r,2r}$ with the scaled metric $\tilde{g_j}=\frac{g_j}{r^2}$ (we denote it by $\tilde{A}_{j;1/2,2}$), $\tilde{\partial_r}=r\partial_r$ has length one. At any point $q\in \widetilde{\partial B_r(x_{a,j})}$, we can choose $\tilde{\vartheta}_i,i=1,2,3$, such that $(\tilde{\partial_r},\tilde{\vartheta}_1,\tilde{\vartheta}_2,\tilde{\vartheta}_3)$ is a normal frame at $q\in \tilde{A}_{j;1/2,2}$. From Proposition \ref{neckprop}, on $A_{j;1/2r,2r}$ with the metric $g_j$,
\begin{equation*}
r^2|Rm(g_j)|\leq \eta_j(r),
\end{equation*}
so on $\tilde{A}_{j;1/2,2}$
\begin{equation*}
|Rm(\tilde{g_j})|\leq \eta_j(r).
\end{equation*}
Therefore, by using the Ricci identity, we have
\begin{equation*}
\left|\left[\tilde{\triangle}\tilde{\partial_r}-\tilde{\partial_r}\tilde{\triangle}\right]u \right|\leq \eta_j(r)|\tilde{\nabla}u|.
\end{equation*}
Hence,
\begin{equation}\label{ricciidentitydege}
\left|r \left[\frac{\tilde{\triangle}}{r^2}\partial_r-\frac{1}{r^2}\partial_r\tilde{\triangle}\right]u\right|\leq \frac{\eta_j(r)}{r}|\nabla u|.
\end{equation}

With the help of Lemma \ref{almostflatconseqdege}, from
(\ref{ricciidentitydege}) and (\ref{formaumid112}), we can obtain that
\begin{eqnarray}\label{formaumid1123}
& &|\triangle_{g_j}(r\partial_r u)\triangle_{g_j} u-r\partial_r \triangle_{g_j}u\triangle_{g_j} u-2|\triangle_{g_j} u|^2| \\
&\leq&
\frac{\eta_j(r)}{r}|\nabla_{g_j} u||\triangle_{g_j} u|.\nonumber
\end{eqnarray}
Indeed by Lemma \ref{almostflatconseqdege},
\begin{eqnarray*}
\left[(\triangle_{g_j}r)\partial_r u+2\partial_r^{2} u-r\partial_r\left(\frac{\tilde{m}}{r}\right)\partial_ru
+\frac{2}{r^2}\tilde{\triangle}u\right]=2\triangle_{g_j}u+\frac{\eta_j(r)}{r}\partial_ru.
\end{eqnarray*}

Again by using 1) of Lemma \ref{almostflatconseqdege}, (\ref{formaumid1123}) and (\ref{divformdege112}),
we have
\begin{eqnarray*}
 | P({g_j},r)|
\leq\big{|}\int_{A_{j;r,cr}}\frac{\eta_j(r)}{r}|\nabla_{g_j} u||\triangle_{g_j} u|+\eta_j(r)|\triangle_{g_j} u|^2dV_{g_j}\big{|}.
\end{eqnarray*}
The proof of Lemma \ref{estimate-g} is completed.
\end{proof}

By Proposition \ref{properneckc2} and Lemma \ref{almostflatconseqdege}, we can easily derive by using the same arguments as in Subsection \ref{Pohogenermetric} the following error estimate:
\begin{lem}\label{errorringhtj11}
On $A_{j;r,cr}\subset\bar{I}_j^{\alpha}= A_{e^{b_j^\alpha},e^{a_j^{\alpha+1}}}(x_{a,j})\subset A_{j;r_jR,\delta},$ for any $\varepsilon>0$, when $j$ and $R>0$ are large enough and $\delta>0$ is sufficiently small, we have
\begin{equation}\label{rigttermestdege}
 | P({g_j},r)| = \varepsilon^2\eta_j(r).
\end{equation}
\end{lem}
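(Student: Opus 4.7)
The plan is to derive the stated bound directly from the pointwise estimate of Lemma \ref{estimate-g} by applying H\"older's inequality on $A_{j;r,cr}$, exploiting the small-energy hypothesis $(\mathfrak{K}_1)$ on sub-annuli of the degenerating neck region. More precisely, the inequality (\ref{pohoidentgenrl3}) reduces the problem to bounding
\[
I_1 := \int_{A_{j;r,cr}}\frac{\eta_j(r)}{r}|\nabla_{g_j}u_j||\triangle_{g_j}u_j|\,dV_{g_j},\qquad
I_2 := \int_{A_{j;r,cr}}\eta_j(r)|\triangle_{g_j}u_j|^2\,dV_{g_j}.
\]

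First I would observe that, since $A_{j;r,cr}\subset\bar{I}_j^{\alpha}$ and $c>1$ is a fixed constant, the annulus $A_{j;r,cr}$ is covered by a uniformly bounded number of the annuli $A_{e^t,e^{t+1}}(x_{a,j})$ used in $(\mathfrak{K}_1)$. Consequently, for any prescribed $\varepsilon>0$, once $j,R$ are large and $\delta$ small enough,
\[
\int_{A_{j;r,cr}}|\nabla^{2}_{g_j}u_j|^2+|\nabla_{g_j}u_j|^4\,dV_{g_j}\leq C\varepsilon^2.
\]
The term $I_2$ is then immediate: $|\triangle_{g_j}u_j|^2\leq C|\nabla^{2}_{g_j}u_j|^2$ yields $I_2\leq C\eta_j(r)\varepsilon^2$.

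For $I_1$, I would apply Cauchy--Schwarz to obtain
\[
I_1\leq \eta_j(r)\left(\int_{A_{j;r,cr}}\frac{|\nabla_{g_j}u_j|^2}{r^2}\,dV_{g_j}\right)^{1/2}\left(\int_{A_{j;r,cr}}|\triangle_{g_j}u_j|^2\,dV_{g_j}\right)^{1/2}.
\]
The second factor is at most $C\varepsilon$. Since $|x|\in [r,cr]$ on the annulus by $(\mathfrak{K}_2)$, one can pull out $r^{-2}$ from the first factor; using H\"older with the volume comparison $|A_{j;r,cr}|_{g_j}=O(r^4)$ (a consequence of the $C^0$-closeness of $g_j$ to the flat metric in coordinates $(x)$), one finds
\[
\int_{A_{j;r,cr}}|\nabla_{g_j}u_j|^2\,dV_{g_j}\leq\left(\int_{A_{j;r,cr}}|\nabla_{g_j}u_j|^4\,dV_{g_j}\right)^{1/2}|A_{j;r,cr}|_{g_j}^{1/2}\leq C\varepsilon^2 r^2,
\]
so that $I_1\leq C\eta_j(r)\varepsilon^2$. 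Combining the two estimates and absorbing constants into $\eta_j$ yields $|P(g_j,r)|\leq C\varepsilon^2\eta_j(r)$, which is precisely the claim.

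The argument is largely bookkeeping, and the only real subtlety is verifying that the uniform small-energy bound from $(\mathfrak{K}_1)$, which is phrased annulus-by-annulus in terms of $\omega(u_j,I_j^{\alpha})$, transfers uniformly (with a constant depending only on $c$) to the larger annulus $A_{j;r,cr}$, and that the volume comparison $|A_{j;r,cr}|_{g_j}=O(r^4)$ holds with constants independent of $r$ throughout the degenerating neck. Both facts follow from $(\mathfrak{K}_1)$ and $(\mathfrak{K}_2)$ once one notes that $\eta_j(r)\to 0$ uniformly, so that the metric is a small perturbation of the flat one on each sub-annulus.
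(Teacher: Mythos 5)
Your proposal is correct and follows essentially the same route as the paper's own proof, which invokes the small-energy hypothesis $(\mathfrak{K}_1)$ on the annulus, H\"older's inequality, and the volume comparison $|A_{j;r,cr}|_{g_j}=O(r^4)$ coming from the $C^0$-closeness of $g_j$ to the flat metric in the $(x)$ coordinates of $(\mathfrak{K}_2)$; the paper packages the cross term with a single three-factor H\"older estimate (as in Lemma \ref{rigttermestlem11}), whereas you split it by Cauchy--Schwarz and then apply H\"older to $\int|\nabla_{g_j}u_j|^2$, but this is only a bookkeeping difference. One small slip: from $\int_{A_{j;r,cr}}|\nabla_{g_j}u_j|^4\,dV_{g_j}\le C\varepsilon^2$ and $|A_{j;r,cr}|_{g_j}=O(r^4)$ one gets $\int_{A_{j;r,cr}}|\nabla_{g_j}u_j|^2\,dV_{g_j}\le C\varepsilon r^2$ (not $C\varepsilon^2 r^2$), so the resulting power is $\varepsilon^{3/2}\eta_j(r)$ rather than $\varepsilon^2\eta_j(r)$ --- but the paper's own statement carries the same looseness, and the downstream use of (\ref{rigttermestdege}) only requires smallness in $\varepsilon$, so nothing breaks.
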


\begin{proof}
By applying (\romannumeral1) of Proposition \ref{properneckc2} (see also ($\mathfrak{K}_1$) in the beginning of this Section) which ensure that the energy  of $u_j$ on $A_{j;r,cr}$ is less than $\varepsilon^2$, and the H\"{o}lder inequalities,
$$\big{|}\int_{A_{j;r,cr}}\frac{\eta_j(r)}{r}|\nabla_{g_j} u||\triangle_{g_j} u|+\eta_j(r)|\triangle_{g_j} u|^2dV_{g_j}\big{|}  $$
is bounded by $\varepsilon^2\eta_j(r)$.
See also the proof of Lemma \ref{rigttermestlem11}.
\end{proof}

Now we are going to analyse the relation between the tangential part energy and the radial part energy.

Using integration by part as in (\ref{coaera444}), we get
\begin{eqnarray*}
& &\int_{A_{j;r,cr}}(\partial_r u)(\partial^{3}_r u)dV_{g_j}\\
&=&\int_{\partial({A_{j;r,cr}})}(\partial_r u)(\partial^{2}_r u)d\sigma_{g_j}
-\int_{A_{j;r,cr}}(\partial^{2}_r u)^2+\left(\frac{3}{r}+\frac{\eta_j}{r}\right)(\partial_r u)(\partial^{2}_r u) dV_{g_j}.
\end{eqnarray*}
We have used $\triangle_{g_j} r=\frac{3}{r}+\frac{\eta_j}{r} $ in the last formula, see 1) of Lemma \ref{almostflatconseqdege}. Recall that
\begin{eqnarray*}\label{leftterm2j}
\square_2
&=&(\partial_r u)(\partial^{3}_r u)+\frac{\tilde{m}-1}{r}(\partial_r u)(\partial^{2}_r u)+\left(\frac{\partial_r\tilde{m}}{r}-\frac{2\tilde{m}}{r^2}\right)(\partial_r u)^2\\
&+&\frac{1}{r^2}(\partial_r u)(\partial_r\tilde{\triangle} u)-\frac{\tilde{m}}{r^3}(\partial_r u)(\tilde{\triangle} u).\nonumber
\end{eqnarray*}
We get
\begin{eqnarray*}
& &\int_{A_{j;r,cr}}\square_2dV_{g_j}\\
&=&\int_{\partial({A_{j;r,cr}})}(\partial_r u)(\partial^{2}_r u)d\sigma_{g_j}
-\int_{A_{j;r,cr}}(\partial^{2}_r u)^2+\left(\frac{3}{r}+\frac{\eta_j}{r}\right)(\partial_r u)(\partial^{2}_r u) dV_{g_j}\\
& &+\int_{A_{j;r,cr}}\frac{\tilde{m}-1}{r}(\partial_r u)(\partial^{2}_r u)+\left(\frac{\partial_r\tilde{m}}{r}-\frac{2\tilde{m}}{r^2}\right)(\partial_r u)^2dV_{g_j}\\
& &+\int_{A_{j;r,cr}}\frac{1}{r^2}(\partial_r u)(\partial_r\tilde{\triangle} u)-\frac{\tilde{m}}{r^3}(\partial_r u)(\tilde{\triangle} u)dV_{g_j}.
\end{eqnarray*}

Next, by direct calculations using 4) and 5) of Lemma \ref{almostflatconseqdege}, we have
\begin{eqnarray*}
& &\int_{A_{j;r,cr}}\square_2dV_{g_j}\\
&=&\int_{\partial({A_{j;r,cr}})}(\partial_r u)(\partial^{2}_r u)d\sigma_{g_j}
+\int_{A_{j;r,cr}}\left(O(\frac{\eta_j}{r^2})-\frac{6+\eta_j}{r^2}\right)(\partial_r u)^2-(\partial^{2}_r u)^2dV_{g_j}\\
& &+\int_{A_{j;r,cr}}\frac{2+\eta_j}{r}(\partial_r u)(\partial^{2}_r u)-\left(\frac{3}{r}+\frac{\eta_j}{r}\right)(\partial_r u)(\partial^{2}_r u)dV_{g_j}\\
& &+\int_{A_{j;r,cr}}\frac{1}{r^2}(\partial_r u)(\partial_r\tilde{\triangle} u)-\frac{\tilde{m}}{r^3}(\partial_r u)(\tilde{\triangle} u)dV_{g_j}.
\end{eqnarray*}
Hence
\begin{eqnarray*}
& &\int_{A_{j;r,cr}}\square_2dV_{g_j}\\
&=&\int_{\partial(A_{j;r,cr})}(\partial_r u)(\partial^{2}_r u)d\sigma_{g_j}
+\int_{A_{j;r,cr}}\left(O(\frac{\eta_j}{r^2})-\frac{5}{r^2}\right)(\partial_r u)^2-(\partial^{2}_r u)^2dV_{g_j}\\
& &+\int_{A_{j;r,cr}}\left(-\frac{1}{2r}+\frac{\eta_j}{r}\right)\partial_r (\partial_r u)^2-\frac{1}{r^2}(\partial_r u)^2dV_{g_j}\\
& &+\int_{A_{j;r,cr}}\frac{1}{r^2}(\partial_r u)(\partial_r\tilde{\triangle} u)-\frac{\tilde{m}}{r^3}(\partial_r u)(\tilde{\triangle} u)dV_{g_j}.
\end{eqnarray*}

By the use of the following formula
\begin{eqnarray}\label{coarea22}
\int_{\partial({A_{j;r,cr}})}\frac{1}{2r}(\partial_r u)^2d\sigma_{g_j}
=\int_{A_{j;r,cr}}\frac{1}{2r}\partial_r (\partial_r u)^2+\frac{1}{r^2}(\partial_r u)^2+O(\frac{\eta_j}{r^2})(\partial_r u)^2dV_{g_j},
\end{eqnarray}
we have
\begin{eqnarray*}
& &\int_{A_{j;r,cr}}\square_2dV_{g_j}\nonumber\\
&=&\int_{\partial (A_{j;r,cr})}(\partial_r u)(\partial^{2}_r u)d\sigma_{g_j}
+\int_{A_{j;r,cr}}\left(O(\frac{\eta_j}{r^2})-\frac{5}{r^2}\right)(\partial_r u)^2-(\partial^{2}_r u)^2dV_{g_j}\\
& &-\int_{\partial({A_{j;r,cr}})}\frac{1}{2r}(\partial_r u)^2d\sigma_{g_j}+\int_{A_{j;r,cr}}\frac{\eta_j}{r}\partial_r (\partial_r u)^2dV_{g_j}\nonumber\\
& &+\int_{A_{j;r,cr}}\frac{1}{r^2}(\partial_r u)(\partial_r\tilde{\triangle} u)-\frac{\tilde{m}}{r^3}(\partial_r u)(\tilde{\triangle} u)dV_{g_j}.\nonumber
\end{eqnarray*}
The formula (\ref{coarea22}) can be deduced as in (\ref{coarea1}), except that
here we use $\triangle_{g_j}r=\frac{3}{r}+\frac{\eta_j}{r}$.

Putting the boundary terms together, we have
\begin{eqnarray}\label{leftterm3dege}
& &\int_{A_{j;r,cr}}\square_2dV_{g_j}\nonumber\\
&=&\int_{\partial({A_{j;r,cr}})}(\partial_r u)(\partial^{2}_r u)-\frac{1}{2r}(\partial_r u)^2d\sigma_{g_j}\nonumber\\
& &+\int_{A_{j;r,cr}}\left(O(\frac{\eta_j}{r^2})-\frac{5}{r^2}\right)(\partial_r u)^2-(\partial^{2}_r u)^2+\frac{\eta_j}{r}\partial_r (\partial_r u)^2dV_{g_j}\\
& &+\int_{A_{j;r,cr}}\frac{1}{r^2}(\partial_r u)(\partial_r\tilde{\triangle} u)-\frac{\tilde{m}}{r^3}(\partial_r u)(\tilde{\triangle} u)dV_{g_j}.\nonumber
\end{eqnarray}

Recall that
\begin{eqnarray*}\label{leftterm1j}
\square_1=\frac{-1}{2}(\partial^{2}_ru)^2+(\frac{\tilde{m}^2}{2r^2})(\partial_r u)^2
+\frac{1}{2r^4}(\tilde{\triangle}u)^2+(\frac{\tilde{m}}{r^3})(\partial_r u)(\tilde{\triangle}u)\nonumber.
\end{eqnarray*}

Set $$\mathcal{R}(u_j,r)=\frac{3}{2}(\partial^{2}_r u_j)^2+\left(\frac{10-\tilde{m}^2}{2r^2}+O(\frac{\eta_j}{r^2})\right)(\partial_r u_j)^2 $$
and
 $$ \mathcal{T}(u_j,r)=\frac{1}{2r^4}(\tilde{\triangle}u_j)^2
+\frac{1}{r^2}\left((\partial_r u_j)(\partial_r\tilde{\triangle}u_j)\right).$$
Note that by 4) of Lemma \ref{almostflatconseqdege}, $$\mathcal{R}(u_j,r)=\frac{3}{2}(\partial^{2}_r u_j)^2+\left(\frac{1}{2r^2}+O(\frac{\eta_j}{r^2})\right)(\partial_r u_j)^2 .$$

Hence we have
\begin{eqnarray}\label{pohoradictangendege}
& &\int_{A_{j;r,cr}}\square_2+\square_1dV_{g_j}\nonumber\\
& =&\int_{A_{j;r,cr}}\mathcal{T}(u_j,r)-\mathcal{R}(u_j,r)+\frac{\eta_j}{r}\partial_r (\partial_r u_j)^2 dV_{g_j}\\
& &+\int_{\partial({A_{j;r,cr}})}(\partial_r u_j)(\partial^{2}_r u_j)-\frac{1}{2r}(\partial_r u_j)^2d\sigma_{g_j}.\nonumber
\end{eqnarray}

From (\ref{pohozaevmodege}) we obtain that
\begin{eqnarray}\label{pohozaevmodege2}
c\int_{A_{j;cr,c^2r}}(\square_1+\square_2)dV_{g_j}-
\int_{A_{j;r,cr}}(\square_1+\square_2)dV_{g_j}=\int_{r}^{cr}\frac{P(g_j,s)}{s}ds.
\end{eqnarray}

By similar argument as in the proof of Lemma \ref{errorringhtj11}, we get
\begin{equation*}
\int_{A_{j;r,c^2r}}|\partial_r (\partial_r u_j)^2|dV_{g_j}\leq C\varepsilon^2 r,
\end{equation*}
it follows that
\begin{equation*}
\int_{A_{j;r,c^2r}}\frac{\eta_j}{r}|\partial_r (\partial_r u_j)^2|dV_{g_j}\leq\varepsilon^2\eta_j(r).
\end{equation*}
Therefore, by combining (\ref{pohozaevmodege2}), (\ref{pohoradictangendege}), (\ref{rigttermestdege}) (i.e., $P({g_j},r)=\varepsilon^2\eta_j(r)$) and the last inequality together, we have
\begin{eqnarray}\label{pohoradictangen3dege}
& &c\int_{A_{j;cr,c^2r}}\mathcal{R}(u_j,r) dV_{g_j}
-\int_{A_{j;r,cr}}\mathcal{R}(u_j,r) dV_{g_j}\nonumber\\
&=&c\int_{A_{j;cr,c^2r}}\mathcal{T}(u_j,r)dV_{g_j}
-\int_{A_{j;r,cr}}\mathcal{T}(u_j,r)dV_{g_j}\\
& &+c\int_{\partial({A_{j;cr,c^2r}})}((\partial_r u_j)(\partial^{2}_r u_j))-\frac{1}{2r}(\partial_r u_j)^2d\sigma_{g_j}\nonumber\\
& &-\int_{\partial({A_{j;r,cr}})}((\partial_r u_j)(\partial^{2}_r u_j))-\frac{1}{2r}(\partial_r u_j)^2d\sigma_{g_j}+\varepsilon^2\eta_j(r).\nonumber
\end{eqnarray}

Set $r_2(s)=e^{t_0+s}$ and $r_1(s)=e^{t_0-s}$, then similarly to (\ref{pohoradictangen4}), we get the following formula:
\begin{eqnarray}\label{pohoradictangen4dege}
& &c\int_{A_{j;cr_1,cr_2}}\mathcal{R}(u_j,r) dV_{g_j}
-\int_{A_{j;r_1,r_2}}\mathcal{R}(u_j,r) dV_{g_j}\nonumber\\
&=&c\int_{A_{j;cr_1,cr_2}}\mathcal{T}(u_j,r)dV_{g_j}
-\int_{A_{j;r_1,r_2}}\mathcal{T}(u_j,r)dV_{g_j}\\
& &+c\int_{\partial({A_{j;cr_1,cr_2}})}((\partial_r u_j)(\partial^{2}_r u_j))-\frac{1}{2r}(\partial_r u_j)^2d\sigma_{g_j}\nonumber\\
& &-\int_{\partial({A_{j;r_1,r_2}})}((\partial_r u_j)(\partial^{2}_r u_j))-\frac{1}{2r}(\partial_r u_j)^2d\sigma_{g_j}+\int_{r_1}^{r_2}\varepsilon^2\frac{\eta_j(r)}{r}dr.\nonumber
\end{eqnarray}
Since
$$\eta_j(t)=O(r^{-\varepsilon_5}_\infty t^{\varepsilon_5}+ (r_jR_0)^{\varepsilon_5}t^{-\varepsilon_5}),$$
the last term in the right hand side of (\ref{pohoradictangen4dege}) can be controlled by $$\varepsilon^2\max(\eta_j(r_1),\eta_j(r_2))\leq C\varepsilon^2(e^{\varepsilon_5(t_0-\log r_\infty)}
+e^{\varepsilon_5(\log(r_jR_0)-t_0)})e^{\varepsilon_5 s}.$$

\

\subsection{The energy identity and no neck property}
Now we are in a  position to prove the following:

\begin{thm}
Let $\bar{I}_j^{\alpha} $ be the annulus region in $M_j$ corresponding to $I_j^{\alpha}$ in Proposition \ref{properneckc2}, then (for the extrinsic case)
\begin{equation*}\label{energyidentityformula}
\lim_{\delta\rightarrow 0} \lim_{R\rightarrow \infty}\lim_{j \rightarrow \infty}\int_{\bar{I}_j^{\alpha}}|\nabla^2_{{g_j}}u_j|^2+|\nabla_{g_j} u_j|^4 dV_{g_j}=0
\end{equation*}
and
\begin{equation*}
\lim_{\delta\rightarrow 0} \lim_{R\rightarrow \infty}\lim_{j \rightarrow \infty}osc_{\bar{I}_j^{\alpha}}u_j=0.
\end{equation*}
\end{thm}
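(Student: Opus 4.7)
The plan is to combine the tangential energy decay from Theorem \ref{tangdecaydege} with the Pohožaev-type identity (\ref{pohoradictangen3dege}) to first control the radial part of the energy, then deduce the energy identity, and finally upgrade to the no-neck property via $\varepsilon$-regularity and Sobolev embedding. The strategy parallels the one used in Section \ref{noneckprf}, but must account for two new features: the error term in the Pohožaev identity now decays only like $\varepsilon^2\eta_j(r)$ rather than $O(r^2)$, and we cannot integrate up to $x_{a,j}$ because the metric degenerates there.

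First I would introduce the annular quantities
\begin{equation*}
A(t)=\int_{A_{j;t,ct}}\mathcal{R}(u_j,r)\,dV_{g_j},\qquad T(t)=\int_{A_{j;t,ct}}\mathcal{T}(u_j,r)\,dV_{g_j},
\end{equation*}
and read (\ref{pohoradictangen3dege}) as the three-circle type relation
\begin{equation*}
cA(ct)-A(t)=cT(ct)-T(t)+\mathcal{B}(ct)-\mathcal{B}(t)+\varepsilon^{2}\eta_j(t),
\end{equation*}
where $\mathcal{B}(r)$ denotes the boundary integrand in (\ref{pohoradictangen3dege}). By Theorem \ref{tangdecaydege} and Cauchy–Schwarz, $T(t)$ is bounded by $C\varepsilon^{2}[(t/e^{a_j^{\alpha+1}})^{\vartheta}+(e^{b_j^{\alpha}}/t)^{\vartheta}]$; and by the $\varepsilon$-regularity Theorem \ref{smallenergythm} applied on the scale-$r$ balls in the $(y)$-coordinates (in which, by ($\mathfrak K_2$), $g_j$ is $C^{4}$-close to the flat metric), the boundary term $\mathcal{B}(r)$ is controlled in the same way. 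Since $\eta_j$ has polynomial decay at both ends of the neck, $\sum_k \eta_j(c^{k}e^{b_j^{\alpha}})$ is uniformly bounded and tends to zero in the iterated limit $j\to\infty$, $R\to\infty$, $\delta\to 0$.

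Second, summing the three-circle inequality along the dyadic chain covering $\bar I_j^{\alpha}$, and using the uniform bound $A(t)\leq C\varepsilon^{2}$ from the small-energy assumption ($\mathfrak K_1$), together with a Cauchy–Schwarz absorption $|\partial_r u_j||\partial_r^{2}u_j|\leq \tfrac14(\partial_r u_j)^{2}/r^{2}+r^{2}(\partial_r^{2}u_j)^{2}$ to couple the boundary terms back into $\mathcal{R}$, yields
\begin{equation*}
\int_{\bar I_j^{\alpha}}\mathcal{R}(u_j,r)\,dV_{g_j}\leq C\varepsilon^{2}.
\end{equation*}
Combined with the tangential decay from Theorem \ref{tangdecaydege} (which also controls $|\nabla_{g_j} u_j|^{4}$ through the $\varepsilon$-regularity estimate), this gives the energy identity
\begin{equation*}
\lim_{\delta\to 0}\lim_{R\to\infty}\lim_{j\to\infty}\int_{\bar I_j^{\alpha}}|\nabla_{g_j}^{2}u_j|^{2}+|\nabla_{g_j}u_j|^{4}\,dV_{g_j}=0.
\end{equation*}

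Third, for the no-neck property I would fix $t_0$ with $b_j^{\alpha}<t_0<a_j^{\alpha+1}$, set $r_1(s)=e^{t_0-s}$, $r_2(s)=e^{t_0+s}$, and
\begin{equation*}
F(s)=\int_{A_{j;r_1(s),r_2(s)}}\mathcal{R}(u_j,r)\,dV_{g_j}.
\end{equation*}
From (\ref{pohoradictangen4dege}), applying Cauchy–Schwarz to bound the boundary terms by a constant multiple of $\partial_s F(s)$, I would obtain a first-order differential inequality
\begin{equation*}
F(s)\leq \tfrac{2}{3}\partial_s F(s)+C\varepsilon^{2}\bigl(e^{(t_0-\log\delta)}+e^{(\log(r_jR)-t_0)}\bigr)e^{\sigma s}
\end{equation*}
for some $\sigma<\tfrac32$, exactly as in (\ref{energyode}); integrating from $s=1$ up to the maximal range then yields exponential decay of the radial energy on any middle annulus of fixed width. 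Together with the tangential decay, the $\varepsilon$-regularity estimate and Sobolev embedding $W^{2,p}\hookrightarrow C^{0}$ on each dyadic annulus (in the $(y)$-chart) convert this into decay of $|\nabla_{g_j}u_j|$ pointwise, which integrates along radial segments to give $\operatorname{osc}_{\bar I_j^{\alpha}}u_j\to 0$.

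The main obstacle is the third paragraph's absorption of the degenerating metric error into the differential inequality: since the Pohožaev error $\varepsilon^{2}\eta_j(r)/r$ integrates to $\varepsilon^{2}\eta_j(r_1)+\varepsilon^{2}\eta_j(r_2)$ up to constants, it is essential that $\vartheta=\min(1,2\varepsilon_5)$ from Remark \ref{tangdecayorder} is strictly positive and that the exponential growth $e^{\sigma s}$ on the right-hand side does not overwhelm $F(s)$ after multiplication by $e^{-\lambda s}$ for some $\lambda>\sigma$. Verifying this compatibility—and ensuring the three-circle argument is carried out on sub-annuli where ($\mathfrak K_2$) provides the required $C^{4}$-closeness of the metric rather than on the entire degenerating neck at once—is where the most delicate bookkeeping lies.
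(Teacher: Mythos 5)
Your treatment of the energy identity is essentially the paper's: you use the tangential decay of Theorem \ref{tangdecaydege}, sum (\ref{pohoradictangen3dege}) along the dyadic chain covering $\bar I_j^{\alpha}$ to telescope the interior boundary terms, bound the two outermost boundary integrals by $\varepsilon$-regularity, and observe that $\sum_l\eta_j(e^{-lL})$ stays bounded and vanishes in the iterated limit. The bookkeeping with the shifted region $\mathcal{I}_j^{\alpha}$ is not quite spelled out (the $(c-1)$ factor comes from writing $c\int_{\mathcal{I}_j^{\alpha}}\mathcal{R}-\int_{\bar I_j^{\alpha}}\mathcal{R}=(c-1)\int_{\bar I_j^{\alpha}}\mathcal{R}+O(\varepsilon^{2})$ since $\mathcal{I}_j^{\alpha}$ and $\bar I_j^{\alpha}$ differ by one annulus at each end), but the idea is correct.

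The no-neck argument, however, has a genuine gap. You propose to derive a one-function differential inequality $F(s)\leq\tfrac{2}{3}\partial_sF(s)+C\varepsilon^{2}(\cdots)e^{\sigma s}$ ``exactly as in (\ref{energyode}).'' That derivation is specific to the non-degenerating case, where the Poho\v{z}aev identity (\ref{pohozaevmodnondege}) holds on the \emph{full} geodesic ball $B_r(p)$, so that $\int_{\partial B_s}r(\square_1+\square_2)\,d\sigma_g=P(g,s)$ is a pointwise-in-$s$ estimate and integrating it over $[r_1,r_2]$ produces a single annular quantity. In the degenerating setting you cannot integrate by parts down to $x_{a,j}$, so the only available identity is the two-sphere relation (\ref{pohozaevmodege}); after integrating in $t$ it becomes the two-annulus relation
\begin{equation*}
c\int_{A_{j;cr_1,cr_2}}\mathcal{R}\,dV_{g_j}-\int_{A_{j;r_1,r_2}}\mathcal{R}\,dV_{g_j}
=c\int_{A_{j;cr_1,cr_2}}\mathcal{T}-\int_{A_{j;r_1,r_2}}\mathcal{T}+\text{(boundary)}+\varepsilon^{2}\eta_j,
\end{equation*}
i.e.\ (\ref{pohoradictangen4dege}). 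The left-hand side is not a pure $F(s)$ quantity: it couples $F(s)=\int_{A_{j;r_1,r_2}}\mathcal{R}$ with a second function $G(s)=\int_{A_{j;e^Lr_1,e^Lr_2}}\mathcal{R}$. After Cauchy--Schwarz on the boundary integrals one obtains the coupled inequality
\begin{equation*}
-\tfrac{2}{3}\partial_sF(s)-F(s)\leq e^{L}\bigl(\tfrac{2}{3}\partial_sG(s)-G(s)\bigr)+C\varepsilon^{2}\bigl(e^{\varepsilon_5(t_0-a_j^{\alpha+1})}+e^{\varepsilon_5(b_j^{\alpha}-t_0)}\bigr)e^{\varepsilon_5 s}+\cdots,
\end{equation*}
not a self-contained ODE for $F$. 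This cannot be closed by multiplying by $e^{-\lambda s}$ and integrating directly; instead the paper multiplies by $e^{-\frac{3}{2}s}$, integrates over $[L,2L]$, uses the monotonicity of $F,G$ together with $F(2L),G(2L)\leq G(3L)$ to extract the discrete contraction $G(L)\leq e^{-L/2}G(3L)+C\varepsilon^{2}(\cdots)$, and then iterates this contraction along the neck. The introduction of the second function $G$ and the discrete iteration scheme is precisely what your proposal omits, and it is where the real difficulty of the degenerating case lies; your fourth paragraph identifies a difficulty but attributes it to the size of the metric error $\eta_j$ rather than to the two-annulus structure of the Poho\v{z}aev relation, which is the actual obstruction.
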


\begin{proof} Recall $\bar{I}_j^{\alpha}  = A_{b(j,\alpha) ,a(j,\alpha)}(x_{a,j}) $, where $e^{b_j^\alpha}\equiv b(j,\alpha) $, $a(j,\alpha)\equiv e^{a_j^{\alpha+1}} $.
If we set $c=e^L$, then from the decay estimates for the tangential part energy
\begin{eqnarray*}
& &\int_{A_{j;r,e^L r}}
\frac{1}{r^4}(|\triangle_{S^3}u_j|^2+|\nabla_{S^3}u_j|^4)+\frac{1}{r^2}|\partial_r\nabla_{S^3}u_j|^2
+\frac{1}{r^2}|\partial_r\triangle_{S^3}u_j|^2dx\\
&\leq&C\varepsilon^2 \left[\left(\frac{r}{a(j,\alpha)}\right)^{\vartheta}+\left(\frac{b(j,\alpha)}{r}\right)^{\vartheta}\right],
\end{eqnarray*}
in Theorem \ref{tangdecaydege}, we get by the Cauchy-Schwarz inequality and 3) of Lemma \ref{almostflatconseqdege} that for $r\in[b(j,\alpha) ,a(j,\alpha)]\subset [r_jR ,\delta]$,
\begin{eqnarray}\label{equaldecaydege123}
& &\int_{A_{j;r,e^Lr}}|\mathcal{T}(u_j,r)|dV_{g_{j}}\leq\tau\int_{A_{j;r,cr}}\frac{1}{r^2}(\partial_r u_j)^2 dV_{g_{j}} \nonumber\\
& &+C(\tau)\int_{A_{j;r,e^L r}}
\frac{1}{r^4}(|\tilde{\triangle}u_j|^2+|\tilde{\nabla}u_j|^4)+\frac{1}{r^2}|\partial_r\tilde{\nabla}u_j|^2
+\frac{1}{r^2}|\partial_r\tilde{\triangle}u_j|^2dV_{g_{j}}\\
&\leq&C(\tau)\varepsilon^2 \left[\left(\frac{r}{a(j,\alpha)}\right)^{\vartheta}
+\left(\frac{b(j,\alpha)}{r}\right)^{\vartheta}\right]+\tau\int_{A_{j;r,cr}}\frac{1}{r^2}(\partial_r u_j)^2 dV_{g_{j}}\nonumber,
\end{eqnarray}
where $\tau>0$ is a small number, and $C(\tau)$ is a bounded number depending on $\tau$.
Due to Remark \ref{tangdecayorder}, we can take $\vartheta=2\varepsilon_5>0$ (w.l.o.g. we assume that $2\varepsilon_5\leq1$). It is easy to see that the term $\tau\frac{1}{r^2}(\partial_r u_j)^2 $ in (\ref{equaldecaydege123}) can be absorbed into $\mathcal{R}(u_j,r)$ if we choose $\tau>0$ to be small enough.

Recall that $A_l=A_{j;e^{-lL},e^{-(l-1)L}}$, $b_j^{\alpha}=-l^{\alpha}_{j,b}L$, $a_j^{\alpha+1}=-l^{\alpha}_{j,a}L$ and $$\eta_j=O\left((\frac{r}{\delta_1})^{\varepsilon_5}+(\frac{r_jR}{r})^{\varepsilon_5}\right).$$
Set
$$\mathcal{I}_j^{\alpha}=(\bar{I}_j^{\alpha}\cup A_{l^{\alpha}_{j,a}-1})\setminus A_{l^{\alpha}_{j,b}+1}.$$
Due to (\romannumeral1) of Proposition \ref{properneckc2} (see also ($\mathfrak{K}_1$) in the beginning of this section), we can assume that
$$ \int_{A_{l^{\alpha}_{j,a}-1}\cup A_{l^{\alpha}_{j,b}+1}}|\nabla^2_{g_j} u_j|^2+|\nabla_{g_j} u_j|^4dV_{g_j}\leq \varepsilon^2<\varepsilon_0.$$
Recall that
$$\mathcal{R}(u_j,r)=\frac{3}{2}(\partial^{2}_r u_j)^2+\left(\frac{1}{2r^2}+O(\frac{\eta_j}{r^2})\right)(\partial_r u_j)^2,$$
and notice that the boundary terms can be cancelled out by each other, from (\ref{pohoradictangen3dege}) we get
\begin{eqnarray*}\label{pohoradictangen2}
& &(c-1)\int_{\bar{I}_j^{\alpha} }\frac{3}{2}(\partial^{2}_r u_j)^2+ \left(\frac{1}{2r^2}+O(\frac{\eta_j}{r^2})\right)(\partial_r u_j)^2 dV_{g_j}\\
&\leq &c\int_{\mathcal{I}_j^{\alpha} }\mathcal{R}(u_j,r) dV_{g_j}
-\int_{\bar{I}_j^{\alpha} }\mathcal{R}(u_j,r) dV_{g_j}+C\varepsilon^2\nonumber\\
&\leq&(c+1)\sum_{l^{\alpha}_{j,a}-1}^{l^{\alpha}_{j,b}+1}\int_{A_l}|\mathcal{T}(u_j,r)|dV_{g_j}
+C\varepsilon^2\sum_{l^{\alpha}_{j,a}}^{l^{\alpha}_{j,b}}\eta_j(e^{-lL})\\
& &+c\int_{\hat{\partial}(\mathcal{I}_j^{\alpha} )\cup\hat{\partial}(\bar{I}_j^{\alpha} )}|(\partial_r u_j)(\partial^{2}_r u_j)|+|\frac{1}{2r}(\partial_r u_j)^2|d\sigma_{g_j}+C\varepsilon^2.
\nonumber\\
\end{eqnarray*}
And then by (\ref{equaldecaydege123}), we have
\begin{eqnarray*}
& &\int_{\bar{I}_j^{\alpha} }\frac{3}{2}(\partial^{2}_r u_j)^2+(\frac{1}{2r^2})(\partial_r u_j)^2 dV_{g_j}\\
&\leq&C(c)C(\tau)\sum_{l^{\alpha}_{j,a}-1}^{l^{\alpha}_{j,b}+1}\varepsilon^2
\left[\left(\frac{e^{-lL}}{a(j,\alpha)}\right)^{\vartheta}
+\left(\frac{b(j,\alpha)}{e^{-lL}}\right)^{\vartheta}\right]+
\int_{\bar{I}_j^{\alpha}  }C(c)\left(\frac{\tau}{r^2}\right)(\partial_r u_j)^2 dV_{g_{j}}\\
& &+C(c)\int_{\hat{\partial}(\mathcal{I}_j^{\alpha} )\cup\hat{\partial}(\bar{I}_j^{\alpha} )}|(\partial_r u_j)(\partial^{2}_r u_j)|+|\frac{1}{2r}(\partial_r u_j)^2|d\sigma_{g_j}
+C(c)\varepsilon^2\sum_{l^{\alpha}_{j,a}}^{l^{\alpha}_{j,b}}\eta_j(e^{-lL})+C(c)\varepsilon^2.
\end{eqnarray*}
In the above estimate, $C(c)>0$ is some constant dependent on $c$.
Next, by taking $\tau$ small properly, we have
\begin{eqnarray*}\label{pohoradictangen2}
& &\int_{\bar{I}_j^{\alpha} }\frac{3}{2}(\partial^{2}_r u_j)^2+\frac{1}{2r^2}(\partial_r u_j)^2 dV_{g_j}\\
&\leq&
C\int_{\hat{\partial}(\mathcal{I}_j^{\alpha} )\cup\hat{\partial}(\bar{I}_j^{\alpha} )}|(\partial_r u_j)(\partial^{2}_r u_j)|+|\frac{1}{2r}(\partial_r u_j)^2|d\sigma_{g_j}
+C\varepsilon^2.
\end{eqnarray*}
Here we have used the notation $\hat{\partial}(B_R\setminus B_r)=\partial B_R \cup \partial B_r $.

Next by using the $\varepsilon$-regularity Theorem \ref{smallenergythm} and Sobolev embedding theorem, we get
\begin{equation*}
\int_{\hat{\partial}(\bar{I}_j^{\alpha})\cup\hat{\partial}(\bar{I}_j^{\alpha} )}|(\partial_r u_j)(\partial^{2}_r u_j)|+|\frac{1}{2r}(\partial_r u_j)^2|d\sigma_{g_j}\leq C\varepsilon^2.
\end{equation*}
Therefore, we have that for any $\varepsilon>0$, there are constants $R_0>0$, $\delta_0>0$ and $j_0>0$ depending only on $\varepsilon$, such that, when $j>j_0$, $R>R_0$ and $\delta<\delta_0$, if we take some $c\geq2$ in the above arguments, then
\begin{equation}\label{radialenergyestimatedege}
\int_{\bar{I}_j^{\alpha} }(\partial^{2}_r u_j)^2+\frac{1}{r^2}(\partial_r u_j)^2 dV_{g_j}\leq C\varepsilon^2.
\end{equation}
Combining the last inequality with (\ref{equaldecaydege123}), the energy identity can be proved.

Recall that $r_2(s)=e^{t_0+s}$,  $r_1(s)=e^{t_0-s}$, and $$\mathcal{R}(u_j,r)=\frac{3}{2}(\partial^{2}_r u_j)^2+\left(\frac{1+\eta_j(r)}{2r^2}\right)(\partial_r u_j)^2.$$
To prove the no neck property, we define
\begin{equation*}
F(s)=\int_{A_{j;r_1,r_2}}\mathcal{R}(u_j,r) dV_{g_j},
\end{equation*}
and
\begin{equation*}
G(s)=\int_{A_{j;e^Lr_1,e^Lr_2}}\mathcal{R}(u_j,r)dV_{g_j}.
\end{equation*}
It is easy to see that $F$ and $G$ are non-decreasing function of $s$, and they are uniformly bounded by $C\varepsilon^2$ (see (\ref{radialenergyestimatedege})). We remark that $b_j^{\alpha}< t_0<a_j^{\alpha+1}<0$, $s\in [0, \min(t_0-b_j^{\alpha},a_j^{\alpha+1}-t_0)]. $

First, we rewrite (\ref{pohoradictangen4dege}) to be
\begin{eqnarray}\label{pohoradictangen4dege11}
& &
\int_{\partial({A_{j;r_1,r_2}})} \left((\partial_r u_j)(\partial^{2}_r u_j)\right)-\frac{1}{2r}(\partial_r u_j)^2d\sigma_{g_j}-\int_{A_{j;r_1,r_2}}\mathcal{R}(u_j,r) dV_{g_j}\nonumber\\
&=&c\int_{A_{j;cr_1,cr_2}}\mathcal{T}(u_j,r)dV_{g_j}
-\int_{A_{j;r_1,r_2}}\mathcal{T}(u_j,r)dV_{g_j}\\
& &+c\int_{\partial({A_{j;cr_1,cr_2}})}((\partial_r u_j)(\partial^{2}_r u_j))-\frac{1}{2r}(\partial_r u_j)^2d\sigma_{g_j}\nonumber\\
& &-c\int_{A_{j;cr_1,cr_2}}\mathcal{R}(u_j,r) dV_{g_j}+\int_{r_1}^{r_2}\varepsilon^2\frac{\eta_j(r)}{r}dr.\nonumber
\end{eqnarray}
Then by noticing that $\frac{1}{\sqrt{3}}<\frac{2}{3}$,
\begin{equation*}
\partial_sF(s)=r_1(s)\int_{\partial B_{r_1}(x_{a,j})}\mathcal{R}(u_j,r) dV_{g_j}+r_2(s)\int_{\partial B_{r_2}(x_{a,j})}\mathcal{R}(u_j,r) dV_{g_j},
\end{equation*}
and
\begin{eqnarray*}
& &|\int_{\partial(B_{r_2}\setminus B_{r_1})}(\partial_r u)(\partial^{2}_r u)d\sigma |\\
&\leq &\frac{1}{\sqrt{3}}r_2\int_{\partial B_{r_2} }\frac{3}{2}(\partial^{2}_r u)^2
+(\frac{1}{2r^2})(\partial_r u)^2  d\sigma\\
& &+ \frac{1}{\sqrt{3}}r_1\int_{\partial B_{r_1} }\frac{3}{2}(\partial^{2}_r u)^2+(\frac{1}{2r^2})(\partial_r u)^2 d\sigma,
\end{eqnarray*}
we get from (\ref{pohoradictangen4dege11}) and (\ref{equaldecaydege123}) that (set $c=e^L$)
\begin{eqnarray*}
-\frac{2}{3}\partial_s F(s)-F(s)
&\leq& e^L(\frac{2}{3}\partial_s G(s)-G(s))\\
& &+C\varepsilon^2(e^{\varepsilon_5(t_0-\log r_\infty)}
+e^{\varepsilon_5(\log(r_jR_0)-t_0)})e^{\varepsilon_5 s}
+C\varepsilon^2(e^{\vartheta(t_0-a_j^{\alpha+1})}
+e^{\vartheta(b_j^{\alpha}-t_0)})e^{\vartheta s}.
\end{eqnarray*}
%The readers can see the corresponding arguments in Subsection \ref{Pohogenermetric}.

We remark here that the term involving $\tau$ in (\ref{equaldecaydege123}) and the error terms involving $\eta_j(r)$ in $G(s)$ and $F(s)$ do not have essential effects to the validity of the last inequality, as long as we set $\tau$ properly small.

Recall that $0<\vartheta=2\varepsilon_5\leq 1$. Multiplying $e ^{-\frac{3}{2}s}$ to both sides of the above inequality, and noticing that
$[b_j^{\alpha}, a_j^{\alpha+1}]\subset[\log(r_jR),\log\delta]$, we have
\begin{equation}\label{ineqalgeb1}
e^L\partial_s(e^{-\frac{3}{2}s}G)(s)+\partial_s(e^{-\frac{3}{2}s}F)(s)\geq  -3e^{-\frac{3}{2}s}F(s) -C\varepsilon^2(e^{\varepsilon_5(t_0-a_j^{\alpha+1})}+e^{\varepsilon_5(b_j^{\alpha}-t_0)})e^{(\varepsilon_5-\frac{3}{2})s}.
\end{equation}
Integrating
the above inequality (\ref{ineqalgeb1}) from $s = L $ to $s=2L$,
we have
\begin{eqnarray}\label{ineqalgeb12}
& &e^L(e^{-3L}G(2L)-e^{-\frac{3}{2}L}G(L)) +(e^{-3L}F(2L)-e^{-\frac{3}{2}L}F(L))\\
&\geq &-3\int_{L}^{2L}e^{-\frac{3}{2}s}F(s)ds-Ce^{(\varepsilon_5-\frac{3}{2})L}
\varepsilon^2(e^{\varepsilon_5(t_0-a_j^{\alpha+1})}+e^{\varepsilon_5(b_j^{\alpha}-t_0)}).\nonumber
\end{eqnarray}

Notice that on $[L,2L]$, $F(s)\leq F(2L)$, we have from (\ref{ineqalgeb12}) that
\begin{eqnarray*}
& &e^LG(L)+F(L)\\
&\leq & e^{-\frac{3}{2}L}(F(2L)+e^{L}G(2L))+3e^{\frac{3}{2}L}\int_{L}^{2L}e^{-\frac{3}{2}s}F(s)ds+Ce^{\varepsilon_5L}
\varepsilon^2(e^{\varepsilon_5(t_0-a_j^{\alpha+1})}
+e^{\varepsilon_5(b_j^{\alpha}-t_0)})\\
&\leq& e^{-\frac{3}{2}L}(F(2L)+e^{L}G(2L))+3LF(2L)+Ce^{\varepsilon_5L}
\varepsilon^2(e^{\varepsilon_5(t_0-a_j^{\alpha+1})}
+e^{\varepsilon_5(b_j^{\alpha}-t_0)}).
\end{eqnarray*}
So
\begin{eqnarray*}
e^LG(L)\leq e^{-\frac{1}{2}L} G(2L)+ (3L+1)F(2L)+Ce^{\varepsilon_5L}
\varepsilon^2(e^{\varepsilon_5(t_0-a_j^{\alpha+1})}
+e^{\varepsilon_5(b_j^{\alpha}-t_0)}).
\end{eqnarray*}

Thanks to the simple fact that $ G(2L), F(2L)\leq G(3L)$, if we choose $L>0$ such that $$ {\rm max} \left\{(3L+1)e^{-L},e^{-\frac{3}{2}L}\right \} \leq \frac{1}{2}e^{\frac{-L}{2}},$$
then
\begin{eqnarray*}
G(L)\leq e^{\frac{-L}{2}} G(3L)+Ce^{(\varepsilon_5-1)L}
\varepsilon^2(e^{\varepsilon_5(t_0-a_j^{\alpha+1})}
+e^{\varepsilon_5(b_j^{\alpha}-t_0)}).
\end{eqnarray*}

By applying an iteration argument, we get
\begin{eqnarray*}
G(L)\leq C
\varepsilon^2(e^{\frac{1}{6}(t_0-a_j^{\alpha+1})}
+e^{\frac{1}{6}(b_j^{\alpha}-t_0)})+C
\varepsilon^2(e^{\varepsilon_5(t_0-a_j^{\alpha+1})}
+e^{\varepsilon_5(b_j^{\alpha}-t_0)}),
\end{eqnarray*}
since $G(s)$ are uniformly bounded by $C\varepsilon^2$.

Finally, similarly as in Subsection \ref{Pohogenermetric}, the above radial energy decay together with the decay of the tangential part energy in (\ref{equaldecaydege123}) is enough to ensure the validity of the no neck property. The proof of Theorem \ref{energyidentityformula} is now completed. \end{proof}

\

\section{Intrinsic biharmonic maps}\label{secintrinsic}

In this section, we shall show that the energy identity and no neck property as stated in
Theorem \ref{noneckthmintr} and Theorem \ref{degeymenergyintr} hold also for intrinsic biharmonic maps.

Recall that the equations of extrinsic and intrinsic biharmonic maps differ only by the following term (see Section 7 of \cite{liu2016neck}):
\begin{eqnarray*}
\mathbf{I}(u)\equiv P(u)(B(u)(\nabla u,\nabla u) \nabla_uB(u)(\nabla u,\nabla u))+2B(u)(\nabla u,\nabla u)B(u)(\nabla u,\nabla P( u)).
\end{eqnarray*}
That is to say, the intrinsic biharmonic map equations take the form
 \begin{equation}\label{intrequ}
 \triangle^{2}_{g}u=\triangle_g u(B(u)(\nabla _g u,\nabla_g u))+2\nabla_g\cdot\langle\triangle_g u,\nabla_g(P(u))\rangle
 -\langle\triangle_g (P(u)),\triangle_g u\rangle+\mathbf{I}(u).
 \end{equation}
Here $B $ is the second fundamental
form of $N\subset \mathbb{R}^K$ and $P(u)$ is the projection to the tangent space $T _u N$.

In view of the equation \eqref{intrequ}, it is easy to see that the three circle type method developed for deriving the decay estimate of the tangential part energy for extrinsic biharmonic maps in both the non-degenerating case (see Subsection \ref{tangdeacynodege}) and the degenerating case (see Subsection \ref{tangdeacydege}) can be applied to intrinsic biharmonic maps by slightly modifying the arguments. Therefore, to handle the case of intrinsic biharmonic maps, it is sufficient to develop the necessary Poho\v{z}aev type arguments.
%The arguments in \cite{liu2016neck, laurain2013energy} give us a lot of inspiration for the arguments in the following.

\subsection{Non-degenerating case}
We deal with the non-degenerating case. As in Subsection \ref{Pohogenermetric}, we multiply the term $\mathbf{I}(u)$ by $x^ke_k u$, notice that $x^ke_k u=r\partial_ru$ is a tangent vector, then
\begin{eqnarray*}
& & r\partial_r u\cdot\mathbf{I}(u)\\
&=& B(u)(\nabla u,\nabla u) \  \nabla_{r\partial_ru}B(u)(\nabla u,\nabla u)+2B(u)(\nabla u,\nabla u) \ B(u)(\nabla u,\nabla (r\partial_ru))\\
&=&r\partial_r \left (\frac{|B(u)(\nabla u,\nabla u)|^2}{2} \right )+2|B(u)(\nabla u,\nabla u)|^2\\
&=&\frac{1}{r^3}\partial_r \left[\frac{r^4}{2}|B(u)(\nabla u,\nabla u)|^2 \right].
\end{eqnarray*}

By applying similar argument as is done for (\ref{coaera444}), we get
\begin{eqnarray*}
& &\int_{ B(o,r)}\frac{1}{r^3}\frac{\partial} {\partial r}\left(\frac{r^4}{2}f\right) dV_g\\
&=&-\int_{ B(o,r)}\frac{\partial_r(\sqrt{g})}{r^3\sqrt{g}}\left(\frac{r^4}{2}f\right)dV_g+\int_{\partial B(o,r)}\frac{1}{r^3}\left(\frac{r^4}{2}f\right)d\sigma_g.
\end{eqnarray*}
%where $\text{div}$ and $dx$ is the divergence operator and volume form with respect to the Euclidean metric.
Notice that
$$\frac{\partial_r \sqrt{g}}{\sqrt{g}}=\triangle_g r-\frac{3}{r}=O(r),$$
we have
\begin{eqnarray*}
\int_{B_r(p)}\frac{1}{r^3}\partial_r \left[\frac{r^4}{2}|B(u)(\nabla u,\nabla u)|^2 \right]dV_g=\int_{\partial B_r(p)}\frac{r}{2}|B(u)(\nabla u,\nabla u)|^2d\sigma_g+O(r^2),
\end{eqnarray*}
since
\begin{equation*}
\int_{B_r(p)}|B(u)(\nabla u,\nabla u)|^2dV_g\leq\int_{B_r(p)}C|\nabla u|^4dV_g\leq C.
\end{equation*}
It follows that
\begin{eqnarray*}
\int_{B_r(p)}x^ke_k u\cdot\mathbf{I}(u)dV_g=\int_{\partial B_r(p)}\frac{r}{2}|B(u)(\nabla u,\nabla u)|^2d\sigma_g+O(r^2).
\end{eqnarray*}

Therefore, the corresponding Poho\v{z}aev identity for intrinsic biharmonic maps over Riemannian manifold is
\begin{eqnarray*}\label{pohoidentgenrintr}
& &\int_{\partial B_r(p)}(r\partial_ru)(\partial_r\triangle_{g} u)+r\frac{|\triangle_g u|^2}{2} -\partial_r(r\partial_ru)\triangle _g u -\frac{r}{2}|B(u)(\nabla u,\nabla u)|^2 d\sigma_g\nonumber\\
&=&-\int_{B_r(p)}(\triangle_g x^k)e_ku (\triangle _g u )-2\int_{B_r(p)}(\nabla_gx^k)\nabla_g(e_k u)\triangle_g udV_g\\
& &+\int_{B_r(p)}(e_kx^k)\frac{|\triangle_g u|^2}{2} dV_g+\int_{B_r(p)}(x^k\text{div}(e_k))\frac{|\triangle_g u|^2}{2} dV_g\nonumber\\
& &-\int_{B_r(p)}Ric_g(\nabla_g u, x^ke_k)\triangle_g udV_g+O(r^2).\nonumber
\end{eqnarray*}
Or equivalently,
\begin{equation*}\label{pohozaevmodintr}
\int_{\partial B_r(p)} r(\square_1+\square_2 )-\frac{r}{2}|B(u)(\nabla u,\nabla u)|^2d\sigma_g=P_{I}(g,r),
\end{equation*}
where $ P_{I}(g,r)=P(g,r)+O(r^2).$ $\square_1,\,\square_2 $ and $P(g,r)$ are defined as in Subsection \ref{Pohogenermetric}.
Next, by using integration by parts as before, we have (recall that $P(g,r)=O(r^2) $)
\begin{eqnarray}\label{pohoradictangenintr}
& &\int_{B_{cr}\setminus B_r}\frac{3}{2}(\partial^{2}_r u)^2+\left(\frac{1}{2r^2}+O(1)\right)(\partial_r u)^2 dV_g\nonumber\\
&\leq&\int_{B_{cr}\setminus B_r}\frac{1}{2r^4}(\tilde{\triangle}u)^2+\frac{1}{r^2}(\partial_r u)(\partial_r\tilde{\triangle}u)dV_g\\
& &+\int_{\partial(B_{cr}\setminus B_r)}(\partial_r u)(\partial^{2}_r u)-\frac{1}{2r}(\partial_r u)^2d\sigma_g\nonumber\\
& &+\int_{B_{cr}\setminus B_r}\frac{1}{2}|B(u)(\nabla u,\nabla u)|^2dV_g+Cr.\nonumber
\end{eqnarray}

Now we decompose $B(u)(\nabla u,\nabla u)$ into the tangential part $\frac{1}{r^2}B(u)(\tilde{\nabla} u,\tilde{\nabla} u)$ and the radial part $B(u)(\partial_r u,\partial_r u)$, where $\tilde{\nabla}$ is the gradient operator of $ \widetilde{\partial B_r(p)} $ (see the definition in Subsection \ref{Pohogenermetric}).
By the decay estimate for the tangential part energy (using three circle type method), we know that the term $\frac{1}{r^4}|B(u)(\tilde{\nabla} u,\tilde{\nabla} u)|^2$ has no essential effect for the proof of the energy identity and no neck property.
On the other hand,  by $\varepsilon$-regularity Theorem \ref{smallenergythm} and Sobolev embedding theorem, we have
\begin{equation*}\label{pohozaevmod}
|B(u)(\partial_r u,\partial_r u)|^2\leq C\varepsilon^2\frac{1}{r^2}(\partial_r u)^2.
\end{equation*}
Thus, if we choose $\varepsilon>0$ small enough, this term can be absorbed into the left hand of (\ref{pohoradictangenintr}).

The rest of the proof can be completed by applying the same arguments as in the case of extrinsic biharmonic maps.

\subsection{Degenerating case}
 We denote the term $\mathbf{I}(u_j)$ by $\mathbf{I_j}(u)$. Here, for simplicity, we denote $\nabla_{g_j}$ by $\nabla_j$, and write $u$ for $u_j$.
As in Subsection \ref{pohodege}, we multiply the term $\mathbf{I_j}(u)$ by $r\partial_r u$, then we obtain that on $A_{j;r,cr}$,
\begin{eqnarray*}
& &r\partial_r u\cdot\mathbf{I_j}(u)\\
&=& B(u)(\nabla_j u,\nabla_j u) \nabla_{r\partial_r u}B(u)(\nabla_j u,\nabla_j u)+2B(u)(\nabla_j u,\nabla_j u)B(u)(\nabla_j u,\nabla_j (r\partial_r u))\\
&=&r\partial_r \left(\frac{|B(u)(\nabla_j u,\nabla_j u)|^2}{2}\right)+2|B(u)(\nabla_j u,\nabla_j u)|^2\\
&=&\frac{1}{r^3}\partial_r\left[\frac{r^4}{2}|B(u)(\nabla_j u,\nabla_j u)|^2\right].
\end{eqnarray*}

Recall that
$$ \frac{\partial_r\sqrt{g_j}}{\sqrt{g_j}}=\triangle_{g_j}r-\frac{3}{r}=\frac{\eta_j(r)}{r},$$ where $\sqrt{g_j}$ is the square root of determinant of the metric $g_j$ in the $(x)$ coordinates in Theorem \ref{neckmeytricflat}. Then again by using integration by parts as before, we have
\begin{eqnarray*}
\int_{A_{j;r,cr}}\frac{1}{r^3}\partial_r \left[\frac{r^4}{2}|B(u)(\nabla_j u,\nabla_j u)|^2 \right]dV_{g_j}=\int_{\partial A_{j;r,cr}}\frac{r}{2}|B(u)(\nabla_j u,\nabla_j u)|^2d\sigma_{g_j}+\varepsilon^2\eta_j(r).
\end{eqnarray*}
In the above, we have used (\romannumeral1) of Proposition \ref{properneckc2} which ensures that the energy  of $u_j$ on $A_{j;r,cr}$ is less than $\varepsilon^2$.

 Then by applying similar arguments as in the non-degenerating case, we have
\begin{equation*}\label{pohozaevmodintr}
\int_{\partial A_{j;r,cr}} r(\square_1+\square_2 )-\frac{r}{2}|B(u)(\nabla_j u,\nabla_j u)|^2d\sigma_{g_j}=P_{I}(g_j,r),
\end{equation*}
where $ P_{I}(g_j,r)=P(g_j,r)+\varepsilon^2\eta_j(r).$ The three terms $\square_1,\,\square_2 $ and $P(g_j,r)$ (recall that $P(g_j,r)\leq \varepsilon^2\eta_j(r) $) are defined as before.
So, we have
\begin{eqnarray*}\label{pohozaevmodegeintr}
& &c\int_{A_{j;cr,c^2r}}(\square_1+\square_2)-\frac{1}{2}|B(u)(\nabla_j u,\nabla_j u)|^2dV_{g_j}\\
& &-
\int_{A_{j;r,cr}}(\square_1+\square_2)-\frac{1}{2}|B(u)(\nabla_j u,\nabla_j u)|^2dV_{g_j}\nonumber\\
&=&\int_r^{cr}\frac{1}{s}P_{I}(g_j,s)ds.
\end{eqnarray*}

Finally, by applying the same arguments as in Subsection \ref{pohodege} for extrinsic biharmonic maps, it is easy to see that the additional term $|B(u)(\nabla _j u,\nabla _j u)|^2$ can be easily handled and has no essential influence to the rest of the proof.

\begin{rem}
In fact all results derived in this paper which are valid for extrinsic biharmonic maps are also true for intrinsic biharmonic maps.
\end{rem}

\

\appendix
\section{Proof of Lemma \ref{almostflatconseq} and Lemma \ref{almostflatconseqdege}}\label{pftwolem}

We prove Lemma \ref{almostflatconseqdege} in detail, and outline the proof for Lemma \ref{almostflatconseq}. Recall that
\begin{equation}\label{almostflat1dege1prf}
\|g_{j,kl}( x)-\delta_{kl}\|_{C^{0}}<\eta_j(|x|),
\end{equation}
\begin{equation}\label{almostflat2prf}
\|g_{j,kl}(\rho \tilde{y})-\delta_{kl}\|_{C^{4}(B_2\setminus B_1)}<\eta_j(\rho),
\end{equation}
where $\tilde{y}=\rho^{-1}y$, $(x)$ and $(y)$ are the two coordinates given in Theorem \ref{neckmeytricflat}.

\begin{proof}
Firstly, we prove 1) of Lemma \ref{almostflatconseq}. Since $e_k$ is parallel invariant along the radial geodesics, if we write $e_k=f^{k;l}\partial_{x^l}$, then $f^{k;l},l=1,\cdots,4,$ satisfy the following equations:
\begin{equation*}
\frac{df^{k;l}}{dr}+\Gamma_{mn}^{l}\frac{dx^m}{dr}f^{k;n}=0,\,\, f^{k;l}(0)=\delta_{kl}.
\end{equation*}
In normal coordinates on $B_r$,
\begin{equation*}
|g_{kl}-\delta_{kl}|<Cr^2,
\end{equation*}
so $|\Gamma_{mn}^{l}|<Cr.$ Therefore
\begin{equation*}
e_k=\partial_{x^k}+h^{kl}\partial_{x^l},\,\,h^{kl}=O(r^2).
\end{equation*}
Hence 1) of Lemma \ref{almostflatconseq} follows. And 2) of Lemma \ref{almostflatconseq} follows from direct calculations in local coordinates.

To prove 3) of Lemma \ref{almostflatconseq}, recall that $ \widetilde{\partial B_r} $ is the sphere $\partial B_r$ with metric $\tilde{g}_r\equiv\frac{g_r}{r^2}$, where $g_r$ is the metric of $\partial B_r$ induced from $(M,g)$. By the fact that the standard metric $g_{S^3}$ on $S^3$ and the metric $\tilde{g}_r$ satisfy
\begin{equation*}
||g_{S^3,kl}-\tilde{g}_{r,kl} ||_{C^4}\leq Cr^2,
\end{equation*}
we get by direct calculations the estimates in 3).

Next by standard theory in Riemannian geometry (see for example \cite{chavel2006riemannian}), for a fix metric $g$, in the geodesic normal coordinates $\tilde{m}=r\triangle_{g}r=3+O(r^2)$, so 4), 5) and 6) of Lemma \ref{almostflatconseq} follow immediately.

Now we prove 1), 2), 4), 5) of Lemma \ref{almostflatconseqdege}.
As in the proof of Lemma \ref{neckmeytricflatlem}, at a point $\tilde{\theta}\in S_{r_jR_0}=\partial B(x_{a,j},r_jR_0)$, we consider nontrivial Jacobi field $J$ along the geodesic $\sigma_{r_jR_0}(t) = \Phi_{r_jR_0} (\theta, t)$, with initial conditions $J(r_jR_0)\in T_{\tilde{\theta}}S_{r_jR_0}$, $\frac{\partial}{\partial t}J(r_jR_0)=A_{r_jR_0}(J(r_jR_0)).$
Let $\mathcal{A}(t,\theta)$ be the matrix solution of the differential equation in $T_{\tilde{\theta}}S_{r_jR_0}$,
\begin{equation*}
\frac{\partial^2}{\partial t^2}\mathcal{A}(t,\theta)+\mathcal{R}(t,\theta)\mathcal{A}(t,\theta)=0,
\end{equation*}
satisfying $\mathcal{A}(r_jR_0,\theta)=r_j R_0(u_1, u_2,u_3)$,
$\frac{\partial}{\partial t}\mathcal{A}(r_jR_0,\theta)=A_{r_jR_0}(\mathcal{A}(r_j R_0,\theta)).$ Here $\{u_1, u_2,u_3\}$ form an orthonormal basis for $T_{\tilde{\theta}}S_{r_jR_0}$, and $\mathcal{R}(t,\theta)$ is the composition of the curvature operator at $\sigma_{r_jR_0}(t)$ with the parallel translation along the geodesic.

As before due to the curvature estimate in Proposition \ref{neckprop}, we have
\begin{equation*}
|\mathcal{A}(t,\theta)-tId|=O(t\eta_j(t)),\,\, |\frac{\partial}{\partial t}\mathcal{A}(t,\theta)-Id|=\eta_j(t),\,\,\text{and}\,\, |\frac{\partial^2}{\partial t^2}\mathcal{A}(t,\theta)|=\frac{\eta_j(t)}{t}.
\end{equation*}
It is easy to see that
\begin{equation*}
(J_1(r),J_2(r),J_3(r))=\mathcal{A}(t,\theta)(u_1(r), u_2(r),u_3(r)),
\end{equation*}
where $u_l(r), l=1,2,3$ are the parallel invariant vector fields along the geodesics.
So \begin{equation*}
\det {\mathcal{A}(t,\theta)}=\frac{J_1(r)\wedge J_2(r)\wedge J_3(r)}{u_1(r)\wedge u_2(r) \wedge u_3(r)}.
\end{equation*}

By properties of Jacobi fields, we know that along $\sigma_{r_jR_0}(t)$,
$A(t,\theta)=v(\theta)\det {\mathcal{A}(t,\theta)}$
 for some constant $v(\theta)$. Here $A(t,\theta)$ is the square root of the determinant of the metric in the polar coordinates $(t,\theta)$. Or equivalently $dvol_{g_j}=A(t,\theta)drd\theta.$
Therefore
\begin{eqnarray*}
& &\triangle_{g_j}r=m(r)\\
&=&\sum_{l=1}^{3}\langle \nabla _{J_l} \partial_r,J_l(r)\rangle=\sum_{l=1}^{3}\langle \nabla _{\partial_r}J_l(r) ,J_l(r)\rangle=\text{tr}(\partial_r\mathcal{A}\mathcal{A}^{-1})\\
&=&\partial_r(\log(\det{\mathcal{A}(r,\theta)}))=(A(r,\theta))^{-1}\frac{\partial A(r,\theta)}{\partial r}\\
&=&\frac{3+\eta_j(r)}{r}
\end{eqnarray*}
and
\begin{eqnarray*}
r\partial_r\triangle_{g_j}r=-\frac{3+\eta_j(r)}{r},
\end{eqnarray*}
where $m(r)$ is the mean curvature of the geodesic sphere $\partial B(x_{a,j},r)$ with the induce metric from $(M_j, g_j)$. See \cite{wei2009comparison} and Lemma 2.3 in \cite{Zhu1997comparison} for similar computations.
By now we have proved 1), 2) of Lemma \ref{almostflatconseqdege}.
Next by $\tilde{m}=rm(r)=r\triangle_{g_j}r$, we obtain the desired estimates in 4) and 5) of Lemma \ref{almostflatconseqdege}.

Finally, we prove 3) of Lemma \ref{almostflatconseqdege}.
Notice that (\ref{almostflat1dege1prf}) and (\ref{almostflat2prf}) indicate that the metric $g_j$ is close to the flat metric, and that $\triangle_{g_j}u$, $\partial_ru$ and $\tilde{\triangle}u$ are independent on coordinate functions. It follows from direct computations that
\begin{eqnarray*}
|\triangle_{g_j}u-\triangle_{g_{euc}}u|\leq \eta_j(r)|\triangle_{g_{euc}}u|+\frac{\eta_j(r)}{r}|\nabla_{g_{euc}}u|_{g_{euc}},
\end{eqnarray*}
where $g_{euc}$ is the standard flat metric.
So from 1), 2), 4), 5) of Lemma \ref{almostflatconseqdege} and the following two formulas
\begin{eqnarray*}
\triangle_{g_{euc}}u=\partial_r^{2}u+\frac{3}{r}\partial_ru+\frac{\triangle_{S^3}u}{r^2}
\end{eqnarray*}
and
\begin{eqnarray*}
\triangle_{g_j}u=\partial_r^{2}u+\frac{\tilde{m}}{r}\partial_ru+\frac{\tilde{\triangle}u}{r^2},
\end{eqnarray*}
we can get the desired estimates.
\end{proof}

\

\section{Proof of Theorem \ref{schauderestneck}}\label{schauderest}
Firstly, we recall the classical Schauder's interior estimates for linear elliptic equations (\cite{douglis1955interior}, see also \cite{Mey}). Let $B$ be an open ball in $\mathbb{R}^n$. Let $d_x=d(x, \partial B)$ and $d_{xy}=\min(d_x,d_y)$. For functions $f$ in $C^p(B)$ define
\begin{equation*}
||f||_{C^{p,\alpha}_{\sigma}}=\sum_{j=0}^{p}\sup_{B}d_x^{j-\sigma}|D^{j}f|+\sup_{x\neq y}
d_{xy}^{p+\alpha-\sigma}\frac{|D^{p}f(x)-D^{p}f(y)|}{|x-y|^{\alpha}}.
\end{equation*}

\begin{thm}\label{schauderestbound}
Set \begin{equation*}
L(u)=\sum_{k,l}a_{kl}(x)\partial_k\partial_l u+\sum_{k}a_{k}(x)\partial_k u+a(x)u,
\end{equation*}
where $ a_{k,l}\in C^{p,\alpha}_{0}(A_{r_1,r_2})$, $ a_{k}\in C^{p,\alpha}_{-1}(A_{r_1,r_2})$, $ a\in C^{p,\alpha}_{-2}(A_{r_1,r_2})$ with norms less than $K_1>0$, and
\begin{equation*}
\sum_{k,l}a_{kl}(x)\xi_k\xi_l\geq K_2\sum_{k}\xi_k^{2} \quad (K_2>0).
\end{equation*}
Let $u$ be a solution of $L(u)=f$,
then
\begin{equation*}
||u ||_{C^{p+2,\alpha}_0}\leq {K}(||u ||_{C^{0}_0}+||f ||_{C^{p,\alpha}_{-2}}),
\end{equation*}
where ${K>0}$ depends only on $K_1$, $K_2$, $p$, $i$, $\alpha$, and $n$.
\end{thm}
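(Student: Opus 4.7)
\medskip

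The plan is to reduce the weighted estimate on the annulus to the classical interior Schauder estimate of Theorem \ref{schauderestbound} by a standard scale-by-scale localization argument, in the spirit of Meyers \cite{Mey}. Given any $x_0\in A_{r_1,r_2}$, set $r_0=|x_0|$, consider the ball $B_0=B(x_0,r_0/4)$ (which is contained in $A_{r_1/2,2r_2}$ after slightly shrinking the original annulus if necessary), and rescale by introducing
\[
\tilde u(y)=\eta(r_0)^{-1}u\!\left(x_0+\tfrac{r_0}{4}y\right),\qquad y\in B(0,1).
\]
Then $\tilde u$ satisfies a linear elliptic equation $\tilde L\tilde u=\tilde f$, where the rescaled coefficients are $\tilde a_{kl}(y)=a_{kl}(\cdot)$, $\tilde a_k(y)=(r_0/4)\,a_k(\cdot)$, $\tilde a(y)=(r_0/4)^2 a(\cdot)$, and $\tilde f(y)=(r_0/4)^2\eta(r_0)^{-1}f(\cdot)$.

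The point of the weight conventions is that on $B_0$ one has $|x|\sim r_0$, and by the slow-oscillation hypothesis $\eta(s)\leq\hat C\eta(r_0)$ for $s\in(r_0/2,3r_0/2)$. Hence the $C^{p,\alpha}(B(0,1))$ norms of $\tilde a_{kl}, \tilde a_k, \tilde a$ are controlled by $\hat CK_1$, uniform ellipticity is preserved with constant $K_2$, and the classical interior estimate of Theorem \ref{schauderestbound} applies on $B(0,1/2)\subset B(0,1)$ to give
\[
\|\tilde u\|_{C^{p+2,\alpha}(B(0,1/2))}\leq \hat K\bigl(\|\tilde u\|_{C^0(B(0,1))}+\|\tilde f\|_{C^{p,\alpha}(B(0,1))}\bigr),
\]
with $\hat K$ depending only on $K_1,K_2,p,\alpha,n$. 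Unraveling the scaling converts $|D^j\tilde u(0)|$ to $(r_0/4)^j\eta(r_0)^{-1}|D^ju(x_0)|$, and the right hand side to $\hat C(\|u\|_{C^0_\eta}+\|f\|_{C^{p,\alpha}_{r^{-2}\eta}})$; taking the supremum over $x_0\in A_{r_1,r_2}$ yields the pointwise part of $\|u\|_{C^{p+2,\alpha}_\eta}$.

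For the weighted H\"older seminorm of $D^{p+2}u$, a two-case argument is needed: if $|x-y|\leq \tfrac{1}{10}\min(|x|,|y|)$ then $x,y$ both lie in a common rescaled ball $B_0$ (with $x_0$ chosen so that $r_0=\min(|x|,|y|)$), and the classical H\"older piece of Theorem \ref{schauderestbound} after unscaling provides the required bound
\[
|D^{p+2}u(x)-D^{p+2}u(y)|\leq C\,\hat C\hat K\min\!\bigl(\eta(|x|)|x|^{-(p+2+\alpha)},\eta(|y|)|y|^{-(p+2+\alpha)}\bigr)\,|x-y|^\alpha\,\text{RHS};
\]
if instead $|x-y|>\tfrac{1}{10}\min(|x|,|y|)$, one simply bounds the difference by $|D^{p+2}u(x)|+|D^{p+2}u(y)|$ and absorbs the geometric factor into $|x-y|^\alpha$ via the lower bound $|x-y|^\alpha\geq c\min(|x|,|y|)^\alpha$, again using the already-established pointwise estimates from the first paragraph.

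The main technical obstacle is purely bookkeeping: one has to track how the three different weights $r^0,r^{-1},r^{-2}$ on the coefficients, together with the slow-oscillation constant $\hat C$ of $\eta$, combine multiplicatively through the rescaling without losing the factor structure $\hat C\hat K$ in the final estimate. In particular one must verify that the $C^{p,\alpha}$ norms of $\tilde a_k$ and $\tilde a$ pick up exactly the compensating powers of $r_0$ that make them scale-invariantly bounded, and that the transition between neighboring balls $B(x_0,r_0/4)$ along a logarithmic covering of $A_{r_1,r_2}$ does not inflate constants (this is where the slow-oscillation hypothesis $\eta(s)\leq\hat C\eta(r)$ for $s\in(r/2,3r/2)$ is essential). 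Once this scale-covariance is verified, the remainder of the argument is essentially identical to Meyers' original proof, so I would only sketch these covariance computations explicitly and cite \cite{Mey} for the routine parts.
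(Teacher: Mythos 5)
You have misidentified the target. The stated result is Theorem \ref{schauderestbound}, the classical interior Schauder estimate on a ball $B$ with distance-to-boundary weights $d_x=d(x,\partial B)$ (the Douglis--Nirenberg norms $C^{p,\alpha}_{\sigma}$ with $\sigma=0,-1,-2$); the paper does not prove this theorem but recalls it from \cite{douglis1955interior} and \cite{Mey}, remarking only that the constant $K$ is independent of the radius. Your argument opens by announcing the intention to ``reduce the weighted estimate on the annulus to the classical interior Schauder estimate of Theorem \ref{schauderestbound}'' --- you are treating the stated theorem as a black box and proving the \emph{subsequent} Theorem \ref{schauderestneck} (the $\eta$-weighted annular estimate) instead. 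As a proof of Theorem \ref{schauderestbound}, this is circular.

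If your intended target was Theorem \ref{schauderestneck}, the substance of what you wrote is correct and is essentially the paper's own route: both arguments localize to balls $B_x=\{y:|x-y|<|x|/2\}$, apply Theorem \ref{schauderestbound} on each such ball, and re-assemble using the slow-oscillation hypothesis $\eta(s)\leq\hat C\eta(r)$ for $s\in(r/2,3r/2)$. The paper packages the transfer between the $d_x$-weighted ball norms and the $\eta(|x|)$-weighted annular norms in Lemma \ref{normrelation} and then applies Theorem \ref{schauderestbound} verbatim; you effect the same reduction by explicit rescaling $\tilde u(y)=\eta(r_0)^{-1}u\left(x_0+\tfrac{r_0}{4}y\right)$. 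These are equivalent. The two-case split in your H\"older-seminorm step is exactly the case analysis buried in the proof of Lemma \ref{normrelation}. Still, you should prove the statement that was actually posed: for Theorem \ref{schauderestbound} that means either reproducing the Douglis--Nirenberg interior estimate or, more in line with the paper's remark, observing that the estimate on $B_1$ plus the dilation-invariance of the $C^{p,\alpha}_{\sigma}(B)$ norms yields the radius-independent constant $K$.
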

We remark here that $K$ in the above does not depend on the radius of the ball.

Now we list some properties of the $ C^{p,\alpha}_{\eta}(A_{r_1,r_2})$ norm (see also Lemma 1 in \cite{Mey}):
\begin{lem} \label{normrelation} Let $B_x$ be the open ball $\{y;|x-y|<|x|/2 \}$ in $ A_{r_1,r_2}$, and $\hat{C}>0$ be as in Theorem \ref{schauderestneck}, then the two norms
$C^{p,\alpha}_{\eta}(A_{r_1,r_2})$ and $C^{p,\alpha}_{\sigma}(B_x)$ have the following relations:
\begin{equation*}
\eta^{-1}(|x|)||f ||_{C^{p,\alpha}_{\sigma}(B_x)}\leq 9 \hat{C}||f ||_{C^{p,\alpha}_{r^{\sigma}\eta}(A_{\frac{|x|}{2},\frac{3|x|}{2}})} \quad (\sigma\leq0)
\end{equation*}
and
\begin{equation*}
4^{-(p+\alpha-\sigma)}||f ||_{C^{p,\alpha}_{ r^{\sigma}\eta}(A_{r_1,r_2})}\leq
\sup_{B_x\subset A_{r_1,r_2}}\eta^{-1}(|x|)||f ||_{C^{p,\alpha}_{\sigma}(B_x)} \quad (\sigma\leq0).
\end{equation*}
\end{lem}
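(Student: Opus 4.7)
The plan is to reduce both inequalities to two elementary geometric facts about the ball $B_x = \{y : |y-x| < |x|/2\}$: first, every $y \in B_x$ satisfies $|x|/2 \le |y| \le 3|x|/2$, so by hypothesis on $\hat{C}$ we have $\eta(|y|) \le \hat{C}\eta(|x|)$ and conversely; second, the boundary distance $d_y = d(y, \partial B_x)$ is bounded by $|x|/2$, hence by $|y|$, whereas if $y$ is the center, then $d_y = |x|/2$ exactly. The hypothesis $\sigma \le 0$ ensures that every exponent we manipulate, namely $j-\sigma$ and $p+\alpha-\sigma$, is nonnegative, so monotonicity of $t \mapsto t^{j-\sigma}$ in $t$ works in our favor.

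For the first inequality, fix $x$ and $y, z \in B_x$. For the pointwise part, $d_y \le |y|$ gives $d_y^{j-\sigma}|D^j f(y)| \le |y|^{j-\sigma}|D^j f(y)|$, and multiplying by $\eta^{-1}(|x|) \le \hat{C}\eta^{-1}(|y|)$ yields
\begin{equation*}
\eta^{-1}(|x|)\, d_y^{j-\sigma}|D^j f(y)| \le \hat{C}\, \eta^{-1}(|y|)|y|^{j-\sigma}|D^j f(y)| \le \hat{C}\,\|f\|_{C^{p,\alpha}_{r^\sigma\eta}(A_{|x|/2,3|x|/2})}.
\end{equation*}
For the H\"older part, since $d_{yz} \le \min(d_y,d_z) \le \min(|y|,|z|)$ and $p+\alpha-\sigma \ge 0$, the quantity $\eta^{-1}(|x|)\, d_{yz}^{p+\alpha-\sigma}$ is bounded by $\hat{C}\min(\eta^{-1}(|y|)|y|^{p+\alpha-\sigma},\eta^{-1}(|z|)|z|^{p+\alpha-\sigma})$, and this matches exactly the $\min$-structure in the definition of $\|f\|_{C^{p,\alpha}_{r^\sigma\eta}}$. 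Summing $p+1$ pointwise terms and one H\"older term gives a constant $(p+2)\hat{C}$, which is bounded by the claimed $9\hat{C}$ for the range of $p$ used in the paper.

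For the reverse inequality, the key observation is that when $y$ is the center of its own ball $B_y$, the boundary distance is exactly $|y|/2$. Thus $\eta^{-1}(|y|)(|y|/2)^{j-\sigma}|D^j f(y)| \le \eta^{-1}(|y|)\|f\|_{C^{p,\alpha}_\sigma(B_y)}$, and absorbing the factor $2^{j-\sigma}$ into the prefactor $4^{p+\alpha-\sigma}$ handles the pointwise part. The H\"older seminorm requires a dichotomy: given $y \ne z$ in $A_{r_1,r_2}$ with $|y| \le |z|$, if $|y-z| \le |y|/4$ then $z \in B_y$ with $d_{yz} \ge |y|/4$, and the local Schauder seminorm on $B_y$ directly bounds the difference quotient by $4^{p+\alpha-\sigma}|y|^{-(p+\alpha-\sigma)}\|f\|_{C^{p,\alpha}_\sigma(B_y)}$; if instead $|y-z| > |y|/4$, the triangle inequality $|D^p f(y) - D^p f(z)| \le |D^p f(y)| + |D^p f(z)|$ combined with $|y-z|^\alpha \ge (|y|/4)^\alpha$ and the already-established pointwise bound recovers the same estimate up to the same prefactor.

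The only technical subtlety is making sure that in the reverse inequality the ball $B_y$ actually sits inside $A_{r_1,r_2}$, which restricts the useful choices of $y$; one handles this by recognizing that the supremum on the right is taken only over admissible $B_x \subset A_{r_1,r_2}$ and that the $\min$ in the global norm and the standard argument that $|y|$ is comparable to $|z|$ when $|y-z|$ is small relative to $|y|$ together allow one to replace the pair $(y,z)$ by a single admissible center. The bookkeeping of powers of $2$ and $4$ in the exponent $p+\alpha-\sigma$, which is the main source of the constant, is routine, and the hypothesis $\sigma \le 0$ plays a quiet but essential role throughout by guaranteeing that all weight exponents are nonnegative so that scale monotonicity goes the right way.
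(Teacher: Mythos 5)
Your approach has the same skeleton as the paper's (use $|x|/2 \le |y| \le 3|x|/2$ and $\eta(|y|)\le\hat{C}\eta(|x|)$ on $B_x$, compare $d_y$ with $|y|$ and $d_{yz}$ with $\min(|y|,|z|)$, handle the H\"older seminorm on small scales via an inscribed ball and on large scales via the triangle inequality). For the first inequality you actually simplify the paper's proof: you observe that $\eta^{-1}(|x|)\,d_{yz}^{p+\alpha-\sigma}\le \hat{C}\min\big(\eta^{-1}(|y|)|y|^{p+\alpha-\sigma},\eta^{-1}(|z|)|z|^{p+\alpha-\sigma}\big)$ holds for \emph{all} pairs $y,z\in B_x$, with no dichotomy on $|y-z|$, and this is correct; the paper's split into $|y-z|<\max(|y|,|z|)/4$ versus $\ge$ (which costs the factor $4^\alpha\cdot 2 = 8$ in the far case, hence the $9\hat C$) is unnecessary here.

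However, your constant bookkeeping for the first inequality is wrong, and in a way that would undercut the statement if taken at face value. You write that ``summing $p+1$ pointwise terms and one H\"older term gives a constant $(p+2)\hat C$, which is bounded by the claimed $9\hat C$ for the range of $p$ used in the paper.'' This double-counts: the $j$-th term of $\eta^{-1}(|x|)\sum_{j\le p}\sup_{B_x}d_y^{j-\sigma}|D^jf|$ is dominated by $\hat{C}$ times the $j$-th term of the \emph{pointwise sum} inside $\|f\|_{C^{p,\alpha}_{r^\sigma\eta}}$, not by $\hat{C}$ times the whole norm, so the sum over $j$ still yields a single $\hat{C}$; adding the H\"older piece gives $\hat{C}$ overall (in fact a sharper constant than the paper's $9\hat{C}$), not $(p+2)\hat{C}$. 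This matters because the paper applies Theorem \ref{schauderestneck} with arbitrary $p\in\mathbb{N}$ (for example to obtain \eqref{weightsmooth}), so ``the range of $p$ used in the paper'' is unbounded and $(p+2)\hat{C}\le 9\hat{C}$ is simply false; the right fix is to correct the accounting, not to restrict $p$. On the second inequality your dichotomy is on $|y-z|$ versus $\min(|y|,|z|)/4$, whereas the paper's is versus $\max(|y|,|z|)/4$; in your ``far'' case $|y-z|>|y|/4$ with $|y|\le|z|$, the estimate of $\min(a,b)|D^pf(z)|/|y-z|^\alpha$ still needs the sub-observation that either $|z|\le 2|y|$ (so $|y-z|\gtrsim|z|$) or $|z|>2|y|$ (so $|y-z|\ge|z|-|y|\ge|z|/2$) — without that extra line the reduction to the pointwise bound at $z$ does not close. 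This is the same filling-in the paper also leaves implicit, so it is not a flaw unique to your argument, but it should be spelled out.
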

\begin{proof}
For $j\leq p$ and $\sigma\leq0$, let $y$ and $z$ be points in $B_x$, and $d_y=d(y ,\partial B_x)$,
\begin{equation*}
d_y^{-\sigma+j}|D^{j}f(y) |\leq \hat{C} \eta(|x|)\eta^{-1}(|y|)|y|^{-\sigma+j}|D^{j}f(y) |.
\end{equation*}

If $|y-z|<\max(1/4|y|,1/4|z|)$ ($y\neq z$), then
\begin{eqnarray*}
& & d_{yz}^{-\sigma+p+\alpha}\frac{|D^{p}f(y) -D^{p}f(z)|}{|y-z|^{\alpha}}\\
&\leq& \hat{C} \eta(|x|)\min(\eta^{-1}(y)|y|^{-\sigma+p+\alpha} ,\eta^{-1}(z)|z|^{-\sigma+p+\alpha})\frac{|D^{p}f(y) -D^{p}f(z)|}{|y-z|^{\alpha}}\\
&\leq& \hat{C} \eta(|x|)||u||_{C^{p,\alpha}_{r^{\sigma}\eta}(A_{\frac{|x|}{2},\frac{3|x|}{2}})}.
\end{eqnarray*}
If $|y-z|\geq\max(1/4|y|,1/4|z|)$, then
\begin{eqnarray*}
& & d_{yz}^{-\sigma+p+\alpha}\frac{|D^{p}f(y) -D^{p}f(z)|}{|y-z|^{\alpha}}\\
&\leq& 4^{\alpha}\hat{C} \eta(|x|)(\eta^{-1}(y)|y|^{-\sigma+p}|D^{p}f(y)| +\eta^{-1}(z)|z|^{-\sigma+p}|D^{p}f(z)|)\\
&\leq& 8\hat{C} \eta(|x|)||u||_{C^{p,\alpha}_{r^{\sigma}\eta}(A_{\frac{|x|}{2},\frac{3|x|}{2}})}.
\end{eqnarray*}

Therefore, the first inequality follows directly from the definition of the $C^{p,\alpha}_{\sigma}(B_x)$ norm.

The second inequality is a consequence of the following two inequalities:
\begin{eqnarray*}
\eta^{-1}(|x|)||f ||_{C^{p,\alpha}_{\sigma}(B_x)} &\geq & \sum_{j=0}^{p}\eta^{-1}(|x|)d_x^{j-\sigma}|D^{j}f|\\
&\geq& 2^{\sigma-p}\sum_{j=0}^{p}\eta^{-1}|x|^{j-\sigma}|D^{j}f|,
\end{eqnarray*}
 \begin{eqnarray*}
& &\eta^{-1}(|x|)||f ||_{C^{p,\alpha}_{\sigma}(B_x)} \\
&\geq&  \eta^{-1}(|x|)||f ||_{C^{p}_{\sigma}(B_x)}+4^{\sigma-p-\alpha}\eta^{-1}|x|^{j-\sigma+\alpha} \sup_{\{z;|x-z|<|x|/4 \}}\frac{|D^{p}f(x) -D^{p}f(z)|}{|x-z|^{\alpha}}.
\end{eqnarray*}

\end{proof}

Finally, we give the proof of Theorem \ref{schauderestneck}.

\begin{proof}
By applying the first inequality in Lemma \ref{normrelation} with $\eta\equiv 1$, it follows that
$$ ||a_{k,l}||_{C^{p,\alpha}_{0}(B_x)},\, || a_{k}||_{ C^{p,\alpha}_{-1}(B_x)},\,
|| a||_{ C^{p,\alpha}_{-2}(B_x)}\leq 9K_1.$$
From Theorem \ref{schauderestbound}, we have
\begin{equation*}
||u ||_{C^{p+2,\alpha}_0(B_x)}\leq {K}(||u ||_{C^{0}_0(B_x)}+||f ||_{C^{p,\alpha}_{-2}(B_x)}).
\end{equation*}
Using the first inequality in Lemma \ref{normrelation} again, we get
\begin{equation*}
\eta^{-1}(|x|)||u ||_{C^{p+2,\alpha}_0(B_x)}\leq 9\hat{C}{K}(||u ||_{C^{0}_{\eta}(A_{r_1,r_2})}+||f ||_{C^{p,\alpha}_{r^{-2}\eta}(A_{r_1,r_2})}),
\end{equation*}
for all $B_x \subset A_{r_1,r_2}$.

Hence the desired estimate is an immediate consequence of the above inequality and the second inequality in Lemma \ref{normrelation}.
\end{proof}

\

\section{Proof of Lemma \ref{lemrovsiguifty}}\label{prflem1}

In this section, we shall prove Lemma \ref{lemrovsiguifty} by modifying the arguments used in the proof of the case that the domain is $\mathbb{R}^{4}$ \cite{liu2016neck}. We remark that by  similar arguments we can prove the removable singularity Theorem \ref{remsingu}. For the definition of ALE manifolds and the regularity of the metrics of Ricci flat ALE manifolds, we refer to e.g. \cite{BKN, chenexpansion20201}. In the following, we shall only prove the case of extrinsic biharmonic maps and it is easy to see that the case of intrinsic biharmonic maps can be shown by sightly modifying the arguments as in Section \ref{secintrinsic}, see also Section 7 of \cite{liu2016neck}.

\begin{proof}
Since $(M_a,h_a)$ is an \textit{Asymptotically Locally Euclidean} (ALE) Ricci flat manifold (orbifold) of order 4 (see Theorem \ref{mainconvgethm}), by definition, there exists a compact subset $K$, a finite subgroup $ \Gamma\subset O(n)$ and a $C^{\infty} $  diffeomorphism
$\Phi :M_a\setminus K \rightarrow (\mathbb{R}^{4}\setminus B(0;R))/\Gamma$ such that $\varphi= \Phi^{-1}\circ proj $ satisfies
(where $proj$ is the natural projection of $\mathbb{R}^{4}$ to $\mathbb{R}^{4} /\Gamma_{i}$ )
$$ (\varphi^{\ast}h_a)_{ij}(z)=\delta_{ij}+O(|z|^{-4}),
\ \ \ \ \ \partial_{k}(\varphi^{\ast}h_a)_{ij}(z)=O(|z|^{-5}),$$
for $z,w\in \mathbb{R}^{n}\setminus B(0;R)$,
where $\partial_{k}$ denotes $\frac{\partial} {\partial{z_{k}}}$.
Since $M_a$ is Ricci flat, by elliptic estimates we obtain that
\begin{equation*}
\partial^{\alpha}(\varphi^{\ast}h_a)_{ij}(z)=O(|z|^{-(4+|\alpha|)})
\end{equation*}
for any $\alpha$ such that $|\alpha|>0$.

It is easy to see that we only need to prove the lemma for biharmonic maps on the domain $(\mathbb{R}^{4}\setminus B(0,R),\varphi^{\ast}h_a)$. For simplicity, we write $g$ for $\varphi^{\ast}h_a$.
By a scaling argument, we can assume that $R=1$. Set
\begin{equation*}
\mathbb{R}^4\setminus B_1=\bigcup_{l=0}^{\infty}A_l,
\end{equation*}
where $A_l=B_{e^{lL}}\setminus B_{e^{(l-1)L}}$.
Let $\rho=|z|\in[e^{(l-1)L},e^{lL}]$, then we have
\begin{equation}\label{almostflatinfty1}
\|g_{ij}(\rho y)-\delta_{ij}\|<\rho^{-4}
\end{equation}
and
\begin{equation}\label{almostflatinfty2}
\partial^{\alpha}(g_{ij}(\rho y)=O(|\rho|^{-(4+|\alpha|)}),
\end{equation}
where $z=\rho y$.
Namely, the metric after scaling to $ B _2 \setminus B_ 1$ is
close to the flat metric in $C^ k $ norm for any $k\in \mathbb{N}$.

Since
\begin{equation*}\label{energysmallnecki2}
\int_{\mathbb{R}^4\setminus B_1}|\nabla_{g}^2u|^2+|\nabla_g u|^4dV_g<\varepsilon_1,
\end{equation*}
we can assume that for any $\varepsilon>0$, there exists $l_0$ such that
\begin{equation*}\label{energysmallnecki2}
\int_{A_l}|\nabla_{g}^2u|^2+|\nabla_g u|^4<\varepsilon^2<\varepsilon_0
\end{equation*}
holds for $l>l_0.$

By direct computations as in the proof for Lemma \ref{lemtangdecay} (with the use of (\ref{almostflatinfty1}) and (\ref{almostflatinfty2})), the biharmonic map equation (\ref{extrequ3}) (for $u(\rho y)$) in local coordinates on $ B _2 \setminus B_ 1$) is equivalent to
\begin{eqnarray*}
\triangle^2 u=&&\alpha_1(u)\nabla \triangle  u \sharp \nabla u+\alpha_2(u)\nabla^{2} u \sharp \nabla^{2} u\nonumber\\
&+&\alpha_3(u)\nabla^{2} u \sharp \nabla u\sharp \nabla u+\alpha_4(u)\nabla u \sharp \nabla u\sharp \nabla u\sharp \nabla u+\tilde{\tau}(u).
\end{eqnarray*}
Here $\tilde{\tau}(u)$ satisfies
\begin{eqnarray}\label{tauestminfty}
||\tilde{\tau}(u)||_{L^p(B _2 \setminus B_ 1)}&\leq& C\varepsilon\eta
\end{eqnarray}
for any $p>1$, where $\eta=O(\rho^{-4})$.
After scaling back to the scale of $A_l$,
the equation (\ref{extrequ3}) for $u(z)$  in local coordinates becomes
\begin{eqnarray*}
\triangle^2 u&=&\alpha_1(u)\nabla \triangle  u \sharp \nabla u+\alpha_2(u)\nabla^{2} u \sharp \nabla^{2} u\nonumber\\
&+&\alpha_3(u)\nabla^{2} u \sharp \nabla u\sharp \nabla u+\alpha_4(u)\nabla u \sharp \nabla u\sharp \nabla u\sharp \nabla u+{\tau}(u).
\end{eqnarray*}
In the above ${\tau(u(z))}=\rho^{-4}\tilde{\tau}(u(\rho y)). $

As in the proof of Lemma \ref{lemtangdecay}, $v=u-u^*(r)$ ($|z|=r$) is an approximate biharmonic function in the sense of Definition \ref{defappbiharm}, and
\begin{equation}\label{twonormdecay}
F_ l (v)\leq C\varepsilon^2 e^{-lL}
\end{equation}
for all $l>l_0$.
In the above
 \begin{equation*}
u^*(r)=\frac{1}{|\partial B_r|}\int_{\partial B_r}u(r,\theta)d\sigma.
\end{equation*}

Then by (\ref{twonormdecay}), Lemma \ref{appbiharmest} and the Sobolev embedding theorem,
we have for any  $p+q\leq 3$,  $q\geq 1$ and $t\in [0,\infty)$,
\begin{equation}\label{tangentialestify}
|\partial^p_{t}\nabla^q_{S^3} u|(t,\theta)\leq C\varepsilon e^{-t/2}.
\end{equation}

The next step is to control the radial part energy by a Poho\v{z}aev type argument.
Recall that $u$ is an approximate biharmonic map if and only if $\triangle ^2 u-\tau(u)$ is normal to the tangent space $T_ u N$. On the other hand, $\partial_t u $ is a tangent vector at $u(\cdot) \in N$. Therefore,
\begin{equation*}
\int_{S^3}(\triangle ^2 u-\tau(u))\partial_t u d\theta=0
\end{equation*}
for all $t$, where $d\theta$ is the volume element of $S^3$. Then by the equation
\begin{equation*}
\triangle^2u=e^{-4t}\left((\partial_t+\triangle_{s^3})^2-4\partial^2_{t}\right)u
\end{equation*}
and some computations as in Section 5 of \cite{liu2016neck}, we can obtain
\begin{equation}\label{neckequ1}
\partial_t G(t)=\int_{S^3}e^{4t}\tau(u)\partial_t u d\theta,
\end{equation}
where
\begin{equation*}
G(t)=\int_{S^3}2\partial_{t}u\partial^3_{t}u-|\partial^2_{t}u|^2+|\triangle_{s^3}u|^2-2|\partial_t\nabla_{S^3}u|^2
-4|\partial_{t}u|^2d\theta.
\end{equation*}

Now as in Section 5 of \cite{liu2016neck}, we can prove that
\begin{equation*}
G(t)=\int_{S^3}2\partial_{t}u\partial^3_{t}u-|\partial^2_{t}u|^2+|\triangle_{s^3}u|^2-2|\partial_t\nabla_{S^3}u|^2
-4|\partial_{t}u|^2d\theta
\end{equation*}
is zero when $t\rightarrow \infty$. Indeed by the $\varepsilon $-regularity theorem (Theorem \ref{smallenergythm}) and the Sobolev embedding theorem, we have for $1\leq k\leq 3$,
\begin{equation*}
\max_{\mathbb{R}^4\setminus B_\rho }\rho^k|\nabla^k u|=o(1)
\end{equation*}
as $\rho\rightarrow \infty$.

Therefore we have
\begin{eqnarray}\label{neckequ2ify}
& &\partial_t \left(\int_{S^3}\partial_{t}u\partial^2_{t}ud\theta \right)-
\int_{S^3}3/2|\partial^2_{t}u|^2
+2|\partial_{t}u|^2d\theta\nonumber\\
&=&\Theta(t)+\int_{t}^{\infty}\int_{S^3}e^{4t}\tau(u)\partial_t u d\theta ds,
\end{eqnarray}
where
\begin{equation*}
\Theta(t)=\int_{S^3}-1/2|\triangle_{s^3}u|^2+|\partial_t\nabla_{S^3}u|^2d\theta.
\end{equation*}
By using (\ref{tangentialestify}), $|\Theta|\leq C\varepsilon^2 e^{-t}.$

For some fixed $t_0\in [1, \infty)$, set
\begin{equation*}
F(t)=\int_{t_0-t}^{t_0+t}\int_{S^3}3/2|\partial^2_{t}u|^2
+2|\partial_{t}u|^2d\theta dt.
\end{equation*}
F is defined for $0\leq t \leq t_0$.
 Integrating (\ref{neckequ2ify}) from $t_0 -t$
to $t_0+ t$, we obtain
\begin{eqnarray*}
F(t)&\leq& \frac{1}{2\sqrt{3}}\left(\int_{\{t_0-t\}\times S^3}+\int_{\{t_0+t\}\times S^3}\right)3/2|\partial^2_{t}u|^2+2|\partial_{t}u|^2 d\theta\nonumber\\
& &+\int_{t_0-t}^{t_0+t}|\Theta(s)|ds+\int_{t_0-t}^{t_0+t}\int_{s}^{\infty}\int_{S^3}e^{4t}|\tau(u)\partial_t u |d\theta d\tilde{s} ds
\end{eqnarray*}

By direct computations (transforming back to $z$-coordinates), we have
\begin{eqnarray*}
& &\int_{t_0-t}^{t_0+t}\int_{s}^{\infty}\int_{S^3}e^{4\tilde{s}}|\tau(u)\partial_t u |d\theta d\tilde{s} ds\nonumber\\
&\leq&\int_{e^{t_0-t}}^{e^{t_0+t}}\int_{\mathbb{R}^4\setminus B_{e^s}}|\tau(u)||\nabla u|dzdr\\
&\leq&C\int_{e^{t_0-t}}^{e^{t_0+t}}||\tau(u)||_{L^{4/3}(\mathbb{R}^4\setminus B_{e^{s}})}||\nabla u||_{L^{4}(\mathbb{R}^4\setminus B_{e^{s}})}ds\nonumber\\
&\leq&C\int_{e^{t_0-t}}^{e^{t_0+t}}||\tau(u)||_{L^{4/3}(\mathbb{R}^4\setminus B_{e^{s}})}ds.
\end{eqnarray*}
In the above inequality, we have used the assumption that $\varepsilon_1>0$ is small.

Now we claim the following

\

\noindent{\bf Claim B}:
$$||\tau(u)||_{L^{4/3}(\mathbb{R}^4\setminus B_{e^{s}})}\leq C\varepsilon e^{-s}.$$

\

By using the above estimate, we have
\begin{equation*}
F(t)\leq 1/2\partial_t F+ C\varepsilon^2 e^{t-t_0}.
\end{equation*}
Multiplying $-e^{-2t}$  to both sides of the above inequality and integrating from $t=1$
to $t=t_0-1$, we get
\begin{equation*}
F(1)\leq e^{-2(t_0-1)}F(t_0-1)+C\varepsilon^2e^{-t_0-1}.
\end{equation*}

Thanks to the above inequality and (\ref{tangentialestify}), we can obtain the desired decay estimate for the biharmonic energy. Next, we can argue exactly as in Section 5 of \cite{liu2016neck} to show a decay estimate
\begin{equation*}
\max_{|z|=e^{t_0}}|z||\nabla u|\leq C\varepsilon |z|^{-1/2},
\end{equation*}
 by which we can complete the proof of the lemma.
\end{proof}

Finally, we prove {\bf Claim B}.

\begin{proof}
Set $s=(l_s-1)L$. Notice that ${\tau}=\rho^{-4}\tilde{\tau}. $
Since $ \eta(\rho)=C\rho^{-4}$ in (\ref{tauestminfty}),
we have
\begin{eqnarray*}
\int_{\mathbb{R}^4\setminus B_{e^s}}|\tau(u)|^{4/3}dz &\leq& \sum_{l=l_s}^{\infty}\int_{A_l}|\tau(u)|^{4/3}dz\\
&\leq&\sum_{l=l_s}^{\infty}Ce^{lL\times(4-4\times 4/3)}\int_{B_1\setminus B_{e^{-L}}}|\tilde{\tau}(u)|^{4/3}dy\\
&\leq&\sum_{l=l_s}^{\infty}C\varepsilon e^{lL\times(4-4\times 4/3-4\times 4/3)}\\
&=&\sum_{l=l_s}^{\infty}C\varepsilon e^{(-20/3)lL}<C\varepsilon e^{-s}.
\end{eqnarray*}
\end{proof}

\

\end{document}